\DeclareMathAlphabet{\mathpzc}{OT1}{pzc}{m}{it}
\newtheorem{definition}{Definition}
    \newtheorem{Theorem}{Theorem}[section]
\newtheorem{Lemma}[Theorem]{Lemma}
\newtheorem{corollary}[Theorem]{Corollary}
\theoremstyle{remark}
\newtheorem{remark}[Theorem]{Remark}
\newcommand{\RR}{\mathbb{R}}
\newcommand{\EE}{\mathbb{E}}
\newcommand{\MM}{\mathcal{M}}
\newcommand{\DD}{\mathcal{D}}
\newcommand{\OO}{\mathcal{O}}
\newcommand{\UU}{\mathcal{U}}
\newcommand{\WW}{\mathcal{W}}
\newcommand{\KK}{\mathcal{K}}
\newcommand{\LL}{\mathcal{L}}
\newcommand{\C}{\mathcal{C}}
\newcommand{\HH}{\mathcal{H}}
\newcommand{\PP}{\mathcal{P}}
\newcommand{\eps}{\varepsilon}
\newcommand{\defeq}{\stackrel{\bigtriangleup}{=}}
\newcommand{\ceil}[1]{\left\lceil #1 \right\rceil}
\newcommand{\abs}[1]{\left\vert #1 \right\vert}
\newcommand{\norm}[1]{\left\Vert #1 \right\Vert}
\newcommand{\dist}{\operatorname{dist}}
\newcommand{\wtilde}[1]{\widetilde{#1}}
\newcommand{\lrangle}[1]{\langle #1 \rangle}
\newcommand{\lrbrackets}[1]{\left( #1 \right)}
\newcommand{\emphbrackets}[1]{\emph{(}#1\emph{)}}
\newcommand{\maxangle}{\angle_{\max}}
\DeclareMathOperator*{\argmin}{\arg\!\min}
\DeclareMathOperator{\vol}{Vol}
\DeclareMathOperator{\Ima}{Im}
\DeclareMathOperator{\cov}{cov}
\newrobustcmd*{\mysquare}[1]{\tikz{\filldraw[draw=#1,fill=#1] (0,0)
rectangle (0.2cm,0.2cm);}}
\newrobustcmd*{\mycircle}[1]{\tikz{\filldraw[draw=#1,fill=#1] (0,0) circle [radius=0.1cm];}}
\newlist{myEnum}{enumerate}{9}
\setlist[myEnum,1]{label=\arabic*.}
\setlist[myEnum,2]{label=(\alph*)}
\setlist[myEnum,3]{label=\roman*.}
\setlist[myEnum,4]{label=\Alph*.}
\setlist[myEnum,5]{label=(\arabic*)}
\setlist[myEnum,6]{label=(\Roman*)}
\setlist[myEnum,7]{label=(\Alph*)}
\setlist[myEnum,8]{label=(\roman*)}
\setlist[myEnum,9]{label=(\arabic*)}
\definecolor{myPurple}{rgb}{0.4940, 0.1840, 0.5560}
\definecolor{myGreen}{rgb}{0, 0.5, 0}
\begin{document}
\title{Non-Parametric Estimation of Manifolds from Noisy Data}
\author[1]{Yariv Aizenbud\footnote{authors contributed equally}}
\author[2]{Barak Sober$^*$}
\affil[1]{Department of Mathematics, Yale University}
\affil[2]{Department of Statistics and Data Science, The Hebrew University of Jerusalem}

\maketitle

\begin{abstract}
A common observation in data-driven applications is that high dimensional data has a low intrinsic dimension, at least locally. 
In this work, we consider the problem of estimating a $d$ dimensional sub-manifold of $\mathbb{R}^D$ from a finite set of noisy samples.
Assuming that the data was sampled uniformly from a tubular neighborhood of $\mathcal{M}\in \mathcal{C}^k$, a compact manifold without boundary, we present an algorithm that takes a point $r$ from the tubular neighborhood and outputs $\hat p_n\in \mathbb{R}^D$, and $\widehat{T_{\hat p_n}\mathcal{M}}$ an element in the Grassmanian $Gr(d, D)$.
We prove that as the number of samples $n\to\infty$ the point $\hat p_n$ converges to $p\in \mathcal{M}$ and $\widehat{T_{\hat p_n}\mathcal{M}}$ converges to $T_p\mathcal{M}$ (the tangent space at that point) with high probability.  
Furthermore, we show that the estimation yields asymptotic rates of convergence of $n^{-\frac{k}{2k + d}}$ for the point estimation and $n^{-\frac{k-1}{2k + d}}$ for the estimation of the tangent space.
These rates are known to be optimal for the case of function estimation.
\end{abstract}

\section{Introduction}
Differentiable manifolds are an indispensable language in modern physics and mathematics. 
As such, there is a plethora of analytic tools designed to investigate manifold-based models (e.g., connections, differential forms, curvature tensors, parallel transport, bundles). 
In order to facilitate these tools, one normally assumes access to a manifold's atlas of charts (i.e., local parametrizations). 
Over the past few decades, manifold-based modeling has permeated into data analysis as well (e.g., see  \cite{hastie1984principal,roweis2000LLE, scholkopf1998KPCA}), usually to avoid working in high dimensions, due to the known curse of dimensionality \cite{stone1980optimal}. 
However, in these data-driven models, charts are not accessible and the only information at hand are the samples themselves. 
As a result, a common practice in Manifold Learning is to embed the data in a lower dimensional Euclidean domain, while maintaining some notion of distance (e.g., geodesic or diffusion).
Subsequently, the embedded data is being processed using linear methods on the low dimensional domain; to mention just a few of this body of literature, see  \cite{belkin2003laplacian,belkin2006manifold,coifman2006diffusion,saul2003LLE}. Some of the approaches have robustness guarantees~\cite{ding2020phase,dunson2021spectral,el2016graph,shen2020scalability}.
The main drawback of such dimensionality reduction approaches is that they inevitably lose some of the information in the process of data projection.

In recent years there have been a growing interest in the problem of manifold estimation.
The aim of these approaches is to reconstruct an underlying manifold $\widehat{\MM}$, approximating the sampled one $\MM$, based upon a given discrete sample set.
The first attempt (neglecting the literature dealing with approximations of curves and surfaces \cite{dey2006curve,wendland2004scattered}) at this problem was probably made by Cheng et. al. at 2005 \cite{cheng2005manifold} who present an algorithm that outputs a simplicial complex, homeomorphic to the original manifold $\MM$ and is proven to be close to it in the Hausdorff sense.
However, this algorithm is deemed intractable by the authors as it is based on the creation of Delaunay complexes through Voronoi diagrams in the ambient space. 
Harvesting the idea of tangential Delaunay complexes, Boissonat and Ghosh \cite{boissonnat2014manifold} have provided a method reconstructing a simplicial complex which is computationally tractable (i.e., its complexity has linear dependency in the ambient dimension).
In this approach there is an underlying assumption that the local tangent at each sampled point is given, and they recommend using a local Principle component analysis (PCA) to find these tangents.
The choice of local PCA as an approximating tangent is shown to be a valid one in the analysis given in \cite{aamari2018stability,kaslovsky2011optimal,kaslovsky2014non,singer2012vector}. Furthermore, \cite{aamari2018stability} shows that the estimate given by Boissonat and Ghosh~\cite{boissonnat2014manifold} achieves optimal minmax rates of convergence in case of noiseless samples, with respect to a certain class of manifolds.
Some other works aims at learning multiscale dictionaries to describe the manifold data \cite{allard2012multi}. 

In parallel, meshless methods for the reconstruction of manifolds from point sets have been developed.
Niyogi et. al. \cite{niyogi2008finding} present such an approach through a union of $\varepsilon$-balls around the samples.
They show that this approximant can be homologous to $\MM$ under certain conditions. 
This approach is somewhat similar to the one proposed by Fefferman et. al. in \cite{fefferman2018fitting}, where an analysis of convergence under a Gaussian noise model is given as well. 
Furthermore, Fefferman et. al. \cite{fefferman2019fitting} 
propose another meshless way of approximating manifolds from point sets up to arbitrarily small Hausdorff distance in the noiseless case, and such that the approximant itself is a smooth manifold of the same intrinsic dimension as $\MM$ (\cite{mohammed2017manifold} uses this framework to provide two more algorithms of such properties).
Faigenbaum-Golovin and Levin uses a generalization of the $L_1$ median to approximate manifolds from meshless data \cite{faigenbaum2020manifold}.
Sober and Levin \cite{sober2016MMLS} give an approximation scheme based upon a generalization of the the Moving Least-Squares (MLS) \cite{levin2004mesh,mclain1976two} that provides a smooth manifold with optimal convergence rates in the noiseless case as well (this approach is referred below as the Manifold-MLS).
Their approximation is built through a two-stage procedure, first estimating a local coordinate system and then building a local polynomial regression over it. 
This framework is extended to deal with approximations of functions over manifolds \cite{sober2017approximation} as well as geodesic distances \cite{sober2020Geodesics}.
Aamari and Levrard \cite{aamari2019nonasymptotic} provide a different algorithm, which is shown to be optimal in the noiseless case as well. 
Differently from Sober and Levin's approach, this algorithm estimates a tangent along with a polynomial estimation above the tangent domain at once.
However, in their practical implementation Aamari and Levrard propose a two step solution (first perform PCA to achieve a tangent and then a polynomial regression above it). Note, that although there are results regarding the convergence of local PCA to the tangent space of some manifold these works assume that the localization is around a point on the manifold itself, which is not given in the current problem setting.

Upper bounds on the minimax rates of convergence were first introduced for smooth manifolds by Genovesse et. al. \cite{genovese2012minimax,genovese2012manifold}. Later, in \cite{kim2015tight}, the same rates were shown to be optimal.
These results were later refined to a class of H\"older-like smooth manifolds by Aamari and Levrard \cite{aamari2019nonasymptotic}.
They come to the conclusion that the optimal rate of convergence for such $k$-times smooth manifold estimation is $O(n^{-k/d})$ for the noiseless case, and is bounded from below by $O((\frac{\sigma}{n})^{k/(k+d)})$ in an orthogonal noise model, where $d$ is the intrinsic dimension of $\MM$, $n$ is the number of samples and $\sigma$ is a bound on the noise level.
However, they do not show that their bound in the noisy case is achievable. 

Note, that in all previous manifold reconstruction algorithms the convergence under noise assumptions is either not analyzed at all, or is guaranteed only when the noise level decays to zero as the sample size $n$ tends to infinity. 
In the current paper, we assume a sample of size $n$ drawn from the uniform distribution on $\MM_\sigma$, a $\sigma$-tubular neighborhood of the manifold $\MM$.
Then, for a given $ r  \in \MM_\sigma $, we present an algorithm that outputs a point $ \hat p_n\in \RR^D$ and $ \widehat{T_{\hat p_n}\MM}\in Gr(d, D) $, a $d$-dimensional linear subspace of $\RR^D$, which estimate $ p\in \MM $ and $ T_{ p}\MM $, the subspace tangent to $\MM$ at $p$.
We prove, in Theorem \ref{thm:MainResult}, that with high probability $ \norm{\hat p_n -  p} = O(n^{-k/(2k+d)}) $ and that $ \maxangle(\widehat{T_{\hat p_n}\MM}, T_{ p}\MM) = \wtilde O(n^{-(k-1)/(2k+d)}) $, where $ \wtilde O $ neglects dynamics weaker than polynomial order (e.g., $ \ln(n) $ and $ \ln(\ln(n)) $).
These achieved convergence rates coincide with the optimal rates of non-parametric estimation of functions \cite{stone1980optimal}.
We note that, in its current formulation, the theorems and proofs in this paper are valid only for noise level $\sigma$  bounded away from zero (the Region Of Interest in the algorithm will approach an empty set as $\sigma \to 0$). 
We believe that this can be fixed, however, this will complicate the proofs which are already complicated enough.
Be that as it may, our approach builds upon the Manifold-MLS \cite{sober2016MMLS}, but differs from it as explained below.
Nevertheless, the analysis performed in \cite{sober2016MMLS} suggests that the convergence rates in case of clean samples are $ \wtilde O(n^{-k/d}) $ and $ \wtilde O(n^{-(k-1)/d}) $ for the point and tangent respectively.

The algorithm presented in this paper is divided into two steps. In step 1 we find a local coordinate system. It is proved in Theorem \ref{thm:Step1} that this coordinate system is a ``reasonable" approximation to the tangent of the manifold at some point. Next, in step 2, we improve the estimation of step 1 in an iterative manner to get an accurate estimation of a point on the manifold and its tangent.  We prove, in Theorem \ref{thm:Step2}, that these iterations indeed converge to an accurate estimate, and show the convergence rates mentioned above. The formal problem setting, along with the algorithm's description are presented in Section \ref{sec:Problem_Setting}. In Section \ref{sec:mainResults} the formal results are presented, where Theorem \ref{thm:MainResult} is the main result of this paper. The theorems of Section \ref{sec:mainResults} are proved in Section \ref{sec:Proofs}. Finally, in Section \ref{sec:applications} we present one possible application of the presented method. Although there are many possible applications (e.g.  denoising, trajectory reconstruction, etc.) , we chose one which can easily be demonstrated visually. The code for the algorithm in this paper, along with examples, can be found in  \textcolor{blue}{\href{https://github.com/aizeny/manapprox}{https://github.com/aizeny/manapprox}}.

\section{Problem Setting and Algorithm Description}
\label{sec:Problem_Setting}
Throughout the paper we limit the discussion to estimation of $\MM\in\C^k$, $k$-times smooth, compact submanifolds of $\RR^D$ without boundary.
This limitation is important for the analysis, but the algorithm we present is local and thus from a practical perspective, a local version of these assumptions should suffice.
A key concept in the analysis of manifold estimates is the \textit{reach} of a manifold (e.g., see the analyses at \cite{aamari2018stability,aamari2019nonasymptotic,fefferman2018fitting,niyogi2008finding}), which was introduced by Federer in \cite{federer1959curvature} and is defined as the maximal distance for which there exists a unique projection onto $\MM$.
\begin{definition}[Reach \cite{federer1959curvature}]\label{def:reach}
The reach of a subset $A$ of $\RR^D$, is the largest $\tau$ \emph{(}possibly $\infty$\emph{)} such that for any $x\in\RR^D$ that maintains $dist(A,x)\leq \tau$, there exists a unique point $P_A(x)\in A$, nearest to $x$. 
\end{definition}
Using the reach we can bound both the local behavior($1/\tau$ bounds all sectional curvatures of the manifold) and global behavior  of a manifold (i.e., it measures how close a manifold can get to itself). 
Thus, the reach provides a good way of expressing our limitations in the problem of manifold estimation (in \cite{niyogi2008finding} the same concept is defined as the \textit{condition number} of a manifold).
For example, if the reach is too small and the sampling density is not fine enough, we would expect that small features could not be recovered.
Moreover, through the reach of a manifold we can define the acceptable levels of noise that do not obscure the geometrical shape.
In accordance with that, we limit our discussion to manifolds with reach bounded away from zero (notice that in case of flat manifolds the reach is infinite) and with a noise model which limits the noise level from above by the reach.

Our noise model in the analysis is as follows: 
We assume that we are given a finite set of samples $\{r_i\}_{i=1}^n$ drawn independently from 
\begin{equation}\label{eq:Msigma}
    \MM_\sigma \defeq \{q\in \RR^D ~|~ dist(q, \MM) < \sigma \}
,\end{equation}
a tubular neighborhood of $\MM$.
Explicitly, we assume that $r_i \sim \operatorname{Unif}(\MM_\sigma)$, which is the uniform distribution on $\MM_\sigma$; i.e., the normalized Lebesgue measure with respect to $\RR^D$.

Finally, let $p\in\MM$, we wish to describe $W_p \subset \MM$ a neighborhood of $p$ as a graph of a function 
\begin{equation}
\varphi_p:W_{T_p\MM}\to \MM \label{eq:LocalChart_clean}    
\end{equation}
where $W_{T_p\MM} = P_{T_p\MM}(W_p)$, the projection of $W_p$ onto the tangent, and $\varphi_p$ is defined by
\begin{equation}
    \varphi_p(x) = p+(x,\phi_p(x))_{T_p\MM}\label{eq:phi_def_clean}
,\end{equation}
where $x\in \RR^d \simeq T_p\MM$, $\phi_p(x)\in \RR^{D-d} \simeq T_p\MM^\perp $, and $(x,y)_{T_p\MM} \in \RR^D$ denotes that $x\in\RR^d$ and $y\in\RR^{D-d}$ represent a point in some basis of $T_p\MM$ and $T_p\MM^\perp$ correspondingly.
Then, we define the graph of $\phi_p$ to be
\begin{equation}\label{eq:FunctionGraph_init}
\Gamma_{\phi_p, W_p} \defeq \{p + (x, \phi_p(x))_{T_p\MM} | x\in W_{T_p\MM} \}  
.\end{equation}
For simplicity, throughout the paper we identify the graph of $\phi_p$ with $\Gamma_{\phi_p,W_p}$. That is, we refer to $\MM$ as locally a graph of $\phi_p$ (see Figure \ref{fig:SkewedH}).

We would like to stress that throughout the paper there is a slight misuse of notation.
Explicitly, when we refer to $T_p\MM$ and $T_p\MM^\perp$, we sometimes look at it as elements in the Grassmannian $Gr(d,D)$ and $Gr(D-d, D)$; i.e., subspaces of $\RR^D$ with dimensions $d$ and $D-d$ correspondingly.
On the other hand, in some other occasions (as in \eqref{eq:FunctionGraph_init}) we neglect the fact that these are subsets of $\RR^D$ which is equivalent to choosing some basis and working in it.

\begin{SCfigure}
		\centering
		\includegraphics[width=0.6\linewidth]{./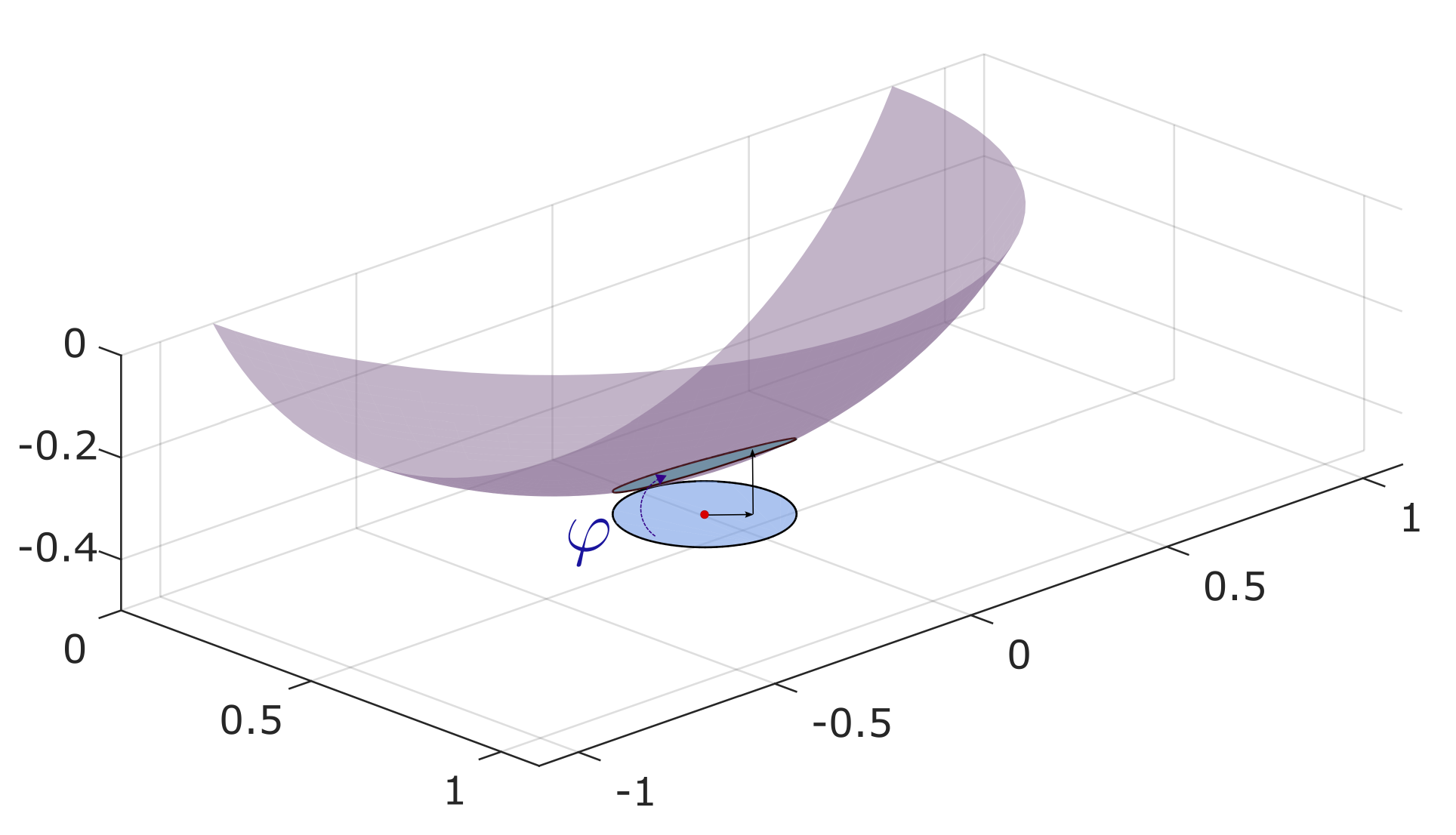}
		\caption{Illustration of $ \varphi_{p} $: The $ xy $-plane is $ H(p) $; The local origin $ q(p) $, which is mapped by $ \phi_{p} $ to $ p $ is marked by the red dot; the vector $ x $ represents a ``tangential" movement; and $ \phi_{p}(x) $ is a normal movement. }
		\label{fig:SkewedH}
\end{SCfigure}

\subsection{Summary of manifold and sampling assumptions}\label{sec:SamplingAssumptions}
Throughout this paper we assume that the (unknown) manifold $\MM$ and the samples $\{r_i\}_{i=1}^n$ satisfy the following requirements: 
\begin{enumerate}
    \item $\MM\in\C^k$ is a compact $d$-dimensional sub-manifold of $\RR^D$ without boundary
    \item $M = \frac{\tau}{\sigma}$ is large enough, where $\tau$ is the reach of $\MM$ and $\sigma>0$ is the noise level. 
    \label{assume:noise}
    \item $\{r_i\}_{i=1}^n$ are samples drawn independently and uniformly from $\MM_\sigma$ (i.e., $r_i\sim \operatorname{Unif}(\MM_\sigma)$).\label{assume:sampling}
\end{enumerate}

\subsection{Algorithm Description}\label{sec:Algorithm}
As explained above, given a point $r\in\MM_\sigma$ we aim at providing a procedure $\PP(r)$ that will estimate a point $ p \in \MM$. 
This is performed through an altered version of the Manifold-MLS that was introduced in \cite{sober2016MMLS}. 
The Manifold-MLS is constructed through a two-step procedure. 
First, an estimate of a local coordinate system is computed.
Second, above this local coordinate system a local polynomial regression is performed, by which we derive the estimate for the projection onto $\MM$ as well as the tangent domain $T_p\MM$.
Below we show that the first step of the Manifold-MLS yields a reasonable estimate to the tangent even in the presence of noise.
However, performing the second step, which is just a local polynomial regression, above the slightly tilted domain results with a biased estimate.
That is, if we try to estimate the manifold locally as a function of a this approximated domain the noise has bias (see Figure \ref{fig:TiltedBias}).
To account for the bias, in our altered version of the algorithm, we perform the second step iteratively, taking the tangent estimate at each iteration as an improved coordinate system.
We show that in the limit, as the number of samples $n$ approaches $\infty$, our estimate projects $r$ onto $\MM$ and the estimated tangent coincides with the tangent at that projected point.

\begin{SCfigure}
	\centering
	\includegraphics[width=0.5\textwidth]{./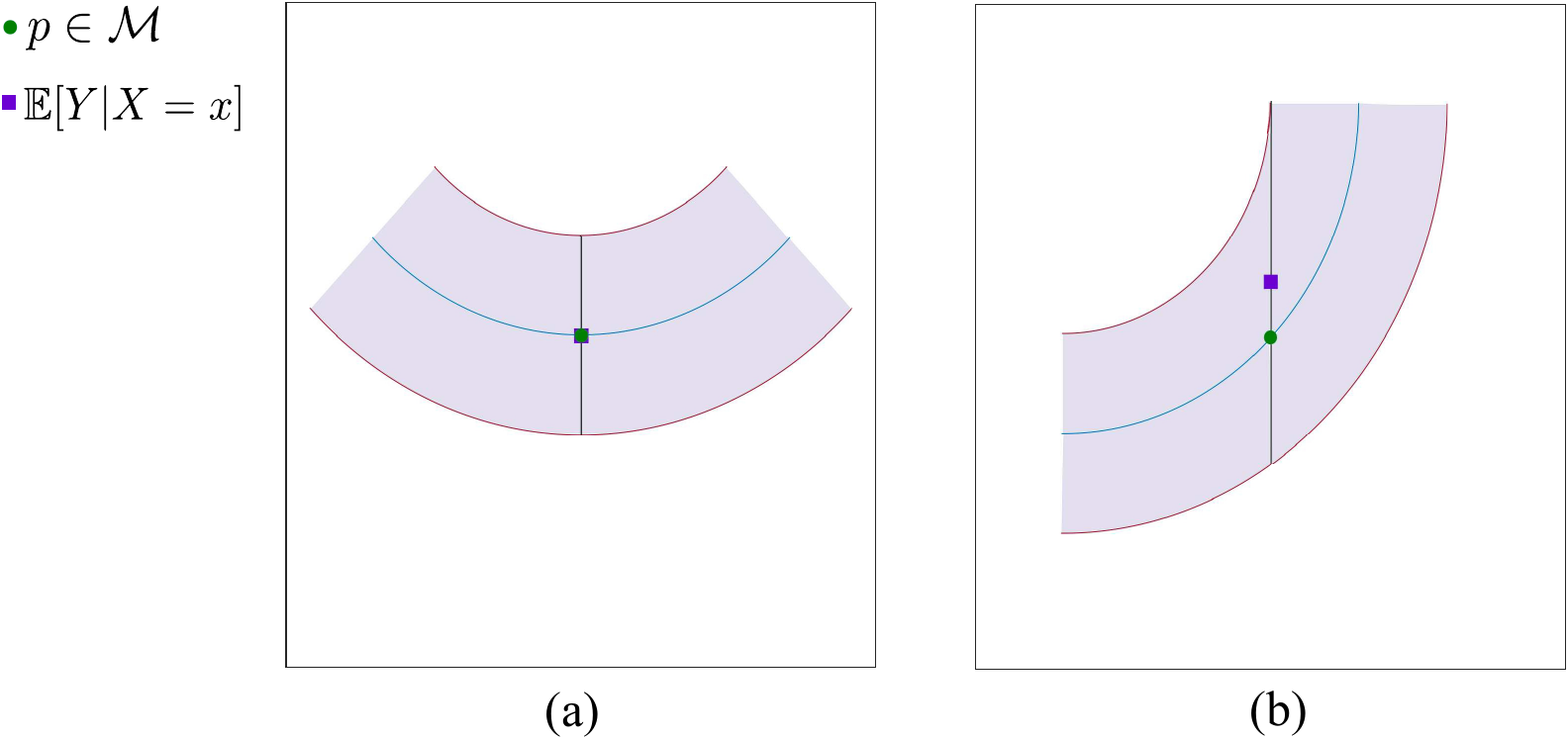}
	\caption{Illustration of a manifold $\MM$ (marked by the blue line) along with its tubular neighborhood $\MM_\sigma$. Assuming uniform sampling in $\MM_\sigma$ we mark the point $p\in\MM$ by \mycircle{myGreen} and the expected value with respect to the given coordinate system by \mysquare{myPurple}. \emph{(}a\emph{)} The coordinate system is aligned with the tangent. \emph{(}b\emph{)} The coordinate system is tilted with respect to the tangent. As can be seen, in (a) the two points coincide, whereas in (b) the expected value differ from the point we wish to estimate.}
	\label{fig:TiltedBias}
\end{SCfigure}

\subsubsection{Step 1 - The Initial Coordinate System}
Given a point $r\in\MM_\sigma$ we limit the the region of interest (ROI) to:
\begin{equation}\label{eq:ROI_clean}
    U_{\textrm{ROI}} = {\{r_i|\dist(r_i,r)<\sqrt{\sigma\tau}\}}
,\end{equation}
and denote the number of samples in the ROI by $N$.
Then, we define the relevant coordinate system as the pair $(q, H)\in \RR^D\times Gr(d, D)$, which minimizes the functional:
\begin{equation}\label{eq:J1_clean}
   J_1(r; q, H) = \frac{1}{N}\sum_{r_i\in U_{\textrm{ROI}}} \dist^2 (r_i - q,H)
\end{equation}
under the constraints
\begin{enumerate}
    \item Orthogonality: $ r - q\perp H $.
    \item Region of interest: $ r_i \in U_{\textrm{ROI}} $.
    \item Search region: $\norm{r-q} < 2\sigma$.
\end{enumerate}
Explicitly, we denote
\begin{equation}\label{eq:argmin1_clean}
    q^*(r), H^*(r) = \argmin_{\substack{ q, H \\ r-q \perp H \\ \|r- q\|<2\sigma}} J_1(r; q, H)
\end{equation}

Note, that Constraint 2 limits our region of interest in accordance with sampling assumptions \ref{assume:sampling} and \ref{assume:noise}.
Furthermore, since $r\in\MM_\sigma$ we know that the true projection onto the manifold is in the search region defined in Constraint 3 (if $q=P_\MM(r)$, the projection of $r$ onto $\MM$ and $H= T_{q}\MM$).
Finally, Constraint 1 extends the notion of orthogonal projection onto manifolds.
As discussed in \cite{sober2016MMLS}, this constraint is responsible for having a unique minimizer for \eqref{eq:argmin1_clean} given enough samples. 
The aforementioned minimization problem is summarized in Algorithm \ref{alg:alg1_clean}.

\begin{algorithm}[H]
	\caption{Step 1: Find an initial coordinate system}
	\label{alg:alg1_clean}
	\begin{algorithmic}[1]
		\Statex {\bfseries Input:}\begin{tabular}[t]{ll}
			$\{r_i\}_{i=1}^n\subset \MM_\sigma$ & noisy samples of a $d$ dimensional manifold $ \MM $.\\
			$r\in \MM_\sigma$ & a point in a tubular neighborhood of $\MM$.
			\end{tabular}
		\Statex {\bfseries Output:}\begin{tabular}[t]{ll}
				$q^*$ & crude estimation of $p = P_\MM(r)$.\\
				$H^*$ & estimation of $T_p\MM$.\\
		\end{tabular}
		\State Disregard points outside of $U_{\textrm{ROI}}$.
		\State Find $q^*, H^*$ minimizing \eqref{eq:argmin1_clean} subject to $\norm{q^* - r}<2\sigma$ and $r - q^* \perp H^*$. 
	\end{algorithmic}
\end{algorithm}

\subsubsection{Step 2 - The iterated projection}
Given $(q, H)\in \RR^D\times Gr(d,D)$ we define the following minimization scheme, known as local polynomial regression (e.g., \cite{cleveland1979robust,mclain1976two}):
Find $\pi\in \Pi_{k-1}^{d\mapsto D}$ a polynomial of total degree $deg(\pi)\leq k-1$ from $\RR^d$ to $\RR^{D-d}$ which minimizes
\begin{equation}\label{eq:Step2_clean}
    J_2(\pi ~|~ q, H) = \frac{1}{N_{q, H}}\sum_{r_i \in U_{\textrm{ROI}}^n} \norm{r_i - (x_i, \pi(x_i))_H}^2
,\end{equation}
where $x_i\in \RR^d$ are the projections of $r_i - q$ onto $H$, $(x, y)_H\in \RR^d\times\RR^{D-d}$ are coefficients in a basis of $H\times H^\perp$, $U_{\textrm{ROI}}^n(q,H)$ is defined through a bandwidth $\epsilon_n$ as
\begin{equation}\label{eq:ROI_ell_clean}
    U_{\textrm{ROI}}^n(q,H) = {\{r_i\in U_\textrm{ROI}~|~\norm{x_i}<\epsilon_n\}}
,\end{equation}
and $N_{q, H}$ denotes the number of samples in $U_{\textrm{ROI}}^n(q, H)$.
Explicitly, the local polynomial regression is defined through
\begin{equation}\label{eq:argmin2_clean}
    \pi^*_{q, H} = \argmin_{\pi\in \Pi_{k-1}^{d\mapsto D}} J_2(\pi ~|~ q, H)
.\end{equation}
As required to ensure convergence in probability for local polynomial regression \cite{aizenbud2021VectorEstimation,stone1980optimal}, we demand that the bandwidth $\epsilon_n\to 0$ as $n\to\infty$ is such that 
\begin{equation}\label{eq:bandwidthDecay_clean}
    0<\lim_{n\rightarrow\infty}n^{1/(2k+1)}\cdot \epsilon_n < \infty
.\end{equation}

Unfortunately, for local polynomial regression, there are no results that relate the probability achieving the required error bound, and the number of samples needed. Such results appear in Theorem 3.2 of \cite{aizenbud2021VectorEstimation} for a slight variant of local polynomial regression, namely some sort of ``median trick"~\cite{alon1999space} on \eqref{eq:argmin2_clean}. For simplicity of notations, we abuse the definition of $ \pi^*_{q, H}$ in \eqref{eq:argmin2_clean}, and define
\begin{equation}\label{eq:pi_star_median_trick}
     \pi^*_{q, H} = \mbox{Algorithm 2 of \cite{aizenbud2021VectorEstimation}} (x_i,y_i),
\end{equation}
where $y_i$ are the projection of $r_i-q$ onto $H^\perp$. 
Any derivative of $\pi^*_{q, H}$ can also be estimated by means of Algorithm 2 of~\cite{aizenbud2021VectorEstimation}. For simplicity of presentation, throughout the paper when we write $\partial \pi^*_{q, H}$ or $\DD_{\pi^*_{q_\ell,H_{\ell}}}$ what we actually mean is the estimate of the derivative rather than the derivative of $\pi^*_{q, H}$.

We begin by setting $q_{-1} = q^*$ (a crude approximation of a point on the manifold) and $H_0 = H^*$ (the initial tangent estimate) resulting from Algorithm \ref{alg:alg1_clean}.
Then, in order to have the origin $q$ closer to $\MM$ we begin by updating $q_0 = q_{-1} + (0, \pi^*_{q_{-1}, H_0}(0))_{H_0}$.
From here on we start updating iteratively the directions of $H$ with respect to a tangent of $\MM$, as well as the point $ q $.
Explicitly, in iteration $\ell$, we define $H_{\ell+1}$ to be the subspace coinciding with $ \Ima(\DD_{\pi^*_{q_{\ell}, H_{\ell}}}[0]) $ the image of the differential of $(\textrm{Id},\pi^*_{q_{\ell},H_{\ell}}):\RR^d\to\RR^D$ at $ 0 $ (i.e., the tangent to the graph of $ \pi^*_{q_{\ell  }, H_{\ell }} $).
In other words, we look at the manifold as a local graph of a function 
\begin{equation}
f_{\ell}:\RR^d\simeq H_\ell \to \RR^{D-d} \simeq H_\ell ^\perp 
.\end{equation}
That is, we define a local patch of the manifold through the graph 
\begin{equation}\label{eq:Graph_fl}
\Gamma_{f_\ell, q_\ell, H_\ell} = \{q_\ell + (x, f_\ell(x))_{H_\ell} | x\in B_{H_\ell}(0,\rho) \}\subset\MM,   
\end{equation}
where $\rho$ is some radius where this function is defined (see Lemma \ref{lem:M_is_locally_a_fuinction_clean} for more details regarding the existence of such $\rho > 0$). 
Then, we estimate the first order differential of $f_\ell$ through taking the differential of the local polynomial regression estimate $\pi^*_{q_{\ell},H_{\ell }}$.
The image of the differential determines $d$-directions in $\RR^D$ (i.e., an element in the Grassmannian $Gr(d,D)$), by which we define $H_{\ell+1}$. 
Following this, we define 
\begin{equation}\label{eq:fl12_def}
    f_{\ell+1/2}:(q_\ell, H_{\ell+1}) \to H_{\ell+1} ^\perp \simeq \RR^{D-d}
\end{equation} 
using the graph $\Gamma_{f_{\ell+1/2}, q_\ell, H_{\ell+1}}$, defined similarly to \eqref{eq:Graph_fl}.
In other words, $f_\ell:(q_\ell, H_\ell)\to H_\ell^\perp$ is defined from a coordinate system with origin at $q_\ell$ and directions $H_\ell$, and $f_{\ell + 1/2}:(q_\ell, H_{\ell+1})\to H_{\ell+1}^\perp$ is defined from a coordinate system with origin at $q_\ell$ and directions $H_{\ell+1}$.
Finally, we update $q_\ell$ by taking 
\begin{equation}\label{eq:ql_definition}
q_{\ell+1} = q_\ell + (0, \pi^*_{q_{\ell }, H_{\ell+1}}(0))_{H_{\ell + 1}} ,
\end{equation}
and then, we have 
\begin{equation}\label{eq:fl_def}
f_{\ell+1}:(q_{\ell+1}, H_{\ell+1}) \to H_{\ell+1} ^\perp \simeq \RR^{D-d}    
\end{equation}

Therefore, the difference between $f_{\ell+1}$ and $f_{\ell +1/2}$ is merely in the location of the origin ($q_{\ell+1}$ instead of $q_\ell$). That is, $q_{\ell+1} - q_\ell \perp H_{\ell+1}$ and
\begin{equation}\label{eq:fell_def}
    f_\ell = f_{\ell + 1/2} - \pi^*_{q_\ell, H_{\ell+1}}(0)
.\end{equation}

We note that the estimate for the first derivative in case of scalar valued functions was analyzed by \cite{stone1980optimal} (as well as others) and was shown to converge to the true derivative with optimal rates in case of unbiased noise. The results of \cite{stone1980optimal} are generalized to vector valued functions in \cite{aizenbud2021VectorEstimation}. 
A core assumption in these results is that $\EE(Y | X=x)$, the expected value of the samples, aligns with the estimated function.
However, in our case this assumption does not hold, since the noise model is tubular with respect to the manifold and unless the coordinate system is aligned with the tangent, the expected value of $Y$ given $X=x$ does not equal to $f_\ell(x)$ (see Figure \ref{fig:TiltedBias}); i.e., the samples have bias.
We show below that the iterations described above improves the maximal angle with respect to a true tangent. 
Thus, eliminating the problem of bias iteratively. 

Finally, after performing $\kappa$ iterations we get the estimate for $ p\in \MM$ and $ T_{ p}\MM $ by 
\begin{equation}
    \hat p_n \defeq q_\kappa ,\quad \widehat{T_{\hat p_n}\MM} \defeq H_\kappa
.\end{equation}
Below we show for a specific value of $\kappa$ that with probability tending to 1 (as the number of samples tend to $\infty$), $ \norm{\hat p_n -  p} =  O(n^{-k/(2k+d)}) $ and that $ \maxangle(\widehat{T_{\hat p_n}\MM}, T_{ p}\MM) = \wtilde O(n^{-(k-1)/(2k+d)}) $, where $ \wtilde O $ neglects dynamics weaker than polynomial order (e.g., $ \ln(n) $ and $ \ln(\ln(n)) $). The aforementioned algorithm is summarized in Algorithm \ref{alg:step2_clean}.

\begin{algorithm}
	\caption{Step 2: estimating the manifold from a good initial guess}
	\label{alg:step2_clean}
	\begin{algorithmic}[1]
		\State {\bfseries Input:}\begin{tabular}[t]{ll}
            $\{r_i\}_{i=1}^n\subset \MM_\sigma$ & noisy samples of a $d$ dimensional manifold $ \MM $.\\
            $q_{-1}$ & a crude approximation of $p = P_\MM(r)$ (normally initialized with $q^*$ of \eqref{eq:argmin1_clean})\\
			$H_0$ & Initial approximation of $T_{p}\MM$ s.t. $\angle_{\max}(H_0,T_{p}\MM) < \alpha_0$ (normally initialized with $H^*$ of \eqref{eq:argmin1_clean})\\
		\end{tabular}
		\State {\bfseries Output:}\begin{tabular}[t]{ll}
				$\hat p_n$ & Estimation of some $ p\in\MM$.\\
				$\widehat{T_{\hat p_n}\MM}$ & Estimation of   $T_{ p}\MM$.\\
		\end{tabular}
		
		\State Compute $\pi^*_{q_{-1}, H_0}$ through a version of linear least-squares minimization of \eqref{eq:pi_star_median_trick}.
        \State $q_0 = q_{-1} + (0,\pi^*_{q_{-1}, H_0}(0))^T$ 
        \For{$\ell=0$ to $\kappa-1$}
		\State Compute  $\pi^*_{q_{\ell},H_{\ell}}$ through a version of linear least-squares minimization  \eqref{eq:pi_star_median_trick}.
		\State Compute $ \DD_{\pi^*_{q_\ell,H_{\ell}}}[0] $ the differential of  $\pi^*_{q_\ell,H_{\ell}}$ in some basis at zero.
		\State $H_{\ell+1} = \Ima(\DD_{\pi^*_{q_\ell,H_{\ell}}}[0]) $ \Comment{this is the column space of $ \DD_{\pi^*_{q_\ell,H_{\ell}}}[0] $}.
		\State Compute $\pi^*_{q_{\ell}, H_{\ell+1}}$ through a version of linear least-squares minimization of \eqref{eq:pi_star_median_trick}.
        \State $q_{\ell+1} = q_\ell + (0, \pi^*_{q_{\ell}, H_{\ell+1}}(0))^T$ 
		\EndFor 
		\State $\hat p_n = q_\kappa$.
		\State $\widehat{T_{\hat p_n}\MM} = H_\kappa $
		\State\Return $\hat p_n$ and $\widehat{T_{\hat p_n}\MM}$.
	\end{algorithmic}
\end{algorithm}

\subsection{Practical Considerations}\label{sec:practical}
\subsubsection{Implementation details}
The minimization problem portrayed in \eqref{eq:argmin1_clean} is non-linear since we optimize for both $q$ and $H$ at the same time; note that if we fix $q$ this amounts to the Principal Component Analysis (which is also related to the iterated linear least-squares problem motivating our algorithms -- see \cite{aizenbud2019approximating}).
This problem has already been studied in \cite{sober2017approximation,sober2016MMLS} and we recommend using the iterative scheme presented in Algorithm \ref{alg:step1Inpractice_clean} to solve it (which is a slight adaptation of the algorithm proposed in \cite{sober2016MMLS}). We note that as the initial estimation of the tangent (step \ref{algstep:U_init_in_practice} in Algorithm \ref{alg:step1Inpractice_clean}) we use the local PCA, which was utilized in many other works and shown to be of merit \cite{aamari2019nonasymptotic,aizenbud2015OutOfSample,kaslovsky2014non}. However, if one wishes to improve the computational complexity, the initialization step can be done in a randomized manner as well \cite{aizenbud2016SVD,halko2011algorithm}.
Algorithm \ref{alg:step1Inpractice_clean} can be shown to converge in theory to a local minimizer of \eqref{eq:argmin1_clean}. 
As explained at length in \cite{sober2016MMLS} under some conditions this minimization has a unique minimum. 
Furthermore, in practical implementations we experienced very fast convergence to a minimum.

As for the practical implementation of Step 2, we note that the derivatives of $f_{\ell + 1/2}$ identify with those of $f_{\ell +1}$.
Finally, the number of iterations $\kappa$ in Algorithm \ref{alg:step2_clean} can be computed explicitly to obtain the rates of convergence as explained in the proofs of Theorem \ref{thm:MainResult}.
However, for the practical implementation, given a specific sample we suggest to iterate until convergence.
See Algorithm \ref{alg:step2InPractice_clean} for the adapted implementation.

\begin{algorithm}
\caption{Step 1: in practice}
\label{alg:step1Inpractice_clean}
\begin{algorithmic}[1]
\State {\bfseries Input:} $\lbrace r_i \rbrace_{i=1}^N, r, \epsilon$
\State{\bfseries Output:}\begin{tabular}[t]{ll}
                         $q$ - an $n$ dimensional vector \\
                         $U$ - an $n\times d$ matrix whose columns are $\lbrace u_j \rbrace_{j=1}^d$ 
                         \end{tabular}
                         \Comment{$H = q + Span\lbrace u_j \rbrace_{j=1}^d$}
\State define $R$ to be an $n\times N$ matrix whose columns are $r_i$
\State initialize $U$ with the first $d$ principal components of the spatially weighted PCA \label{algstep:U_init_in_practice}
\State $q\leftarrow r$
\Repeat
    \State $q_{prev} = q$
    \State $\tilde{R} = (R - repmat(q,1,N))\cdot \Theta$ \Comment{where $\Theta$ is an indicator for points in $U_{\textrm{ROI}}$}
    \State $X_{N\times d} = \tilde{R}^T U$ \Comment{find the representation of $r_i$ in $Col(U)$}
    \State define $\tilde{X}_{N\times (d+1)} = \left[(1,...,1)^T, X\right]$
    \State solve $\tilde{X}^T\tilde{X}\alpha = \tilde{X}^T \tilde{R}^T$ for $\alpha \in M_{(d+1)\times n}$ \Comment{solving the LS minimization of $\tilde{X}\alpha \approx \tilde{R}^T$}
    \State $\tilde{q} = q + \alpha(1,:)^T$
    \State $Q, \hat{R} = qr(\alpha(2:end, :)^T)$ \Comment{where $qr$ denotes the QR decomposition}
    \State $U \leftarrow Q$
    \State $q = \tilde{q} + U U^T (r-\tilde{q})$
\Until {$\|q-q_{\text{prev}}\|<\epsilon$}
\end{algorithmic}
\end{algorithm}

\begin{algorithm}[H]
	\caption{Step 2: in practice}
	\label{alg:step2InPractice_clean}
	\begin{algorithmic}[1]
		\State {\bfseries Input:}\begin{tabular}[t]{ll}
            $\{r_i\}_{i=1}^n\subset \MM_\sigma$ \\
            $q_{-1}$ & a crude approximation of $p_r = P_\MM(r)$ (normally initialized with $q^*$ of \eqref{eq:argmin1_clean})\\
			$H_0$ & Initial approximation of $T_{p}\MM$ s.t. $\angle_{\max}(H_0,T_{p}\MM) < \alpha_0$ (normally initialized with $H^*$ of \eqref{eq:argmin1_clean})\\
		\end{tabular}
		\State {\bfseries Output:}\begin{tabular}[t]{ll}
				$\hat p_n$ & Estimation of some $p\in\MM$.\\
				$\widehat{T_{\hat p_n}\MM}$ & Estimate of  $T_p\MM$.\\
		\end{tabular}
		
		\State Compute $\pi^*_{q_{-1}, H_0}$
		\State $q_0 = q_{-1} + (0,\pi^*_{q_{-1}, H_0}(0))_{H_0}$ 
        \Repeat 
        \State Compute $\pi^*_{q_\ell,H_{\ell}}$ through the least-squares minimization of \eqref{eq:argmin2_clean}.
        \State Compute $\DD_{\pi^*_{q_\ell,H_{\ell}}}[0]$ in some basis.
        \State Set $H_{\ell + 1} = \Ima(\DD_{\pi^*_{q_\ell,H_{\ell}}}[0])$
        \State Set $q_{\ell+1} = q_\ell + (0, \pi^*_{q_\ell,H_{\ell}}(0))_{H_\ell}$
		\Until {$|q_{\ell} - q_{\ell +1}| \leq \epsilon$}
		\State \Return $\hat p_n = q_{\ell+1}$,  $\widehat{T_{\hat p_n}\MM} = H_{\ell+1} $
	\end{algorithmic}
\end{algorithm}



\section{Main Results}\label{sec:mainResults}
The main result reported in this paper is Theorem \ref{thm:MainResult}.
For convenience we wish to reiterate the sampling assumptions presented above in Section \ref{sec:SamplingAssumptions}, as they are relevant for all the following theorems.
Namely, we assume that 
\begin{enumerate}
    \item[i] $\MM\in\C^k$ is a compact $d$-dimensional sub-manifold of $\RR^D$ without boundary.
    \item[ii] $M = \frac{\tau}{\sigma}$ is large enough, where $\tau$ is the reach of $\MM$ and $\sigma>0$ is the noise level. 
    \item[iii] $\{r_i\}_{i=1}^n$ are samples drawn independently and uniformly from $\MM_\sigma$ (i.e., $r_i\sim \operatorname{Unif}(\MM_\sigma)$).
\end{enumerate}

\begin{Theorem}\label{thm:MainResult}
    Assuming $M > C_\tau \sqrt{D\log D}$ for some constant $C_\tau$ independent of $\tau$, and let $r\in \MM_\sigma$.
    Then, for any $\delta>0$ arbitrarily small, there exists $ N $ such that for any number of samples $ n > N $,
    applying Algorithm~\ref{alg:step2_clean} with inputs $q_{-1}, H_0$ being the outputs of Algorithm~\ref{alg:alg1_clean}, and with number of iterations $\kappa$ dependent on $n,d, \delta, k$, we get $ \hat p_n, \widehat{T_{\hat p_n}\MM} $, for which 
	\[
	\norm{\hat p_n - \hat p} \leq \frac{C\ln\left(\frac{1}{\delta}\right)}{n^{r_0}}
	,\]
	and 
	\[
	\angle_{\max}(\widehat{T_{\hat p_n}\MM},T_{\hat p}\MM) \leq  C_{d}\ln\left(\frac{1}{\delta}\right) \left(\frac{ n}{\ln\left(\ln(n) \right)^2}\right)^{-r_1} = \wtilde{\mathcal{O}}(n^{-r_1}) 
	\]
	with probability of at least $ 1 - \delta $, where $ r_0 = \frac{k}{2k +d} $, $ r_1 = \frac{k-1}{2k +d} $ and $\hat p$ is some point in $\MM$.
\end{Theorem}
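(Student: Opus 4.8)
I would prove the statement by chaining the two results already isolated in this section: Theorem~\ref{thm:Step1}, which bounds the quality of the coordinate system returned by Algorithm~\ref{alg:alg1_clean}, and Theorem~\ref{thm:Step2}, which bounds the behaviour of the iteration in Algorithm~\ref{alg:step2_clean} when it is started from a sufficiently good coordinate system. First I would invoke Theorem~\ref{thm:Step1} with confidence $1-\delta/2$: under the hypothesis $M>C_\tau\sqrt{D\log D}$ it produces $q^*=q_{-1}$ at small distance from $\MM$ and $H^*=H_0$ with $\maxangle(H_0,T_{p}\MM)<\alpha_0$ for the relevant base point $p\in\MM$ and a fixed constant $\alpha_0$. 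The point to check at this junction is compatibility of hypotheses: the $\alpha_0$ delivered by Theorem~\ref{thm:Step1} must not exceed the basin-of-attraction threshold required by Theorem~\ref{thm:Step2}. This is exactly where largeness of $M=\tau/\sigma$ is used, since a smaller $\sigma/\tau$ both reduces the geometric distortion of $\MM_\sigma$ relative to $\MM$ and shrinks the bias incurred by the regression over a tilted domain (cf.\ Figure~\ref{fig:TiltedBias}).

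Conditionally on the Step~1 event, I would then apply Theorem~\ref{thm:Step2} to the iteration with inputs $(q_{-1},H_0)$. That theorem provides, at each step $\ell$, a super-linear (quadratic-type) contraction of the tangent error together with a comparable estimate for the point error, schematically $\maxangle(H_{\ell+1},T_{p_{\ell+1}}\MM)\lesssim \maxangle(H_\ell,T_{p_\ell}\MM)^2+B_\angle$ and $\norm{q_{\ell+1}-p_{\ell+1}}\lesssim \norm{q_\ell-p_\ell}^2+B_q$, where $p_\ell\in\MM$ is the base point attached to $(q_\ell,H_\ell)$ and $B_\angle=\wtilde O(n^{-r_1})$, $B_q=\wtilde O(n^{-r_0})$ are the floors set by the variance of the local polynomial regression estimates of the first derivative and of the function value respectively; these are the optimal nonparametric rates of \cite{stone1980optimal,aizenbud2021VectorEstimation}, the dimension $d$ entering through the effective local sample size $N_{q,H}\asymp n\,\epsilon_n^{d}$ and the bandwidth $\epsilon_n$ being chosen to balance bias against variance. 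Because the transient part of the error is squared at every step, starting from $\alpha_0=O(1)$ it falls below the floor after $\kappa=\Theta(\log\log n)$ steps; concretely I would take $\kappa$ to be the least integer with $2^{\kappa}\gtrsim r_1\ln n$ --- this is the ``$\kappa$ dependent on $n,d,\delta,k$'' of the statement --- and this, together with the per-iteration accounting below, produces the $\operatorname{polylog}(\log n)$ factor $(\ln\ln n)^{2r_1}$ in the final rate.

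Next I would assemble the probability bound: the estimates of Theorem~\ref{thm:Step2} must hold simultaneously over all $\kappa$ iterations, which I would secure by a union bound over iterations (splitting the allotted failure budget), intersected with the Step~1 event, giving total probability at least $1-\delta$; the mild dependence of the constants on $\kappa=\Theta(\log\log n)$ is negligible at the order of precision of the statement and is what, together with the sample budget spent across iterations, accounts for writing $\wtilde O$ rather than $O$ for the tangent. The point bound then follows from the tangent bound: once $\maxangle(H_\kappa,T_{\hat p}\MM)=\wtilde O(n^{-r_1})$, the tilt contributes only lower-order terms to the regression of the function value, so $\norm{\hat p_n-\hat p}=\norm{q_\kappa-\hat p}$ is governed by the function-estimation error $O(n^{-r_0})$; here $\hat p\in\MM$ is the base point $p_\kappa$, equivalently the fixed point of the population (infinite-sample) version of the iteration, whose well-definedness and membership in $\MM$ are part of Theorem~\ref{thm:Step2}.

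The genuine difficulty lies in the two theorems being chained --- above all in the bias analysis of local polynomial regression over a tilted domain, which underlies the quadratic contraction --- rather than in the chaining itself. Within the present argument the two delicate points are the quantitative matching of the constant $\alpha_0$ from Step~1 with the basin-of-attraction hypothesis of Step~2 (which is what forces the assumption $M>C_\tau\sqrt{D\log D}$ to appear here), and propagating the high-probability events through the $\kappa$ iterations without losing more than a $\operatorname{polylog}(\log n)$ factor in the rate.
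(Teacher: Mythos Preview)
Your high-level strategy---invoke Theorem~\ref{thm:Step1} to certify that the output $(q^*,H^*)$ of Algorithm~\ref{alg:alg1_clean} satisfies the hypotheses of Theorem~\ref{thm:Step2}, then invoke Theorem~\ref{thm:Step2} as a black box---is exactly what the paper does; indeed the paper's entire proof of Theorem~\ref{thm:MainResult} is the single sentence ``Theorem~\ref{thm:MainResult} can be proven directly from Theorems~\ref{thm:Step1} and~\ref{thm:Step2}.'' At that level your proposal is correct and complete.

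Where you go astray is in your elaboration of what happens \emph{inside} Theorem~\ref{thm:Step2}. You describe a quadratic contraction $\maxangle(H_{\ell+1},T_{p_{\ell+1}}\MM)\lesssim \maxangle(H_\ell,T_{p_\ell}\MM)^2+B_\angle$ and accordingly take $\kappa=\Theta(\log\log n)$. The paper's actual mechanism (Lemma~\ref{lem:main_support_theorem_step2}) is only a \emph{halving}: under the inductive hypotheses one gets $\alpha_{\ell+1}\leq \alpha_\ell/2$, a linear geometric contraction. Consequently the number of iterations is $\kappa=\Theta(\log n)$, not $\Theta(\log\log n)$; this is made explicit in Lemma~\ref{lem:comute_kappa}, where $\kappa\approx r_1\log_2 n$. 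The $(\ln\ln n)^{2r_1}$ factor you correctly identify in the final rate does not come from a doubly-logarithmic iteration count; it comes from the union bound over $\kappa\sim\log n$ iterations, which forces the per-iteration failure probability to be $\delta_1=\delta/(2\kappa)$ and hence introduces a $\ln(1/\delta_1)\sim\ln(\log n/\delta)$ factor into the regression floor at each step. If you actually ran only $\Theta(\log\log n)$ halving steps, the transient error $\alpha_0 2^{-\kappa}$ would be of order $\alpha_0/\mathrm{polylog}(n)$, not $n^{-r_1}$, and the theorem would fail.

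So: if you truly use Theorem~\ref{thm:Step2} as a black box (which already packages the correct $\kappa$ and the union bound), your proof is fine and matches the paper. If instead you intend to re-derive its conclusion from the per-step recursion you wrote down, that recursion is wrong and your choice of $\kappa$ would not deliver the stated rate.
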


We derive this result by showing that Algorithm \ref{alg:alg1_clean} yields a ``reasonable" estimation for the tangent directions (Theorem \ref{thm:Step1}), and the fact that Algorithm \ref{alg:step2_clean} yields estimates that converge  to a point and its tangent on the original manifold as $n \to \infty$ (Theorem \ref{thm:Step2}). Accordingly, Theorem \ref{thm:MainResult} can be proven directly from Theorems \ref{thm:Step1} and \ref{thm:Step2}.

\begin{Theorem}\label{thm:Step1}
	Let $ (q^*(r), H^*(r)) $, the output of Algorithm \ref{alg:alg1_clean}, and let $p = P_\MM(q^*)$. 
	Denote $\alpha = \sqrt{C_M/M}$ for some constant $C_M$ (independent of $\alpha$ and $M$).
	Then, for any $\delta>0$ arbitrarily small, there exists $ N_{\delta} $ such that for all $ n > N_{\delta} $ 
	\[\angle_{\max}(H^*, T_p\MM)\leq \alpha\]
	with probability of at least $ 1 - \delta $.
	Furthermore, we have
	\[
	\norm{p - q^*} \leq 3 \sigma
	\]
\end{Theorem}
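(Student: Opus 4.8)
The two assertions have different flavours, so I would treat them separately. The estimate $\norm{p-q^*}\le 3\sigma$ is deterministic: the constraint $\norm{r-q^*}<2\sigma$ in \eqref{eq:argmin1_clean} together with $r\in\MM_\sigma$ (so $\norm{r-P_\MM(r)}<\sigma$) gives $\norm{q^*-P_\MM(r)}<3\sigma$, which for $M=\tau/\sigma$ large is below the reach; hence $p=P_\MM(q^*)$ is well defined and, $p$ being the nearest point of $\MM$ to $q^*$, $\norm{q^*-p}\le\norm{q^*-P_\MM(r)}<3\sigma$. For the angle bound the plan is a ``sandwich'' on the optimal value $J_1(r;q^*,H^*)$. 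For the \emph{upper} half I would feed the feasible competitor $(q_0,H_0)=(P_\MM(r),T_{P_\MM(r)}\MM)$ into $J_1$: it satisfies the orthogonality and search-region constraints because $r-P_\MM(r)$ is normal to $\MM$ and has length $<\sigma$, and for each $r_i\in U_{\textrm{ROI}}$, decomposing $r_i-q_0=\bigl(r_i-P_\MM(r_i)\bigr)+\bigl(P_\MM(r_i)-q_0\bigr)$ and projecting onto $T_{q_0}\MM^\perp$, the first term contributes at most $\sigma$ and the second at most $\norm{P_\MM(r_i)-q_0}^2/(2\tau)$ by the standard reach estimate; since $r_i,r,q_0$ lie within $O(\sqrt{\sigma\tau})$ of each other, this is again $O(\sigma)$. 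Hence $J_1(r;q^*,H^*)\le J_1(r;q_0,H_0)=O(\sigma^2)$, with no probability needed.

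\textbf{Lower bound on $J_1$.} Fix an arbitrary feasible $(q,H)$, put $p_q=P_\MM(q)$, $T:=T_{p_q}\MM$, $\theta:=\angle_{\max}(H,T)$, and split $\RR^D=T\oplus T^\perp$. For $r_i\in U_{\textrm{ROI}}$ write $r_i-q=v_i+w_i$ with $v_i=P_T(r_i-p_q)$ and $w_i\in T^\perp$; the reach estimate together with $r_i\in\MM_\sigma$ and $\norm{q-p_q}<3\sigma$ force $\norm{w_i}=O(\sigma)$, while $\norm{v_i}=O(\sqrt{\sigma\tau})$ since $U_{\textrm{ROI}}$ has radius $\sqrt{\sigma\tau}$. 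Then $\dist(r_i-q,H)=\norm{P_{H^\perp}(v_i+w_i)}\ge\norm{P_{H^\perp}v_i}-O(\sigma)$, so, writing $A:=\sum_{r_i\in U_{\textrm{ROI}}}\norm{P_{H^\perp}v_i}^2$ and using Cauchy--Schwarz, $N\,J_1(r;q,H)\ge A-O(\sigma)\sqrt{NA}$. Next $A=\operatorname{tr}(GS)$ where $S:=\sum_i v_iv_i^\top$ is supported on $T$ and $G$ is the compression of $P_{H^\perp}$ to $T$, whose eigenvalues are $\sin^2\theta_1\le\dots\le\sin^2\theta_d$ with $\theta_d=\theta$; hence $A\ge\lambda_{\min}(S)\sum_j\sin^2\theta_j\ge\lambda_{\min}(S)\sin^2\theta$. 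The geometric heart is that, because $U_{\textrm{ROI}}$ is a Euclidean ball of radius $\sqrt{\sigma\tau}$ and (for $M$ large) $\MM\cap U_{\textrm{ROI}}$ is a graph over a $d$-ball of comparable radius, the push-forward of $\operatorname{Unif}(\MM_\sigma\cap U_{\textrm{ROI}})$ under $P_T(\,\cdot\,-p_q)$ has covariance with smallest eigenvalue $\gtrsim\sigma\tau$; a uniform law of large numbers --- a covering/VC argument over the compact parameter $q$, using that $p_q$ and $P_T$ depend Lipschitz-continuously on $q$ with constants controlled by $1/\tau$, and that $N\to\infty$ with $n$ --- then yields, with probability at least $1-\delta$ for all $n$ large, that $\lambda_{\min}(S)\gtrsim N\sigma\tau$ simultaneously for every feasible $q$. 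Combining, $A\gtrsim N\sigma\tau\,\theta^2$, hence $J_1(r;q,H)\gtrsim\sigma\tau\,\theta^2$ once $\theta\gtrsim\sqrt{1/M}$ (for larger $\theta$ one even gets $J_1\gtrsim\sigma\tau$).

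\textbf{Conclusion and main obstacle.} Applying both bounds at $(q,H)=(q^*,H^*)$ on the good event gives $c\,\sigma\tau\,\theta^2\le J_1(r;q^*,H^*)\le C\sigma^2$, so $\theta^2\le(C/c)\,\sigma/\tau$, i.e.\ $\angle_{\max}(H^*,T_p\MM)\le\sqrt{C_M/M}=\alpha$ with $C_M:=C/c$, which is the claim. The part I expect to be genuinely laborious is the uniform lower bound on $J_1$: one must (i) quantify, in terms of $M$, that $\MM\cap U_{\textrm{ROI}}$ is a $d$-dimensional graph of the right ``width'' so that the projected covariance is nondegenerate at scale $\sigma\tau$; (ii) make the concentration uniform over \emph{all} feasible pairs $(q,H)$, which is what forces the covering/VC machinery and careful tracking of how the orthogonality constraint ties $q$ to $H$; and (iii) bookkeep the $O(\sigma)$ normal-direction errors (the $w_i$, the offset $q-p_q$, and the curvature of $\MM$) so that they are genuinely lower order than the $\sqrt{\sigma\tau}$-scale tangential spread. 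The upper bound and the $3\sigma$ estimate are, by contrast, elementary consequences of the definition of reach.
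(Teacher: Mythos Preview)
Your proposal is correct and follows the same ``sandwich'' skeleton as the paper---feed the feasible pair $(P_\MM(r),T_{P_\MM(r)}\MM)$ into $J_1$ for an $O(\sigma^2)$ upper bound (this is the paper's Lemma~\ref{lem:J1pTp_clean}), then show that any feasible $(q,H)$ with large angle forces $J_1$ to be too large (the paper's Lemma~\ref{lem:J_1qH_clean}). The $3\sigma$ bound is identical.

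Where you genuinely diverge is in the mechanism for the lower bound. The paper's Lemma~\ref{thm:J1pH_clean} works sectionally along each principal pair $(u_j,w_j)$: it locates an explicit ball $B_{T_{p_r}\MM}(x_0,a_2\sqrt{\sigma\tau})$ inside $U_{\textrm{ROI}}$, uses the sample-count Lemma~\ref{lem:num_of_samples_in_a_ball_clean} to guarantee enough points land there, and then computes by hand that each such point contributes $\gtrsim\sigma\tau\sin^2\alpha$ to $J_1$. Your route is spectral: you reduce to $A=\operatorname{tr}(GS)$ with $G|_T$ having eigenvalues $\sin^2\theta_j$ and $S$ the empirical tangential second-moment matrix, then appeal to $\lambda_{\min}(S)\gtrsim N\sigma\tau$. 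This is cleaner and gives the angle directly with respect to $T_{p_q}\MM$ (hence to $T_p\MM$ at $q=q^*$), whereas the paper argues relative to the fixed $T_{p_r}\MM$ and tacitly absorbs the $O(\sigma/\tau)$ difference between $T_p\MM$ and $T_{p_r}\MM$ into $C_M$. The trade-off is that your uniform eigenvalue bound over all feasible $q$ requires a matrix-concentration-plus-covering argument, while the paper gets away with scalar Hoeffding on ball counts over a net in $T_{p_r}\MM$ (Lemma~\ref{lem:sampling_density_almost_uniform_clean}); in particular the paper never needs to cover the $q$-variable because it compares everything to the single tangent $T_{p_r}\MM$. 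Your observation that $S$ depends only on $q$ (not on $H$) and that $p_q$ ranges over a $d$-dimensional patch of $\MM$ is what keeps your covering tractable.
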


The proof of this theorem can be found in Section \ref{sec:Step1Analysis}.
\begin{Theorem}\label{thm:Step2}
	Assume that $M>C_\tau\sqrt{D\log D}$.  
	Let $(q, H)$ be  a coordinate system, for which $\norm{q - p}\leq 3 \sigma$ and $ \maxangle (H, T_p\MM) \leq \alpha_0$ for some $p \in \MM$ and $\alpha_0 = \sqrt{C_M/M}$.
	For any $\delta>0$ arbitrarily small, denote by $ \hat p_n, \widehat{T_{\hat p_n}\MM} $ the estimates derived from Algorithm \ref{alg:step2_clean} initialized with $(q,H)$ with the number of iterations $\kappa$ specified in Lemma \ref{lem:comute_kappa}. Then, there are $ C_{\delta,d,k} $ and $ N_{\delta} $ such that for all $ n > N_{\delta}  $ there is $\mathbf{p}\in \MM$ for which 
	\begin{equation}\label{eq:thm3.3_p_bound}
	\norm{\hat p_n - \mathbf p} \leq \frac{C \ln\left(\frac{1}{\delta}\right)}{n^{r_0}}
	,\end{equation}
	and 
	\begin{equation}\label{eq:thm3.3_tangent_ang}
	\angle_{\max}(\widehat{T_{\hat p_n}\MM},T_{\mathbf p}\MM) \leq C_{d}\ln\left(\frac{1}{\delta}\right) \left(\frac{ n}{\ln\left(\ln(n) \right)^2}\right)^{-r_1} = \wtilde{\mathcal{O}}(n^{-r_1})    
	\end{equation}
	with probability of at least $ 1 - \delta $, where $ r_0 = \frac{k}{2k +d} $ and where $ r_1 = \frac{k-1}{2k +d} $.
\end{Theorem}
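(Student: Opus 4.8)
The plan is to analyze a single iteration $\ell \mapsto \ell+1$ of Algorithm \ref{alg:step2_clean} and show that the pair $(q_\ell, H_\ell)$ improves geometrically toward a genuine point–tangent pair on $\MM$, up to an unavoidable statistical floor dictated by the local polynomial regression rates. Fix the local picture: since $\maxangle(H_\ell, T_p\MM)$ is small (bounded initially by $\alpha_0 = \sqrt{C_M/M}$, which we will maintain as an invariant), Lemma \ref{lem:M_is_locally_a_fuinction_clean} guarantees $\MM$ is locally the graph of a $\C^k$ function $f_\ell$ over $(q_\ell, H_\ell)$ on a ball of radius $\rho$ independent of $n$. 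The core of the argument is a bias–variance decomposition of $\pi^*_{q_\ell,H_\ell}$. Write $Y = f_\ell(X) + \eta$ where $X$ is the projection onto $H_\ell$ of the (noisy) sample and $\eta$ captures the tubular perturbation; the key difficulty, flagged in the text around Figure \ref{fig:TiltedBias}, is that $\EE[\eta \mid X=x] \neq 0$ when $H_\ell$ is tilted. First I would quantify this bias: a second-order expansion of the distance-to-manifold constraint shows that the conditional mean of the normal coordinate is $f_\ell(x) + b_\ell(x)$ with $\|b_\ell\|_{\infty} = O(\sigma \cdot \maxangle(H_\ell,T_p\MM))$ on the bandwidth ball (the bias vanishes to first order in the tilt because the tube is symmetric about $\MM$, and the leading surviving term couples the tilt with the tube width $\sigma$). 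This is the step I expect to be the main obstacle — making the dependence of the regression bias on the tilt angle both rigorous and sharp, since everything downstream hinges on it being $O(\alpha_\ell)$ rather than $O(1)$.

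Next I would invoke the vector-valued local polynomial regression results of \cite{aizenbud2021VectorEstimation,stone1980optimal}, applied to the \emph{de-biased} target $f_\ell + b_\ell$, together with the bandwidth condition \eqref{eq:bandwidthDecay_clean}, to get: with probability $\geq 1 - \delta/\kappa$ (union-bounding over the $\kappa$ iterations), the estimate and its first derivative satisfy
\[
\|\pi^*_{q_\ell,H_\ell}(0) - f_\ell(0)\| \leq C\Big(\epsilon_n^{k} + \text{stochastic error} + \alpha_\ell\,\sigma\Big), \qquad
\|\DD_{\pi^*_{q_\ell,H_\ell}}[0] - \DD_{f_\ell}[0]\| \leq C\Big(\epsilon_n^{k-1} + \cdots + \alpha_\ell\Big),
\]
where the stochastic error is $\wtilde O(n^{-k/(2k+d)})$ for the value and $\wtilde O(n^{-(k-1)/(2k+d)})$ for the derivative after plugging in $\epsilon_n \sim n^{-1/(2k+1)}$ — here one must be careful that the number of samples landing in $U_{\textrm{ROI}}^n$ is $\Theta(n\epsilon_n^d)$ with high probability, which follows from the uniform sampling assumption and a Bernstein/Chernoff bound on the volume of the bandwidth ball intersected with $\MM_\sigma$. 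Since $H_{\ell+1} = \Ima(\DD_{\pi^*_{q_\ell,H_\ell}}[0])$ and $T_p\MM = \Ima(\DD_{f_\ell}[0])$ (up to the identification of $H_\ell$ with $\RR^d$), a standard perturbation bound for the map from a matrix to its column space converts the derivative estimate into
\[
\maxangle(H_{\ell+1}, T_p\MM) \;\leq\; c\,\maxangle(H_\ell, T_p\MM) \;+\; \wtilde O\!\big(n^{-(k-1)/(2k+d)}\big),
\]
with $c < 1$ because the tilt enters the regression bias with a small constant (controlled by $M$ being large). This is the contraction that drives the whole scheme.

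Iterating the contraction $\alpha_{\ell+1} \leq c\,\alpha_\ell + A_n$ with $A_n = \wtilde O(n^{-r_1})$ gives $\alpha_\kappa \leq c^\kappa \alpha_0 + A_n/(1-c)$; choosing $\kappa$ as in Lemma \ref{lem:comute_kappa} (roughly $\kappa \asymp \log_{1/c}(\alpha_0 / A_n)$, which is $O(\log\log n)$ up to constants — hence the $\ln\ln n$ factors in the statement after the union bound spreads $\delta$ over $\kappa$ rounds) makes $c^\kappa \alpha_0 \lesssim A_n$, yielding \eqref{eq:thm3.3_tangent_ang}. It remains to check the invariant: I need $\alpha_\ell \leq \alpha_0$ for all $\ell$ so that Lemma \ref{lem:M_is_locally_a_fuinction_clean} keeps applying and the bias bound stays valid; this holds for $n$ large since $A_n \to 0$ and $c<1$. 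Finally, for the point estimate, \eqref{eq:ql_definition} gives $q_{\ell+1} = q_\ell + (0, \pi^*_{q_\ell,H_{\ell+1}}(0))_{H_{\ell+1}}$; once $H_{\ell+1}$ is within angle $\alpha_{\ell+1}$ of the tangent, the regression value $\pi^*_{q_\ell,H_{\ell+1}}(0)$ estimates the normal displacement of $\MM$ from $q_\ell$ with error $\wtilde O(n^{-k/(2k+d)}) + O(\alpha_{\ell+1}\sigma)$, so $\dist(q_{\ell+1}, \MM)$ contracts by the same mechanism and $q_\kappa$ converges to some $\mathbf p \in \MM$ with $\|q_\kappa - \mathbf p\| = \wtilde O(n^{-r_0})$; absorbing the $\ln\ln n$ factor into the constant and renaming $\mathbf p = P_\MM(q_\kappa)$ (well-defined since $\dist(q_\kappa,\MM) < \tau$) gives \eqref{eq:thm3.3_p_bound}. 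The $\ln(1/\delta)$ dependence is inherited from the sub-Gaussian tail bounds in the regression step via the union bound over iterations.
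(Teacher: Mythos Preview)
Your approach matches the paper's: decompose the tangent error into a tilt-dependent bias $\maxangle(T_0 f_\ell, T_0\wtilde f_\ell)$ (the paper's Lemma~\ref{lem:Tf_TtildeF_D}) plus a stochastic term from vector-valued local polynomial regression (Lemma~\ref{lem:AngleImprovement}), obtain a contraction (the paper packages it as exact halving in Lemma~\ref{lem:main_support_theorem_step2}, equivalent to your $\alpha_{\ell+1}\le c\alpha_\ell+A_n$), and iterate with a union bound over rounds. One concrete slip: $\kappa\asymp\log_{1/c}(\alpha_0/A_n)$ is $O(\log n)$, not $O(\log\log n)$, since $A_n\sim n^{-r_1}$; indeed Lemma~\ref{lem:comute_kappa} gives $\kappa\approx r_1\log_2 n$. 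The $\ln\ln n$ factor in \eqref{eq:thm3.3_tangent_ang} arises differently---the union bound over $\kappa=O(\log n)$ iterations forces $\delta_1=\delta/(2\kappa)$, so the regression tail constant becomes $\ln(1/\delta_1)\approx\ln\ln n+\ln(1/\delta)$, and it is \emph{this} term, not $\kappa$ itself, that produces the $(\ln\ln n)^{2r_1}$.

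You are right that the bias analysis is the main obstacle, and the paper's version is more involved than a second-order expansion of the distance function. What is needed is a bound on the \emph{derivative} of the bias, $\|\DD_{\wtilde f}[0]-\DD_f[0]\|\le\varepsilon\alpha$ with $\varepsilon$ small (this is what gives $c<1$), not just on $\|\wtilde f-f\|_\infty$; the paper extracts the small constant via an explicit computation of $\nabla_x g(0,\theta)$, where $g$ parametrizes the boundary of $\MM_\sigma$ in each normal direction $\theta$, combined with a concentration-of-measure argument on $S^{D-d-1}$ to absorb the ambient-dimension factor into the assumption $M>C_\tau\sqrt{D\log D}$.
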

The proof of this theorem can be found in Section \ref{sec:Step2Analysis}.

\section{Proofs}
\label{sec:Proofs}
Theoretically, if we had known the tangent bundle of the sampled manifold at every point, we could have utilized it as a moving frame for the ``$x$-domain" to simply perform  a Moving Least-Squares approximation.
In this case, the convergence analysis would have been similar to standard local polynomial regression \cite{stone1980optimal} (with a varying coordinate system), as the sample bias issue described above would not have occurred. 
Thus, the first part of our investigation is focused on proving that $(q^*(r), H^*(r))$, the solution to the minimization problem of Equation \eqref{eq:argmin1_clean}, yields crude approximations to a tangent of the manifold.

Then, we refine the coordinate system in order to prevent bias introduced by the fact that $H^*(r)$ is tilted with respect to $T_{p}\MM$.
Yet, as we show below, one of the keys to unlocking the convergence rates are the known rates for local polynomial regression.

\subsection{Proof of Theorem \ref{thm:Step1}}
\label{sec:Step1Analysis}
\begin{proof}[proof of Theorem \ref{thm:Step1}]
The proof can be described by the following three arguments which are proven in Lemmas \ref{lem:J1pTp_clean}, \ref{lem:J_1qH_clean}.
\begin{enumerate}
    \item[Arg. 1:]\label{outline:Step1_1_clean}
    Denote $p_r = P_\MM(r)$.
    Then, since $r - p_r\perp T_{p_r}\MM $ and $r\in\MM_\sigma$, we have that $q = p_r$ along with $H= T_{p_r}\MM$ are in the search space defined by the constraints of \eqref{eq:argmin1_clean}.
    
    \item[Arg. 2:]\label{outline:Step1_2_clean} 
    From Lemma \ref{lem:J1pTp_clean} it follows that for large enough $M = \tau/\sigma$ such that $ \sqrt{\frac{\sigma}{\tau}}  + \frac{\sigma}{\tau} < \frac{1}{2} $ we have $J_1(r; p_r; T_{p_r}\MM) \leq 50\cdot \sigma^2$. 
    According to Assumption \ref{assume:noise} in Section \ref{sec:SamplingAssumptions}, we have that $M$ is large enough, and Lemma \ref{lem:J1pTp_clean} hold.
    Due to the definition of $q^*, H^*$ in \eqref{eq:argmin1_clean}, we achieve that $J_1(r; q^*, H^*) \leq 50 \cdot \sigma^2$ as well.
    
    \item[Arg. 3:]\label{outline:Step1_3_clean} From Lemma \ref{lem:J_1qH_clean}, we have that  
    for $\alpha = \sqrt{C_M/M}$, where $M = \frac{\tau}{\sigma}$, and $C_M$ is a constant
    the following holds:
    For any $\delta >0$ arbitrarily small there is $ N_{\delta} $ sufficiently large (independent of $\alpha$) such that for all $ n>N_{\delta} $, \textbf{all} pairs $(q, H)$ in the search space of \eqref{eq:argmin1_clean} such that $\angle_{max}(H,T_{p_r}\MM) > \alpha$, the score $J_1(r; q, H) \geq 100 \sigma^2 $ with probability of at least $1 - \delta$.
\end{enumerate}
Combining Arguments 2 and 3 we have that 
for $\alpha = \sqrt{C_M/M}$, where $M = \frac{\tau}{\sigma}$, and $C_M$ is a constant
the following holds:
	For any $\delta>0$ arbitrarily small, there exists $ N_{\delta} $ (independent of $\alpha$) such that for all $ n > N_{\delta} $ 
	\[\angle_{\max}(H^*, T_p\MM)\leq \alpha\]
	with probability of at least $ 1 - \delta $. Additionally, since the search space of \eqref{eq:argmin1_clean} requires that $\|r-q^*\| <2\sigma$, we have that $\|p-q^*\|\leq 3\sigma$, and the proof is concluded.
\end{proof}

\begin{Lemma}\label{lem:J1pTp_clean}
Let the sampling assumption of \ref{sec:SamplingAssumptions} hold, let ${p_r} = P_\MM(r)$ be the orthogonal projection of $r$ onto $\MM$, and let $ T_{p_r}\MM $ be the tangent to $ \MM $ at $ {p_r} $.
Then, for $M$ (of Assumption \ref{assume:noise}) large enough
    \begin{equation}\label{eq:TangentScore_clean}
       J_1(r; {p_r}, T_{{p_r}}\MM) \leq 50 \cdot \sigma^2 
    \end{equation}
\end{Lemma}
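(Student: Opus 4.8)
The plan is to bound $J_1(r; p_r, T_{p_r}\MM)$ by controlling the distance $\dist^2(r_i - p_r, T_{p_r}\MM)$ for each sample $r_i\in U_{\textrm{ROI}}$, using the ``bounding ball'' estimates already established. First note that by definition $J_1(r; p_r, T_{p_r}\MM) = \frac{1}{N}\sum_{r_i\in U_{\textrm{ROI}}}\dist^2(r_i - p_r, T_{p_r}\MM)$, so it suffices to give a uniform bound on each summand. By Lemma~\ref{lem:dist_ri_TpM_clean}, for every $r_i\in U_{\textrm{ROI}}$ we have
\[
\dist(r_i - p_r, T_{p_r}\MM) \leq \tau - \sqrt{\tau^2 - \|x_i\|^2} + \sigma,
\]
where $x_i = P_{T_{p_r}\MM}(r_i - p_r)$. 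So the whole argument reduces to bounding $\|x_i\|$, which is the tangential displacement of $r_i$ from $p_r$.

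Next I would control $\|x_i\|$. Since $r_i\in U_{\textrm{ROI}}$ means $\dist(r_i, r) < \sqrt{\sigma\tau}$, and $\|r - p_r\| \leq \sigma$ (as $r\in\MM_\sigma$ and $p_r = P_\MM(r)$), the triangle inequality gives $\|r_i - p_r\| \leq \sqrt{\sigma\tau} + \sigma$. Since $x_i$ is an orthogonal projection of $r_i - p_r$, we get $\|x_i\| \leq \|r_i - p_r\| \leq \sqrt{\sigma\tau} + \sigma$. Now write $\|x_i\| = \bar x \tau$; for $M = \tau/\sigma$ large enough we have $\bar x \leq \sqrt{\sigma/\tau} + \sigma/\tau < 1/2$, so the Taylor bound of Remark~\ref{rem:taylor_sqrt1-x2_clean} applies (noting $\bar x < 1/2 < \sqrt 3/2$): $\tau - \sqrt{\tau^2 - \|x_i\|^2} = \tau(1 - \sqrt{1 - \bar x^2}) \leq \tau \cdot \frac{1}{2}\bar x^2 = \frac{\|x_i\|^2}{2\tau} \leq \frac{(\sqrt{\sigma\tau}+\sigma)^2}{2\tau}$. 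Expanding, this is $\frac{\sigma\tau + 2\sigma\sqrt{\sigma\tau} + \sigma^2}{2\tau} = \frac{\sigma}{2}\bigl(1 + 2\sqrt{\sigma/\tau} + \sigma/\tau\bigr) \leq \frac{\sigma}{2}\cdot\frac{9}{4} \leq 2\sigma$ for $M$ large enough (using $\sqrt{\sigma/\tau}+\sigma/\tau < 1/2$ again). Hence $\dist(r_i - p_r, T_{p_r}\MM) \leq 2\sigma + \sigma = 3\sigma$ uniformly over $U_{\textrm{ROI}}$.

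Finally, squaring and averaging: $\dist^2(r_i - p_r, T_{p_r}\MM) \leq 9\sigma^2 \leq 50\sigma^2$ for each $i$, hence $J_1(r; p_r, T_{p_r}\MM) \leq \frac{1}{N}\sum_{r_i\in U_{\textrm{ROI}}} 9\sigma^2 = 9\sigma^2 \leq 50\sigma^2$, which gives \eqref{eq:TangentScore_clean}. I do not expect any real obstacle here — the statement has considerable slack (the constant $50$ versus the actual bound near $9$), so the only care needed is making precise what ``$M$ large enough'' means, namely large enough that $\sqrt{\sigma/\tau} + \sigma/\tau < 1/2$, which simultaneously validates the Taylor remark, keeps the radius $\sqrt{\sigma\tau}+\sigma$ below $\tau/2$ so that Lemma~\ref{lem:dist_ri_TpM_clean} and Corollary~\ref{cor:GraphOfFunctionTau2_clean} apply, and makes the arithmetic bound close to $9\sigma^2$. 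The only subtlety worth double-checking is that Lemma~\ref{lem:dist_ri_TpM_clean} already incorporates the $+\sigma$ for the noise on $r_i$ itself, so one should be careful not to double-count it; the clean bookkeeping is as above.
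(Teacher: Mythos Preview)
Your approach is structurally the same as the paper's and is sound, but you have reversed the direction of the Taylor inequality from Remark~\ref{rem:taylor_sqrt1-x2_clean}. That remark gives $\sqrt{1-\bar x^2} \leq 1 - \tfrac{1}{2}\bar x^2$, i.e., $1 - \sqrt{1-\bar x^2} \geq \tfrac{1}{2}\bar x^2$, which is the wrong direction for an upper bound. You should instead use the other half of the same remark, $\sqrt{1-\bar x^2} \geq 1 - \bar x^2$, which yields
\[
\tau - \sqrt{\tau^2 - \|x_i\|^2} \;\leq\; \tau\,\bar x^2 \;=\; \frac{\|x_i\|^2}{\tau} \;\leq\; \sigma\bigl(1 + 2\sqrt{\sigma/\tau} + \sigma/\tau\bigr) \;<\; \tfrac{9}{4}\sigma
\]
under your assumption $\sqrt{\sigma/\tau} < 1/2$. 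Hence $\dist(r_i - p_r, T_{p_r}\MM) \leq \tfrac{13}{4}\sigma$ and $J_1 \leq \tfrac{169}{16}\sigma^2 \approx 10.6\,\sigma^2 \leq 50\sigma^2$, so the conclusion still holds with plenty of slack.

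Apart from this slip, your argument is a streamlined version of the paper's: both invoke Lemma~\ref{lem:dist_ri_TpM_clean}, bound $\|x_i\| \leq \sqrt{\sigma\tau}+\sigma$, and Taylor-expand the square root. The paper uses the finer Remark~\ref{rem:taylor2_sqrt1-x2_clean} and a considerably longer algebraic manipulation to land at $49\sigma^2$; your route via the cruder first-order remark is shorter and, once corrected, entirely adequate given the generous constant $50$.
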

\textbf{Idea of the proof: } Since all the sampled points are $\sigma$-close to the manifold which is linearly approximated by the tangent, the mean squared distance to the tangent is of the order of $\OO(\sigma^2)$.
The proof is given in Appendix \ref{subsec:proof_lem_J1pTp_clean}

We show, in Lemma \ref{thm:J1pH_clean} that given a coordinate system $(p_r, H)$ with $\angle_{max}(H,T_{{p_r}}\MM) > \alpha$ with yields a large score of of our cost $J_1$ with high probability.
This will be generalized to a coordinate system $(q, H)$ around any origin $q$ in the search space of \eqref{eq:argmin1_clean} in Lemma \ref{lem:J_1qH_clean}.

\begin{Lemma}\label{thm:J1pH_clean}
    Let the sampling assumptions of Section \ref{sec:SamplingAssumptions} hold.
    Let ${p_r} = P_\MM(r)$ be the projection of $r$ onto $\MM$, and $ T_{p_r}\MM $ be the tangent to $ \MM $ at $ {p_r} $. 
    For $\alpha = \sqrt{C_M/M}$, where $M = \frac{\tau}{\sigma}$, and $C_M$ is a constant,
    the following holds: For any $\delta >0$ there is $N_\delta$ (independent of $\alpha$) such that $\forall n > N_\delta$ all linear sub-spaces $H\in Gr(d,D)$ with $\angle_{max}(H,T_{{p_r}}\MM) > \alpha$, yield a score 
    \[J_1(r; {p_r}, H) \geq 109 \sigma^2 ,\]
    with probability of at least $1 - \delta$.
\end{Lemma}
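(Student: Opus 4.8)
The plan is to show that a tilted $H$ is forced to pay a large $J_1$ because the tangential spread of the in-ROI samples, when projected onto the tilted orthogonal complement $H^\perp$, leaves a residual of order $\sigma\tau\cdot\sin^2\alpha$, which exceeds $109\sigma^2$ once $C_M$ in $\alpha=\sqrt{C_M/M}$ is chosen large. First I would use Corollary \ref{cor:GraphOfFunctionTau_ROI} to write, for every $r_i\in U_{\mathrm{ROI}}$, $r_i-p_r = x_i+\phi_{p_r}(x_i)+\eps_i$ with $x_i=P_{T_{p_r}\MM}(r_i-p_r)$, $\|\eps_i\|\le\sigma$, and (by \eqref{eq:eq:bound_phix_clean}) $\|\phi_{p_r}(x_i)\|\le \tau-\sqrt{\tau^2-\|x_i\|^2}$; since $\|x_i\|\le\sqrt{\sigma\tau}+\sigma$ and $M$ is large, the ``error'' $\delta_i:=\|\phi_{p_r}(x_i)\|+\|\eps_i\|$ is at most a universal constant times $\sigma$. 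As $\dist(\cdot,H)$ is $1$-Lipschitz, $\dist(r_i-p_r,H)\ge \dist(x_i,H)-\delta_i$, and an elementary case check gives $\dist^2(r_i-p_r,H)\ge \dist(x_i,H)^2-2\,\delta_i\,\dist(x_i,H)$. Averaging and using Cauchy--Schwarz on the cross term, $J_1(r;p_r,H)\ge S_H-2\bar\delta\sqrt{S_H}$ where $S_H:=\tfrac1N\sum_{r_i\in U_{\mathrm{ROI}}}\dist(x_i,H)^2$ and $\bar\delta:=\tfrac1N\sum_i\delta_i=\OO(\sigma)$ deterministically.

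Next I would convert $S_H$ into a linear-algebraic quantity controlled \emph{uniformly} in $H$. Writing $T:=T_{p_r}\MM$ and $\hat\Sigma:=\tfrac1N\sum_{r_i\in U_{\mathrm{ROI}}}x_ix_i^\top$ as an operator on $T$ (each $x_i\in T$), one has $\dist(x_i,H)^2=\|P_{H^\perp}x_i\|^2=x_i^\top (P_TP_{H^\perp}P_T)x_i$, hence $S_H=\operatorname{tr}(\hat\Sigma B_H)$ with $B_H$ the operator $P_TP_{H^\perp}P_T$ restricted to $T$, a PSD operator whose eigenvalues are $\sin^2$ of the principal angles between $T$ and $H$, so $\lambda_{\max}(B_H)=\sin^2(\maxangle(T,H))>\sin^2\alpha$. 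Since $B_H\succeq0$, $\operatorname{tr}(\hat\Sigma B_H)\ge\lambda_{\min}(\hat\Sigma)\operatorname{tr}(B_H)\ge\lambda_{\min}(\hat\Sigma)\lambda_{\max}(B_H)>\lambda_{\min}(\hat\Sigma)\sin^2\alpha$. Everything now reduces to a lower bound on $\lambda_{\min}(\hat\Sigma)$, a quantity that does not involve $H$ at all, which is precisely what lets the conclusion hold simultaneously for every admissible $H$ on a single high-probability event.

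The heart of the proof is then the claim $\lambda_{\min}(\hat\Sigma)\ge c_1\sigma\tau$ with probability at least $1-\delta$ for all $n>N_\delta$, where $c_1=c_1(d)>0$. For this I would first bound the conditional covariance $\Sigma_{\mathrm{cond}}:=\EE[x_ix_i^\top\mid r_i\in U_{\mathrm{ROI}}]$ from below: since $\|r-p_r\|\le\sigma\ll\sqrt{\sigma\tau}$, the ROI ball $B_D(r,\sqrt{\sigma\tau})$ essentially coincides with $B_D(p_r,\sqrt{\sigma\tau})$; inside the cylinder of Corollary \ref{cor:GraphOfFunctionTau2_clean} the tube $\MM_\sigma$ is, up to an $\OO(\sigma/\tau)$ distortion, a graph over $T$ of thickness comparable to $\sigma$, so the pushforward onto $T$ of $\operatorname{Unif}(\MM_\sigma\cap U_{\mathrm{ROI}})$ has density bounded above and below by constants on a ball of radius $\asymp\sqrt{\sigma\tau}$ about $0$; restricting the expectation to an annulus $\tfrac14\sqrt{\sigma\tau}\le\|x_i\|\le\tfrac12\sqrt{\sigma\tau}$ that sits strictly inside that ball yields $\Sigma_{\mathrm{cond}}\succeq c_1\sigma\tau\,I_d$. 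Then, since $\Pr(r_i\in U_{\mathrm{ROI}})>0$ is a fixed positive number ($\MM$ compact, $\sqrt{\sigma\tau}>0$), $N\to\infty$ as $n\to\infty$, and a law-of-large-numbers/matrix-concentration argument for the summands $x_ix_i^\top$ gives $\|\hat\Sigma-\Sigma_{\mathrm{cond}}\|_{\mathrm{op}}<\tfrac{c_1}{2}\sigma\tau$, hence $\lambda_{\min}(\hat\Sigma)>\tfrac{c_1}{2}\sigma\tau$, on an event of probability $\ge1-\delta$ once $n>N_\delta$, with $N_\delta$ depending only on $\delta$ (and $d,M$) and not on $\alpha$ or $H$.

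Finally I would assemble the pieces: on that event, for every $H$ with $\maxangle(H,T)>\alpha$ we get $S_H>\tfrac{c_1}{2}\sigma\tau\sin^2\alpha\ge\tfrac{c_1}{4}\sigma\tau\,\alpha^2=\tfrac{c_1C_M}{4}\sigma^2$ (using $\sin^2\alpha\ge\alpha^2/2$, valid for $M$ large so that $\alpha$ is small), and since $2\bar\delta\sqrt{S_H}\le\tfrac12S_H$ whenever $S_H\ge(4\bar\delta)^2$, choosing $C_M$ large enough (depending only on $d$) forces $S_H\ge(4\bar\delta)^2$ and $J_1(r;p_r,H)\ge\tfrac12S_H\ge\tfrac{c_1C_M}{8}\sigma^2\ge109\sigma^2$, uniformly over all such $H$. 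The main obstacle is the geometric/probabilistic lower bound on $\lambda_{\min}(\hat\Sigma)$: one must argue carefully that the tangential spread of the in-ROI samples is genuinely $d$-dimensional and of size $\asymp\sqrt{\sigma\tau}$ in every direction, controlling both the $\OO(\sigma/\tau)$ distortion of the tube over $T$ and the off-centering of the ambient ROI ball; by contrast the reduction through $\operatorname{tr}(\hat\Sigma B_H)$ dispenses with the ``uniformly over all $H$'' difficulty essentially for free.
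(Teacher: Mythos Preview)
Your approach is correct and takes a genuinely different route from the paper's. The paper decomposes $\dist^2(r_i-p_r,H)-\dist^2(r_i-p_r,T_{p_r}\MM)$ along the principal-angle directions $\{\tilde y_j, y_j\}$, isolates one index $\tilde j$ with $\beta_{\tilde j}>\alpha$, and then constructs an explicit ball $B_{T_{p_r}\MM}(x_0,a_2\sqrt{\sigma\tau})$ centered at $x_0=a_1\sqrt{\sigma\tau}\,u_{\tilde j}$ whose samples provably give a large contribution $\langle r_i-p_r,\tilde y_{\tilde j}\rangle^2\gtrsim\sigma\tau\sin^2\alpha$; the count of samples in this ball is handled by Lemma~\ref{lem:num_of_samples_in_a_ball_clean} via an $\varepsilon$-net plus scalar Hoeffding. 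You instead reduce everything to the single deterministic inequality $S_H=\operatorname{tr}(\hat\Sigma B_H)\ge\lambda_{\min}(\hat\Sigma)\,\sin^2(\maxangle(T,H))$ and push all the probabilistic work into one $H$-independent bound $\lambda_{\min}(\hat\Sigma)\gtrsim\sigma\tau$, obtained by matrix concentration for the bounded i.i.d.\ summands $x_ix_i^\top$. This buys uniformity over $H$ for free --- no need to pick directions, split into $U_{good}/U_{bad}$, or compare sample counts in nested balls --- and the endgame $\sigma\tau\sin^2\alpha\approx\sigma\tau\cdot C_M/M=C_M\sigma^2$ is cleaner than the paper's computation, which juggles a $\sigma^{3/2}\tau^{1/2}$ cross-term. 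The paper's approach, by contrast, stays closer to the geometry, needs only scalar Hoeffding, and makes the dependence on the pushforward density explicit through $\mu_{\min},\mu_{\max}$, whereas you package that into the lower bound on $\Sigma_{\mathrm{cond}}$. Two small remarks: in your Cauchy--Schwarz step the quantity that appears is $\big(\tfrac1N\sum_i\delta_i^2\big)^{1/2}$, not $\tfrac1N\sum_i\delta_i$ (still $\OO(\sigma)$, so harmless); and since $r-p_r\perp T_{p_r}\MM$, the off-centering of the ROI ball contributes nothing to the mean of the $x_i$, so that worry is milder than you flag.
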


\begin{proof}
We first wish to denote by $\beta_j$ the principal angles between $H$ and $T_{p_r}\MM$ and their matching principal pairs $(u_j, w_j)\in T_{p_r}\MM\times H$ (see Definition \ref{def:principal_angles_clean}).  
Throughout the proof we work on the sectional planes defined by $\LL_j = Span\{u_j,w_j\}$.
Thus, we can define the orthogonal complement of $u_j$ and $w_j$ on $\LL_j$ by $y_j$ and $\tilde y_j$ correspondingly.
That is, both $\{u_j, y_j\}$ and $\{w_j, \tilde y_j\}$ are orthogonal bases of $\LL_j$.
Since for any $i \neq j$ we have that $u_i \perp w_j$, $u_i \perp u_j$ and $w_i \perp w_j$, we have that both $\{\tilde y_j\}_{j=1}^{d}$ and $\{y_j\}_{j=1}^{d}$ are orthonormal sets.
Then, complete the sets $\{\tilde y_j\}_{j=1}^{d}$ and $\{y_j\}_{j=1}^{d}$  to an orthonormal basis of $H^\perp$  and $T_{p_r}\MM^\perp$ through adding the orthonormal sets $\{\tilde y_j\}_{j=d+1}^{D-d}$ and $\{y_j\}_{j=d+1}^{D-d}$ correspondingly.
Explicitly, we know that for all $j' = 1\ldots, d$
$$\textrm{Span}\{\tilde y_j\}_{j=d+1}^{D-d}\perp \LL_{j'} ~~,~~ \textrm{Span}\{ y_j\}_{j=d+1}^{D-d}\perp \LL_{j'}.$$ 
Thus, for $j=d+1, \ldots, D-d$ we have both $\tilde y_j , y_j\in H^\perp \cap T_{p_r}\MM^\perp$ and without limiting the generality we can choose $\tilde y_j = y_j$ for such $j$.
Using this notation we get that for any point $x\in U_\textrm{ROI}$
\[
\dist^2(x - {p_r}, H) = 
\sum_{j=1}^{D-d} \lrangle{x - {p_r}, \tilde y_j}^2 =
\sum_{j=1}^{D-d} \lrangle{x - {p_r}, \tilde y_j}^2 + \sum_{j=1}^{D-d} \left[\lrangle{x - {p_r},  y_j}^2 - \lrangle{x - {p_r},  y_j}^2\right] =
\]
\[
=\sum_{j=1}^{D-d} \lrangle{x - {p_r}, y_j}^2 + \sum_{j=1}^{D-d}\left[ \lrangle{x - {p_r},  \tilde y_j}^2 - \lrangle{x - {p_r},  y_j}^2\right]
,\]
and since $\tilde y_j = y_j$ for $j=d+1,\ldots,D-d$ we have
\begin{align}\label{eq:EJ_1_clean}
	\dist^2(x - {p_r}, H) &=
	\dist^2(x - {p_r}, T_{{p_r}}\MM) + \sum_{j=1}^{d}\left[ \langle x - {p_r}, \tilde{y}_j \rangle^2 - \langle x - {p_r}, {y}_j \rangle^2\right]
.\end{align}

The remainder of the proof is achieved through the following set of claims:
\begin{enumerate}
    \item\label{thm:claim_J1H_clean} From \eqref{eq:EJ_1_clean} it follows that
    \[J_1(r; {p_r}, H) =  J_1(r;{p_r},T_{p_r}\MM) +  R_1(r;{p_r},H),\]
    where
    \begin{equation}\label{eq:sum_k_clean}
    R_1(r; {p_r}, H) = \sum\limits_{j=1}^{d} \frac{1}{\#|U_\textrm{ROI}|}\sum\limits_{r_i\in U_\textrm{ROI}}  \langle r_i - {p_r}, \tilde{y}_j \rangle^2 - \langle r_i - {p_r}, {y}_j \rangle^2
    .\end{equation}
    \item Thus, in order to bound $J_1(r; {p_r}, H)$ from below we can focus on bounding $R_1(r; {p_r}, H)$.
    We then consider separately two sets of indices $\KK', \KK''$ such that $\KK' \cup \KK'' = \{1,\ldots, d\}$, where for all $j\in \KK'$ we have $\beta_j > \alpha$ and for all $j\in \KK''$ we have $\beta_j\leq \alpha$.
    Notice that since $\angle_{max}(H, T_{p_r}\MM) > \alpha$ we, get that $\KK' \not = \emptyset$ and that $\#\KK'' \leq d-1$.
    Writing this explicitly we get
    \begin{equation}\label{eq:RpRpp_clean}
        R_1(r; {p_r}, H) = R_1'(r; {p_r}, H)  +  R_1''(r; {p_r}, H)
    \end{equation}
    where 
    \begin{equation}\label{eq:R1p_def_clean}
    R_1'(r; {p_r}, H) = \sum\limits_{j\in\KK'} \frac{1}{\#|U_\textrm{ROI}|}\sum\limits_{r_i\in U_\textrm{ROI}}  \langle r_i - {p_r}, \tilde{y}_j \rangle^2 - \langle r_i - {p_r}, {y}_j \rangle^2
    ,\end{equation}
    and
    \begin{equation}\label{eq:R1pp_def_clean}
    R_1''(r; {p_r}, H) = \sum\limits_{j\in\KK''} \frac{1}{\#|U_\textrm{ROI}|}\sum\limits_{r_i\in U_\textrm{ROI}}  \langle r_i - {p_r}, \tilde{y}_j \rangle^2 - \langle r_i - {p_r}, {y}_j \rangle^2
    .\end{equation}
    \item\label{thm:claim_R1pp_bound_clean} We show in \ref{sec:proof_claim_R1pp_bound_clean} that
    \begin{equation}\label{eq:R1pp_bound_clean}
    R_1''(r; {p_r}, H) \geq
    -9 \sigma^2
    .\end{equation}
    \item\label{thm:claim_ri_tyk_bound_clean} Then, when we focus on $j\in\KK'$ we show that for $a_1 = \frac{1}{4}$ and $a_2 = \frac{1}{8}$ and $x_0\in T_{p_r}\MM$ such that $\norm{{p_r} - x_0} = a_1\sqrt{\sigma\tau}$ and $B_{T_{p_r}\MM}(x_0, a_2 \sqrt{\sigma\tau} + \sigma)\subset B_D({p_r}, \sqrt{\sigma \tau} - \sigma) \subset U_\textrm{ROI}$, and 
    \begin{equation}\label{eq:ri_tyk_bound_clean}
        \lrangle{r_i - {p_r}, \tilde y_j}^2 \geq \frac{1}{64}\sigma \tau sin\alpha - \frac{1}{2}\sigma^{3/2}\tau^{1/2}
    .\end{equation}
    \item\label{thm:claim_useStochasticLemma} From Assumption \ref{assume:noise} in Section \ref{sec:SamplingAssumptions} we know that $ \sqrt{\frac{\sigma}{\tau}} < \frac{1}{2} $. 
    In addition, setting $ \rho = a_2 \sqrt{\sigma \tau} $ we know that $ B_{T_{p_r}\MM}(x_0, \rho+\sigma)\subset B_{T_{p_r}\MM}({p_r}, \sqrt{\sigma \tau} - \sigma) $, so we can use Lemma \ref{lem:num_of_samples_in_a_ball_clean}.
    Explicitly, let $\mathrm{V}_d = \frac{\pi^{d/2}}{\Gamma(d/2+1)}$ be the volume of a $ d $-dimensional unit ball, and denote 
     \begin{align*}
    \mu_{\min} &= \mathrm{V}_{D-d}\sigma^{D-d} \min_{\substack{p\in \MM\\ x\in B_{T_p\MM}(0,\sqrt{\sigma\tau}- \sigma)}}  \sqrt{det(G_p(x))}\\
    \mu_{\max} &= \mathrm{V}_{D-d}\sigma^{D-d}\max_{\substack{p\in \MM \\ x \in B_{T_p\MM}(0,\sqrt{\sigma\tau}- \sigma)}}  \sqrt{det(G_p(x))}
    ,
    \end{align*}
    where $ \vol(B_{{T_p\MM}^\perp}(\sigma)) $ is the volume of a $ D-d $ dimensional ball with radius $ \sigma $ and $ G_p $ is the matrix representing the Riemannian metric at $ p $ in the chart $ \varphi_p $ of \eqref{eq:phi_def_clean}.
    Then, from Lemma \ref{lem:num_of_samples_in_a_ball_clean}, we get that for any $ \eps, \delta $ there is $ N $ such that for all $ n > N $
    \begin{align*}
    &\#\{r_i|P_{T_p\MM}(r_i) \in B_{T_p\MM}(x_0,\rho) \}
    \leq
    n(2\cdot \mu_{min}\cdot \mathrm{V}_d \cdot\rho^d+\eps) \\
    &\#\{r_i|P_{T_p\MM}(r_i) \in B_{T_p\MM}(x_0,\rho+\sigma) \}
    \geq
    n \left(\frac{\mu_{max}}{2}\cdot \mathrm{V}_{d}\cdot \rho^d - \eps\right)
    \end{align*}
    with probability of at least $1-\delta$.
    By setting $ \eps $ appropriately we get 
    \begin{align*}
    &\#\{r_i|P_{T_p\MM}(r_i) \in B_{T_p\MM}(x_0,\rho) \}
    \leq
    3 n \cdot\mu_{min}\cdot \mathrm{V}_d \cdot\rho^d \\
    &\#\{r_i|P_{T_p\MM}(r_i) \in B_{T_p\MM}(x_0,\rho+\sigma) \}
    \geq
    \frac{n}{3}  \cdot\mu_{max}\cdot \mathrm{V}_{d} \cdot \rho^d 
    .\end{align*}
    \item\label{thm:claim_R1p_bound_clean} Rephrasing \ref{thm:claim_ri_tyk_bound_clean}-\ref{thm:claim_useStochasticLemma} as one statement, there is a large subset of $U_\textrm{ROI}$ with large values of $\lrangle{r_i - p, \tilde y_j}^2$, with probability of at least $1-\delta$.
    Accordingly, we show that for any $ \delta $ there is $ N_\delta $ such that for all $ n>N_\delta $ 
    \begin{align}
    R_1'(r; p, H) \geq  &\frac{\mu_{max}}{\mu_{min} }\cdot\frac{1}{9} \left(\frac{1-\sqrt{\sigma/\tau}}{ 8}\right)^d\cdot \left(\frac{1}{64}\sigma \tau sin\alpha - \frac{1}{2}\sigma^{3/2}\tau^{1/2} \right)  -9 \sigma^2
    \label{eq:R1p_bound_clean},\end{align}
    with probability of at least $1-\delta$.
    \item Combining \eqref{eq:RpRpp_clean} with \eqref{eq:R1pp_bound_clean} and \eqref{eq:R1p_bound_clean}, we get that for $\alpha = \sqrt{C_M/M}$, where $M = \frac{\tau}{\sigma}$, and $C_M$ is a constant the following holds:
    For any $ \delta > 0$ there is $ N $ such that for all $ n > N $ 
    \begin{equation}
        R_1(r;p,H) \geq 109 \sigma^2
    ,\end{equation}
    with probability of at least $1-\delta$.
    \item From Claim \ref{thm:claim_J1H_clean} above, since $J_1(r;{p_r},T_{p_r}\MM) \geq 0$ we achieve that with probability of at least $1-\delta$ there is $N_\delta$ large enough such that for all $ n > N_\delta $
    \[
       J_1(r; p, H) \geq 109 \sigma^2  
    ,\]
    as required.
    
\end{enumerate}

\begin{figure}
	\centering
	\includegraphics[width=0.3\textwidth]{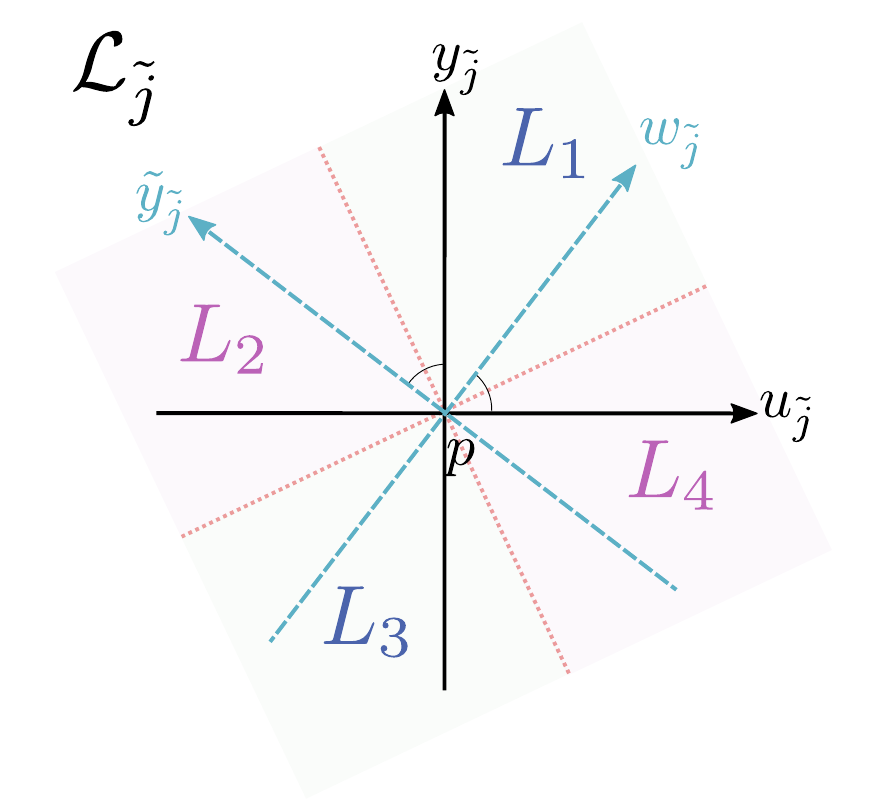}
	\caption{ Illustration of $\mathcal{L}_{\tilde j}$ and the sections $L_1,L_2,L_3,L_4$. $U_{bad}$ is marked in light green and $U_{good}$ is marked in light pink. The angles $\angle (\tilde y_{\tilde j}, y_{\tilde j}), \angle (w_{\tilde j},u_{\tilde j})$ which equal to $\beta_{\tilde j}$ are marked in black. The bisectors of these angles are marked in dotted red lines.}
	\label{fig:Lk_clean}
\end{figure}
For clarity of presentation we wish to first prove Claim \ref{thm:claim_R1p_bound_clean} and only then show the correctness of the formula \eqref{eq:ri_tyk_bound_clean} presented in Claim 
\ref{thm:claim_ri_tyk_bound_clean}.
\paragraph*{Proof of Claim \ref{thm:claim_R1p_bound_clean}:}
For all $j\in \KK'$ we have $\beta_j>\alpha$.
Let us assume that there is only one index $\tilde j$ in $\KK'$ (otherwise we can treat each index separately and arrive at the same conclusion), then \eqref{eq:R1p_def_clean} can be rewritten as
\[
R_1'(r; p, H) = \frac{1}{\#|U_\textrm{ROI}|}\sum\limits_{r_i\in U_\textrm{ROI}}  \langle r_i - p, \tilde{y}_{\tilde j} \rangle^2 - \langle r_i - p, {y}_{\tilde j} \rangle^2
.\]
Thus, the only property which affect the score of $R_1'$ is the difference between the measurements $\langle r_i - p, \tilde{y}_{\tilde j} \rangle^2$ and $\langle r_i - p, {y}_{\tilde j} \rangle^2$, both on the 2D plane $\LL_{\tilde j}$.
Accordingly, using the bisector of $\angle(\tilde y_{\tilde  j}, y_{\tilde  j})$ and its orthogonal complement, we can split $\LL_{\tilde j}$ into four regions $L_1, L_2, L_3, L_4$ (see Figure \ref{fig:Lk_clean}), where in two regions ($L_2$ and $L_4$ in Figure \ref{fig:Lk_clean}) $\lrangle{r_i - p, \tilde y_{\tilde j}} \geq \lrangle{r_i - p, y_{\tilde j}}$ and in the other two regions ($L_1$ and $L_3$ in Figure \ref{fig:Lk_clean}) $\lrangle{r_i - p, \tilde y_{\tilde j}} \leq \lrangle{r_i - p, y_{\tilde j}}$.
By denoting 
$$U_{bad} = \{r_i \in U_\textrm{ROI} ~|~ P_{\LL_{\tilde j}}(r_i - p)\in L_1 \cup L_3\},$$
and
$$U_{good} = \{r_i \in U_\textrm{ROI} ~|~ P_{\LL_{\tilde j}}(r_i - p)\in L_2 \cup L_4\}$$
we get that
\[
R_1'(r; p, H) = \frac{1}{\#|U_\textrm{ROI}|}\left[\sum\limits_{r_i\in U_{good}}\left[  \langle r_i - p, \tilde{y}_{\tilde j} \rangle^2 - \langle r_i - p, {y}_{\tilde j} \rangle^2\right] + \sum\limits_{r_i\in U_{bad}} \left[ \langle r_i - p, \tilde{y}_{\tilde j} \rangle^2 - \langle r_i - p, {y}_{\tilde j} \rangle^2\right]\right]
\]
\[
R_1'(r; p, H) \geq \frac{1}{\#|U_\textrm{ROI}|}\left[\sum\limits_{r_i\in U_{good}}  \langle r_i - p, \tilde{y}_{\tilde j} \rangle^2 - \sum\limits_{r_i\in U_\textrm{ROI}} \langle r_i - p, {y}_{\tilde j} \rangle^2\right]
.\]
Similar to \eqref{eq:ri_yk_bound_s_square_clean}, 
\[
\frac{1}{\#|U_\textrm{ROI}|}\sum\limits_{r_i\in U_\textrm{ROI}} \langle r_i - p, {y}_{\tilde j} \rangle^2 \leq 9 \sigma^2
,\]
and thus
\[
R_1'(r; p, H) \geq \frac{1}{\#|U_\textrm{ROI}|}\sum\limits_{r_i\in U_{good}}  \langle r_i - p, \tilde{y}_{\tilde j} \rangle^2
-9\sigma^2
.\]
Therefore, all we need to show is that given $n$ large enough, there are enough samples in $U_{good}$ for which the value $\lrangle{r_i - p, \tilde y_{\tilde j}}^2$ is large enough.
Using Lemma \ref{lem:num_of_samples_in_a_ball_clean}, as described in Claim \ref{thm:claim_useStochasticLemma}, since $U_\textrm{ROI}\subset B_D(P_{\MM}(r), \sqrt{\sigma \tau}+\sigma)$, then for any $\delta$ there is $N$ large enough such that for all $n>N$  with probability of at least $1 - \delta$ 
\[
\#|U_\textrm{ROI}| < 3n\cdot\mu_{min}\cdot \mathrm{V}_{d}\cdot (\sqrt{\sigma\tau}+\sigma)^d 
.\]
Thus,
\[
R_1'(r; p, H) \geq \frac{1}{3n\cdot\mu_{min}\cdot V\cdot ( \sqrt{\sigma\tau} +\sigma)^d }\sum\limits_{r_i\in U_{good}}  \langle r_i - p, \tilde{y}_{\tilde j} \rangle^2
-9\sigma^2
.\]
Below in the proof of Claim \ref{thm:claim_ri_tyk_bound_clean} we show that \eqref{eq:ri_tyk_bound_clean} holds (the proof below is independent of the current one, but utilizes the notion of $U_{good}$ defined above).
Explicitly, for $ r_i \in U_{good} $
\begin{equation*}
\lrangle{r_i - p, \tilde y_j}^2 \geq \frac{1}{64}\sigma \tau \sin\alpha_0 - \frac{1}{2}\sigma^{3/2}\tau^{1/2}
.\end{equation*}
Combining this with Lemma \ref{lem:num_of_samples_in_a_ball_clean} we get that for any $\delta$ there is $N$ large enough such that for all $n>N$  with probability of at least $1-\delta$
\begin{align*}
R_1'(r; p, H) 
&\geq 
\frac{\frac{n}{3}\cdot \mu_{max}\cdot \mathrm{V}_d \cdot(a_2\sqrt{\sigma\tau})^d}{3n\cdot\mu_{min}\cdot V\cdot ( \sqrt{\sigma\tau}+\sigma)^d}\left(\frac{1}{64}\sigma\tau sin\alpha_0 -  \frac{1}{2}\sigma^{3/2}\tau^{1/2} \right)
-9\sigma^2 \\
&=
\frac{ a_2^d\cdot\mu_{max}}{9\cdot\mu_{min}}\left(\frac{ 1}{1+\sqrt{\sigma/\tau}}\right)^d\left(\frac{1}{64}\sigma\tau sin\alpha_0 - \frac{1}{2}\sigma^{3/2}\tau^{1/2} \right)
-9 \sigma^2
.\end{align*}
Since $1/(1+x) \geq 1-x$ for sufficiently small $x$ we have that, for large enough $M$ of Assumption \ref{assume:noise}, of section \ref{sec:SamplingAssumptions},
\[
\left(\frac{ 1}{1+\sqrt{\sigma/\tau}}\right)^d \leq (1-\sqrt{\sigma/\tau})^d
\]
holds.
Thus, we have 
\[
R_1'(r; p, H) \geq \frac{ a_2^d\cdot\mu_{max}}{9\cdot\mu_{min}}(1-\sqrt{\sigma/\tau})^d\left(\frac{1}{64}\sigma\tau sin\alpha_0 - \frac{1}{2}\sigma^{3/2}\tau^{1/2} \right)
-9 \sigma^2,
\]
Since $a_2 = 1/8$ we have 
\[
R_1'(r; p, H) \geq \frac{ \mu_{max}}{\mu_{min}}\cdot\frac{1}{9}\left(\frac{1-\sqrt{\sigma/\tau}}{ 8}\right)^d \left(\frac{1}{64}\sigma\tau sin\alpha_0 - \frac{1}{2}\sigma^{3/2}\tau^{1/2} \right)
-9 \sigma^2,
\]
as required.
\qed

\begin{figure}
    \centering
    \includegraphics[width=0.8\textwidth]{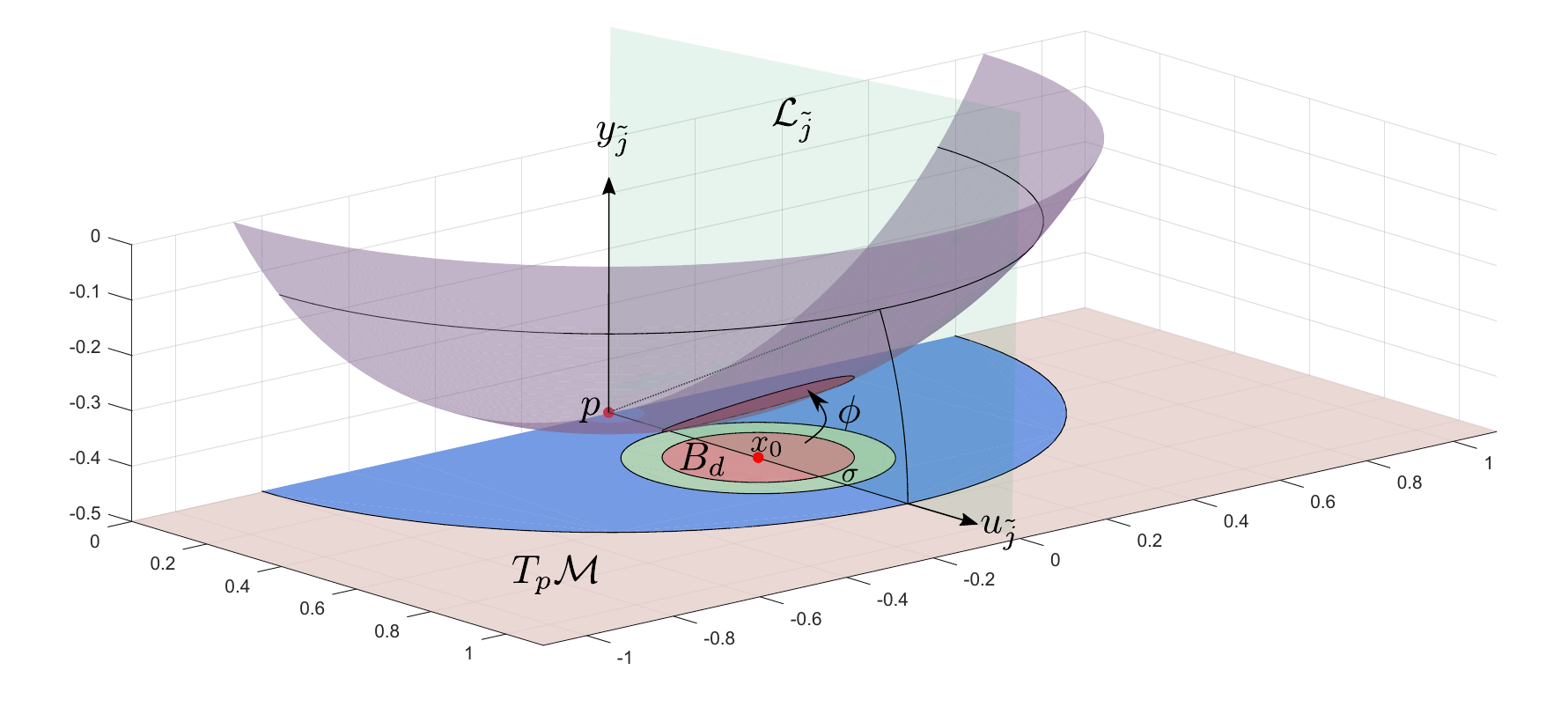}
    \caption{Assisting illustration for the proof of Claim \ref{thm:claim_ri_tyk_bound_clean}: the sphere section \emph{(}in purple\emph{)} represents the manifold; The $xy$-plane represents $T_p\MM$; the plane $\LL_{\tilde j}$ is spanned by $y_{\tilde j}\in T_p\MM^\perp$ and $u_{\tilde j}\in T_p\MM$; the blue disc on $T_p\MM$ is $B_{T_p\MM}(p, \sqrt{\sigma\tau} - \sigma)$; the manifold is considered to be the graph of the function $\phi_p:B_{T_p\MM}(p, \sqrt{\sigma\tau}- \sigma)\to T_p\MM^\perp$. }
    \label{fig:ManifoldGoodDisc_clean}
\end{figure}
\paragraph*{Proof of Claim \ref{thm:claim_ri_tyk_bound_clean}:}
As a result of Corollary \ref{cor:GraphOfFunctionTau_ROI}, $r_i \in U_\textrm{ROI}$ can be written as 
$$
r_i = \underbrace{p+(x_i, \phi_p(x_i))}_{p_i}+\eps_i ,
$$
where $x_i=P_{T_p\MM}(r_i - p)\in T_{p}\MM$ and $\|\eps_i\| \leq \sigma$.
Since 
$$ 
B_D(p, \sqrt{\sigma\tau}-\sigma ) \subset B_D(r, \sqrt{\sigma\tau}) = U_\textrm{ROI}
$$
(see the blue disc on $T_p\MM$ in Figure \ref{fig:ManifoldGoodDisc_clean}), we look for a point $x_0\in T_p\MM\cap \LL_{\tilde j}$ and a radius $\rho >0$ such that $\Gamma_{\phi_p, B_{T_p\MM}(x_0, \rho)}^\sigma\subset B_D(p, \sqrt{\sigma\tau} - \sigma)$, where
\begin{equation}
    \Gamma_{\phi_p, B_{T_p\MM}(x_0, \rho)}^\sigma \defeq \{x ~|~ dist(x, \Gamma_{\phi_p, B_{T_p\MM}(x_0, \rho)}) < \sigma\}
,\end{equation}
where we remind that
\[
\Gamma_{\phi_p, B_{T_p\MM}(x_0, \rho)} = \{p + (x, \phi_p(x))_{T_p\MM} ~|~ x\in B_{T_p\MM}(x_0, \rho)\}
.\]
Furthermore, we choose $x_0$ and $\rho$, such that $\lrangle{r_i - p, \tilde y_{\tilde j}}^2$ is large for any point $r_i\in\Gamma_{\phi_p, B_{T_p\MM}(x_0, \rho)}^\sigma$ (see Figure \ref{fig:ManifoldGoodDisc_clean} for an illustration).

For convenience, we denote by $q^y, q^x, q^{\vec x}$ the projections of $q\in \Gamma_{\phi_p, B_{T_p\MM}(x_0, \rho)}^\sigma$ onto $y_{\tilde j}$ (i.e., $T_p\MM^\perp\cap \LL_{\tilde j}$), $u_{\tilde j}$ (i.e., $T_p\MM\cap \LL_{\tilde j}$) and $T_p\MM$ respectively.
Let $a_1, a_2<1$ and we define 
$$\rho = a_2 \cdot\sqrt{\sigma\tau},$$
and 
$$x_0 = (a_1\sqrt{\sigma\tau})\cdot u_{\tilde j}\in T_p\MM \cap \LL_{\tilde j}.$$
That is, 
\[
\norm{p - x_0} = a_1 \cdot \sqrt{\sigma\tau}
.\]
In order to make sure that $\Gamma^\sigma_{\phi_p, B_{T_p\MM}(x_0, \rho)}\subset U_{\textrm{ROI}}$ we restrict the choice of $a_1, a_2$ such that
\begin{equation}\label{eq:a1a2_restriction_clean}
    \forall x\in B_{T_p\MM}(x_0, a_2\sqrt{\sigma\tau}): \norm{p+(x,\phi_p(x))_{T_p\MM} ~-~ p}
    + \sigma < \sqrt{\sigma\tau} - \sigma
,\end{equation}
We wish to reiterate that $U_\textrm{ROI} = \{r_i ~|~ \norm{r - r_i}\leq \sqrt{\sigma \tau}\}$ and so $U_\textrm{ROI}\subset B_D(r, \sqrt{\sigma\tau})$.
Furthermore, since $\norm{r - p}\leq \sigma$, we have $B_D(p, \sqrt{\sigma\tau} - \sigma)\subset  B_D(r, \sqrt{\sigma\tau})$.
Accordingly, all points $p+(x, \phi_p(x))_{T_p\MM}$ for  $x\in B_{T_p\MM}(x_0, a_2 \sqrt{\sigma\tau})$ are within our region of interest even when moved $\sigma$ away from the manifold in some direction into $\MM_\sigma$ (in Fig. \ref{fig:ManifoldGoodDisc_clean}  $B_{T_p\MM}(x_0, a_2\sqrt{\sigma\tau})$ is the orange disc and the projections of $p+(x, \phi_p(x))_{T_p\MM} + \eps(x)$ onto $T_p\MM$ are limited by the green disc containing the orange disc).
Using the calculations in Appendix \ref{subsec:simplification_app} we can use a simpler demand using $a_1$ and $a_2$, which ensures that the inequality \eqref{eq:a1a2_restriction_clean} is satisfied. The simplified requirement is
\begin{equation}\label{eq:a1a2_simplification_clean}
    (a_1+a_2)<\frac{1}{\sqrt{2}} -\frac{\sqrt{2\sigma}}{\sqrt{\tau}}
.\end{equation}

Let us now bound the value of $\lrangle{\tilde q - p, \tilde y_{\tilde j}}^2$ from below, for any $\tilde  q\in \Gamma^\sigma_{\phi_p,B_{T_p\MM}(x_0, a_2\sqrt{\sigma\tau})}$.
Every such $\tilde  q$ can be written as
\[
\tilde q = q + \eps
,\]
where $q\in \Gamma^0_{\phi_p,B_{T_p\MM}(x_0, a_2\sqrt{\sigma\tau})}$, and so 
 \begin{equation}\label{eq:qxBound_clean}
 (a_1 - a_2)\sqrt{\sigma\tau}  \leq \abs{q^x} \leq \|q^{\vec{x}}\| \leq \|p-x_0\|+a_2\sqrt{\sigma\tau} = (a_1 + a_2)\sqrt{\sigma\tau}   
.\end{equation}
From Lemma \ref{lem:f_bound_circle_clean} we get that
\[
\abs{\lrangle{q, y_{\tilde j}}} \leq \tau-\sqrt{\tau^2-\norm{q^{\vec{x}}}^2} 
.\]
Thus, 
\[
\abs{\langle \tilde{q}, y_{\tilde j} \rangle} = |\tilde{q}^y| \leq \tau+\sigma-\sqrt{\tau^2-\|q^{\vec{x}}\|^2}
,\]
and by plugging the right hand side of \eqref{eq:qxBound_clean} we get
\begin{align*}
\langle \tilde{q}, y_{\tilde j} \rangle^2 & \leq \left(\tau+ \sigma -\sqrt{\tau^2-(a_1+a_2)^2\sigma\tau}\right)^2
\\& =\tau^2 \left(1+ \sigma/\tau -\sqrt{1-(a_1+a_2)^2\sigma/\tau}\right)^2
\\&\leq \tau^2(1+\sigma/\tau -(1-(a_1+a_2)^2\sigma/\tau))^2
\\&= \tau^2 (\sigma/\tau +  (a_1+a_2)^2\sigma/\tau)^2
\\&=  \sigma^2(1 +  (a_1+a_2)^2)^2
\\&\leq 2\sigma^2
,\end{align*}
where the last inequality comes from \eqref{eq:a1a2_simplification_clean}.
Since $\angle(y_{\tilde j}, \tilde y_{\tilde j}) =\beta_{\tilde j} $ we can use the Euclidean geometry on $\LL_{\tilde j}$ (Figure \ref{fig:Lk_clean}) to get
\[
\lrangle{q, \tilde y_{\tilde j}} = \lrangle{q, \textrm{Rot}(\beta_{\tilde j}) y_{\tilde j}} = \lrangle{\textrm{Rot}(-\beta_{\tilde j}) q,  y_{\tilde j}}
,\]
where $\textrm{Rot}(\theta)$ denotes the rotation matrix in $\RR^D$ with respect to the angle $\theta$ in $\LL_{\tilde j}$ .
Therefore,
$$|\langle q, \tilde{y}_{\tilde j} \rangle| \geq \abs{-|q^x|\sin \beta_{\tilde j}  + |q^y|\cos \beta_{\tilde j}} = \abs{|q^x|\sin \beta_{\tilde j}  - |q^y|\cos \beta_{\tilde j}}.$$
Using Lemma \ref{lem:f_bound_circle_clean} and \eqref{eq:qxBound_clean} as before we get
\begin{align*}
 |\langle q, \tilde{y}_{\tilde j} \rangle| &=\abs{  |q^x|\sin \beta_{\tilde j}  -  \left(\tau-\sqrt{\tau^2-\|q^{\vec{x}}\|^2} \right)\cos \beta_{\tilde j}}
 \\
 &\geq\left((a_1-a_2)\sqrt{\sigma\tau}\sin \beta_{\tilde j}  -  \tau\left(1-\sqrt{1- (a_1+a_2)^2\sigma/\tau} \right)\cos \beta_{\tilde j} \right)
 \\
 &\geq\tau\left((a_1-a_2)\sqrt{\sigma/\tau}\sin \beta_{\tilde j}-\cos \beta_{\tilde j} + \cos \beta_{\tilde j}(1- (a_1+a_2)^2\sigma /\tau) \right)
 \\
 &= \tau \left((a_1-a_2)\sqrt{\sigma/\tau}\sin \beta_{\tilde j} -(a_1+a_2)^2\sigma/\tau \cos\beta_{\tilde j}\right)
 \\
 &\geq \tau \left((a_1-a_2)\sqrt{\sigma/\tau}\sin \beta_{\tilde j} -(a_1+a_2)^2\sigma/\tau \right) 
,\end{align*}
and
\begin{equation}\label{eq:q_yk_prod_clean}
    |\langle q, \tilde{y}_{\tilde j} \rangle| \geq \tau \left((a_1-a_2)\sqrt{\sigma/\tau}\sin \alpha_0 -(a_1+a_2)^2\sigma/\tau \right) 
.\end{equation}
Since 
$\|\tilde{q} - q\| \leq \sigma $
we get 
$$\abs{\lrangle{\tilde q - q, \tilde y_{\tilde j}}} = |\langle \tilde{q}, \tilde{y}_{\tilde j}\rangle - \langle q, \tilde{y}_{\tilde j}\rangle| \leq\sigma,$$
and so,
\begin{align*}
|\langle q, \tilde{y}_{\tilde j}\rangle| - |\langle \tilde{q}, \tilde{y}_{\tilde j}\rangle|&\leq |\langle q, \tilde{y}_{\tilde j}\rangle - \langle \tilde{q}, \tilde{y}_{\tilde j}\rangle| \leq  \sigma\\
|\langle \tilde{q}, \tilde{y}_{\tilde j}\rangle|&\geq |\langle q, \tilde{y}_{\tilde j}\rangle| - \sigma\\
\langle \tilde{q}, \tilde{y}_{\tilde j}\rangle^2&\geq \langle q, \tilde{y}_{\tilde j}\rangle^2 - 2\sigma|\langle q, \tilde{y}_{\tilde j}\rangle| + \sigma^2\\
\langle \tilde{q}, \tilde{y}_{\tilde j}\rangle^2&\geq \langle q, \tilde{y}_{\tilde j}\rangle^2 - 2\sigma|\langle q, \tilde{y}_{\tilde j}\rangle|
.\end{align*}
 Substituting $\abs{\lrangle{q, \tilde y_{\tilde j}}}$ with the bound from \eqref{eq:q_yk_prod_clean} we get
\begin{align*}
    \lrangle{\tilde q, \tilde y_{\tilde j}}^2 & \geq \tau^2 \left((a_1-a_2)\sqrt{\sigma/\tau}\sin \alpha_0 -  (a_1+a_2)^2{\sigma/\tau}\right)^2 - 2\sigma\tau \left((a_1-a_2)\sqrt{\sigma/\tau}\sin \alpha_0 -  (a_1+a_2)^2{\sigma/\tau}\right) \\
    &\geq \tau^2\left((a_1-a_2)\sqrt{\sigma/\tau}\sin \alpha_0 -  {\sigma/\tau}\right)^2 - 2\sigma\tau \left((a_1-a_2)\sqrt{\sigma/\tau}\sin \alpha_0\right)\\
    &= \tau^2 \left((a_1-a_2)^2{\sigma/\tau}\sin^2 \alpha_0 - 2(a_1-a_2)(\sigma/\tau)^{3/2}\sin\alpha_0 + (\sigma/\tau)^2\right) -2(a_1-a_2)\sigma^{3/2}\sqrt{\tau} \sin \alpha_0 \\
    &\geq (a_1-a_2)^2{\sigma\tau}\sin^2 \alpha_0 - 2(a_1-a_2)\sigma^{3/2}\tau^{1/2}\sin\alpha_0 + \sigma^2 -2(a_1-a_2)\sigma^{3/2}{\tau}^{1/2} \\
    &= (a_1-a_2)^2{\sigma\tau}\sin^2 \alpha_0 - 2(a_1-a_2)\sigma^{3/2}\tau^{1/2} + \sigma^2 -2(a_1-a_2)\sigma^{3/2}{\tau}^{1/2} \\
    &= (a_1-a_2)^2{\sigma\tau}\sin^2 \alpha_0 - 4(a_1-a_2)\sigma^{3/2}\tau^{1/2} + \sigma^2 
\end{align*}
Thus, by choosing for example $a_1 = \frac{1}{4}$ and $a_2 = \frac{1}{8}$ we get
\begin{equation}
    \lrangle{\tilde q, \tilde y_{\tilde j}}^2 \geq \frac{1}{64}\sigma \tau sin\alpha_0 - \frac{1}{2}\sigma^{3/2}\tau^{1/2}
.\end{equation}

\end{proof}

\begin{Lemma}\label{lem:J_1qH_clean}
Let the sampling assumption of Section \ref{sec:SamplingAssumptions} hold. Let ${p_r} = P_\MM(r)$ be the projection of $r$ onto $\MM$, and $ T_{p_r}\MM $ be the tangent to $ \MM $ at $ {p_r} $.
For $\alpha = \sqrt{C_M/M}$, where $M = \frac{\tau}{\sigma}$, and $C_M$ is a constant  the following holds:
For any $\delta >0$ there is $N_\delta$ (independent of $\alpha$) such that $\forall n > N_\delta$ all linear sub-spaces $H \in Gr(d,D)$ with $\angle_{max}(H,T_{{p_r}}\MM) > \alpha$, and all $q$ in the search space of \eqref{eq:argmin1_clean} yield a score 
\[
J_1(r; {q}, H) \geq 100\sigma^2 
\]
with probability of at least $1 - \delta$. 
\end{Lemma}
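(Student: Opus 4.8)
The plan is to deduce the statement from Lemma~\ref{thm:J1pH_clean}, which already handles the distinguished origin $p_r=P_\MM(r)$, by showing that replacing $p_r$ by an arbitrary admissible $q$ perturbs $J_1$ only by a lower-order amount. First observe that any $q$ in the search space of \eqref{eq:argmin1_clean} satisfies $\norm{r-q}<2\sigma$, and since $r\in\MM_\sigma$ we have $\norm{r-p_r}\le\sigma$, hence $\norm{q-p_r}<3\sigma$. Writing $r_i-q=(r_i-p_r)+(p_r-q)$ and putting $b:=P_{H^\perp}(p_r-q)$, so $\norm{b}\le\norm{p_r-q}<3\sigma$, the elementary identity $\dist^2(r_i-q,H)=\norm{P_{H^\perp}(r_i-p_r)}^2+2\lrangle{P_{H^\perp}(r_i-p_r),b}+\norm{b}^2$, averaged over $r_i\in U_{\textrm{ROI}}$, gives
\[
J_1(r;q,H)=J_1(r;p_r,H)+2\lrangle{\bar\mu,b}+\norm{b}^2\ \ge\ J_1(r;p_r,H)-6\sigma\norm{\bar\mu},
\]
where $\bar\mu:=\frac1N\sum_{r_i\in U_{\textrm{ROI}}}P_{H^\perp}(r_i-p_r)$. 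This inequality is deterministic and holds uniformly over all admissible $q$, so the remaining randomness is entirely that of Lemma~\ref{thm:J1pH_clean}.

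Next I would bound the mean $\bar\mu$ by its second moment: by Jensen's inequality $\norm{\bar\mu}^2\le\frac1N\sum_{r_i\in U_{\textrm{ROI}}}\norm{P_{H^\perp}(r_i-p_r)}^2=J_1(r;p_r,H)$, and therefore
\[
J_1(r;q,H)\ \ge\ J_1(r;p_r,H)-6\sigma\sqrt{J_1(r;p_r,H)}.
\]
Because $t\mapsto t-6\sigma\sqrt t$ is increasing for $t\ge 9\sigma^2$, it now suffices to feed in a large enough lower bound for $J_1(r;p_r,H)$. Lemma~\ref{thm:J1pH_clean} supplies one that is valid simultaneously for all $H$ with $\angle_{\max}(H,T_{p_r}\MM)>\alpha$ with probability at least $1-\delta$; moreover, the lower bound on $R_1(r;p_r,H)$ obtained in its proof is of the order $\sigma\tau\sin\alpha\sim\sigma^2\sqrt{C_M M}$, so by taking the constant $C_M$ large enough we may in fact assume $J_1(r;p_r,H)\ge C_1\sigma^2$ for any prescribed $C_1$ (the number $109$ recorded in the statement being merely a convenient threshold $>50$). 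Choosing $C_1$ with $C_1-6\sqrt{C_1}\ge 100$, e.g.\ $C_1=200$, then yields $J_1(r;q,H)\ge 100\sigma^2$, with probability at least $1-\delta$, uniformly over every admissible $q$ and every $H$ with $\angle_{\max}(H,T_{p_r}\MM)>\alpha$.

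The one point requiring care is the cross term $\lrangle{\bar\mu,b}$: bounded crudely, via $\norm{\bar\mu}\le\norm{r_i-p_r}\lesssim\sqrt{\sigma\tau}$, it would contribute $-O(\sigma^{3/2}\tau^{1/2})$, which dwarfs the target $100\sigma^2$ once $\tau\gg\sigma$. The resolution is structural rather than numerical: whatever tilt of $H$ could make $\bar\mu$ large also forces $J_1(r;p_r,H)$ to be large, since both are controlled by $\dist(r_i-p_r,H)$; hence the self-improving estimate $\norm{\bar\mu}\le\sqrt{J_1(r;p_r,H)}$ together with the unbounded growth of $J_1(r;p_r,H)$ in $M$ is exactly what closes the gap. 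An alternative, if one wishes not to invoke a strengthened form of Lemma~\ref{thm:J1pH_clean}, is to re-run that lemma's proof with $q$ in place of $p_r$ throughout: each inner product shifts by at most $3\sigma$, so $R_1''$ loses only $O(\sigma^2)$, while in the treatment of $R_1'$ the set $U_{good}$ and the bound $\lrangle{r_i-q,\tilde y_{\tilde j}}^2\gtrsim\sigma\tau\sin\alpha$ persist up to $O(\sigma^{3/2}\tau^{1/2})+O(\sigma^2)$ corrections, which are again absorbed by the main term for $M$ large.
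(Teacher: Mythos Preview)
Your proof is correct, and in fact more careful than the paper's own argument. Both share the same strategy: compare $J_1(r;q,H)$ to $J_1(r;p_r,H)$ via the decomposition $r_i-q=(r_i-p_r)+(p_r-q)$, and then invoke Lemma~\ref{thm:J1pH_clean}. The difference is in how the cross term is handled. The paper simply asserts the one-line bound
\[
\dist^2(r_i-q,H)=\norm{p-q}^2+\dist^2(r_i-p,H)\ \ge\ \dist^2(r_i-p,H)-9\sigma^2
\]
(with $p=P_\MM(q)$), which combined with the constant $109$ from Lemma~\ref{thm:J1pH_clean} yields $100\sigma^2$ directly; however that displayed identity is not valid as written, since $P_{H^\perp}(r_i-p)$ and $P_{H^\perp}(p-q)$ are not orthogonal in general and $p-q\notin H^\perp$. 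Your expansion isolates the cross term $2\lrangle{\bar\mu,b}$ explicitly and controls it by the Cauchy--Schwarz/Jensen step $\norm{\bar\mu}\le\sqrt{J_1(r;p_r,H)}$, yielding the self-improving inequality $J_1(r;q,H)\ge J_1(r;p_r,H)-6\sigma\sqrt{J_1(r;p_r,H)}$. This is rigorous; the price is that you need a stronger input, $J_1(r;p_r,H)\ge 200\sigma^2$ rather than $109\sigma^2$, but your observation that the proof of Lemma~\ref{thm:J1pH_clean} actually delivers a lower bound of order $\sqrt{C_M M}\,\sigma^2$ (since $\sigma\tau\sin\alpha\sim\sqrt{C_M M}\,\sigma^2$) is exactly right, so the slack is available by enlarging $C_M$. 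The alternative you sketch at the end---rerunning the proof of Lemma~\ref{thm:J1pH_clean} from the origin $q$ and absorbing the $O(\sigma)$ shifts in each inner product---is also sound and is closer to what the paper presumably intended.
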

\begin{proof}
From Lemma \ref{thm:J1pH_clean}, we have that there is a constant $C_M$ such that for any $ {\alpha} < \frac{\pi}{2} $ and $M = C_M/\alpha^2$, and for any $\tau$ and $\sigma$ maintaining $\frac{\tau}{\sigma}>M$
    the following hold:
For any $\delta >0$ arbitrarily small there is $ N_\delta $ sufficiently large such that for all $ n>N_\delta $, \textbf{all} linear spaces $H$ with $\angle_{max}(H,T_{p_r}\MM) > \alpha$, yield a score $J_1(r; p_r, H) \geq 109 \cdot \sigma^2 $, with probability of at least $1 - \delta$. 
We now wish to show that $J_1(r; q, H) \geq 100 \sigma^2$ with high probability as well.
For convenience we wish to reiterate \eqref{eq:J1_clean}
\[
J_1(r; q, H) = \frac{1}{n}\sum_{r_i\in U_{\textrm{ROI}}} \dist^2 (r_i - q,H)
.\]
By Constraint 3 of \eqref{eq:J1_clean} we achieve that $dist(q, \MM) \leq 3\sigma$ and so for $p=P_\MM(q)$ we have
\[
\norm{q - p} \leq 3\sigma
.\]
Thus,
\[
\dist^2 (r_i - q,H) = \norm{p - q}^2 + \dist^2 (r_i - p,H) \geq \dist^2 (r_i - p,H) - 9\sigma^2
,\]
and we achieve
\[
J_1(r; q, H) \geq J_1(r; p, H) - 9\sigma^2
.\]
By Lemma \ref{thm:J1pH_clean} we conclude the proof of the current lemma.
\end{proof}

\subsection{Proof of Theorem \ref{thm:Step2}}
\label{sec:Step2Analysis}

\tikzstyle{decision} = [diamond, aspect=2, draw, fill=yellow!0,
text width=7em, text badly centered, node distance=3cm, inner sep=0pt]
\tikzstyle{block} = [rectangle, draw, fill=blue!20,
text width=5em, text centered, rounded corners, minimum height=2em]
\tikzstyle{wblock} = [rectangle, draw, fill=blue!20,
text width=7em, text centered, rounded corners, minimum height=2em]
\tikzstyle{line} = [draw, -latex']
\tikzstyle{bbblock} = [rectangle, draw, fill=blue!20,
text width=10em, text centered, rounded corners, minimum height=2em]
\tikzstyle{xbblock} = [rectangle, draw, fill=blue!20,
text centered, rounded corners, minimum height=2em]
\tikzstyle{wwblock} = [rectangle, draw, fill=blue!20,
text width=9.5em, text centered, rounded corners, minimum height=2em]
\tikzstyle{Iwwblock} = [rectangle, draw, fill=blue!50,
text width=9.5em, text centered, rounded corners, minimum height=2em]
\tikzstyle{line} = [draw, -latex']

\tikzstyle{Iblock} = [rectangle, draw, fill=blue!40,
text width=5em, text centered, rounded corners, minimum height=2em]

\tikzstyle{Iwblock} = [rectangle, draw, fill=blue!40,
text width=7em, text centered, rounded corners, minimum height=2em]

\tikzstyle{Rblock} = [rectangle, draw, fill=red!20,
text width=5em, text centered, rounded corners, minimum height=2em]

\tikzstyle{Rbbblock} = [rectangle, draw, fill=red!20,
text width=10em, text centered, rounded corners, minimum height=2em]

\newcommand{\dd}{2cm}
\newcommand{\ddd}{4cm}
\newcommand{\thd}{4cm}
\newcommand{\qut}{3cm}
\newcommand{\oct}{2cm}
\newcommand{\sm}{0.8cm}
\newcommand{\wit}{20pt}

\tikzstyle{innerWhite} = [-, white,line width=1mm, shorten >= 4.5pt]
\tikzstyle{innerWhiteEq} = [-, white,line width=1mm, shorten >= 0pt]
\tikzstyle{imp} = [-{implies}, double, double distance=1.1mm, line width=0.3mm]
\tikzstyle{Rimp} = [-{implies}, red,double, double distance=1.1mm, line width=0.3mm]
\tikzstyle{Gimp} = [-{implies}, green,double, double distance=1.1mm, line width=0.3mm]
\tikzstyle{Req} = [=, red,double, double distance=1.1mm, line width=0.3mm]
\tikzstyle{eq} = [=,  double distance=1.1mm, line width=0.4mm]
\tikzstyle{Geq} = [=, green, double distance=1.1mm, line width=0.4mm]
\usetikzlibrary{arrows, shapes.gates.logic.US, calc}

\begin{figure}
    \centering
    \begin{tikzpicture}[auto]
               
        \node [Rblock] (main_lem) {Lemma \textcolor{blue}{\ref{lem:main_support_theorem_step2}}};
        \node [Rblock,left = 7cm of main_lem] (thm33) {Theorem \textcolor{blue}{\ref{thm:Step2}}};
        
        \draw [imp] (main_lem) to[out=180,in=0] (thm33) node [midway, below, sloped] (TextNode) {};
        
        \node [block, below right =\dd and 1.3cm of main_lem] (Tf_Tf_tild) {L \textcolor{blue}{\ref{lem:Tf_TtildeF_D}}};
        \node [below = 1cm of Tf_Tf_tild, text width = 3cm, align = center] 
            (secTf_Tf_tilde_title) {Section \textcolor{blue}{\ref{sec:Tf_Tf_tilde_err}}: Bounding $\maxangle(T_0 f_\ell, T_0 {\wtilde{f}_\ell})$};
        \draw[red,thick,dotted] ($( Tf_Tf_tild.north west)+(-0.4,0.6)$)  rectangle ($(secTf_Tf_tilde_title.south east)+(0.3,-0.1)$);
        \node [block, below =\dd of main_lem] (Tf_tilde_finite_est) {L \textcolor{blue}{\ref{lem:AngleImprovement}}};
        \node [below = 1cm of Tf_tilde_finite_est, text width = 3cm, align = center]
            (secfinit_sample_tangen_title) 
            {Section \textcolor{blue}{\ref{sec:FiniteSample_err}}: 
            Bounding
            $\maxangle(T_0\wtilde f_\ell, H_{\ell + 1})$};
        \draw[red,thick,dotted] ($( Tf_tilde_finite_est.north west)+(-0.4,0.6)$)  rectangle ($(secfinit_sample_tangen_title.south east)+(0.3,-.1)$);
        \node [block, below left = \dd and 1.3cm of main_lem]  (Tf_move_rot) {L \textcolor{blue}{\ref{lem:shift_ang_general}}};
        \node [below = 1cm of Tf_move_rot, text width = 3cm, align = center]
            (secshifted_center_t_title) {Section \textcolor{blue}{\ref{sec:ShiftedCenter_err}}: 
            Bounding 
            $\maxangle(T_0\wtilde f_{\ell+1}, H_{\ell + 1})$ 
            given 
            $\maxangle(T_0\wtilde f_\ell, H_{\ell + 1})$};
        \draw[red,thick,dotted] ($( Tf_move_rot.north west)+(-0.4,0.6)$)  rectangle ($(secshifted_center_t_title.south east)+(0.3,-0.1)$);
        \node [block, below right = \dd and -0.65 cm of thm33](f0_move_rot) {L \textcolor{blue}{\ref{lem:dist_to_f_l0_weak}}};
        \node [block, below = 0.1 cm of f0_move_rot](f0_move_rot2) {L \textcolor{blue}{\ref{lem:dist_to_f_l0_srtong_small_alpha}}};
        \node [below = 1cm of f0_move_rot, text width = 3cm, align = center] 
            (secf0_est_title) {Section \textcolor{blue}{\ref{sec:f0_est}}: Bounding $\|q_\ell - f_\ell(0)\| $};
        \draw[red,thick,dotted] ($( f0_move_rot.north west)+(-0.4,0.6)$)  rectangle ($(secf0_est_title.south east)+(0.3,-0.1)$);

        \node [block, below left = \dd and -0.65 of thm33](init_H0) {L \textcolor{blue}{\ref{lem:Tf_Tftilde_Init}}};
        \node [below = 1cm of init_H0, text width = 3cm, align = center] (secX_title) {Initial \\ conditions};
        \draw[red,thick,dotted] ($( init_H0.north west)+(-0.4,0.6)$)  rectangle ($(secX_title.south east)+(0,-0.1)$);
        

	   \draw [imp] (Tf_Tf_tild) to[out=90,in=-90,looseness=0.73] (main_lem) node [midway, below, sloped] (TextNode) {};
	   \draw [imp] (Tf_tilde_finite_est) to[out=90,in=-90,looseness=0.73] (main_lem) node [midway, below, sloped] (TextNode) {};
	   \draw [imp] (Tf_move_rot) to[out=90,in=-90,looseness=0.73] (main_lem) node [midway, below, sloped] (TextNode) {};
	   \draw [imp] (f0_move_rot) to[out=90,in=-90,looseness=0.73] (thm33) node [midway, below, sloped] (TextNode) {};
	   
	   \draw [imp] (init_H0) to[out=90,in=-90,looseness=0.73] (thm33) node [midway, below, sloped] (TextNode) {};

    \end{tikzpicture}
    \caption{Road-map for proof of Theorem \ref{thm:Step2}}
    \label{tikz:thm33_lemmas}
\end{figure}


Before delving into the details of the proof, we wish to reiterate the steps of Algorithm \ref{alg:step2_clean} while introducing some useful notations.
According to the assumptions of Theorem \ref{thm:Step2} we have a local coordinate system $(q, H)\in \RR^D\times Gr(d, D)$, such that $ \norm{q-p} \leq 3 \sigma $ and $\angle_\textrm{max}(T_p\MM, H) \leq \alpha_0$, where $p\in \MM$ .
As Algorithm \ref{alg:step2_clean} involves an iterative process, we denote the initial values as $q_{-1} := q$ and $H_0 := H$.
Then, at each iteration we update the origin $q_\ell$ and  the local coordinates' directions $H_\ell$ (for $\ell=1, \ldots, \kappa$). 

Similar to \eqref{eq:FunctionGraph_init} and using the result of Lemma \ref{lem:M_is_locally_a_fuinction_clean}, we begin by looking at the manifold patch $\MM \cap \textrm{Cyl}_{H_0}(p,c_{\pi/4}\tau,\tau/2)$ as the graph of a function $f_{-1/2}:(q_{-1},H_0) \simeq\RR^d\to\RR^{D-d}$; i.e., 
\begin{equation}
 \Gamma_{f_{-1/2}, q_{-1},H_0} \defeq \{q_{-1} + (x, f_{-1}(x))_{H_0} ~|~ x\in B_{H_0}(0, c_{\pi/4}\tau)\}  
,\end{equation}
where $\textrm{Cyl}_{H_0}(p,c_{\pi/4}\tau, \tau/2)$ is the $D$-dimensional cylinder with the base $B_H(p, c_{\pi/4}\tau)\subset H_0$ and height $\tau/2$ in any direction on $H_0^\perp$. 
For the remainder of this section we assume that $ \alpha $ is small enough.
Then, at the first step we estimate $f_{-1/2}(0)$ in order to update the origin from $q_{-1}$ to $q_0$ and make sure it is close to the manifold. 
This is done through the weighted least-squares minimization of \eqref{eq:argmin2_clean} and by evaluating the local polynomial estimate $\pi^*_{q_{-1}, H_0}$ at $ 0 $; i.e., we set $q_0 = q_{-1} + (0,\pi^*_{q_{-1}, H_0}(0))_{H_0}$.

Subsequently, we look at the function $f_0:(q_0,H_0) \simeq \RR^d\to\RR^{D-d}$ taking us from the shifted coordinate system $(q_0, H_0)$ to $\MM$; i.e., $\MM$ is now locally expressed by the graph 
\begin{equation}
 \Gamma_{f_0, q_0, H_0} \defeq \{q_{0} + (x, f_0(x))_{H_0} ~|~ x\in B_{H_0}(0,c_{\pi/4}\tau)\}
.\end{equation}

Using similar notation, each iteration of Algorithm \ref{alg:step2_clean} comprises two steps. First, we update $H_\ell$ to $H_{\ell + 1}$ by taking the linear space coinciding with the directions of the estimated tangent to $f_{\ell }$ at zero to get $f_{\ell+ 1/2}:(q_\ell, H_{\ell+1}) \simeq \RR^d \to \RR^{D-d}$. Second, we update $q_\ell$ to $q_{\ell+1} $ by taking the estimated value of $f_{\ell+1/2}$ at zero to get $f_{\ell+1}:(q_{\ell+1},H_{\ell+1}) \simeq \RR^d \to \RR^{D-d}$.

Explicitly, given $(q_{\ell}, H_{\ell})$, we look at the manifold as $\Gamma_{f_{\ell}, q_{\ell}, H_{\ell}}$ the local graph of a function $f_{\ell}$ and estimate $T_0 f_{\ell} \in Gr(d, D)$ the tangent to the graph of $f_{\ell}$ at $0$ through taking the image of $\DD_{\pi^*_{q_{\ell}, H_{\ell}}}[0]$, the first order differential of $\pi^*_{q_{\ell }, H_{\ell }}$ at zero.
In other words, we ``rotate" the coordinate system to the point where $H_{\ell+1}$ aligns with the former tangent estimation to get $f_{\ell+1/2}$. 
Then, we define 
 $ q_{\ell+1}  =  q_\ell + ( 0 , \pi^*_{q_\ell, H_{\ell+1}} )_{H_\ell} $ and get $f_{\ell +1}$ by shifting the coordinate system from $(q_\ell, H_{\ell+1})$ to $(q_{\ell+1}, H_{\ell+1})$.

However, if we want to use the well-known convergence rates of local polynomial regression (i.e., the minimization of \eqref{eq:argmin2_clean}), a key assumption in the analysis is that the noise is of zero mean \cite{stone1980optimal}. 
However, in our case, for any $x\in H$, the samples above $x$ are uniformly distributed~in 
\begin{equation}\label{eq:Omega_def}
\Omega(x) = (x + H^\perp) \cap  \MM_\sigma
,\end{equation}
where $x + H^\perp = \{x + y ~|~ y\in H^\perp\}$.
That is, $\Omega(x):H \simeq \RR^d \to 2^{\RR^{D-d}}$.
Thus, denoting $\eta(x)\sim \textrm{Unif}(\Omega(x))$ and defining 
\begin{equation}\label{eq:ftilde_def_general}
\wtilde f(x) = \mathbb{E}[\eta(y | x)]    
,\end{equation}
the result of the regression will estimate $\wtilde f$ rather than $f$ itself.

In our case, we are estimating $f_{\ell}$ and $f_{\ell + 1/2}$, and the corresponding $\eta_{\ell}(y | x), \eta_{\ell+1/2}(y | x)$ are compactly supported since the sampling is uniform on $\MM_\sigma$.
However, that same fact implies that $f_\ell \neq \mathbb{E}[\eta_\ell(y | x)]$ and $f_{\ell+1/2} \neq \mathbb{E}[\eta_{\ell+1/2}(y | x)]$ (see Figure \ref{fig:TiltedBias}).
Thus, when we try estimating $f_\ell$ or its derivatives, we in fact estimate a different function, for which the noise has zero mean (see Figure \ref{fig:bias_ex_clean}); that is, we are estimating the function of conditioned expected value 
\begin{equation}\label{eq:ftilde_def}
\wtilde{f}_\ell(x) = \mathbb{E}[\eta_\ell(y |x)]
,\end{equation}
where $\eta_\ell(y |x)$ is the distribution of values $y\in H_{\ell}^\perp$ conditioned by the values $ x\in H_{\ell} $.

\begin{figure}
	\centering
	\includegraphics[width=0.4\textwidth]{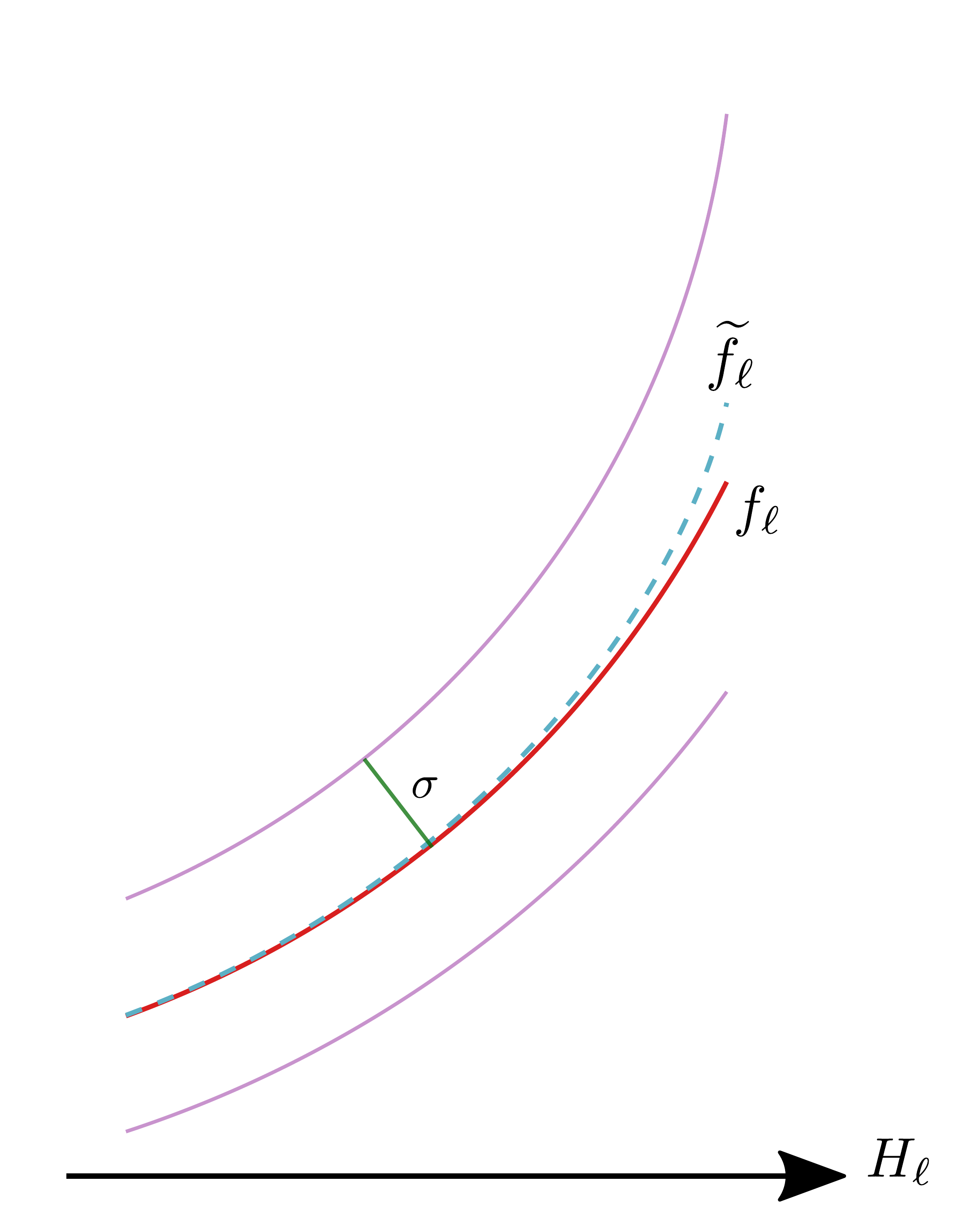}
	\caption{Illustration of $ \MM $ as a graph of a function $ f_\ell $ \emphbrackets{marked by the red solid line} above the coordinate system $ f_\ell $. The boundary of  $ \MM_\sigma $ is delineated by the pink lines and $ \wtilde f_\ell $ is the conditioned expectancy $ \EE[\eta_\ell(y ~|~ x)] $ of this domain with respect to the presented $ y $-axis. }
	\label{fig:bias_ex_clean}
\end{figure}

Since for any arbitrarily fixed $ x $ the density of the random variable $ \eta_\ell(y | x) $ is constant, $\wtilde{f}_\ell(x)$ can be computed as the mean of the set $\Omega(x)$ defined in \eqref{eq:Omega_def}.
Furthermore, the farthest point in each such direction is exactly $\sigma$ away from the graph of $f_\ell$ (see the red line in Figure \ref{fig:draw_step_2_bias_clean}).

\begin{figure}
	\centering
	\includegraphics[width=0.45\textwidth]{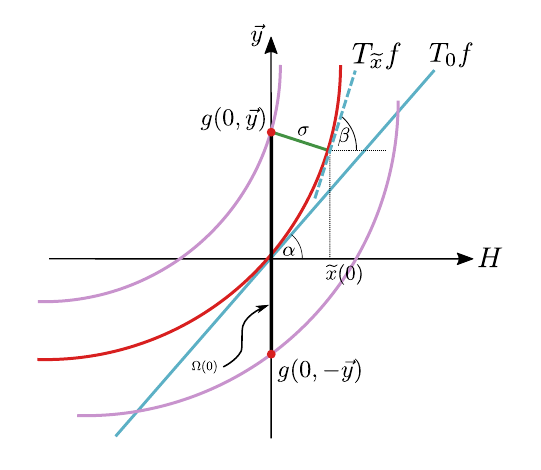}
	\caption{Illustration of $g(0,\vec{y})$ and $\Omega(0)$ in the two dimensional case. Let $H$ be some local coordinate system and consider $\MM$ as a local graph of some function $f:H\to H^\perp$. The upper bound for the values of the sample distribution above $0$ in some direction $\theta$ of $H^\perp$ is $g(0,\theta)$. This value is $\sigma$-away from some point on $\MM$. We denote this point by $f(\wtilde x(0))\in \MM$.}
	\label{fig:draw_step_2_bias_clean}
\end{figure}
Below we show that Algorithm \ref{alg:step2_clean} starts with $(q, H)$ a rough estimate of the origin and  tangent (which plays the role of the coordinate system), and as $n\to \infty$, $\hat p_n$ approaches $\mathbf p\in \MM$ and $\widehat{T_{\hat p_n}\MM}$ approaches $T_{\mathbf p}\MM$ as well.  
We first note that by Lemma \ref{lem:Tf_Tftilde_Init} since $ \maxangle(H_1, T_p\MM) \leq \alpha $ is sufficiently small we get 
\[
\maxangle(H_0, T_0 f_{-1/2}) \leq \frac{3\alpha}{2}
,\]
and we denote $ \alpha_0 = \maxangle(H_0, T_0 f_{-1/2}) = \maxangle(H_0, T_0 f_{0}) $ with which we initiate the iterates.

A key lemma in the proof of Theorem \ref{thm:Step2} is the following

\begin{Lemma} \label{lem:main_support_theorem_step2}
    Let $H_\ell\in Gr(d, D)$, and let $f_\ell:H_\ell\simeq\RR^d \rightarrow \RR^{D-d}$, defined as in \eqref{eq:fl_def}. Define $H_{\ell+1} = \Ima(\DD_{\pi^*_{q_\ell,H_{\ell}}}[0])$, as in Algorithm \ref{alg:step2_clean}. Let $f_{\ell+1/2}$ be as defined in \eqref{eq:fl12_def} and let $r_1 = \frac{k-1}{2k +d}$. Assume that $M = \frac{\tau}{\sigma} \geq C_\tau \sqrt{D\log D}$ (where $C_\tau$ is a constant from Lemma \ref{lem:Tf_TtildeF_D}).
    For any $\delta>0$, there is $N$ such that for any number of samples $n>N$, and any $\alpha \leq\frac{3}{2} \sqrt{C_M/M}$ smaller than some constant (see Theorem \ref{thm:Step1} and Section \ref{sec:SamplingAssumptions} for the definition of $M$ and $C_M$).  If
    \begin{enumerate}
        \item $\maxangle (H_\ell, T_0 f_\ell) = \alpha_\ell\leq \alpha$
        \item $48\frac{\|f_\ell(0)\|}{\tau} \leq \alpha$
        \item $ 12\sqrt{d} \frac{C_0 \ln(1/\delta)}{n^{r_1}}\leq \alpha $ 
    \end{enumerate}
    hold, where $C_0$ is some constant. Then, we have  
    \[
    \alpha_{\ell+1} = \maxangle (H_{\ell+1}, T_0 f_{\ell+1})  = \maxangle (H_{\ell+1}, T_0 f_{\ell+1/2}) \leq \alpha/2
    \]
    with probability at least $1-\delta$.
\end{Lemma}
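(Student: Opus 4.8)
The plan is to prove the one-step angle contraction by a single triangle inequality on principal angles, splitting the deviation of the estimated subspace $H_{\ell+1}$ from the true tangent $T_0 f_{\ell+1}$ into three conceptually distinct contributions — a finite-sample regression error, a bias error, and a coordinate-change error — each controlled by one of the feeder lemmas (Lemmas \ref{lem:AngleImprovement}, \ref{lem:Tf_TtildeF_D}, \ref{lem:shift_ang_general}; see the road map in Figure \ref{tikz:thm33_lemmas}), and then calibrating the constants so that the sum is at most $\alpha/2$. First I would dispose of the equality in the conclusion: since $q_{\ell+1}-q_\ell\in H_{\ell+1}^\perp$, the affine fibre $q_\ell+H_{\ell+1}^\perp$ over the origin is unchanged by the update $q_\ell\mapsto q_{\ell+1}$, so the manifold point lying over $0$ and its tangent are the same for $f_{\ell+1}$ and $f_{\ell+1/2}$, i.e. $T_0 f_{\ell+1}=T_0 f_{\ell+1/2}$; hence it suffices to bound $\maxangle(H_{\ell+1},T_0 f_{\ell+1/2})$.

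Introducing $\wtilde f_\ell$, the conditional-mean function in the coordinates $(q_\ell,H_\ell)$ of \eqref{eq:ftilde_def}, and writing $T_0\wtilde f_\ell\in Gr(d,D)$ for the tangent to its graph at $0$, the subadditivity of $\maxangle$ (the same triangle inequality used in Lemma \ref{lem:alpha_beta_x_no_func}) gives
\begin{align*}
\maxangle(H_{\ell+1},T_0 f_{\ell+1}) &\le \maxangle(H_{\ell+1},T_0\wtilde f_\ell)\\
&\quad+\maxangle(T_0\wtilde f_\ell,T_0 f_\ell)+\maxangle(T_0 f_\ell,T_0 f_{\ell+1/2}).
\end{align*}
The first term is the \emph{finite-sample} error: $H_{\ell+1}=\Ima(\DD_{\pi^*_{q_\ell,H_\ell}}[0])$ is the image of the derivative at $0$ of the degree-$(k-1)$ local polynomial regression, and, since passing to $\wtilde f_\ell$ makes the noise unbiased, this estimates the derivative of $\wtilde f_\ell$ at $0$; Lemma \ref{lem:AngleImprovement} (using the sample-density guarantee of Lemma \ref{lem:num_of_samples_in_a_ball_clean} and the bandwidth condition \eqref{eq:bandwidthDecay_clean}) bounds it by $\sqrt d\,C_0\ln(1/\delta)n^{-r_1}$ up to $\ln\ln n$ factors with probability at least $1-\delta$, which is $\le\alpha/12$ by Hypothesis 3. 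The second term is the \emph{bias} caused by the tilt of $H_\ell$; Lemma \ref{lem:Tf_TtildeF_D} bounds it by a quantity carrying a factor $\sigma/\tau=1/M$ (times a factor polynomial in $\sqrt{D\log D}$ and $\alpha_\ell$), so under $M\ge C_\tau\sqrt{D\log D}$, Hypothesis 1, and $\alpha$ small it is of strictly smaller order than $\alpha\sim 1/\sqrt M$ and in particular $\le\alpha/6$. The third term compares the tangents of $\MM$ at the two nearby manifold points lying over $0$ with respect to $H_\ell$ and $H_{\ell+1}$; these are within distance $O(\|f_\ell(0)\|\cdot\maxangle(H_\ell,H_{\ell+1}))$ of one another, so by Corollary 3 of \cite{boissonnat2017reach} (tangent variation $\lesssim$ distance$/\tau$) and Lemma \ref{lem:shift_ang_general} it is $O(\|f_\ell(0)\|\,\alpha/\tau)\le\alpha/48$ by Hypothesis 2, where $\maxangle(H_\ell,H_{\ell+1})\le\alpha_\ell+\maxangle(T_0 f_\ell,T_0\wtilde f_\ell)+\maxangle(T_0\wtilde f_\ell,H_{\ell+1})\lesssim\alpha$ is already furnished by the bounds on the first two terms.

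Summing the three pieces yields $\maxangle(H_{\ell+1},T_0 f_{\ell+1})\le\alpha/12+\alpha/6+\alpha/48\le\alpha/2$ on the intersection of the (finitely many) high-probability events from Lemmas \ref{lem:AngleImprovement} and \ref{lem:num_of_samples_in_a_ball_clean}; a union bound, after replacing $\delta$ by a constant multiple of itself, gives probability at least $1-\delta$, and $N$ is chosen large enough (depending on $\delta,d,k$ and on the fixed $\MM,\tau,\sigma,D$) for all the invoked finite-sample statements to apply. If Lemma \ref{lem:Tf_TtildeF_D} is phrased with $\maxangle(H_{\ell+1},T_0 f_{\ell+1})$ on its right-hand side rather than with $\alpha_\ell$, one instead obtains an inequality of the form $\maxangle(\cdot)\le(\text{small})+(C/M)\maxangle(\cdot)$ and solves it, which is another place where $M$ large is used.

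The hard part will be the bias term, Lemma \ref{lem:Tf_TtildeF_D}: quantifying how far the tangent of the conditional-mean graph $\wtilde f_\ell$ deviates from $T_0 f_\ell$ as a function of the tilt $\alpha_\ell$, the noise $\sigma$, the reach $\tau$, and — crucially — the codimension $D-d$, since the conditional mean averages the shape of the fibre $\Omega(x)=(x+H_\ell^\perp)\cap\MM_\sigma$ over a $(D-d)$-dimensional set and one needs this uniformly in $x$ and in the fibre direction; the $\sqrt{D\log D}$ in the hypothesis on $M$ is the footprint of this high-dimensional estimate, and it is precisely this bias — invisible when the coordinate system is aligned (Figure \ref{fig:TiltedBias}) — that makes the iteration necessary. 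Assuming the three feeder lemmas, the only residual difficulty inside the present proof is the bookkeeping: checking that $\maxangle(H_\ell,H_{\ell+1})$ needed for the third term is controlled by the first two, and that the constants in Hypotheses 1–3 are tight enough for the three contributions to sum below $\alpha/2$.
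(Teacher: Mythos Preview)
Your approach is essentially the paper's: the same three feeder lemmas (\ref{lem:Tf_TtildeF_D}, \ref{lem:AngleImprovement}, \ref{lem:shift_ang_general}) combined by the same triangle inequality on principal angles, and the same bookkeeping that $\maxangle(H_\ell,H_{\ell+1})\lesssim\alpha$ is controlled by the first two pieces before invoking the coordinate-change lemma. Two small corrections are worth noting. First, the constants you quote are slightly optimistic: Lemma~\ref{lem:AngleImprovement} gives $2\sqrt d\,C_0\ln(1/\delta)n^{-r_1}$, which under Hypothesis~3 is $\le\alpha/6$ (not $\alpha/12$); and Lemma~\ref{lem:shift_ang_general} only yields the distance bound $O(\|f_\ell(0)\|)$ between the two manifold points (hence $\maxangle(T_0f_\ell,T_0f_{\ell+1/2})\le 8\|f_\ell(0)\|/\tau\le\alpha/6$), not the sharper $O(\|f_\ell(0)\|\cdot\maxangle(H_\ell,H_{\ell+1}))$ you claim, so your $\alpha/48$ should be $\alpha/6$. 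The arithmetic still closes: $\alpha/6+\alpha/6+\alpha/6=\alpha/2$. Second, the paper applies Lemma~\ref{lem:shift_ang_general} in its stated form, bounding $\maxangle(H_{\ell+1},T_0f_{\ell+1/2})$ directly by $\maxangle(H_{\ell+1},T_0f_\ell)+8\|f_\ell(0)\|/\tau$, whereas your three-term decomposition needs the intermediate inequality $\maxangle(T_0f_\ell,T_0f_{\ell+1/2})\le 8\|f_\ell(0)\|/\tau$, which is established inside the proof of Lemma~\ref{lem:shift_ang_general} but is not its statement; this is a cosmetic rather than substantive difference.
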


Lemma \ref{lem:main_support_theorem_step2} is proved in Section \ref{subsubsec:proof of main support lemma}.
Now we prove Theorem~\ref{thm:Step2}.
\begin{proof}[proof of theorem \ref{thm:Step2}]
We divide Algorithm \ref{alg:step2_clean} into three steps:
\begin{enumerate}
    \item[(i)] Initialize $q_0$ and $H_0$ - corresponds to rows 3 and 4 in Algorithm \ref{alg:step2_clean}.
    \item[(ii)] Estimate $H_{\ell+1}$ from $H_{\ell}$ and $q_\ell$ - corresponds to rows 6 to 8 in Algorithm \ref{alg:step2_clean}.
    \item[(iii)] Estimate $q_{\ell+1}$ from $H_{\ell+1}$ - corresponds to row 10 in Algorithm \ref{alg:step2_clean}.
\end{enumerate}
Where steps (ii) and (iii) are repeated $\kappa$ times.
We will treat the three steps one by one, and prove that the "output" of each step will fit our requirements for the input of the next step. Although we will prove this point later, we start with assuming that $\kappa$ and $\delta_1$ are such that 
\begin{equation}\label{eq:n_bound1}
    12\sqrt{d}\frac{C_0 \ln(1/\delta_1)}{n^{r_1}} \leq \alpha_0 2^{-\kappa+1},
\end{equation}
where $C_0$ is the constant from Lemma \ref{lem:main_support_theorem_step2}. 

We start with \textbf{step (i)}: From Lemma \ref{lem:Tf_Tftilde_Init} there is $N_{1,\delta_1}$ such that for all $n>N_{1,\delta_1}$ we have that:
\[
\Pr\left(\maxangle (T_0f_{0}, H_{0}) \leq \alpha_1 \right) \geq 1-\delta_1,
\]
where $\alpha_1 = \frac{3}{2} \alpha_0$.
From Lemma \ref{lem:dist_to_f_l0_weak} Since $\alpha <1/D^{1/4}$ there is $N_{2,\delta_1}$ such that for all $n>N_{2,\delta_1}$ we have that:
\[
\Pr\left(\|f_0(0)\| \leq \tau \alpha_1 /48\right) \geq 1-\delta_1
\]
hold.

Denote by $A_\ell$ the event that $\maxangle (T_0f_{\ell}, H_{\ell}) \leq \alpha_1 2^{-\ell}$ and by $B_\ell$ the event that $\|f_\ell(0)\| \leq \tau \alpha_1 2^{-\ell} /48$, and then, by the union bound, we have that $\Pr(A_0 \mbox{ and } B_0) \geq 1-2\delta_1$.

Next, considering \textbf{step(ii)}. Since $M>C_\tau \sqrt{D\log D}$, and $\alpha_1 \leq \frac{3}{2} \sqrt{ C_M/M}$ if events $A_\ell$ and $B_\ell$ hold, then, the requirements of Lemma \ref{lem:main_support_theorem_step2} are met for $\alpha = \alpha_1 2^{-\ell}$. Since $\maxangle (T_0f_{\ell+1/2}, H_{\ell+1}) =\maxangle (T_0f_{\ell+1}, H_{\ell+1}) $, we have that for $n> N_{3,\delta_1}$, with probability of at least $1-\delta_1$, the event $A_{\ell+1}$ holds, i.e 
\[
\Pr(A_{\ell+1}|A_\ell, B_\ell) \geq 1-\delta_1.
\]

For \textbf{step(iii)}, we show that given that the event $A_{\ell+1}$ holds, then, with probability of at least $1-\delta_1$, the event $B_{\ell+1}$ holds. From Lemma \ref{lem:dist_to_f_l0_weak} with  $\alpha = \alpha_1 2^{-\ell}$ there is $N_{4,\delta_1}$ such that for all $n>N_{4,\delta_1}$ we have that:
\[
\Pr\left(\|f_{\ell+1}(0)\| \leq \tau \alpha_1 2^{-(\ell+1)}/48\right) \geq 1-\delta_1
\]
hold. 

To conclude our arguments so far, since $\kappa$ satisfies \eqref{eq:n_bound1}, using the union bound on the events $A_0,\ldots, A_\kappa$ and  $B_0,\ldots, B_\kappa$  we have that 
\begin{equation}\label{eq:final_prob_bound1}
\Pr\left(\maxangle (T_0 f_{\kappa}, H_{\kappa}) \leq \alpha_1 2^{-\kappa}\right) \geq 1 -  2\kappa\delta_1.        
\end{equation}
Choosing $\delta_1 = \frac{\delta}{2\kappa}$, we have from Lemma \ref{lem:comute_kappa}, that $\kappa$ from \eqref{eq:kappa_res_lem} satisfies \eqref{eq:n_bound1_kappa_lem}, and thus $\kappa$ satisfies \eqref{eq:n_bound1} as well. We also have from Lemma \ref{lem:comute_kappa} that 
 \[
    \alpha_1 2^{-\kappa} \leq C_{d} \ln\left(\frac{1}{\delta}\right) n^{-r_1}\left(\ln\left(\ln(n) \right)\right)^{2r_1}.
    \]
Thus, we have that 
\begin{equation}
\Pr\left(\maxangle (Tf_{\kappa}(0), H_{\kappa}) \leq C_{d}\ln\left(\frac{1}{\delta}\right) n^{-r_1}\left(\ln\left(\ln(n) \right)\right)^{2r_1}\right) \geq 1-\delta.    
\end{equation}
This concludes the proof of Eq. \eqref{eq:thm3.3_tangent_ang}.

Furthermore, assuming event $A_\kappa$ holds, there is $N_5$ such that for $n>N_{5}$ we have that $\alpha = \alpha_1 2^{-\kappa}<1/D$. Thus, from Lemma \ref{lem:dist_to_f_l0_srtong_small_alpha} we have that  
\[
\|f_\kappa(0)\| \leq DC_1 \alpha_1^2 2^{-2\kappa} + \frac{C_2\ln\left(\frac{1}{\delta}\right)}{n^{r_0}}
\]
holds with probability at least $1-\delta_1$. 
Denote this event by $\bar{B}_\kappa$.
Substituting $\kappa$ from \eqref{eq:kappa_res_lem} we have from Lemma \ref{lem:comute_kappa} that 
\[
\| f_\kappa(0)\| \leq D C_1 C_{d}^2 \ln\left(\frac{1}{\delta}\right)^2 n^{-2r_1}\left(\ln\left(\ln(n) \right)\right)^{4r_1} + C_2\ln\left(\frac{1}{\delta}\right)n^{-r_0}
\]
or, for $n$ large enough, and some constant $C$,
\[
\|f_\kappa(0)\| \leq C \ln\left(\frac{1}{\delta}\right) n^{-r_0}.
\]

Thus, using the union bound on $A_1,\ldots, A_\kappa, B_1,\ldots, B_{\kappa-1}$, and $\bar{B}_\kappa$, we have that 
\begin{equation}
\Pr\left(  \|f_\kappa(0)\| \leq C\ln\left(\frac{1}{\delta}\right) n^{-r_0}   \right) \geq 1-\delta.    
\end{equation}
This concludes the proof of Eq. \eqref{eq:thm3.3_p_bound} and of Theorem \ref{thm:Step2}.

\end{proof}






\subsection{Proof of Lemma \ref{lem:main_support_theorem_step2}}\label{subsubsec:proof of main support lemma}

 For road-map of the proof of Theorem \ref{thm:Step2} see Figure \ref{tikz:thm33_lemmas}.
 
\begin{proof}
From Lemma \ref{lem:Tf_TtildeF_D} we have that 
\begin{equation}\label{eq:bound_Tfl_Tftile_l}
\maxangle(T_0 f_\ell, T_0 {\wtilde{f}_\ell}) 
\leq  
\frac{\alpha}{6}.    
\end{equation}

From Lemma \ref{lem:AngleImprovement} we have that for any $\delta$ and $n>N_{\delta}$, 
\[
\Pr(\maxangle(T_0\wtilde f_\ell, T_0\pi^*_{q_0, H_\ell}) \geq 2\sqrt{d}\frac{C_0 \ln(1/\delta)}{n^{r_1}}) < \delta.
\]
Thus, as $ 12\sqrt{d} \frac{C_0 \ln(1/\delta)}{n^{r_1}}\leq \alpha $  (see assumption 3 in the lemma), we have 
\begin{equation}\label{eq:pr_alpha_6}
\Pr(\maxangle(T_0\wtilde f_\ell, T_0\pi^*_{q_0, H_\ell}) \geq \frac{\alpha}{6}) < \delta.
\end{equation}
And, from
\[
\maxangle (T_0f_{\ell}, H_{\ell+1}) \leq \maxangle (T_0f_\ell, T_0\wtilde{f}_\ell)  + \maxangle (T_0\wtilde{f}_\ell, H_{\ell+1}). 
\]
along with \eqref{eq:bound_Tfl_Tftile_l} and \eqref{eq:pr_alpha_6}, we have that 
\begin{equation}\label{eq:pr_alpha_3}
\Pr\left(\maxangle (T_0f_{\ell}, H_{\ell+1}) \geq \alpha/3\right) < \delta.    
\end{equation}

In order to apply Lemma \ref{lem:shift_ang_general} we denote $g_0 = f_\ell$, $g_1 = f_{\ell+1/2}$, $G_0 = H_\ell$, and $G_1 = H_{\ell+1}$.
Thus, we have that $\maxangle (T_0g_0, G_0) = \maxangle (T_0f_\ell, H_\ell)\leq \alpha$ and  $\maxangle(G_0,G_1) = \maxangle(H_\ell,H_{\ell+1})\leq \maxangle(H_\ell,T_0f_\ell) + \maxangle(T_0f_\ell,H_{\ell+1})$.
Under the assumption that the event described in \eqref{eq:pr_alpha_3} holds we also know that $\maxangle(G_0,G_1)  \leq \alpha + \alpha/3 = \beta$.
Lastly, as $\alpha \leq \pi/16$, $\beta = \frac{4\alpha}{3} \leq \alpha_c$, and  $\|f_\ell(0)\| \leq \frac{3\tau}{4 \cdot 16} $ (follows from assumption 2 of the lemma), we can use the result of Lemma  \ref{lem:shift_ang_general} and obtain 
    \[
    \alpha_{\ell+1} = \maxangle (T_0f_{\ell+1}, H_{\ell+1}) = \maxangle (T_0f_{\ell+1/2}, H_{\ell+1}) \leq \maxangle(T_0f_\ell,H_{\ell+1})  + \frac{ 8\|f_\ell(0)\|}{\tau}  .\]
Furthermore, since $\maxangle(T_0f_\ell,H_{\ell+1}) \leq \alpha/3$ and $\frac{ 8\|f_\ell(0)\|}{\tau} \leq \alpha/6$, we have 
\[
\maxangle (T_0f_{\ell+1}, H_\ell)  = \maxangle (T_0f_{\ell+1/2}, H_\ell) \leq \alpha/2
.\]
\end{proof}

\section{A Possible Application}\label{sec:applications}
While there are numerous applications for this method, we present here one example that demonstrate the potential of the presented approach.
In this example we show how the presented method can be used to follow the trajectory of a geodesic line on a manifold. 
We assume that a point $x_0$ is chosen on the manifold and some direction $\vec v_0$ on the tangent $T_{x_0} \MM$ (In practice, the process can initialized with a point near the manifold $\MM$ and then project it to the manifold). 

The process of tracking a geodesic line is iterative. 
At each step, we compute $\wtilde x_{i+1} =  x_i + \eps \vec{v}$, then ``project'' the new point back to the manifold $x_{i+1} \approx \mbox{Proj}_\MM \wtilde x_{i+1}$ through Algorithms \ref{alg:step1Inpractice_clean} and \ref{alg:step2InPractice_clean}, and parallel transport $\vec v_i$ to $T_{x_{i+1}}$ to get $\vec v_{i+1}$.

In the first toy case, the manifold $\MM$ is a circle of radius 10 in $\RR^2$. 
The dataset consists of 5000 points. 
We start with some sample (illustrated in red in Figure \ref{fig:geoWalk_circle}), project it onto the circle (in Figure \ref{fig:geoWalk_circle}, the circle is marked in blue, and the projected point in green), and than move the point in some direction, project it again (shown in another green point in Figure \ref{fig:geoWalk_circle}), etc.  
\begin{figure}
	\centering
	\includegraphics[width=0.35\textwidth]{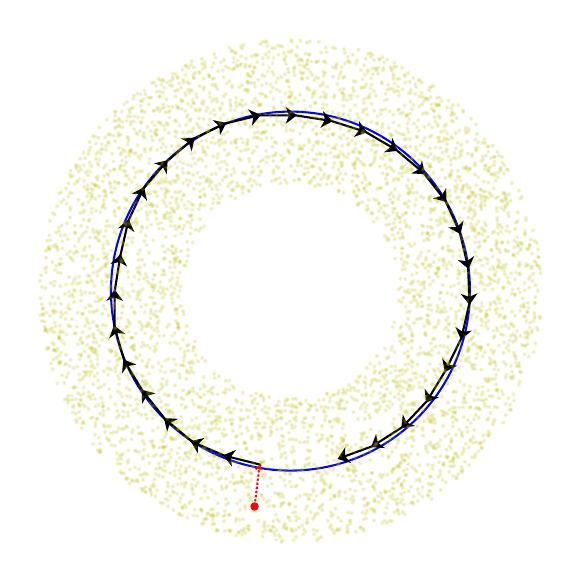}
	\caption{Geodesic ``walk" on a circle. The red point is the initial point. The yellow points are the data set. The red point is than projected onto the estimation of the blue circle. Then at each step, a new point is generated along the circle (the black arrows connect the points. The plot illustrates 30 steps.}
	\label{fig:geoWalk_circle}
\end{figure}

In the second example, we took a 3d model of an airplane\footnote{\textcolor{blue}{\href{http://3dmag.org/en/market/download/item/4740/}{http://3dmag.org/en/market/download/item/4740/}}}, rotated it in the z-axis, and took 2000 snapshots. Each snapshot is an image of $290 \times 209$ gray-scale pixels. The input data set consist of the unsorted images, sampled from a one dimensional manifold embedded in $\RR^{60,610}$.
Several such images appear in Figure \ref{fig:geoWalk_airplane}. 
Starting from some image, we create a movie of the rotating airplane. 
The movie can be found in \textcolor{blue}{\href{https://youtu.be/aHYyUvu1Q-8}{https://youtu.be/aHYyUvu1Q-8}}, and the code for generating it can be found in \textcolor{blue}{\href{https://github.com/aizeny/manapprox}{https://github.com/aizeny/manapprox}}

\begin{figure}[H]
	\centering
	\includegraphics[width=0.16\textwidth]{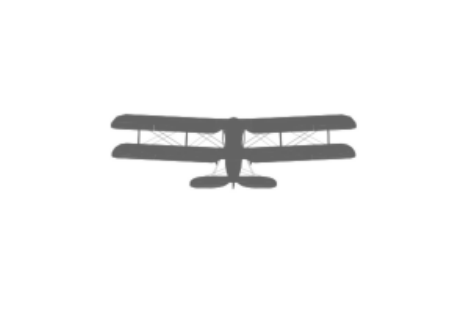}
	\includegraphics[width=0.16\textwidth]{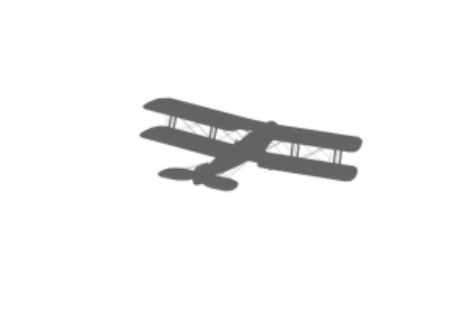}
	\includegraphics[width=0.16\textwidth]{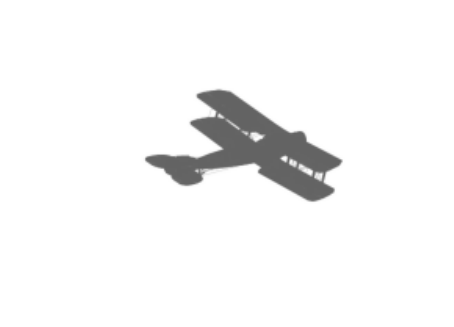}
	\includegraphics[width=0.16\textwidth]{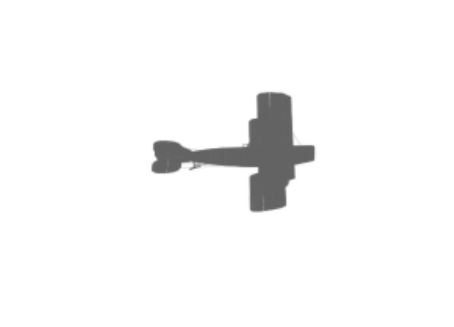}
	\includegraphics[width=0.16\textwidth]{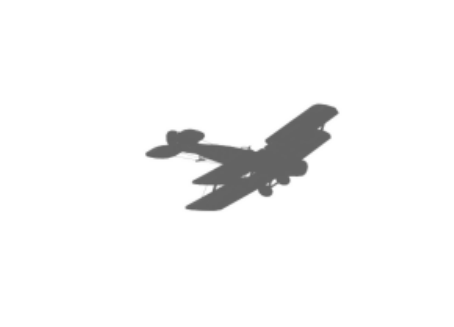}
	\includegraphics[width=0.16\textwidth]{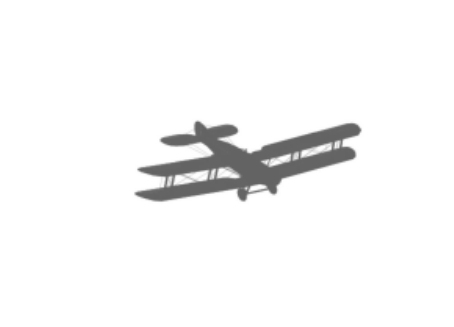}
	\\
	\includegraphics[width=0.16\textwidth]{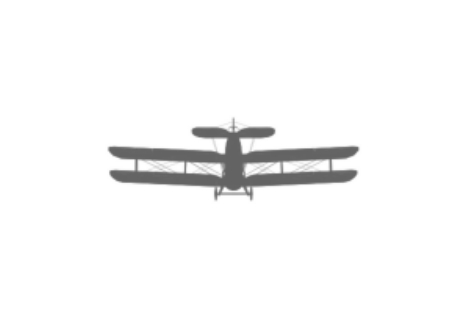}
	\includegraphics[width=0.16\textwidth]{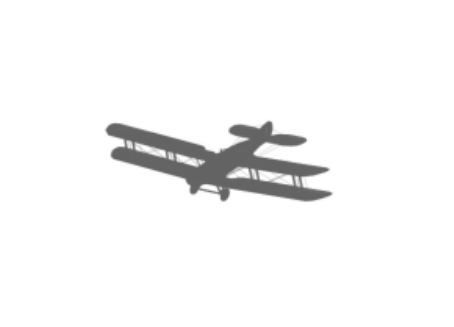}
	\includegraphics[width=0.16\textwidth]{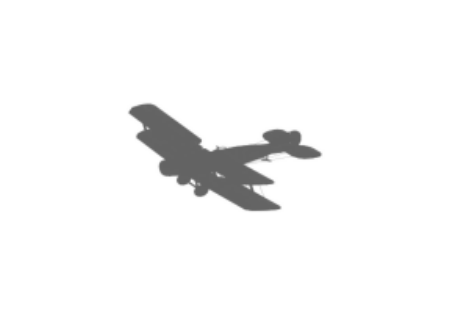}
	\includegraphics[width=0.16\textwidth]{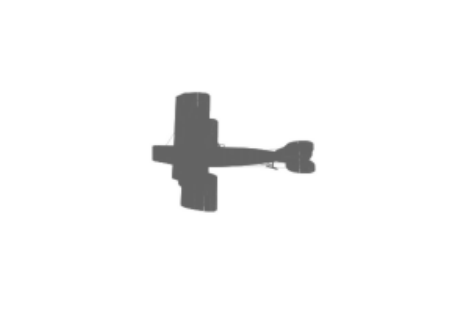}
	\includegraphics[width=0.16\textwidth]{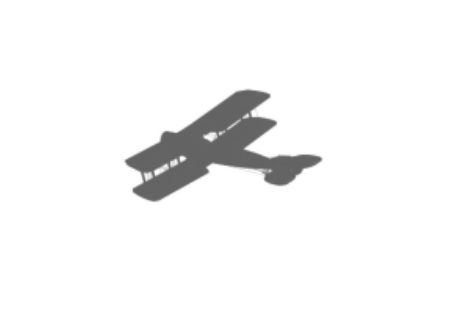}
	\includegraphics[width=0.16\textwidth]{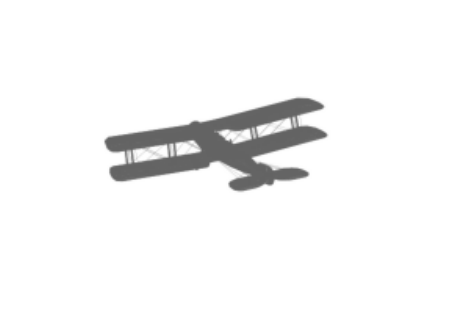}
	\caption{Sample images from a 3d model of an airplane.}
	\label{fig:geoWalk_airplane}
\end{figure}

\section{Acknowledgments}
We wish to thank Prof. Felix Abramovich for driving us to do this work and for the fruitful discussions.
We also thank Prof. Ingrid Daubechies and Prof. David Levin for various discussions along the road.
B. Sober is supported by Duke University, The Hebrew University of Jerusalem, and the Simons Foundation through Math+X grant 400837.

\bibliography{main}{}
\bibliographystyle{plain}

\begin{appendices}

\section{Preliminaries}
Before we delve into the proofs, we wish to introduce the concepts of Principal Angles between linear sub-spaces \cite{jordan1875essai,bjorck1973numerical} as well as develop some general results concerning the viewpoint of the manifold as being locally a graph of some function from a local coordinate system.
Both of these topics will play a key role in the proofs below.

In addition, two bounds resulting from the Taylor expansion will be used extensively in our proofs.
Thus, we note them here as the two following remarks:

\begin{remark}\label{rem:taylor_sqrt1-x2_clean}
For $x\in[0,\sqrt{3}/2]$
\begin{equation}\label{eq:Taylor2ndOrder}
    1-1/2x^2 \geq \sqrt{1 - x^2} \geq 1-x^2
\end{equation}
\end{remark}
\begin{remark}\label{rem:taylor2_sqrt1-x2_clean}
For $x\in[0,\sqrt{3}/2]$
\begin{equation}
1-1/2x^2-1/8x^4 \geq \sqrt{1 - x^2} \geq 1-\frac{1}{2}x^2 - x^4    
\end{equation}
\end{remark}

\subsection{Principal angles between linear sub-Spaces}
The concept of Principal Angles between flats were first introduced by Jordan in 1875 \cite{jordan1875essai}.
Below, we use the definition of Principal Angles between subspaces as described in \cite{bjorck1973numerical}.
\begin{definition}[Principal Angles]\label{def:principal_angles_clean}
	Let $V$ be an inner product space. Given two sub-spaces $\UU, \WW$ of dimensions $dim(\UU)=k, dim(\WW) = l$, where $k \leq l$ there exists a sequence of $k$ angles $0 \leq \beta_1 \leq \ldots \leq \beta_k \leq \pi/2$ called the principal angles and their corresponding principal pairs of vectors $(u_i, w_i)\in\UU\times\WW$ for $i=1,\ldots,k$ such that $\angle(u_i,w_i) = \beta_i$ are defined by: 
	\begin{equation*}
	\begin{array}{ll}
	    u_1, w_1 \defeq \argmin\limits_{\substack{u\in\UU, w\in\WW \\ \norm{u}=\norm{w} =1}}\arccos\left(\abs{\langle u, w \rangle}\right), &
	    \beta_1 \defeq \angle(u_1, w_1)
	\end{array}    
	,\end{equation*}
	and for $i > 1$
	\begin{equation*}
	\begin{array}{ll}
	    u_i, w_i \defeq \argmin\limits_{\substack{u\perp\UU_{i-1}, w\perp\WW_{i-1} \\ \norm{u}=\norm{w} =1}}\arccos\left(\abs{\langle u, w \rangle}\right), &
	    \beta_i \defeq \angle(u_i, w_i)
	\end{array}
	,\end{equation*}
    where $$\UU_i \defeq Span\{u_j\}_{j=1}^i ~,~ \WW_i \defeq Span\{w_j\}_{j=1}^i.$$
\end{definition}
In other words, given two linear subspaces of $\RR^D$ of the same dimension $d$ we can measure the distance between them based upon the principal angles. 
In our case, we measure the distance between two subspaces by taking the maximal principal angle (maximal angle) , and denote it as
\begin{equation}\label{eq:TheAngleLinear_clean}
    \angle_{\max}(\UU,\WW) \defeq \max_{1\leq i\leq d}\beta_i
.\end{equation}

Lemmas \ref{lem:angle_space_to_vec} and \ref{lem:angle_space_perp_space} are reformulation of results proven in \cite{knyazev2007majorization} and will be also used later on.
\begin{Lemma}\label{lem:angle_space_to_vec}
Let $F$ and $G$ be two linear spaces of dimension $d$ in $\RR^D$. Assume that $\maxangle(F,G)\leq \alpha$. Then for any vector $v \in F^\perp$
\[
\min\limits_{w\in G^\perp} \angle(v,w) \leq \alpha
\]
\end{Lemma}

\begin{Lemma}\label{lem:angle_space_perp_space}
Let $F$ and $G$ be two linear spaces of dimension $d$ in $\RR^D$. Assume that $\maxangle(F.G)\leq \alpha$. Then for any vector $v \in F^\perp$ and  $w\in G$,
\[
\angle(v,w) \geq \pi/2 - \alpha
\]
\end{Lemma}

\subsection{Viewing the manifold locally as a function graph}
It is well known that, locally, a sub-manifold of $\RR^D$ can be described as a graph of a function defined from the tangent space to its orthogonal complement.
In this section, we deal with expressing a manifold as a local function graph with respect to some tilted coordinate system and bounding the size of the neighborhood for which this definition still hold.
The results reported below are general and relates closely on the concept of the Reach (see Definition \ref{def:reach}) which was introduced by Federer \cite{federer1959curvature} and further studied by Boissonat, Lieutier and Wintraecken \cite{boissonnat2017reach}. 

\begin{Lemma}[Corollary 8 from \cite{boissonnat2017reach}] \label{lem:reach_ball_no_intersect_clean}
    Let $\MM$ be a sub-manifold of $\RR^D$ with reach $\tau$ and let $p\in\MM$.
    Then, any open $D$ dimensional ball of radius $\rho \leq \tau$ that is tangent to $\MM$ at $p$ does not intersect $\MM$.
\end{Lemma}

\begin{Lemma}[Bounding Ball]\label{lem:f_bound_circle_no_func_clean}
Let $\MM$ be a $d$-dimensional sub-manifold of $\RR^D$ with reach $\tau$. For any $p \in \MM$, let $T_{p}\MM$ be the tangent of $\MM$ at $p$.
    For any $x = p + x_T$, where $x_T\in T_p\MM$, and $y\in (T_p\MM)^\perp$  such that $\|x_T\|\leq \tau$, $\|y\| \leq \tau/2$ and $(x + y)\in \MM$, we have that 
    \[
    \norm{y} \leq \tau-\sqrt{\tau^2-\norm{x_T}^2}
    \]
\end{Lemma}
The proof follows directly from Lemma \ref{lem:reach_ball_no_intersect_clean}.

\begin{Lemma}[Bounding Ball with Noise]\label{lem:dist_ri_TpM_clean}
Under the sampling assumption of \ref{sec:SamplingAssumptions}. For $r_i\in U_{ROI}$ where $U_{ROI}$ is defined in \eqref{eq:ROI_clean}. Denote $p_r = P_\MM(r)$, and $x_i = P_{T_{p_r}\MM}(r_i - p_r)$. Then,
\[
\dist(r_i - p_r,T_{p_r}\MM) \leq \tau-\sqrt{\tau^2-\|x_i\|^2}  + \sigma
.\]    
\end{Lemma}
\begin{proof}
We first note that since $r$ is at distance at most $\sigma$ from $\MM$, we have that  $U_{\textrm{ROI}} \subset B_D(p_r, \sqrt{\sigma\tau}+\sigma )$. Denote $p_i = P_\MM(r_i)$.
By Lemma \ref{lem:f_bound_circle_no_func_clean} the distance between $r_i$ and $ T_{p_r}\MM $ is bounded by 
\[
\dist(r_i-p_r,T_{p_r}\MM) \leq \dist(p_i-p_r,T_{p_r}\MM) +\sigma \leq \tau-\sqrt{\tau^2-\|x_i\|^2}  + \sigma
.\]
\end{proof}

\begin{Lemma}[Bounding Ball From a Tilted Plane] \label{lem:f_bound_circle_H_no_func_clean}

    Let $\MM$ be a $d$-dimensional sub-manifold of $\RR^D$ with reach $\tau$. For any $p \in \MM$, let $T_{p}\MM$ be the tangent of $\MM$ at $p$ and let $ H\in Gr(d, D) $ such that $ \maxangle(T_p\MM, H) = \alpha \leq \pi/4 $.
    For any $x = p + x_H$, where $x_H\in H$, $\|x_H\|\leq c_{\pi/4}\tau$ for some constant $c_{\pi/4}$ , and $y\in H^\perp$  such that $\|x-p\|\leq \tau \cos \alpha$, $\|y\| \leq \tau/2$ and $(x + y)\in \MM$, we have that 
	\[
	-\tau\cos\alpha+\sqrt{\tau^2-(\norm{x} - \tau\sin\alpha)^2} \leq \|y\| \leq \tau\cos\alpha-\sqrt{\tau^2-(\norm{x} + \tau\sin\alpha)^2}
	\]
\end{Lemma}
The proof of Lemma \ref{lem:f_bound_circle_H_no_func_clean} follows directly from applying Lemma \ref{lem:f_bound_circle_no_func_clean} and observing the illustration in Figure~\ref{fig:BoundingCircle}.

\begin{figure}
	\centering
	\includegraphics[width=0.6\linewidth]{./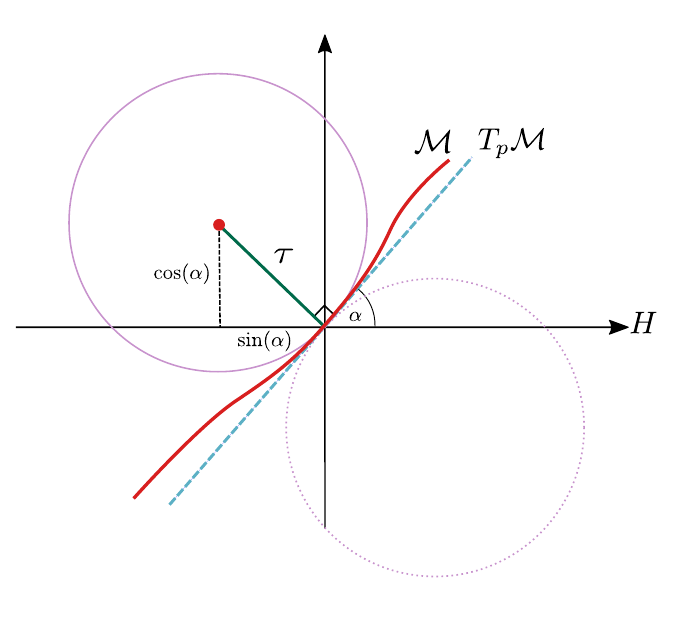}
	\caption{Illustration of bounding ball for $ d=1,~D=2 $. The manifold $ \MM $ is marked by the red solid line, $ T_\MM $ is marked by the blue dashed line and $ \alpha = \maxangle(H, T_p\MM) $ is the angle betweeen the $ x $-axis and $ T_p\MM $. The bounding balls defined by the reach $ \tau $ are marked in pink \emphbrackets{solid and dotted}  }
	\label{fig:BoundingCircle}
\end{figure}

\begin{Lemma}\label{lem:f_bound_circle_H_no_func_ver2_clean}
    Let $\MM$ be a $d$-dimensional sub-manifold of $\RR^D$ with reach $\tau$. For any $p \in \MM$, let $T_{p}\MM$ be the tangent of $\MM$ at $p$ and let $ H\in Gr(d, D) $ such that $ \maxangle(T_p\MM, H) = \alpha  \leq \pi/4$.
    For any $x = p + x_H$, where $x_H\in H$,$\|x_H\|\leq c_{\pi/4}\tau$ for some constant $c_{\pi/4}$, and $y\in H^\perp$  such that $\|x-p\|\leq \tau \cos \alpha$, $\|y\| \leq \tau/2$ and $(x + y)\in \MM$, we have that 
	\[
	\|y\| \leq \norm{x_H}(\tan \alpha  +\OO(\|x_H\|/\tau))
	\]
\end{Lemma}
\begin{proof}
 Recalling Lemma \ref{lem:f_bound_circle_H_no_func_clean}, we have that 
	\begin{equation}
	\|y\| \leq \tau \cos \alpha - \sqrt{\tau^2-(\norm{x_H}+\tau \sin \alpha)^2}
	.\end{equation}
	Therefore,
	\begin{align*}
	\|y\| &\leq \tau (\cos \alpha - \sqrt{1-(\norm{x_H}/\tau + \sin \alpha)^2})\\
	      &=  \tau (\cos \alpha - \sqrt{1-\norm{x_H}^2/\tau^2  + 2\norm{x_H}/\tau\sin \alpha - \sin^2\alpha})\\
	      &=  \tau (\cos \alpha - \sqrt{\cos^2\alpha-\norm{x_H}^2/\tau^2 - 2\norm{x_H}/\tau\sin \alpha })\\
	      &=  \tau \cos \alpha(1 - \sqrt{1-\norm{x_H}^2/(\tau^2\cos^2\alpha)  - 2\norm{x_H}/\tau\tan \alpha / \cos\alpha })\\
	      &\leq  \tau \cos \alpha (1 - (1-\frac{\norm{x_H}^2}{2\tau^2\cos^2\alpha}  - \frac{\norm{x_H}\tan \alpha}{\tau \cos\alpha}  - \left(\frac{\norm{x_H}^2}{\tau^2\cos^2\alpha}  + 2\frac{\norm{x_H}\tan \alpha}{\tau\cos\alpha} \right)^2))\\
	      &= \tau   \cos\alpha(\frac{\norm{x_H}^2}{2\tau^2\cos^2\alpha}  + \frac{\norm{x_H}\tan \alpha}{\tau\cos\alpha}  + \left(\frac{\norm{x_H}^2}{\tau^2\cos^2\alpha}  + 2\frac{\norm{x_H}\tan \alpha}{\tau\cos\alpha} \right)^2)\\
	      &= \norm{x_H}\tan \alpha  +\frac{\norm{x_H}^2}{2\tau\cos\alpha}  +  \left(\frac{\norm{x_H}^2}{\tau^{3/2}\cos^{3/2}\alpha}  + 2\frac{\norm{x_H}\tan \alpha}{\tau^{1/2}\cos^{1/2}\alpha} \right)^2\\
	      &\leq \norm{x_H} (\tan \alpha + c\|x_H\|/\tau)
	\end{align*}
	for some constant $c$.
\end{proof}

\begin{Lemma}\label{lem:alpha-beta_x_no_func}
	Let $\MM$ be a $d$-dimensional sub-manifold of $\RR^D$ with reach $\tau$. For any $p \in \MM$, let $T_{p}\MM$ be the tangent of $\MM$ at $p$ and let $ H\in Gr(d, D) $ such that $ \maxangle(T_p\MM, H) = \alpha\leq \pi/4 $.
	For any $x = p + x_H$, where $x_H\in H$,$\|x_H\|\leq c_{\pi/4}\tau$ for some constant $c_{\pi/4}$, and $y\in H^\perp$  such that $\|x-p\|\leq \tau \cos \alpha$, $\|y\| \leq \tau/2$ and $(x + y)\in \MM$, we denote $ \gamma = \maxangle(T_{x+y}\MM, T_{p}\MM) $.
	Then, we get that
	\[
	\sin(\maxangle (T_{x+y}\MM, T_{p}\MM)) \leq \frac{\norm{x_H}}{\tau}(1 + \tan^2 \alpha)  +\OO(\|x_H\|^2/\tau^2)
	\]
\end{Lemma}
\begin{proof}
From  Corollary 3 in \cite{boissonnat2017reach} bounds the maximal angle between the tangent spaces at two points on $\MM$ through their Euclidean distance and $\tau$.
	Namely, let $p_1, p_2 \in \MM$
	\[
	sin(\maxangle(T_{p_1}\MM, T_{p_2}\MM)/2) \leq\frac{\norm{p_1 - p_2}}{2\tau}
	.\]
	Therefore, in our case we obtain,
	\begin{equation}\label{eq:sin_alpha_1}
	\sin(\maxangle (T_{x+y}\MM, T_{p}\MM)/2) \leq \frac{\sqrt{\norm{x_H}^2 + \|y\|^2}}{2\tau}
	.\end{equation}
		
	Recalling Lemma \ref{lem:f_bound_circle_H_no_func_ver2_clean}, we have that 
	\begin{equation*}
	\|y\| \leq \norm{x_H}\tan \alpha  +\OO(\|x_H\|^2/\tau)
	\end{equation*}
	
	\begin{align*}
	\sin(\maxangle (T_{x+y}\MM, T_{p}\MM)/2) &\leq \frac{\sqrt{\norm{x_H}^2 + \|y\|^2}}{2\tau}    \\
	&= \frac{\sqrt{\norm{x_H}^2 +  \norm{x_H}^2\tan^2 \alpha  +\OO(\|x_H\|^2/\tau)}}{2\tau}\\
	&=\frac{\norm{x_H}}{2\tau}\sqrt{1 + \tan^2 \alpha  +\OO(\|x_H\|/\tau)}\\
	&\leq\frac{\norm{x_H}}{2\tau}(1 + \tan^2 \alpha)  +\OO(\|x_H\|^2/\tau^2)
	\end{align*}
	Then, as for sufficiently small $\gamma$ $sin(\gamma) < 2\sin(\gamma/2)$, for $\tau$ large enough we have
	\[
	\sin(\maxangle (T_{x+y}\MM, T_{p}\MM)) \leq \frac{\norm{x_H}}{\tau}(1 + \tan^2 \alpha)  +\OO(\|x_H\|^2/\tau^2)
	\]
\end{proof}

\begin{Lemma}\label{lem:alpha_beta_x_no_func}
	Let $\MM$ be a $d$-dimensional sub-manifold of $\RR^D$ with reach $\tau$. For any $p \in \MM$, let $T_{p}\MM$ be the tangent of $\MM$ at $p$ and let $ H\in Gr(d, D) $ such that $ \maxangle(T_p\MM, H) = \alpha\leq \pi/4 $.
	For any $x = p + x_H$, where $x_H\in H$,$\|x_H\|\leq c_{\pi/4}\tau$ for some constant $c_{\pi/4}$, and $y\in H^\perp$  such that $\|x-p\|\leq \tau \cos \alpha$, $\|y\| \leq \tau/2$ and $(x + y)\in \MM$, we denote $ \beta = \maxangle(T_{x+y}\MM, H) $.
	Then, we get that
	\[
	\alpha - 2 \sqrt{ \frac{\norm{x_H}}{\tau}\lrbrackets{2 \alpha + \frac{\norm{x_H}}{\tau}} } 
	\leq \beta \leq
	\alpha + 2 \sqrt{ \frac{\norm{x_H}}{\tau}\lrbrackets{2 \alpha + \frac{\norm{x_H}}{\tau}} }
	\]
\end{Lemma}
\begin{proof}
	Using the triangle inequality for maximal angles we have that 
	\[
	\maxangle(T_p \MM, H) \leq \maxangle(T_p \MM,T_{x+y} \MM)+\maxangle(T_{x+y} \MM,H),\]
	which can be written as 
	\[
	|\alpha - \beta| \leq \maxangle(T_{x+y} \MM, T_p \MM)
	.\]
	From Lemma~\ref{lem:alpha-beta_x_no_func} we have that 
	\[
	\sin(\maxangle (T_{x+y}\MM, T_{p}\MM)) \leq \frac{\norm{x_H}}{\tau}(1 + \tan^2 \alpha)  +\OO(\|x_H\|^2/\tau^2)
	,\]
	and thus, 
	\[
	\sin(|\alpha - \beta|) \leq \frac{\norm{x_H}}{\tau}(1 + \tan^2 \alpha)  +\OO(\|x_H\|^2/\tau^2)
	\]
	Since for $x\leq \pi/2$, we have that $x/2<\sin(x)$, we have 
	
	\[
	|\alpha - \beta| \leq 2\frac{\norm{x_H}}{\tau}(1 + \tan^2 \alpha)  +\OO(\|x_H\|^2/\tau^2)
	.\]
	Since for $\alpha \leq \pi/4$, we have that $\tan \alpha \leq 1$, and $\tan^2\alpha \leq \alpha$, we have that 
	\[
	|\alpha - \beta| \leq
	2 \sqrt{ \frac{\norm{x_H}}{\tau}\lrbrackets{2 \alpha + \frac{\norm{x_H}}{\tau}} }
	\]
	or 
	\[
	\alpha - 2 \sqrt{ \frac{\norm{x_H}}{\tau}\lrbrackets{2 \alpha + \frac{\norm{x_H}}{\tau}} } 
	\leq \beta \leq
	\alpha + 2 \sqrt{ \frac{\norm{x_H}}{\tau}\lrbrackets{2 \alpha + \frac{\norm{x_H}}{\tau}} }
	\]
	as required.
\end{proof}

\begin{Lemma} [$\MM$ is locally a function graph over a tilted plane]\label{lem:M_is_locally_a_fuinction_clean}
Let $\MM$ be a $d$-dimensional sub-manifold of $\RR^D$ with reach $\tau$. For any $p \in \MM$, let $T_{p}\MM$ be the tangent of $\MM$ at $p$. Let $H \in Gr(d,D)$, such that $\maxangle(H,T_p\MM) = \alpha \leq \pi/4$.  
Then $\MM \cap \emph{Cyl}_H(p,\rho, \tau/2)$ is locally a function over $H$, where $\emph{Cyl}_H(p,\rho, \tau/2)$ is the $D$-dimensional cylinder with the base $B_H(p, \rho)\subset H$ and height $\tau/2$ in any direction on $H^\perp$. 
Furthermore, $\rho = c_{\pi/4}\tau$ for some constant $c_{\pi/4}$. Explicitly, there exists a function 
$$
f:B_H(p, \rho) \to H^\perp
$$
such that the graph of $f$ defined as
$$
\Gamma_f = \{p + (x,f(x)) | x\in B_H(p, \rho) \} 
$$
identifies with $\MM \cap \emph{Cyl}_H(p,\rho, \tau/2)$.
\end{Lemma}
\begin{proof}
We split our arguments to two separate parts.
First, we show that for $c < 0.02$ there exists a function $f$ such that $\Gamma_f \subset M\cap \textrm{Cyl}_H(p,  c \tau, \tau/2)$.
Then, in the second part of the proof, we show that there is a constant $c_{\pi/4} < c$ such that $f$ is defined uniquely and $\Gamma_f = M\cap \textrm{Cyl}_H(p, \rho, \tau/2)$.

By definition, there is an open ball $U_T\subset T_p\MM$ of $p$ such that there is a neighborhood $W_\MM \subset \MM$ that can be pronounced as a graph of a function from $U_T\simeq \RR^d$ to $T_p\MM^\perp\simeq \RR^{D-d}$.
Accordingly, for any $H$ such that $\maxangle(H, T_p\MM) < \pi/2$ there is an open ball $B_H(p,\eps) \subset H$ such that $W_H \subset \MM$ can be pronounced as a graph of a function $f$  from $B_H(p,\eps)$ to $H^\perp$.
We wish to show that $f$ can be extended to a ball $B_H(p, 0.02 \tau)\subset H$ such that the graph of $f$ is a subset of $\MM$ (note that $f(0) = 0$).

By contradiction, let us assume that $\frak{r}$ the maximal radius of an open ball such that the $\Gamma_f \subset \MM$, is strictly smaller than $0.02 \tau$.
We claim that the graph $\Gamma_f$ is defined on the closed ball $\bar B_H(p, \frak{r})$ and is also subset of $\MM$.
This is true, from the following argument:
Take a sequence of points $\{x_n\}$ converging to $ x\in \partial \bar B_H(p, \frak{r})$, a point on the boundary of $\bar B_H(p, \rho)$, and consider $\{y_n = p + (x_n, f(x_n))\in \MM\}$. From the compactness of $\MM$ the sequence $y_n$ has a converging subsequence $y_{n_k}$ and we denote its limit as $y$.
Since $x_n \to x$, we define $f(x)\defeq\lim f(x_{n_k})$ and  $y = p + (x, f(x))$.

We now wish to show that there is $\eps>0$ such that $f$ can be extended to $B_H(p, \frak{r} + \eps)$.
Using a similar argument to the one used in the beginning of the proof, by showing that for any $x\in\partial B_H(p, \frak{r})$ the angle $\maxangle(T_x f, H) < \pi/2$, we get that there is $W_H\subset\MM$, a neighborhood of $y\in \MM$ that is the image of some function from $B_H(x,\eps_x)$ to $H^\perp$.
Therefore, $f$ can be extended into this neighborhood.
Taking $\eps$ to be the minimum over all $\eps_x$, which exists since $x$ is in $H(p, \frak{r})$, which is compact, we get that $f$ can be extended to $B_H(p, \frak{r}+ \eps)$.

The remaining piece of the existence puzzle is showing that for all $x\in\partial B_H(p, \frak{r})$ we have $\maxangle(T_x f, H) < \pi/2$.
From Lemma \ref{lem:alpha_beta_x_no_func} we have that for any $x$ such that 
\[
\frac{\pi}{2} > \frac{\pi}{4} + 2\sqrt{\frac{\norm{x}}{\tau}\left(2\frac{\pi}{4} + \frac{\norm{x}}{\tau}\right)},
\]
$\maxangle(T_xf,H) <\pi/2$ holds.
Rewriting the inequality we get
\[
\begin{array}{cc}
      \\
    \frac{\pi}{4} > & 2\sqrt{\frac{\norm{x}}{\tau}\left(\frac{\pi}{2} + \frac{\norm{x}}{\tau}\right)} \\
    \frac{\pi^2}{8^2} > & \frac{\norm{x}}{\tau}\left(\frac{\pi}{2} + \frac{\norm{x}}{\tau}\right)\\
    0 > & \frac{\norm{x}^2}{\tau^2} + \frac{\pi}{2} \frac{\norm{x}}{\tau} - \frac{\pi^2}{8^2},\\
\end{array}
\]
and thus we require
\[
\frac{\norm{x}}{\tau} < \frac{-\frac{\pi}{2} + \sqrt{\frac{\pi^2}{4} + 4 \frac{\pi^2}{8^2} }}{2} = 
\pi \left(-\frac{1}{4} + \frac{1}{2}\sqrt{\frac{1}{4} + \frac{4}{8^2} }\right)
.\]
Finally, as 
$\pi \left(-\frac{1}{4} + \frac{1}{2}\sqrt{\frac{1}{4} + \frac{4}{8^2} }\right) >0.02$ we have that for $x<0.02\tau$,
$\maxangle(T_xf,H) <\pi/2$ holds.

We now turn to show that there is a constant $c_{\pi/4}$ for which $f$ is uniquely defined in $B_H(p, c_{\pi/4}\tau)$.
From Lemma \ref{lem:f_bound_circle_H_no_func_clean} we know that for any $x \in H$ with $\|x\|\leq \tau/2$ all the $y\in H^\perp$ such that $(x,y)\in\MM$ and $\|y\| \leq \tau/2$ must satisfy:
\begin{equation}
    \|y\| \leq \tau\cos\alpha-\sqrt{\tau^2-(\norm{x} + \tau\sin\alpha)^2} = \tau\left(\cos\alpha-\sqrt{1-(\frac{\norm{x}}{\tau} + \sin\alpha)^2}\right)
\end{equation}
Let $y_1, y_2$ be such that $(x,y_1),(x,y_2)\in\MM$ where $\norm{x}=\bar x\tau$.
Then,
\begin{equation}
    \|y_j\| \leq \tau\left(\cos\alpha-\sqrt{1-(\bar x+ \sin\alpha)^2}\right),\quad (j=1,2)
.\end{equation}
In other words, $y_1$ and $y_2$ cannot be too far from one another, and note that as $\bar x \to 0$
\begin{equation}\label{eq:y2_y1_diff}
\norm{y_2 - y_1} \to 0
\end{equation}

On the other hand, taking the point $(x, y_1)\in \MM$, denoting $\beta = \maxangle (T_{(x, y_1)}\MM, H)$, and applying Lemma \ref{lem:alpha_beta_x_no_func} we have that
\[
\beta \leq
	\alpha + 2 \sqrt{ \bar x\lrbrackets{2 \alpha + \bar x} }
,\]
which tends to $\alpha$ when $\bar x\to 0$.
From Lemma \ref{lem:reach_ball_no_intersect_clean} we know that $(x,y_2)$ cannot be in any ball tangent to $\MM$ at $(x, y_1)$ of radius $\tau$.
We denote by $v$ the direction $(0, y_2 - y_1)\in H^\perp$. 
From Lemma \ref{lem:angle_space_to_vec}  we know that there is $w \in \lrbrackets{ T_{(x, y_1)} \MM} ^\perp$ such that $\angle (v, w) \leq \beta$.
Therefore, we can now limit our discussion to $L_{y_1}$ the affine space spanned by $v$ and $w$ from $(x, y_1)$, and note that it contains $(x, y_2)$ as well.
Taking the two balls $B_D((x, y_1) \pm \tau\cdot w, \tau)$ and intersecting them with $L_{y_1}$ we get two 2-dimensional disks of radius $\tau$ (see Figure \ref{fig:bouding_balls_for lem_locally_func}).
Thus, $(x, y_2)$ cannot be within either disks.
From basic trigonometry we achieve that either $y_2 = y_1$ or
\[
\norm{y_2 - y_1} \geq 2\tau\cos(\beta) \geq 2\tau\cos(\alpha + 2\sqrt{\bar x (2\alpha + \bar x)})
,\]
which tends to $2\tau \cos \alpha$ as $\bar x \to 0$.
Combining this with \eqref{eq:y2_y1_diff} we get there is $c_{\pi/4}$  such that for all $\bar x \leq c_{\pi/4}$ we have $y_1 = y_2$.
\begin{figure}
    \centering
    \includegraphics
 [width=0.5\textwidth]{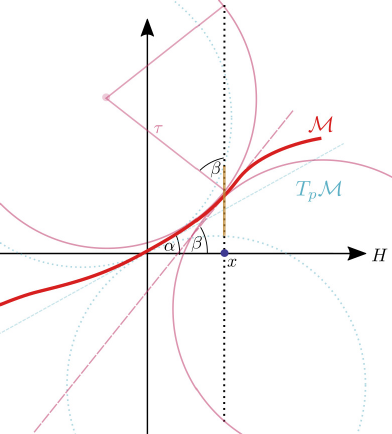}
    \caption{Illustration of bounding balls. The reach $\tau$ of the manifold $\MM$ (marked by the red line) bounds its sectional curvatures. Namely, the manifold cannot intersect a tangent ball of radius $\tau$. In this illustration we have some coordinate system $H$ and the manifold can be described locally as a graph of some function $f:H\to H^\perp$. The coordinate system $H$ is not aligned with $T_0f$, the tangent at zero; i.e., $\maxangle(H, T_0f) = \alpha$. Then the value of $f$ at $x$ is bounded to the markered interval in $H^\perp$ above $x$. Furthermore, in order to have two different points in $\MM$ above $x$, the manifold cannot curve too fast as it cannot enter neither the dotted balls nor the solid ones.}
    \label{fig:bouding_balls_for lem_locally_func}
\end{figure}

\end{proof}

\begin{corollary} \label{cor:GraphOfFunctionTau2_clean}
Under the requirements of Lemma \ref{lem:M_is_locally_a_fuinction_clean} we get that $\MM$ is a function graph over $T_{p}\MM$ in a ${\tau}/2$ neighborhood of $p$. 
Reiterating \eqref{eq:phi_def_clean}, we have a function 
\[
\phi_p:B_{T_p\MM}(0,\tau/2) \rightarrow T_p\MM^\perp
\]
such that the graph of $\phi$ shifted to $p$ coincides with $\MM \cap \mathrm{Cyl}_{T_p\MM}(p,\tau/2,\tau/2)$.

\end{corollary}

\begin{corollary}\label{cor:GraphOfFunctionTau_ROI}
Let the requirements of Lemma \ref{lem:M_is_locally_a_fuinction_clean}, and the sampling assumptions of Section \ref{sec:SamplingAssumptions} hold. Denote the projection of $r$ onto $\MM$ by $p_r = P_\MM(r)$, let $U_{\textrm{ROI}}$ be as defined in \eqref{eq:ROI_clean}. 
Then, any $r_i \in U_{\textrm{ROI}}$ can be written as
\begin{equation}\label{eq:FunctionGraph_pi_ei_clean}
r_i = \underbrace{p_r + (x_i, \phi_{p_r}(x_i))_{T_{p_r}\MM}}_{p_i}+\eps_i
,\end{equation}
where $x_i= P_{T_{p_r}\MM}(r_i - p_r)$ and $\|\eps_i\| \leq \sigma$.
\end{corollary}
\begin{proof}
From the assumptions of Section \ref{sec:SamplingAssumptions} we know that  $\tau/\sigma > M $.
Therefore, there is $M$ such that $\sqrt{\sigma\tau}+\sigma < \tau/2$ and thus, from Corollary \ref{cor:GraphOfFunctionTau2_clean}, the intersection of $\MM$ with $\textrm{Cyl}_{T_{p_r}\MM}(p_r,\sqrt{\sigma \tau}+\sigma, \tau/2)$, a cylinder with base $B_{T_{p_r}\MM}(p_r, \sqrt{\sigma\tau}+\sigma)\subset T_{p_r}\MM$ and heights $\tau/2$ in $T_{p_r}\MM^\perp$ can be written as $\Gamma_{\phi_{p_r}, B_{T_{p_r}\MM}(p_r, \sqrt{\sigma\tau} + \sigma)}$, the graph of $\phi_{p_r}:T_{p_r}\MM\to T_{p_r}\MM^\perp$. Since $r_i$ are in a tubular neighborhood of $\MM$, the proof is concluded. 
\end{proof}

\begin{Lemma}[Function version of Lemma \ref{lem:f_bound_circle_no_func_clean}]\label{lem:f_bound_circle_clean}
    Let $\MM$ be a $d$-dimensional sub-manifold of $\RR^D$ with reach $\tau$. For any $p \in \MM$, let $T_{p}\MM$ be the tangent of $\MM$ at $p$.
    Let $\phi_p:B_{T_p\MM}(0, \tau/2)\to T_p\MM^\perp$ be defined as in Corollary \ref{cor:GraphOfFunctionTau2_clean}; that is, 
    \[\Gamma_{\phi_p,B_{T_p\MM}(0, \tau/2)}\subset\MM,\]
    where
    \[\Gamma_{\phi_p,B_{T_p\MM}(0, \tau/2)} = \{p + (x, \phi_p(x)) | x\in B_{T_p\MM}(0, \tau/2)\}.\]
    Then, for any $v \in T_p\MM^\perp$, such that $\norm{v} = 1$
    \[
    \lrangle{v, \phi_p(x)} \leq \tau-\sqrt{\tau^2-\norm{x}^2}
    \]
\end{Lemma}
\begin{proof}
    This follows immediately from Lemma \ref{lem:f_bound_circle_no_func_clean} and Lemma \ref{cor:GraphOfFunctionTau2_clean}.
\end{proof}

\begin{corollary}
It follows immediately from Lemma \ref{lem:f_bound_circle_clean} 
\begin{equation}\label{eq:eq:bound_phix_clean}
    \norm{\phi_p(x)}_{\RR^{D-d}} \leq \tau - \sqrt{\tau^2 - \norm{x}_{\RR^d}^2}
,\end{equation}
and using the triangle inequality we can say that
\begin{equation}\label{eq:bound_x_phix_clean}
    \norm{(x,\phi_p(x))}_{\RR^{D}} \leq \norm{x}_{\RR^d} + \tau - \sqrt{\tau^2 - \norm{x}_{\RR^d}^2}
.\end{equation}
\end{corollary}

\begin{Lemma}[Function version of Lemma \ref{lem:f_bound_circle_H_no_func_clean}]\label{lem:f_bound_circle_H_clean}
    Let $\MM$ be a $d$-dimensional sub-manifold of $\RR^D$ with reach $\tau$. For any $p \in \MM$, let $T_{p}\MM$ be the tangent of $\MM$ at $p$ and let $ H\in Gr(d, D) $ such that $ \maxangle(T_p\MM, H) = \alpha \leq \pi/4$.
	Let $f_p:\RR^d\to \RR^{D-d}$ be such that the neighborhood $W_p\subset\MM$ can be descried as the the graph of $f_p$
	\[\Gamma_{f_p,W_p} = \{p + (x, f_p(x)) | x\in P_{H}(W_p)\}\]
	Then, for any $v \in H^\perp$, such that $\norm{v} = 1$
	\[
	-\tau\cos\alpha+\sqrt{\tau^2-(\norm{x} - \tau\sin\alpha)^2} \leq \lrangle{v, f_p(x)} \leq \tau\cos\alpha-\sqrt{\tau^2-(\norm{x} + \tau\sin\alpha)^2}
	\]
\end{Lemma}
\begin{proof}
    This follows directly from Lemma \ref{lem:f_bound_circle_H_no_func_clean} and Lemma \ref{lem:M_is_locally_a_fuinction_clean}.
\end{proof}

\begin{Lemma}[Function Version of Lemma \ref{lem:alpha_beta_x_no_func}]\label{lem:alpha_beta_x}
    Let $\MM$ be a $d$-dimensional sub-manifold of $\RR^D$ with reach $\tau$. For any $p \in \MM$, let $T_{p}\MM$ be the tangent of $\MM$ at $p$ and let $ H\in Gr(d, D) $ such that $ \maxangle(T_p\MM, H) = \alpha \leq \pi/4$.
	Let $f_p:\RR^d\to \RR^{D-d}$ be such that the neighborhood $W_p\subset\MM$ can be descried as the the graph of $f_p$
	\[\Gamma_{f_p,W_p} = \{p + (x, f_p(x)) | x\in P_{H}(W_p)\}\]
    Let $ x_0\in H $, $ \beta(x_0) = \maxangle(T_{x_0}f_p, H) $, where $ T_\xi f $ is the tangent to the graph of $ f_p $ at $ f(\xi) $.
	Then, we get
	\[
	\alpha - 2 \sqrt{ \frac{\norm{x_0}}{\tau}\lrbrackets{2 \alpha + \frac{\norm{x_0}}{\tau}} } 
	\leq \beta(x_0) \leq
	\alpha + 2 \sqrt{ \frac{\norm{x_0}}{\tau}\lrbrackets{2 \alpha + \frac{\norm{x_0}}{\tau}} }
	\]
\end{Lemma}

\begin{proof}
 This follows immediately from Lemma \ref{lem:alpha_beta_x_no_func} and Lemma \ref{lem:M_is_locally_a_fuinction_clean}.
\end{proof}
\section{Supporting lemmas for Step 1}
\subsection{Proof of Lemma \ref{lem:J1pTp_clean}} \label{subsec:proof_lem_J1pTp_clean}
\begin{proof}
We recall that $ U_\textrm{ROI} $ as defined in \eqref{eq:ROI_clean} is contained in $ B_D({p_r}, \sqrt{\sigma\tau} + \sigma) $.
Using Lemma \ref{lem:dist_ri_TpM_clean}, for any $r_i\in U_\textrm{ROI}$ we know that the distance between $r_i - p_r$ and $ T_{{p_r}}\MM $ is bounded by 
\begin{equation}\label{eq:d_ri_Tp_clean}
\dist(r_i - p_r,T_{{p_r}}\MM) \leq \tau-\sqrt{\tau^2-\|x_i\|^2}  + \sigma
,\end{equation}
where $ x_i = P_{T_{p_r}\MM}(r_i - p_r) $.
Since $ x_i \in B_D(0, \sqrt{\sigma\tau}+\sigma )$ we have that $\norm{x_i} \leq \sqrt{\sigma\tau}+\sigma $, and so
\[
J_1(r; {p_r}, T_{{p_r}}\MM) \leq \left(\tau-\sqrt{\tau^2-(\sqrt{\sigma\tau}+\sigma)^2} + \sigma\right)^2 
.\]
By simplifying and bounding this expression using Remark \ref{rem:taylor2_sqrt1-x2_clean} we get
	\begin{align*}
		\left((\sigma+\tau)-\sqrt{\tau^2-(\sqrt{\sigma\tau}+\sigma)^2}\right)^2 = & (\sigma+\tau)^2 + \tau^2-(\sqrt{\sigma\tau}+\sigma)^2 - 2(\sigma+\tau) \sqrt{\tau^2-(\sqrt{\sigma\tau}+\sigma)^2}\\
		= &
		\sigma^2 + 2\sigma\tau + 2 \tau^2 -  (\sqrt{\sigma\tau}+\sigma)^2 \\
		&- 2\tau(\sigma+\tau) \sqrt{1-(\sqrt{\sigma/\tau}+\sigma/\tau)^2}\\
		\leq &
		\sigma^2 + 2\sigma\tau + 2 \tau^2 - (\sqrt{\sigma\tau}+\sigma)^2\\
		& - 2\tau^2 (1-1/2 (\sqrt{\sigma/\tau}+\sigma/\tau)^2- (\sqrt{\sigma/\tau}+\sigma/\tau)^4 )\\& - 2\tau\sigma (1-1/2 (\sqrt{\sigma/\tau}+\sigma/\tau)^2- (\sqrt{\sigma/\tau}+\sigma/\tau)^4)\\
		= &
		\sigma^2 -  \tau^2(\sqrt{\sigma/\tau}+\sigma/\tau)^2 \\
		& + \tau^2  ((\sqrt{\sigma/\tau}+\sigma/\tau)^2  +2(\sqrt{\sigma/\tau}+\sigma/\tau)^4) \\& + \tau\sigma  ((\sqrt{\sigma/\tau}+\sigma/\tau)^2+2(\sqrt{\sigma/\tau}+\sigma/\tau)^4) \\
		= &
		\sigma^2 + 2\tau^2 (\sqrt{\sigma/\tau}+\sigma/\tau)^4\\& + \tau\sigma ( (\sqrt{\sigma/\tau}+\sigma/\tau)^2 +2(\sqrt{\sigma/\tau}+\sigma/\tau)^4)\\
		\leq &
		\sigma^2 + 2 \left(\sqrt{\sigma} + \frac{\sigma}{\sqrt{\tau}}\right)^4
		+ 3\sigma
		\left(\sqrt{\sigma} + \frac{\sigma}{\sqrt{\tau}}\right)^2 
	.\end{align*}
Using the fact that $ \sigma \leq \tau $ we get
\begin{align*}
J_1(r; {p_r}, T_{{p_r}}\MM) &\leq
\sigma^2 + 2(2\sqrt{\sigma})^4 + 3\sigma (2\sqrt{\sigma})^2
\end{align*}
Thus, we obtain
$$J_1(r; {p_r}, T_{{p_r}}\MM) \leq 49 \sigma^2.$$
\end{proof}
\subsection{Technical parts of Lemma \ref{thm:J1pH_clean}}
\subsubsection{Proof of Claim \ref{thm:claim_R1pp_bound_clean} of  Lemma \ref{thm:J1pH_clean}}\label{sec:proof_claim_R1pp_bound_clean}
\begin{proof} 
It is clear from \eqref{eq:R1pp_def_clean} that
\[
R_1''(r; p, H) \geq
- \sum\limits_{j\in\KK''} \frac{1}{\#|U_\textrm{ROI}|}\sum\limits_{r_i\in U_\textrm{ROI}}   \langle r_i - p, {y}_j \rangle^2 =  - \frac{1}{\#|U_\textrm{ROI}|}\sum\limits_{r_i\in U_\textrm{ROI}}\sum\limits_{j\in\KK''}\langle r_i - p, {y}_j \rangle^2
.\]
Furthermore, since $y_j\in T_p\MM^\perp$ and by Lemma \ref{lem:dist_ri_TpM_clean} we get 
\[
\sum\limits_{j\in\KK''}\abs{\lrangle{r_i - p, y_j}}^2 \leq dist^2(r_i - p, T_p\MM) \leq \left(\tau - \sqrt{\tau^2 - \norm{P_{T_p\MM}(r_i- p) }^2} + \sigma\right)^2 
,\]
and since $r_i\in U_\textrm{ROI} \subset B_D(p, \sqrt{\sigma\tau} + \sigma)$ we get
\begin{align*}
     \sum\limits_{j\in\KK''}\langle r_i - p,y_j \rangle^2 
     & \leq 
	\left(\tau+\sigma-\sqrt{\tau^2-(\sqrt{\sigma\tau}+\sigma)^2}\right)^2 \\
	& = \lrbrackets{\tau + \sigma - \tau\sqrt{1 - \lrbrackets{\sqrt{\frac{\sigma}{\tau}} + \frac{\sigma}{\tau} }^2 }}^2
\end{align*}
by Taylor expansion (Remark \ref{rem:taylor_sqrt1-x2_clean})
\begin{align*}
     \sum\limits_{j\in\KK''}\langle r_i - p,y_j \rangle^2 
	& \leq 
	\lrbrackets{\tau + \sigma - \tau\lrbrackets{1 - \frac{1}{2}\lrbrackets{\sqrt{\frac{\sigma}{\tau}} + \frac{\sigma}{\tau} }^2}}^2 \\
	&=
	\lrbrackets{\sigma + \tau \frac{1}{2}\lrbrackets{\sqrt{\frac{\sigma}{\tau}} + \frac{\sigma}{\tau} }^2}^2 \\
\end{align*} 
since $ \sigma < \tau $ we get $\sqrt{\frac{\sigma}{\tau}} > \frac{\sigma}{\tau} $ and
\begin{align}\label{eq:ri_yk_bound_s_square_clean}
     \sum\limits_{j\in\KK''}\langle r_i - p,y_j \rangle^2 
	& \leq 
	\lrbrackets{\sigma + 2\tau \frac{\sigma}{\tau}}^2 = 9 \sigma^2
\end{align}
and
\begin{equation*}
R_1''(r; p, H) \geq
-  \frac{1}{\#|U_\textrm{ROI}|}\sum\limits_{r_i\in U_\textrm{ROI}}   9 \sigma^2 =  -9 \sigma^2
\end{equation*}

\end{proof}

\subsubsection{Simplification of \eqref{eq:a1a2_restriction_clean} to achieve \eqref{eq:a1a2_simplification_clean}}\label{subsec:simplification_app}
We wish to rewrite the requirement of \eqref{eq:a1a2_restriction_clean} in terms of $a_1$ and $a_2$.
By Lemma \ref{lem:f_bound_circle_clean}, for all $x\in B_{T_p\MM}(x_0, a_2 \sqrt{\sigma\tau})$:
\begin{equation*}
    \|\phi_p(x)\| \leq \tau - \sqrt{\tau^2 - (a_1+a_2)^2 {\sigma\tau}} 
;\end{equation*}
see Figure \ref{fig:ManifoldGoodDisc_clean}.
Thus,
\begin{align*}
\norm{(x,\phi_p(x))_{T_p\MM}}
& \leq \sqrt{(a_1+a_2)^2 {\sigma\tau} + \left(\tau - \sqrt{\tau^2 - (a_1+a_2)^2 {\sigma\tau}}\right)^2} \\
&= \sqrt{(a_1+a_2)^2{\sigma\tau} + \tau^2 - 2\tau\sqrt{\tau^2 - (a_1+a_2)^2 {\sigma\tau}} + \tau^2 - (a_1+a_2)^2 {\sigma\tau}} \\
&= \sqrt{2\tau^2 - 2\tau^2\sqrt{1 - (a_1+a_2)^2 {\sigma/\tau}} } \\
&= \tau\sqrt{2} \sqrt{1 - \sqrt{1 - (a_1+a_2)^2 {\sigma/\tau}} } \\
&\leq \tau\sqrt{2} \sqrt{1 - (1 - (a_1+a_2)^2 {\sigma/\tau}) } \\
&= \sqrt{2}(a_1+a_2) \tau\sqrt{ {\sigma/\tau} } \\
&= \sqrt{2}(a_1+a_2) \sqrt{ {\sigma\tau} },\end{align*}
Where the second inequality results from applying \eqref{eq:Taylor2ndOrder}.
Therefore, the requirement of \eqref{eq:a1a2_restriction_clean} translates to
$$
\norm{(x,\phi_p(x))_{T_p\MM}}_{T_p\MM,y_{\tilde j}}\leq \sqrt{2}(a_1+a_2)\sqrt{\sigma\tau}<\sqrt{\sigma\tau}-2\sigma
,$$
which can be simplified to 
$$
(a_1+a_2)<\frac{1}{\sqrt{2}} - \frac{\sqrt{2\sigma}}{\sqrt{\tau}} 
.$$

\subsection{Supporting Lemmas on Sample size in a given volume}
In this section we concentrated all assisting lemmas that are used in the proofs of Step 1.

\begin{Lemma}[Number of samples in a ball]\label{lem:sampling_density_almost_uniform_clean}
Suppose $\nu$ is a distribution on $\Omega\subset B_d(0,R) \subset \RR^d$ which is close to the uniform distribution $\mu$. That is, there exists $\mu_{max}$, $\mu_{min}$  such that for any $x\in \Omega$ we have  $\mu_{\min}\mu(x) \leq \nu(x) \leq \mu_{\max}\mu(x)$. Suppose $X=\{x_j\}_{j=1}^n$ is a set of $n$ i.i.d. sample from $\nu$, and denote the volume of a $ d $-dimensional unit ball by  $ \mathrm{V}_d = \frac{\pi^{d/2}}{\Gamma(d/2+1)} $. For any $\eps$, $\delta$ and radius $\rho$, there is $N$, such that if $n>N$ the following holds: For any $x_0\in \Omega$ such that $B_d(x_0,\rho)\subset \Omega$, we have 
\[
n(\frac{\mu_{max}}{2}\cdot \mathrm{V}_d \cdot\rho^d-\eps) < \#|X \cap B_d(x_0, \rho)| < n(2\cdot \mu_{min}\cdot \mathrm{V}_d \cdot\rho^d+\eps)
\]
with probability of at least $1-\delta$, where $\#|A|$ denotes the number of elements in the set $A$.
\end{Lemma}
\begin{proof}
Since $\Omega \subset B_d(0,R)\subset\RR^d$, there exists an $\tilde\eps$-net (denoted by $\Xi$) such that 
\[
\#\abs{\Xi} = \ceil{ \frac{3R}{\tilde\eps}}^d
,\]
where $ \ceil{x} $ is the ceiling value of $ x $ \cite{vershynin2018high}.
Around each point $p$ in $\Xi$, we consider a ball $B_d(p, (1 - \tilde\eps)\rho)$. 
Note, that this $\tilde\eps$-net along with these balls are independent of the choice of a specific ball $B_d(x_0,\rho)$.

For each of the $B_d(p, (1-\tilde\eps)\rho)$ , we  consider our sample set as $n$ i.i.d random variables   $Z^p_j$ which return the value $1$ if the sample lies within $B_d(p, (1-\tilde\eps)\rho)$ and $0$ if not.
Naturally, we get for all $j$ that
\[
\Pr[z_j^p = 1] = \int\limits_{B_d(p, (1-\tilde\eps)\rho)} d\nu 
\] 
Applying Hoeffding's inequality for each of the $B_d(p, (1 - \tilde\eps)\rho)$ we arrive at
\begin{equation}\label{eq:HoeffingtonInequality1_clean}
\Pr[\overline{Z}^p - \EE[\overline{Z}^p] \leq -\eps] \leq e^{-2 n \eps^2}
,    
\end{equation}
and
\begin{equation}\label{eq:HoeffingtonInequality2_clean}
\Pr[\overline{Z}^p - \EE[\overline{Z}^p] \geq \eps] \leq e^{-2 n \eps^2}
,    
\end{equation}
where
\[
\overline{Z}^p = \frac{1}{n}\sum_{j=1}^n Z_j^p.
\]
As a result
\[
\EE[\overline{Z}^p] = \frac{1}{n}\sum_{j=1}^n\Pr[Z_j^p = 1] =
\frac{1}{n}\sum_{j=1}^n\int\limits_{B_d(p, (1 - \tilde\eps)\rho)}d\nu
,\]
and
\begin{equation}\label{eq:EzBounds_clean}
\mu_{min} \cdot \vol(B_d(p, (1 - \tilde\eps)\rho)) \leq \EE[\overline{Z}^p] \leq \mu_{max} \cdot \vol(B_d(p, (1 - \tilde\eps)\rho))
.\end{equation}
Plugging this into \eqref{eq:HoeffingtonInequality1_clean} we get
\begin{equation}\label{eq:PrBound1_clean}
\Pr\left[\overline{Z}^p - \mu_{max}\vol\left(B_d(p, (1- \tilde\eps)\rho)\right)\leq -\eps\right] \leq e^{-2n \eps^2}
,\end{equation}
or, alternatively, since $\#|X \cap B_d(p,(1-\tilde\eps)\rho)| = n \cdot \overline{Z}^p$ we get
\[
\Pr\left[\#|X \cap B_d(p,(1-\tilde\eps)\rho)| \leq n(\mu_{max}\vol(B_d(p,(1-\tilde\eps)\rho))-\eps)\right]\leq e^{-2n \eps^2}
.\]

Denoting by $A_p$ the event $\#|X \cap B_d(p,(1-\tilde\eps)\rho)| \leq n(\mu_{max}\vol(B_d(p,(1-\tilde\eps)\rho))-\eps)$ we use the union bound to achieve 
\begin{equation}\label{eq:UnionBoundAp_clean}
\Pr\left(\bigcup\limits_{p\in\Xi} A_p\right) \leq \sum\limits_{p\in\Xi} \Pr\left(A_p \right) \leq \ceil{ 3R/\tilde\eps}^d \cdot  e^{-2n \eps^2}
.\end{equation}
Explicitly, the chances that there exists $B_d(p,(1-\tilde\eps)\rho)$ containing \textbf{less} sampled points than $n(\mu_{max}\vol(B_d(p,(1-\tilde\eps)\rho))-\eps)$ are less than $c \cdot e^{-2n \eps^2}$, where $c = \ceil{ 3R/\tilde\eps}^d$.
Going back to $B_d(x_0, \rho)$, we know that there exists a point $\tilde p\in \Xi$ such that 
\[
B_d(\tilde p,(1-\tilde\eps)\rho)\subset B_d(x_0, \rho)
.\]
As a result, for any $\delta, \eps, \rho$ there exists $N$ such that for all $n>N$
\[
\#|X \cap B_d(x_0, \rho)| > n(\mu_{max}\vol(B_d((1-\tilde\eps)\rho))-\eps)
= n(\mu_{max}\cdot \mathrm{V}_d \cdot(1-\tilde\eps)^d\rho^d-\eps)
,\]
with probability larger than $1-\delta$, where
\[
\mathrm{V}_d = \frac{\pi^{d/2}}{\Gamma(d/2+1)}
.\]

Similarly, instead of considering $B_d(p, (1-\tilde\eps)\rho)$ we look at $B_d(p, (1+\tilde\eps)\rho)$ for $p\in\Xi$ and alter the definitions of $Z^p$ accordingly.
Then, by plugging the left inequality of \eqref{eq:EzBounds_clean} into \eqref{eq:HoeffingtonInequality2_clean} we get
\[
\Pr[\overline{Z}^p - \mu_{min}\vol(B_d((1+\tilde\eps)\rho))\geq \eps] \leq e^{-2n\eps^2} 
\]
Utilizing the union bound once more we get the same bound as in \eqref{eq:UnionBoundAp_clean}. 
In other words, the chances that there exists a $B_d(p, (1+\tilde\eps)\rho)$ containing \textbf{more} sampled points than $n(\mu_{min} \cdot \vol(B_d((1+\tilde\eps)\rho)) + \eps )$ are less than $\ceil{3R/\tilde\eps}^d\cdot e^{-2n\eps^2}$.

Going back to $B_d(x_0, \rho)$, we know that there exists a point $\tilde p\in \Xi$ such that 
\[
B_d(x_0, \rho) \subset B_d(\tilde p,(1+\tilde\eps)\rho)
,\]
and for $\tilde\eps$ small enough
\[
B_d(\tilde p,(1+\tilde\eps)\rho) \subset \Omega
.\]
As a result, for any $\delta, \eps, \rho$ there exists $N$ such that for all $n>N$
\[
\#|X \cap B_d(x_0, \rho)| < n(\mu_{min}\vol(B_d((1+\tilde\eps)\rho))+\eps)
= n(\mu_{min}\cdot \mathrm{V}_d \cdot(1+\tilde\eps)^d\rho^d+\eps)
,\]
with probability larger than $1-\delta$.

Finally, we get that for any $\delta, \eps, \rho$ there exists $N$ large enough such that if $n>N$ we get
\[
n(\mu_{max}\cdot \mathrm{V}_d \cdot(1-\tilde\eps)^d\rho^d-\eps) < \#|X \cap B_d(x_0, \rho)| < n(\mu_{min}\cdot \mathrm{V}_d \cdot(1+\tilde\eps)^d\rho^d+\eps)
\]
Since this is true for any $\tilde\eps$, $(1+1/a)^a \leq 3$ and $(1-1/a)^a \geq 0.25$ we can choose $\tilde\eps = \frac{1}{c_1 d}$ such that
\[(1 - \tilde\eps)^d \geq 0.25^{1/c_1} \geq 0.5,\]
and
\[(1 + \tilde\eps)^d \leq 3^{1/c_1} \leq 2,\]
and achieve
\[
n(\frac{\mu_{max}}{2}\cdot \mathrm{V}_d \cdot\rho^d-\eps) < \#|X \cap B_d(x_0, \rho)| < n(2\cdot \mu_{min}\cdot \mathrm{V}_d \cdot\rho^d+\eps)
\]
\end{proof}

\begin{Lemma}[The projection of the Lebesgue measure onto $T_p\MM$ is almost uniform] \label{lem:measure_projecting_to_tangent_clean}
    Let $\MM$ be a $d$-dimensional sub-manifold of $\RR^D$ with bounded reach $\tau$ and a Riemannian metric $G$ pronounced through the chart $\varphi_p$ around a point $ p\in \MM $ \eqref{eq:LocalChart_clean}. 
    Let $r\in\MM_\sigma$, and let $\mu, \mu_\MM, \mu_{T_p\MM}$ denote the uniform distribution on $\MM_\sigma\subset\RR^D$, $\MM$, $T_p\MM$ correspondingly.
    Denote $P_\MM, P_{T_p\MM}$ the projection operators onto $\MM$ and $T_p\MM$.
    Then $\left(P_{T_p\MM}P_{\MM}\right)_*\mu$ is a measure on $T_p\MM$, and upon restricting this measure to $B_d(0,\rho)$ for some $\rho < \tau/2$ we get
    \[
    (P_{T_p\MM}P_\MM)_*\mu = \mathrm{V}_{D-d}\sigma^{D-d} \sqrt{det(G)}\mu_{T_p\MM}
    ,\]
    where $\mathrm{V}_d$ denotes the volume of a $d$-dimensional ball.
\end{Lemma}

\begin{proof}
We first note that since $\mu$ is the Lebesgue measure on $\MM_\sigma$ we have
\[
\int_{\MM_\sigma} d\mu = \int_{\MM}\mathrm{V}_{D-d}\sigma^{D-d}\mu_\MM
,\]
where $\mu_\MM$ is the uniform distribution on $\MM$. Thus,
\[
P_\MM \mu = {\mathrm{V}_{D-d}\sigma^{D-d}}\mu_\MM
.\]
Now, $P_\MM\mu$ is a measure defined on $\MM$, which can be pulled back to the tangent domain $T_p\MM\simeq\RR^d$ in the neighborhood $B_d(p, \tau/2)$ according to Corollary \ref{cor:GraphOfFunctionTau2_clean}.
If we denote the chart from $T_p\MM\simeq\RR^d$ to $\MM$ by $\varphi_p$ we get
\[
(P_{T_p\MM}P_\MM)_*\mu = \mathrm{V}_{D-d}\sigma^{D-d} \sqrt{det(G)}\mu_{T_p\MM}
,\]
where $G$ is the Riemannian metric expressed in this chart, and $\mu_{T_p\MM}$ is the Lebesgue measure on $T_p\MM$.

\end{proof}

\begin{corollary}\label{cor:measure_projecting_bounds_clean}
	From the fact that $\MM\in\C^k$ is compact and the restriction to a ball of radius $\tau/2$ we get that $\sqrt{det(G)}$ is bounded and
	\[
	\mu_{min} \mu_{T_p\MM} \leq (P_{T_p\MM}P_\MM)_*\mu \leq \mu_{max} \mu_{T_p\MM}
	\]
	where $\mu_{min}, \mu_{max}$ are constans that depend on $\tau$, and $\mu_{T_p\MM}$ is the Lebesgue measure on $T_p\MM$.
	The constants $ \mu_{min}, \mu_{max} $ can be described explicitly to show their exact relationship to $ \tau $.
\end{corollary}

Combining Lemma \ref{lem:sampling_density_almost_uniform_clean} and Corollary \ref{cor:measure_projecting_bounds_clean}, we have the following result

\begin{Lemma}\label{lem:num_of_samples_in_a_ball_clean}
	Let $\MM$ be a compact $d$-dimensional sub-manifold of $\RR^D$ with reach $\tau$  bounded away from zero, and a Riemannian metric $G_p$ pronounced through the chart $\varphi_p$ around a point $ p\in \MM $ \emphbrackets{see \eqref{eq:LocalChart_clean}}. 
    Let $\MM_\sigma$ be a tubular neighborhood around $ \MM $ of radius $ \sigma $ \emphbrackets{see \eqref{eq:Msigma}}, and assume $\sqrt{\frac{\sigma}{\tau}} < \frac{1}{2}$. 
    Suppose that $\mu$ is the uniform distribution on $ \MM_\sigma $. 
    Let $X = \{r_1, \ldots,r_n\}$ be $n$ points sampled i.i.d from $\mu$, and denote the volume of a $d$-dimensional unit ball by  $ \mathrm{V}_d = \frac{\pi^{d/2}}{\Gamma(d/2+1)} $.
    Denote,
    \begin{align}
    \label{eq:mu_min_max_def}
    \begin{split}
    \mu_{\min} &= \mathrm{V}_{D-d}\sigma^{D-d} \min_{\substack{p\in \MM\\ x\in B_{T_p\MM}(0,\sqrt{\sigma\tau}- \sigma)}}  \sqrt{det(G_p(x))}\\
    \mu_{\max} &= \mathrm{V}_{D-d}\sigma^{D-d}\max_{\substack{p\in \MM \\ x \in B_{T_p\MM}(0,\sqrt{\sigma\tau}- \sigma)}}  \sqrt{det(G_p(x))}
    .\end{split}
    \end{align}
    
    Then for any $\eps$ and $\delta$, there is $N$, such that for all $n>N$ the following holds: For any $x_0\in T_p\MM$ and $\rho \in \RR^+$ such that $B_{T_p\MM}(x_0,\rho + \sigma)\subset  B_{T_p\MM}(0,\sqrt{\sigma\tau}- \sigma)$ \emphbrackets{see the red, green, and blue discs in Figure \ref{fig:ManifoldGoodDisc_clean}}, we have 
\begin{align*}
\begin{split}
&\#\{r_i|P_{T_p\MM}(r_i) \in B_{T_p\MM}(x_0,\rho) \}
\leq
n(2\cdot \mu_{min}\cdot \mathrm{V}_d \cdot\rho^d+\eps) \\
&\#\{r_i|P_{T_p\MM}(r_i) \in B_{T_p\MM}(x_0,\rho+\sigma) \}
\geq
n \left(\frac{\mu_{max}}{2}\cdot V \cdot \rho^d - \eps\right)
\end{split}
\end{align*}
with probability of at least $1-\delta$.
\end{Lemma}

\begin{proof}
    Note that both the minimum and maximum in \eqref{eq:mu_min_max_def} exist and finite since $ \MM $ is compact and the determinant is continuous.
	We mention that since $ \sqrt{\frac{\sigma}{\tau}} < \frac{1}{2} $ we get that $ \sqrt{\sigma \tau} < \frac{\tau}{2} $ and the conditions of Corollary \ref{cor:measure_projecting_bounds_clean} are met.
	Note that 
	\[
	\#\{r_i|P_{T_p\MM}(r_i) \in B_d(x_0,\rho) \} \leq \#\{r_i|P_{T_p\MM}\circ P_\MM(r_i) \in B_d(x_0,\rho) \}
	,\]
	and from Lemma \ref{lem:sampling_density_almost_uniform_clean} combined with Corollary \ref{cor:measure_projecting_bounds_clean} we get
	\[
	\#\{r_i|P_{T_p\MM}\circ P_\MM(r_i) \in B_d(x_0,\rho) \} \leq n(2\cdot \mu_{min}\cdot \mathrm{V}_d \cdot\rho^d+\eps)
	.\]
	Thus, the first inequality is achieved.
	
	On the other hand,
	\[
	\#\{r_i|P_{T_p\MM}(r_i) \in B_d(x_0,\rho+\sigma) \}
	\geq \#\{r_i|P_{T_p\MM}\circ P_\MM(r_i) \in B_d(x_0,\rho)\}
	.\]
	Using again Lemma \ref{lem:sampling_density_almost_uniform_clean} combined with Corollary \ref{cor:measure_projecting_bounds_clean} we get
	\[
	\#\{r_i|P_{T_p\MM}\circ P_\MM(r_i) \in B_d(x_0,\rho)\} \geq n \left(\frac{\mu_{max}}{2}\cdot \mathrm{V}_{d} \cdot \rho^d - \eps\right)
	,\]
	and the second inequality holds.
\end{proof}

\section{Supporting lemmas for Step 2}

\begin{Lemma}\label{lem:Tf_Tftilde_Init}
	Let the sampling assumptions of Section \ref{sec:SamplingAssumptions} hold.
	Let $(q_{-1}, H_0)\in \MM_\sigma\times Gr(d,D)$ be the initialization of Algorithm \ref{alg:step2_clean}. Assume $\maxangle (H_0, T_p\MM) \leq \alpha$, for $p = P_\MM(q_{-1})$.
	Let $ f_{-1/2}:\RR^d\to\RR^{D-d} $ be defined in \eqref{eq:fl12_def}; i.e., $f_{-1/2}$ is a function whose graph $ \Gamma_{f_{-1/2}, q_{-1}, H_0} $ coincides with $ \MM $ in the sense of Lemma \ref{lem:M_is_locally_a_fuinction_clean}; explicitly 
	\[
	\Gamma_{f_{-1/2}, q_{-1}, H_0} = \{q_{-1} + (x, f_{-1/2}(x))_{H_0} ~|~ x\in B_{H_0}(q_{-1}, c_{\pi/4}\tau)\}
	,\]
	where $c_{\pi/4}$ is a constant and $\Gamma_{f_{-1/2}, q_{-1}, H_0} \subset \MM$.
	Then, for any $\alpha \leq \alpha_c$ where $\alpha_c$ is a constant depending only on $c_{\pi/4}$ of Lemma \ref{lem:M_is_locally_a_fuinction_clean}, there is $M$ such that for $\tau/\sigma >M$ (see assumption \ref{assume:noise} in Section \ref{sec:SamplingAssumptions}) we have 
	\[
	\maxangle(H_0, T_0 f_{-1/2}) 
	\leq  \frac{3\alpha}{2}
	.\]
\end{Lemma}
\begin{proof}
    Let $\phi_p: T_p\MM \simeq \RR^d \rightarrow \RR^{D-d}$ be defined in \eqref{eq:phi_def_clean} and \eqref{eq:FunctionGraph_init}. Let $g_0:(q_{-1},T_p\MM) \simeq \RR^d \to \RR^{D-d}$ defined as $g_0(x) = \phi_p(x) + (p - q_{-1})$ (note that $ (p-q_{-1}) \in T_p \MM^\perp$ and thus this can be understood with some abuse of notation). From Corollary \ref{cor:GraphOfFunctionTau2_clean}, $\Gamma_{g_0, q_{-1}, T_p\MM}$ coincides with $\MM \cap \mathrm{Cyl}(p,\tau/2, \tau/2)$.
    We note that $\maxangle (T_0g_0, T_p\MM)=0$,  $\maxangle(H_0, T_p\MM)\leq \alpha$ where $\alpha \leq \alpha_c$, $\|p-q_{-1}\| \leq \sigma$ and from assumption \ref{assume:noise} in Section \ref{sec:SamplingAssumptions}, we have that $\sigma \leq \frac{3\tau}{4 \cdot 16}$.  
    Then, we can apply Lemma \ref{lem:shift_ang_general}, with the above defined $g_0$ and $g_1 = f_{-1/2}$, $G_0 = T_p\MM$, and $G_1 = H_0$ and get
    \[
    \maxangle (H_0, T_0f_{-1/2}) \leq \maxangle(H_0, T_0g_0)  + \frac{ 8\|g_0(0)\|}{\tau}
    .\]
Since $T_0g_0 = T_p\MM$ and $\frac{ 8\|g_0(0)\|}{\tau} \leq \frac{8\sigma}{\tau}$, we have 
\[
    \maxangle (H_0, T_0f_{-1/2}) \leq \alpha + \frac{8\sigma}{\tau}.
\]
Finally, from assumption \ref{assume:noise} in Section \ref{sec:SamplingAssumptions}, we can require that $8\frac{\sigma}{\tau}\leq \frac{\alpha}{2}$, and thus,
\[
    \maxangle (H_0, T_0f_{-1/2}) \leq \frac{3\alpha}{2}.
\]

\end{proof}
\begin{Lemma}\label{lem:comute_kappa}
For any $\delta$ and for any $n,\alpha_1,r_1$, Let $C_{0}$ be the constant from Theorem 3.2 of \cite{aizenbud2021VectorEstimation}. We have that 
    \begin{equation}\label{eq:kappa_res_lem}
        \kappa = r_1\log_2(n) + \bar{C}_{\alpha_1,d} - \log\left(\ln\left(\frac{2r_1\log_2(n) + 2\bar{C}_{ \alpha_1,d}}{\delta}\right)\right)
    ,\end{equation}
    where
  \[
    \bar{C}_{ \alpha_1,d} = 1 + \log_2\left(\frac{ \alpha_1}{12\sqrt{d}}\right) -\log_2(C_0 )
    ,
    \]
    satisfies 
    \begin{equation}\label{eq:n_bound1_kappa_lem}
        2^{\kappa-1}C_0 \ln(1/\delta_1) \leq \frac{ \alpha_1}{12\sqrt{d}}n^{r_1},
    \end{equation}
    for $\delta_1 = \frac{\delta}{2\kappa}$, and $C_0$ from Theorem 3.2 of \cite{aizenbud2021VectorEstimation}.
    Furthermore, for $\kappa$ as in \eqref{eq:kappa_res_lem} we have 
    \[
     \alpha_1 2^{-\kappa} \leq C_{d} \ln\left(\frac{1}{\delta}\right) n^{-r_1}\left(\ln\left(\ln(n) \right)\right)^{2r_1}
    \]
\end{Lemma}

\begin{proof}
We find $\kappa$ that will satisfy \eqref{eq:n_bound1_kappa_lem}. Recalling that $\delta_1 = \frac{\delta}{2\kappa}$, we have that
	\[
    C_0 \left(\ln \frac{1}{\delta_1}\right) = C_0 \left(\ln \frac{2\kappa}{\delta}\right).
	\]
Rewriting \eqref{eq:n_bound1_kappa_lem}, we need $\kappa$ to satisfy
    \begin{equation}
        2^{\kappa-1}C_0\left(\ln \frac{2\kappa}{\delta}\right) \leq \frac{ \alpha_1}{12\sqrt{d}}n^{r_1},
    \end{equation}
    or, taking $\log_2$ of both sides, we have
    \begin{equation}
        \kappa - 1 +\log_2(C_0 ) + \log_2\left(\ln \frac{2\kappa}{\delta}\right) \leq \log_2\left(\frac{ \alpha_1}{12\sqrt{d}}\right) + r_1\log_2(n),
    \end{equation}
    or, 
    \begin{equation}\label{eq:kappa_bound1}
        \kappa  + \log_2\left(\ln \frac{2\kappa}{\delta}\right) \leq 1 + \log_2\left(\frac{ \alpha_1}{12\sqrt{d}}\right) -\log_2(C_0 ) +  r_1\log_2(n),
    \end{equation}
    To simplify the expression we denote the RHS of \eqref{eq:kappa_bound1} by $x$. Then we are looking for $\kappa$ such that
    \begin{equation}\label{eq:kappa_bound_simplified_x}
    \kappa+\log_2\left(\ln\left(\frac{2\kappa}{\delta}\right)\right) < x     
    \end{equation}
    We note that 
    \[
    \kappa = x-\log_2\left(\ln\left(\frac{2x}{\delta}\right)\right)
    \]
    satisfies \eqref{eq:kappa_bound_simplified_x} since
    \[
    x-\log_2\left(\ln\left(\frac{2x}{\delta}\right)\right)   +   \log_2\left(\ln\left( \frac{2x-2\log_2\left(\ln\left(\frac{x}{\delta}\right)\right)}{\delta} \right) \right)<x
    \]
    Thus, the following $\kappa$ satisfies Eq. \eqref{eq:kappa_bound1}
    \begin{equation}\label{eq:kappa_res}
        \kappa = r_1\log_2(n) + \bar{C}_{ \alpha_1,d} - \log\left(\ln\left(\frac{2r_1\log_2(n) + 2\bar{C}_{ \alpha_1,d}}{\delta}\right)\right)
    \end{equation}
    where 
    \[
    \bar{C}_{ \alpha_1,d} = 1 + \log_2\left(\frac{ \alpha_1}{12\sqrt{d}}\right) -\log_2(C_0 ).
    \]
    
    We now bound $ \alpha_1 2^{-\kappa}$ by 
    
       \begin{equation}
    \begin{array}{ll}
     \alpha_1 2^{-r_1\log_2(n) - \bar{C}_{\alpha_1,d} + \log\left(\ln\left(\frac{2r_1\log_2(n) + 2 \bar{C}_{\alpha_1,d}}{\delta}\right)\right)} &=   \alpha_1 n^{-r_1}2^{-\bar{C}_{ \alpha_1,d}}\left(\ln\left(\frac{2r_1\log_2(n) + 2\bar{C}_{ \alpha_1,d}}{\delta}\right)\right)
     \\
         & =  \alpha_1 2^{-\bar{C}_{\alpha_1,d}} n^{-r_1}\left(\ln\left(2r_1\log_2(n) + 2\bar{C}_{ \alpha_1,d}\right)+\ln\left(\frac{1}{\delta}\right)\right)\\
         & \leq C_{ \alpha_1,d} \ln\left(\frac{1}{\delta}\right) n^{-r_1}\left(\ln\left(\ln(n) \right)\right)^{2r_1}
    \end{array}
    \end{equation}
    Since $\alpha_1$ is bounded from above, there is  $ C_{d}$ independent of $\alpha_1$ for which
    \begin{equation}
      \alpha_1 2^{-\kappa} = \alpha_1 2^{-r_1\log_2(n) - \bar{C}_{ \alpha_1,d} + \log\left(\ln\left(\frac{2r_1\log_2(n) + 2 \bar{C}_{ \alpha_1,d}}{\delta}\right)\right)} \leq C_{ d} \ln\left(\frac{1}{\delta}\right) n^{-r_1}\left(\ln\left(\ln(n) \right)\right)^{2r_1}
    \end{equation}
\end{proof}

\subsection{Supporting lemmas for Lemma \ref{lem:main_support_theorem_step2}}
\label{sec:ProofofLemAlphaHalf}

\subsubsection{Bounding the error between $T_0 {f_\ell}$ and $T_0\wtilde{f}_\ell$}\label{sec:Tf_Tf_tilde_err}
\usetikzlibrary{calc}

\begin{figure}
    \centering
    \begin{tikzpicture}[auto]
        \node [Rbbblock] (main_main_lem) {L \textcolor{blue}{\ref{lem:Tf_TtildeF_D}}
        \[
        \maxangle(T_0 f_\ell, T_0 {\wtilde{f}_\ell}) 
        \leq  
         \frac{\alpha}{6}
        \]};
        
        \node [bbblock, below =1cm of main_main_lem] (main_lem) { L \textcolor{blue}{\ref{lem:bound_Df-Dfwtilde}}
        \vspace{-0.3cm}
         \begingroup\makeatletter\def\f@size{7}\check@mathfonts
        \[
        \|\DD\wtilde{f}(0) - \DD f(0)\| < \eps \alpha
        \]
        \endgroup
        };

        \draw [imp] (main_lem) to[out=90,in=-90,looseness=0.73] (main_main_lem) node [midway, below, sloped] (TextNode) {};
       
        \node [bbblock, below right =1cm and 1cm of main_main_lem] (Diff_Tf_bound) {L \textcolor{blue}{\ref{lem:Diff_Tf_bound}}
        \vspace{-0.3cm}
        \begingroup\makeatletter\def\f@size{7}\check@mathfonts
        \begin{gather*}
    	\norm{\DD_f[0] - \DD_{\tilde f}[0]}_{op}\leq \varepsilon 
    	\\
    	\Downarrow
    	\\
    	\sin(\maxangle(T_0 f, T_0 \wtilde{f})) \leq  \varepsilon 
    	\end{gather*}
    	\endgroup
	};
        
 	   \draw [imp] (Diff_Tf_bound) to[out=90,in=-90,looseness=0.73] (main_main_lem) node [midway, below, sloped] (TextNode) {};

        \node [bbblock, below left = 1cm and 1cm of main_lem] (bound_g) {L \textcolor{blue}{\ref{lem:g_bond_weak}}
        \vspace{-0.3cm}
         \begingroup\makeatletter\def\f@size{7}\check@mathfonts
         \[
        \sigma \leq g(0,\theta) \leq \sigma +  4\sigma \alpha^2
        \]
        \endgroup}; 
        
         \draw [imp] (bound_g) to[out=90,in=-90,looseness=0.73] (main_lem) node [midway, below, sloped] (TextNode) {};

        \node [bbblock, below =1cm of main_lem] (bound_partI) {L \textcolor{blue}{\ref{lem:bound_nablag}}
        \vspace{-0.3cm}
         \begingroup\makeatletter\def\f@size{7}\check@mathfonts
         \[
        \|\nabla_x{g}(0,\theta)\| =  \OO(\frac{\sigma}{\tau}\sin(\alpha))
        \]\endgroup}; 
        
         \draw [imp] (bound_partI) to[out=90,in=-90,looseness=0.73] (main_lem) node [midway, below, sloped] (TextNode) {};

        \node [bbblock, below right =1.5cm and 0cm of bound_partI] (bound_J) {L \textcolor{blue}{\ref{lem:bound_J}}
        \vspace{-0.3cm}
        \begingroup\makeatletter\def\f@size{7}\check@mathfonts
        \[
            \|I-J_{\wtilde{x}_\theta}\| = \frac{3\sigma}{\tau} + \OO(\frac{\sigma\sin \alpha}{\tau})
        \]
        \endgroup};

        \node [bbblock, below =0.5cm of bound_J] (Jacobi_bound_clean) {L \textcolor{blue}{\ref{lem:bound_A_inv_B}}
        }; 
        
        \node [bbblock, below right =1cm and -2cm of Jacobi_bound_clean] (Jacobi_expression_clean) {L \textcolor{blue}{\ref{lem:Dxtildew+Dxw}}
        \vspace{-0.3cm}
        \begingroup\makeatletter\def\f@size{7}\check@mathfonts
        \begin{equation*}
        \begin{array}{l}
            \| D_{\wtilde{x}_\theta} w + D_x w\| \leq \mathcal{O}(\frac{\sigma}{\tau}\sin \alpha)\\
            \| D_{\wtilde{x}_\theta} w \| \leq  \mathcal{O}(\sin \alpha)\\ \|D_x w\| \leq \mathcal{O}(\sin \alpha)\\
            1/2 \sigma^2 \leq \Delta(0,\wtilde x_\theta (0)) \leq 2 \sigma^2
        \end{array}
        \end{equation*}
        \endgroup
        }; 
        
        \node [bbblock, below left =1cm and -2cm of Jacobi_bound_clean] (hessian_bounds_clean) {L \textcolor{blue}{\ref{lem:bound_DDf}}  
        \vspace{-0.3cm}
        \begingroup\makeatletter\def\f@size{7}\check@mathfonts
        \[
        \HH f(\wtilde x_\theta)_w = \frac{\sqrt{2}\sigma}{\tau} + \OO(\frac{\sigma\sin \alpha}{\tau})
        \]
        \endgroup};
        
        \node [xbblock,text width=13em, below right =5cm and -2.5cm of Jacobi_bound_clean] (Df_Dftilde_diff) {L \textcolor{blue}{\ref{lem:Dfx-Dfxtilde}}
        \vspace{-0.3cm}
        \begingroup\makeatletter\def\f@size{7}\check@mathfonts
        \[
        \|Df(\wtilde{x}_\theta(0)) - Df(0)\|_{op} \leq \OO(\frac{\sigma}{\tau}\sin \alpha)
        \]
        \endgroup
};
        \draw [imp] (bound_J) to[out=90,in=-90,looseness=0.73] (bound_partI) node [midway, below, sloped] (TextNode) {};
        
        \draw [imp] (Jacobi_bound_clean) to[out=90,in=-90,looseness=0.73] (bound_J) node [midway, below, sloped] (TextNode) {};

        \draw [imp] (Jacobi_expression_clean) to[out=90,in=-90,looseness=0.73] (Jacobi_bound_clean) node [midway, below, sloped] (TextNode) {};
        
        \draw [imp] (Jacobi_expression_clean) to[out=90,in=0,looseness=0.73] (bound_J) node [midway, below, sloped] (TextNode) {};
        \draw [imp] (hessian_bounds_clean) to[out=90,in=-90,looseness=0.73] (Jacobi_bound_clean) node [midway, below, sloped] (TextNode) {};
        
        \draw [imp] (Df_Dftilde_diff)  to[out=90,in=-90,looseness=0.73] (Jacobi_expression_clean) node [midway, below, sloped] (TextNode) {};
        
        \draw [imp] (Jacobi_expression_clean)   to[out=90,in=-90,looseness=0.73] (7.2,-8) to[out=90,in=0,looseness=0.7] (bound_partI) node [midway, below, sloped] (TextNode) {};

        \draw [imp] (Df_Dftilde_diff)  to[out=90,in=-90,looseness=0.73] (9.5,-13.3) to[out=90,in=-90,looseness=0.73] (9.5,-8) to[out=90,in=0,looseness=0.7] (bound_partI) node [midway, below, sloped] (TextNode) {};

          \node [xbblock,text width=13em, below left =2.5cm and -1cm of bound_partI] (Grad_fl_bounds) {L  \textcolor{blue}{\ref{lem:bounding_stuff}}
         \vspace{-0.3cm} 
         \begingroup\makeatletter\def\f@size{7}\check@mathfonts
          \begin{equation*}
          \begin{array}{l}
            \|\wtilde{x}_\theta(0)\| \leq \sigma \sin (\alpha + 3\sqrt{\frac{\sigma}{\tau}}\alpha)
            \\
            \|f(\wtilde{x}_\theta(0))\| \leq 2\sigma \sin (\alpha + 3\sqrt{\frac{\sigma}{\tau}}\alpha) \tan \alpha 
            \\
            \|Df(0)\|_2 \leq \sin \alpha    
            \\
            \|Df(\wtilde{x}_\theta(0))\|_2 \leq \sin (\alpha + 3\sqrt{\frac{\sigma}{\tau}}\alpha)
          \end{array}
          \end{equation*}
          \endgroup
            }; 

         \draw [imp] (Grad_fl_bounds) to[out=90,in=-90,looseness=0.73] (bound_partI) node [midway, below, sloped] (TextNode) {};
         
         \draw [imp] (Grad_fl_bounds) to[out=90,in=-90,looseness=0.73] (bound_g) node [midway, below, sloped] (TextNode) {};
         
         \draw [imp] (Grad_fl_bounds) to[out=0,in=180,looseness=0.73] (Jacobi_bound_clean) node [midway, below, sloped] (TextNode) {};

         \node [bbblock, below right =1.6cm and -2.5cm of Grad_fl_bounds] (sectional_derivative_diff_bound) {L \textcolor{blue}{\ref{lem:wtilde_x_bound_clean_D}}
         \vspace{-0.3cm}
         \begingroup\makeatletter\def\f@size{7}\check@mathfonts
         \[
         \|\wtilde{x}_\theta(0)\| \leq \sigma \sin (\alpha + 3\sqrt{\frac{\sigma}{\tau}}\alpha)
         \]   \endgroup}; 
         
         \node [bbblock, below =4cm of Grad_fl_bounds] (Grad_f_bound) {L \textcolor{blue}{\ref{lem:x_tilde_bound_beta_D}}
         \vspace{-0.3cm}
          \begingroup\makeatletter\def\f@size{7}\check@mathfonts
          \[
        \norm{\wtilde x_\theta(0) } \leq \sigma \sin \beta(\wtilde x_\theta(0))
        \] \endgroup}; 

         \draw [imp] (sectional_derivative_diff_bound) to[out=90,in=-90,looseness=0.73] (Grad_fl_bounds) node [midway, below, sloped] (TextNode) {};

         \draw [imp] (Grad_f_bound) to[out=90,in=-90,looseness=0.73] (sectional_derivative_diff_bound) node [midway, below, sloped] (TextNode) {};

         \node [bbblock, below left =1.6cm and -2.5cm of Grad_fl_bounds] (wtilde_x_bound_clean) {L \textcolor{blue}{\ref{lem:alpha_beta_D}}
         \vspace{-0.3cm}
         	 \begingroup\makeatletter\def\f@size{7}\check@mathfonts
         	 \[
        	\alpha_\ell - 4\alpha_\ell\frac{\sigma}{\tau}  \leq \beta(x_0) \leq \alpha_\ell + 3\alpha_\ell\sqrt{\frac{\sigma}{\tau}}
        	\]\endgroup
	        };

         \draw [imp] (wtilde_x_bound_clean) to[out=90,in=-90,looseness=0.73] (Grad_fl_bounds) node [midway, below, sloped] (TextNode) {};
          \draw [imp] (Grad_f_bound) to[out=90,in=-90,looseness=0.73] (wtilde_x_bound_clean) node [midway, below, sloped] (TextNode) {};
    \end{tikzpicture}
    \caption{Road-map for proof of Lemma  \ref{lem:Tf_TtildeF_D}}
    \label{tikz:lemTf-Tftilde_lemmas}
\end{figure}


Back to Theorem \ref{thm:Step2} proof road-map see Figure \ref{tikz:thm33_lemmas}.

The main Lemma in the Section is Lemma \ref{lem:Tf_TtildeF_D}. A road-map for the proof appears in Figure~\ref{tikz:lemTf-Tftilde_lemmas}.

\begin{Lemma}\label{lem:Tf_TtildeF_D}
    Let the assumptions of Theorem \ref{thm:Step2} hold, and let $ f:H\simeq\RR^d\to\RR^{D-d} $ be a function such that its graph coincides with a neighborhood on the manifold $\MM$ (see Lemma \ref{lem:M_is_locally_a_fuinction_clean}). Let $\wtilde f:\RR^d\to\RR^{D-d} $ be the regression function defined in \eqref{eq:ftilde_def_general}. Denoting $\alpha = \maxangle(H, T_0 f)$, there is a constant $C_\tau$ large enough such that for $M = \frac{\tau}{\sigma}>C_\tau\sqrt{D \log D}$, and $\alpha \leq \frac{3}{2}\sqrt{C_M/M}$, where $C_M$ is from Theorem \ref{thm:Step1}, we have 
    \[
	\maxangle(T_0 f, T_0 {\wtilde{f}}) 
	\leq  
	 \frac{\alpha}{6} 
	\]
\end{Lemma}
\begin{proof}
From Lemma \ref{lem:bound_Df-Dfwtilde} we have 
\[
\|\DD_{\wtilde{f}}[0] - \DD_f[0]\| < \frac{\alpha}{6}
\]
From Lemma \ref{lem:Diff_Tf_bound} we have that 
    \[
	\maxangle(T_0 f, T_0 {\wtilde{f}}) 
	\leq   \frac{\alpha}{6}
	\]
	and the proof is concluded. 
\end{proof}

\begin{Lemma}\label{lem:bound_Df-Dfwtilde}
 Let the assumptions of Theorem \ref{thm:Step2} hold, and let $ f:H\simeq\RR^d\to\RR^{D-d} $ be a function such that its graph coincides with a neighborhood on the manifold $\MM$ (see Lemma \ref{lem:M_is_locally_a_fuinction_clean}). Let $\wtilde f:\RR^d\to\RR^{D-d} $ be the regression function   defined in \eqref{eq:ftilde_def_general}. 
 For any $\eps>0$, denoting $\alpha = \maxangle(H, T_0 f)$, there is a constant $C_\tau$ large enough such that for $M = \frac{\tau}{\sigma} >C_\tau\sqrt{D \log D}$, and $\alpha \leq \frac{3}{2} \sqrt{C_M/M}$, where $C_M$ is from Theorem \ref{thm:Step1}, we have 
    \[
    \|\DD\wtilde{f}(0) - \DD f(0)\| < \eps \alpha
    \]
\end{Lemma}
\begin{proof}
We reiterate the definition of $\Omega(x):H \simeq \RR^d \to 2^{\RR^{D-d}}$ from \eqref{eq:Omega_def}
\begin{equation*}
\Omega(x) = (x + H^\perp) \cap  \MM_\sigma
,    
\end{equation*}
where $x + H^\perp = \{x + y ~|~ y\in H^\perp\}$.
Next, denoting $S_{D-d}$ to be the $D-d$ dimensional unit sphere, we define $g(x, \theta):H\times S_{D-d}\to \RR$ the maximal length from $f(x)$ in the direction $\theta$ that is inside $\Omega(x)$.
Explicitly,
\begin{equation}\label{eq:g_def}
g(x, \theta) = \max\{y\in \RR ~|~ (x,f(x) + y\cdot \theta)_H  \in \Omega(x)\}
.    
\end{equation}
Note, that the farthest point from $f(x)$ in $\Omega(x)$ at each direction $\theta$, by which we define $g(x, \theta)$, belongs to $\partial\MM_\sigma$ (the boundary of the domain $\MM_\sigma$), and is therefore exactly $\sigma$ away from some point on the manifold itself.
Since we are viewing the manifold locally as the graph of the function $f:H\to H^\perp$ we denote this point by $(\wtilde x_\theta, f(\wtilde x_\theta(x)))_H$.
Explicitly, $\wtilde{x}_\theta:H \to H$ is such that
\begin{equation}\label{eq:x_tilde_def}
    (x ,f(x) + g(x, \theta) \cdot \theta)_H^T  = (\wtilde x_\theta(x), f(\wtilde x_\theta(x)))_H^T + \sigma \vec N_\theta (x)
,\end{equation}
where $\vec N_\theta \in \RR^D$ is perpendicular to $T_{\wtilde x_\theta}f$.
We introduced the definition of $\wtilde{x}_\theta$ here as it will be pivotal in the proofs of Lemmas \ref{lem:g_bond_weak} and \ref{lem:bound_nablag} upon which the the current proof relies.
Furthermore, we wish to stress here that by Lemma \ref{lem:bounding_stuff} $\wtilde x_\theta(0) \leq \sigma \sin (\alpha + 3 \sqrt{\sigma/\tau}\alpha)$.
Therefore, for sufficiently large $M = \frac{\tau}{\sigma}$, $\wtilde x_\theta(0)$ is within the domain of definition of the function $f$ which by Lemma \ref{lem:M_is_locally_a_fuinction_clean} is of radius of at least $c_{\pi/4}\tau$.

Next, by the definitions and Eq. \eqref{eq:g_def} and \eqref{eq:Omega_def} we have that 
\begin{equation*}
\wtilde{f}(x) - f(x) = \frac{\int\limits_{y\in\Omega(x)} y dy}{\int\limits_{y\in\Omega(x)} dy} 
.\end{equation*}
Since $\Omega \subset \RR^D$ is perpendicular to $H\simeq \RR^d$ we get that $\Omega\simeq \RR^{D-d}$.
Thus, by change of variables we can breakdown the integrals over $\Omega(x)$ to a radial component $r$ and directions on the $(D-d-1)$-dimensional sphere.
Explicitly,
\begin{equation*}
\wtilde{f}(x) - f(x) = 
\frac{\int\limits_{S_{D-d-1}}\int\limits_0^{g(x,\theta)} \theta r r^{D-d-1} drd\theta}{\int\limits_{S_{D-d-1}}\int\limits_0^{g(x,\theta)} r^{D-d-1} drd\theta}  = 
\frac{(D-d)\int\limits_{S_{D-d-1}}\theta g(x,\theta)^{D-d+1} d\theta}{(D-d+1)\int\limits_{S_{D-d-1}}g(x,\theta)^{D-d} drd\theta}     
,\end{equation*}
where $dr$ is the measure over the radial component, $r^{D-d-1}$ is the Jacobian introduced by the change of variables and $d\theta$ is the measure over the $(D-d-1)$-dimensional sphere.
For brevity we introduce the notation $\wtilde D = D - d$ and get
\begin{equation}\label{eq:f_tilde-f_integrals}
\wtilde{f}(x) - f(x) = 
\frac{\int\limits_{\mathbb{S}_{\wtilde D-1}}\int\limits_0^{g(x,\theta)} \theta r r^{\wtilde D-1} drd\theta}{\int\limits_{\mathbb{S}_{\wtilde D-1}}\int\limits_0^{g(x,\theta)} r^{\wtilde D-1} drd\theta}  = 
\frac{\wtilde D \int\limits_{\mathbb{S}_{\wtilde D-1}}\theta g(x,\theta)^{\wtilde D+1} d\theta}{(\wtilde D+1)\int\limits_{\mathbb{S}_{\wtilde D-1}}g(x,\theta)^{\wtilde D} drd\theta}     
.\end{equation}
 
Next, by taking the differential of that expression with respect to $x$ we have that 
\begin{multline*}
    \DD_{\wtilde{f}}[x] - \DD_f[x] =
\frac{\wtilde D}{(\wtilde D+1)}\left(
\frac{(\wtilde D+1)\int\limits_{\mathbb{S}_{\wtilde D-1}}\theta g(x,\theta)^{\wtilde D} \nabla_x g^T d\theta}
{\int\limits_{\mathbb{S}_{\wtilde D-1}}g(x,\theta)^{\wtilde D} d\theta} \right.\\
-\left.
\frac{\wtilde D\int\limits_{\mathbb{S}_{\wtilde D-1}}\theta g(x,\theta)^{\wtilde D+1} d\theta \int\limits_{\mathbb{S}_{\wtilde D-1}} g(x,\theta)^{\wtilde D-1}\nabla_x g^T d\theta}
{\left(\int\limits_{\mathbb{S}_{\wtilde D-1}}g(x,\theta)^{\wtilde D} d\theta\right)^2}     
\right)
,\end{multline*}
where $\nabla_x g$ stands for the gradient of $g(\theta, x)$ with respect to the $x$ variables only.
As can be seen in the above equation there is a multiplicative factor of size $\approx (D - d)$ for both summends.
In order to deal with this obstacle we wish to utilize the fact that in high dimensions most of the volume of a sphere is concentrated near an equator.
Thus, we split the domain into two different regions that we will deal with separately (we will use this trick in few of the other proofs). 
In case $D-d$ is small, then the following computations can be done without splitting the domain into two regions (one near the equator and the the second being the remaining cap), and include the $D-d$ factor in the constant that will be cancelled by $M$.
Therefore, we assume without losing the generality of our claim that $D-d = \wtilde D > 3$.
For any direction/unit vector $\vec{z}\in S_{\wtilde D}$ denote $\vec{z}^T\theta = z$ and 
\begin{equation}\label{eq:Omegas_def}
    \begin{aligned}
    \Omega_1 &= \{\theta~|~0\leq \vec{z}^T\theta \leq \xi\}\\
    \Omega_2 &= \{\theta~|~\vec{z}^T\theta > \xi\}
    \end{aligned}
\end{equation}
For some $\xi$ to be chosen later. Using the above notation, we have 
\begin{multline}\label{eq:zDftilde}
    \vec{z}^T\cdot  (\DD_{\wtilde{f}}[x] - \DD_f[x] )=
\frac{\wtilde D}{(\wtilde D+1)}\left(
\underbrace{\frac{(\wtilde D+1)\int\limits_{\mathbb{S}_{\wtilde D-1}}z g(x,\theta)^{\wtilde D} \nabla_x g^T d\theta}
{\int\limits_{\mathbb{S}_{\wtilde D-1}}g(x,\theta)^{\wtilde D} drd\theta}  }_{I} \right.  \\
-\left.
\underbrace{\frac{\wtilde D\int\limits_{\mathbb{S}_{\wtilde D-1}}z g(x,\theta)^{\wtilde D+1} d\theta \int\limits_{\mathbb{S}_{\wtilde D-1}} g(x,\theta)^{\wtilde D-1}\nabla_x g^T d\theta}
{\left(\int\limits_{\mathbb{S}_{\wtilde D-1}}g(x,\theta)^{\wtilde D} drd\theta\right)^2}}_{II}     
\right)
\end{multline}
First we treat part $(I)$ of Eq. \eqref{eq:zDftilde} by splitting the domain into $\Omega_1$ and $\Omega_2$ of Eq. \ref{eq:Omegas_def}.

\begin{align*}
    (I) &= \frac{(\wtilde D+1)\left(\int\limits_{\Omega_1}z g(x,\theta)^{\wtilde D} \nabla_x g^T d\theta + \int\limits_{\Omega_2}z g(x,\theta)^{\wtilde D} \nabla_x g^T d\theta \right) }
{\int\limits_{\mathbb{S}_{\wtilde D-1}}g(x,\theta)^{\wtilde D} d\theta} \\
&\leq\frac{(\wtilde D+1)\left(\xi\int\limits_{\Omega_1} g(x,\theta)^{\wtilde D} \nabla_x g^T d\theta + \int\limits_{\Omega_2} g(x,\theta)^{\wtilde D} \nabla_x g^T d\theta \right)}
{\int\limits_{\mathbb{S}_{\wtilde D-1}}g(x,\theta)^{\wtilde D} d\theta}
\end{align*}

From \ref{lem:g_bond_weak} we have that $0 < \sigma \leq g(0,\theta)\leq \sigma + 4\sigma\alpha^2$, and from Lemma \ref{lem:bound_nablag} we have $\|\nabla_x g(0,\theta)\|\leq c_1 \sigma \alpha/\tau$. Then,
\begin{align*}
\norm{(I)} &\leq \frac{(\wtilde D+1)\left(\xi\int\limits_{\Omega_1} g(x,\theta)^{\wtilde D} \|\nabla_x g^T\| d\theta + \int\limits_{\Omega_2} g(x,\theta)^{\wtilde D} \|\nabla_x g^T\| d\theta \right)}
{\int\limits_{\mathbb{S}_{\wtilde D-1}}g(x,\theta)^{\wtilde D} d\theta} \\
&\leq  \frac{(\wtilde D+1)c_1 \sigma \alpha \left(\xi\int\limits_{\Omega_1} g(x,\theta)^{\wtilde D}  d\theta + \int\limits_{\Omega_2} g(x,\theta)^{\wtilde D}  d\theta \right)}
{\tau\int\limits_{\mathbb{S}_{\wtilde D-1}}g(x,\theta)^{\wtilde D} d\theta}
\\
&\leq  \frac{\xi(\wtilde D+1)c_1 \sigma \alpha}{\tau} + \frac{(\wtilde D+1)c_1 \sigma \alpha \int\limits_{\Omega_2} g(x,\theta)^{\wtilde D}  d\theta }
{\tau\int\limits_{\mathbb{S}_{\wtilde D-1}}g(x,\theta)^{\wtilde D} d\theta}
\\
&\leq  \frac{\xi(\wtilde D+1)c_1 \sigma \alpha}{\tau} + \frac{(\wtilde D+1)c_1 \sigma \alpha ( \sigma + 4\sigma\alpha^2)^{\wtilde D}\int\limits_{\Omega_2}  d\theta }
{\tau \sigma^{\wtilde D}\int\limits_{\mathbb{S}_{\wtilde D-1}} d\theta}
\end{align*}

Furthermore, using the following concentration of measure inequality (see e.g. \cite{milman2009asymptotic,Venkatesan2012high}
\begin{equation}\label{eq:Omega3_to_ball_ratio}
\frac{\int\limits_{\Omega_2}  d\theta}{\int\limits_{\mathbb{S}_{\wtilde D-1}}  d\theta} \leq \frac{2}{\xi\sqrt{\wtilde D-2}}e^{-(\wtilde D - 2 )\xi^2/2}
,\end{equation}
we have that,
\begin{align*}
    \norm{(I)} 
&\leq  \frac{\xi(\wtilde D+1)c_1 \sigma\alpha}{\tau} + \frac{(\wtilde D+1)c_1 \sigma\alpha (1 + 4\alpha^2)^{\wtilde D} }
{\tau}\frac{2}{\xi\sqrt{\wtilde D-2}}e^{-(\wtilde D - 2 )\xi^2/2}
\\
&\leq  \frac{\xi(\wtilde D+1)c_1 \sigma\alpha}{\tau} + \frac{c_2 \sigma\alpha \sqrt{\wtilde D+1}( 1 + 4\alpha^2)^{\wtilde D} }
{\tau}\frac{2}{\xi}e^{-(\wtilde D - 2 )\xi^2/2}
,\end{align*}
where $c_2 = 4c_1 \geq \frac{\sqrt{\wtilde D +1}}{\sqrt{\wtilde D - 2}} c_1$.

Since $M = \frac{\tau}{\sigma}>C_\tau\sqrt{D \log D}$, we choose $\xi = 2\sqrt{\log D/D}$ and we have

\begin{align*}
    \norm{(I)} 
&\leq \frac{2c_1 \alpha(\wtilde D+1)}{C_\tau D} + \frac{c_2 \alpha \sqrt{\wtilde D+1}( 1 + 4\alpha^2)^{\wtilde D} }
{C_\tau\sqrt{D \log D}}\frac{2}{\xi}e^{-(\wtilde D - 2 )\xi^2/2}
\\
&\leq \alpha\left(\frac{2c_1 }{C_\tau} + \frac{c_1 \sqrt{\wtilde D+1}( 1 + 4\alpha^2)^{\wtilde D} } 
{C_\tau\sqrt{D\log D }}\frac{4\sqrt{D}}{\sqrt{\log D}}e^{-\log D}\right)
\\
&\leq \alpha\left(\frac{2c_1 }{C_\tau} + \frac{2c_1 \sqrt{D}( 1 + 4\alpha^2)^{\wtilde D} }
{C_\tau D\log D }\right)
,\end{align*}
where the second inequality is true since  $(\wtilde D-2)/D \geq 1/2$.
Since $\alpha \leq \frac{3}{2} \sqrt{C_M/M} $, and we also have that $M>C_\tau\sqrt{D \log D}$ we have that 
$( 1 + 4\alpha^2)^{\wtilde D}$ is bounded for any $D$. Thus, we have that for $C_\tau$ large enough, 
\begin{align}
\norm{(I)} 
&\leq \frac{\eps}{2}\alpha
\end{align}
for any $D$.

Next we bound $\|(II)\|$ from \eqref{eq:zDftilde}:
\begin{align*}
\frac{\wtilde D\int\limits_{\mathbb{S}_{\wtilde D-1}}z g(x,\theta)^{\wtilde D+1} d\theta \int\limits_{\mathbb{S}_{\wtilde D-1}} g(x,\theta)^{\wtilde D-1}\nabla_x g^T d\theta}
{\left(\int\limits_{\mathbb{S}_{\wtilde D-1}}g(x,\theta)^{\wtilde D} d\theta\right)^2}
\end{align*}
First, we note that 
\begin{align*}
\norm{\frac{\int\limits_{\mathbb{S}_{\wtilde D-1}} g(x,\theta)^{\wtilde D-1}\nabla_x g^T d\theta}
{\int\limits_{\mathbb{S}_{\wtilde D-1}}g(x,\theta)^{\wtilde D} d\theta}} & \leq \frac{c_1 \sigma\alpha}{\tau} \frac{\int\limits_{\mathbb{S}_{\wtilde D-1}} g(x,\theta)^{\wtilde D-1} d\theta}
{\int\limits_{\mathbb{S}_{\wtilde D-1}}g(x,\theta)^{\wtilde D-1} d\theta} = \frac{c_1 \sigma \alpha}{\tau}.
\end{align*}
Thus, similarly to the way we bounded $(I)$, we split the domain to $\Omega_1$ and $\Omega_2$ of Eq. \eqref{eq:Omega_def} and achieve
\begin{align*}
\norm{(II)} &\leq \frac{\wtilde Dc_1 \sigma\alpha}{\tau}\frac{\int\limits_{\mathbb{S}_{\wtilde D-1}}z g(x,\theta)^{\wtilde D+1} d\theta}
{\int\limits_{\mathbb{S}_{\wtilde D-1}}g(x,\theta)^{\wtilde D} d\theta}
\\
&\leq \frac{\wtilde Dc_1 \sigma\alpha}{\tau} \frac{ \left(\xi\int\limits_{\Omega_1} g(x,\theta)^{\wtilde D+1}  d\theta + \int\limits_{\Omega_2} g(x,\theta)^{\wtilde D+1}  d\theta \right)}
{\int\limits_{\mathbb{S}_{\wtilde D-1}}g(x,\theta)^{\wtilde D} d\theta}
\\
&\leq  \frac{\xi\wtilde Dc_1 \sigma\alpha}{\tau} + \frac{\wtilde Dc_1 \sigma\alpha ( \sigma + 4\sigma \alpha^2)^{\wtilde D+1}\int\limits_{\Omega_2}  d\theta }
{\tau \sigma^{\wtilde D}\int\limits_{\mathbb{S}_{\wtilde D-1}} d\theta}
\\
&\leq  \frac{\xi\wtilde Dc_1 \sigma\alpha}{\tau} + \frac{\wtilde Dc_1 \sigma^2\alpha ( 1 + 4\alpha^2)^{\wtilde D+1}\int\limits_{\Omega_2}  d\theta }
{\tau \int\limits_{\mathbb{S}_{\wtilde D-1}} d\theta}
\end{align*}
From \eqref{eq:Omega3_to_ball_ratio} we have that 
\begin{align*}
\norm{(II)} &\leq  \frac{\xi\wtilde Dc_1 \sigma\alpha}{\tau} + \frac{\wtilde Dc_1 \sigma^2\alpha ( 1 + 4\alpha^2)^{\wtilde D+1}}
{\tau} \frac{2}{\xi\sqrt{\wtilde D-2}}e^{-(\wtilde D - 2 )\xi^2/2}
\end{align*}
Since $M = \frac{\tau}{\sigma}>C_\tau\sqrt{D\log D}$, and choosing again $\xi = 2\sqrt{\log D/D}$ we have

\begin{align*}
\norm{(II)} &\leq  \frac{2\wtilde Dc_1 \alpha}{C_\tau D} + \frac{\wtilde Dc_1 \sigma\alpha ( 1 + 4\alpha^2)^{\wtilde D+1}}
{C_\tau \sqrt{D\log D}} \frac{\sqrt{D}}{\sqrt{\log D} \sqrt{\wtilde D-2}}e^{-\log D}
\\
&\leq \frac{2c_1 \alpha}{C_\tau } + \frac{\alpha c_1 \sigma ( 1 + 4\alpha^2)^{\wtilde D+1}}
{C_\tau D\log D} \frac{1}{\sqrt{\wtilde D-2}}
\end{align*}
Since From Theorem \ref{thm:Step1} we have that $\alpha \leq \frac{3}{2} \sqrt{ C_M/M}$, and thus 
$( 1 + \alpha)^{\wtilde D}$ is bounded for any $D$. Thus, we have that for $C_\tau$ large enough, 
\begin{align}
\norm{(II)}&\leq \frac{\eps}{2}\alpha
\end{align}
for any $D$.

Thus, we have from \eqref{eq:zDftilde} that for any unit vector $z$, $\|\vec{z}( \DD_{\wtilde{f}}[x] - \DD_f[x])\| \leq \eps\alpha$
or 
\[
\| \DD_{\wtilde{f}}[x] - \DD_{f}[x]\|_{op} \leq \eps\alpha
\]
\end{proof}

\begin{Lemma}\label{lem:Diff_Tf_bound}
	Let $ f, \tilde f $ be functions from $ \RR^d $ to $ \RR^{D-d} $, and denote their differentials by $ \DD_f, \DD_{\wtilde f} $ respectively.
	Denote by $ T_0 f, T_0 \wtilde f $ the tangent planes of the graphs of $ f $ and $ \wtilde f $ respectively.
	Assume that
	\[
	\norm{\DD_f[0] - \DD_{\tilde f}[0]}_{op}\leq \varepsilon 
	.\]
	Then, for sufficiently small $ \varepsilon $
	\[
	\sin(\maxangle(T_0 f, T_0 \wtilde{f})) \leq  \varepsilon 
	.\]
\end{Lemma}

\begin{proof}
	By definition we know that
	\[
	T_0 f = \{ \DD_{f}[0]v ~|~ v\in \RR^d \} \subset \RR^D
	,\]
	and
	\[
	T_0 \wtilde f_\ell = \{ (v, \DD_{\wtilde f}[0]v) ~|~ v\in \RR^d \} \subset \RR^D
	.\] 
	Let $ L_1, L_2 $ be two linear spaces, denote by $ Q_{L_1, L_2} :L_1\to L_2^\perp $  the following operator
	\[
	Q_{L_1, L_2} = {v - P_{L_2}(v)}
	.\]
	By Lemma \ref{lem:Q_angle_between_flats} we know that 
	\[
	\sin(\maxangle\lrbrackets{L_1, L_2}) = \norm{Q_{L_1, L_2}}_{op}
	.\]
	We now turn to look at the operator $ Q_{T_0 f, T_0 \wtilde f } $ operating on some vector $ v\in T_0 f $.
	\begin{align*}
	\norm{Q_{T_0 f, T_0 \wtilde f}(v,\DD_{f}[0]v )} 
	&\leq 
	\norm{(v,\DD_{f}[0]v ) - (v,\DD_{\wtilde f}[0]v )}\\
	&\leq 
	\norm{\DD_{f}[0] - \DD_{\wtilde f}[0]}_{op}\norm{(v,\DD_{f}[0]v)}
	,\end{align*}
	and thus,
	\[
	\sin (\angle_{max}(T_0 f, T_0 \wtilde f ))  = \|Q_{T_0 f, T_0 \wtilde f}\|_{op} \leq \|\DD_{f}[0] - \DD_{\wtilde f}[0]\|_{op} \leq  \varepsilon  
	\]
\end{proof}

\begin{Lemma}\label{lem:Q_angle_between_flats}
    Let $L_1$ and $L_2$ be two linear subspaces of $\RR^D$. Denote $Q_{L_1,L_2}:L_1 \rightarrow L_2^\perp$ defined as
    \[
    Q_{L_1,L_2}(v) = v - P_{L_2}(v)
    ,\]
    where $ P_{L_2} $ is the projection onto $ L_2 $.
    Then, 
    \[
    \sin \angle_{max}(L_1,L_2) = \|Q_{L_1,L_2}\|_{op}
    .\]
\end{Lemma}
\begin{proof}
	Recalling Definition \ref{def:principal_angles_clean}, the Principal Angles $ \beta_i $ between  $ L_1, L_2 $ and their corresponding pairs of vectors $ u_i\in L_1, w_i\in L_2 $ are defined as
	\begin{equation*}
	\begin{array}{ll}
	u_1, w_1 \defeq \argmin\limits_{\substack{u\in L_1, w\in L_2 \\ \norm{u}=\norm{w} =1}}\arccos\left(\abs{\langle u, w \rangle}\right), &
	\beta_1 \defeq \angle(u_1, w_1)
	\end{array}    
	,\end{equation*}
	and for $i > 1$
	\begin{equation*}
	\begin{array}{ll}
	u_i, w_i \defeq \argmin\limits_{\substack{u\perp\UU_{i-1}, w\perp\WW_{i-1} \\ \norm{u}=\norm{w} =1}}\arccos\left(\abs{\langle u, w \rangle}\right), &
	\beta_i \defeq \angle(u_i, w_i)
	\end{array}
	,\end{equation*}
	where $$\UU_i \defeq Span\{u_j\}_{j=1}^i ~,~ \WW_i \defeq Span\{w_j\}_{j=1}^i.$$
	We now wish to show that for all $ i $ we can choose
	\[
	w_i = \frac{P_{L_2}(u_i)}{\norm{P_{L_2}(u_i)}}
	.\]
	Since the definition is inductive so will be our proof.
	\paragraph*{Basis of the induction $ i=1 $:}
	We first denote
	\[
	v_1 = P_{L_2(u_1)}
	.\]
	Note that,
	\[
	\beta_1 = \angle(u_1, w_1) = \angle(u_1, \norm{v_1} w_1 )
	,\]
	and by the minimization problem defining $ \beta_1 $ we know that
	\[
	\beta_1 \leq \angle(u_1, v_1)
	.\]
	Then, since the projection onto a linear space minimizes the Least-Squares norm we get
	\begin{align*}
	\lrangle{u_1 - v_1, u_1 - v_1} 
	&\leq 
	\lrangle{u_1 - \norm{v_1}w_1, u_1 - \norm{v_1}w_1}\\
	\lrangle{u_1,u_1} - 2\lrangle{u_1, v_1} + \lrangle{v_1,v_1} 
	&\leq
	 \lrangle{u_1,u_1} - 2\lrangle{u_1, \norm{v_1}w_1} + \lrangle{\norm{v_1}w_1,\norm{v_1}w_1} \\
	 - 2\lrangle{u_1, v_1} + \norm{v_1}^2 
	 &\leq
	 - 2\lrangle{u_1, \norm{v_1}w_1} + \norm{v_1}^2 \\
	 \lrangle{u_1, \norm{v_1}w_1} 
	 &\leq
	 \lrangle{u_1, v_1}\\
	 \lrangle{u_1, w_1} 
	 &\leq
	 \lrangle{u_1, \frac{v_1}{\norm{v_1}}}
	 \\
	 \beta_1 =
	 \angle(u_1, w_1) =
	 \arccos(\lrangle{u_1, w_1}) 
	 &\geq
	 \arccos(\lrangle{u_1, \frac{v_1}{\norm{v_1}}}) = \angle(u_1, v_1)
	.\end{align*}
	Thus, 
	\[
	\angle(u_1, v_1) = \beta_1
	,\]
	and we can choose $ w_1 = v_1 $.
	\paragraph*{The induction step:}
	Now we assume that for all $ i\leq j $
	\[
	w_i = \frac{v_i}{\norm{v_i}}
	,\]
	where
	\[
	v_i = P_{L_2}(u_i)
	.\]
	And, we wish to show that 
	\[
	\angle(u_{j+1}, v_{j+1}) = \beta{j+1}
	,\]
	where the fact that $ \beta_{j+1} \leq  \angle(u_{j+1}, v_{j+1})$ results directly from the definition of $ \beta_{j+1} $.
	
	We first note that since
	\[
	u_{j+1}\perp\UU_j
	,\]
	we have
	\[	P_{L_2}(u_{j+1})\notin Span\{P_{L_2}(u_i)\}_{i=1}^j = \WW_j
	,\]
	thus, 
	\[
	v_{j+1} \in \WW_j^\perp  
	.\]
	From here on we can repeat the same argument as in the basis of the induction, just replacing $ L_1, L_2 $ with $ \UU_j^\perp, \WW_j^\perp $ respectively.
\end{proof}

\begin{Lemma}\label{lem:g_bond_weak}
Let the conditions of Lemma \ref{lem:bound_Df-Dfwtilde} hold.
Let $g(x, \theta)$ be as defined in Equation \eqref{eq:g_def}.
Then, for $\alpha$ smaller than some constant and $M$ larger then some constant, 
\[
\sigma \leq g(0,\theta) \leq \sigma +  4\sigma \alpha^2
\]
\end{Lemma}

\begin{proof}
    
    Since $(0,f(0))_H \in \MM$, any point $p\in \RR^D$ such that $\norm{p - (0,f(0))_H}\leq\sigma$ belongs to $\MM_\sigma$, the $\sigma$-tubular neighborhood of $\MM$. 
    In particular, this is also true for $p\in H^\perp\subset \RR^D$, and thus, we obtain the lower bound 
    \begin{equation*}
     \sigma \leq g(0,\theta)   
    ,\end{equation*}
    as by the definition of \eqref{eq:g_def} we have $g(0,\theta) = \max_{p\in (\textrm{Span}\{\theta\}\cap\MM_\sigma)} \norm{p - (0, f(0))_H}$.
    
    From Lemma \ref{lem:bounding_stuff} we have that 
    \[
    \|f(\wtilde{x}_\theta(0)) - f(0)\| \leq 2\sigma \sin (\alpha + 3\sqrt{\frac{\sigma}{\tau}}\alpha) \tan \alpha,
    \]
    where $\wtilde{x}_\theta$ defined in \eqref{eq:x_tilde_def}.
    
    Since $(0, f(0))_H + \theta g(0,\theta)$ is at distance  $\sigma$ from $(\wtilde{x}_\theta(0), f(\wtilde{x}_\theta(0)))_H$, and denoting $\theta\in H^\perp\subset \RR^D$ by $(0, \bar \theta)_H$ we have that
    \[
    \sigma  =
    \norm{(0, f(0) + \bar \theta g(0, \theta)  )_H -(\wtilde x_{\theta}(0), f(\wtilde x_\theta(0))_H}
    = \sqrt{\norm{\wtilde x_\theta}^2 + \norm{f(\wtilde x_\theta(0) - f(0)}^2 + g(0,\theta)^2 },\]
    and thus,
    \[
    g(0,\theta) \leq \sigma + 2\sigma \sin (\alpha + 3\sqrt{\frac{\sigma}{\tau}}\alpha) \tan \alpha,
    .\]
    Then, for $\alpha$ smaller than some constant and $M$ larger than some constant we have 
    \[
    g(0,\theta) \leq \sigma + 4\sigma \sin^2 \alpha,
    \] 
    or,
    \[
    g(0,\theta) \leq \sigma + 4\sigma \alpha^2,
    \] 
\end{proof}
\begin{Lemma}\label{lem:bound_nablag}
Let the conditions of Lemma \ref{lem:bound_Df-Dfwtilde} hold.
Let $g(x, \theta)$ be as defined in Equation \eqref{eq:g_def}. Then, 
\[
\|\nabla_x{g}(0,\theta)\| \leq C \cdot \frac{\sigma}{\tau}\alpha 
,\]
where $\nabla_x{g}$ denotes the gradient of $g(x, \theta)$ with respect to the $x$ variables only, and $C$ is some constant.
\end{Lemma}
\begin{proof}
Following the definition of $\wtilde{x}_\theta$ in \eqref{eq:x_tilde_def} and $g(x,\theta)$ in \eqref{eq:g_def}, we have the following equations that describe the connection between $x, \wtilde{x}_\theta$ and $g(x,\theta)$
\begin{equation*}
\left(\begin{array}{c}
     \wtilde{x}_\theta \\
     f(\wtilde{x}_\theta) 
\end{array}\right)
+
\sigma \vec N(x,\wtilde{x}_\theta,\theta) = 
\left(\begin{array}{c}
     x \\
     f(x) 
\end{array}\right)_H
+ 
\left(\begin{array}{c}
     0 \\
     \bar \theta
\end{array}\right)_H g(x,\theta)
,\end{equation*}
where $\theta\in \RR^D$ is written as $(0, \bar \theta)_H$, and $ \vec N(x,\wtilde{x}_\theta,\theta)\in \RR^D$ is some unit vector perpendicular to $T_{\wtilde{x}_\theta}f$.
Explicitly, 
\begin{equation}\label{eq:c_2}
 \vec N(x,\wtilde{x}_\theta,\theta) \perp T_{\wtilde{x}_\theta}f
,\end{equation}
and 
\begin{equation}\label{eq:c_3}
\|\vec N(x,\wtilde{x}_\theta,\theta) \| = 1
.\end{equation}
Alternatively, we can write,
\begin{equation}\label{eq:c_1}
\sigma \vec N(x,\wtilde{x}_\theta,\theta) = 
\left(\begin{array}{c}
     x \\
     f(x) 
\end{array}\right)_H
-
\left(\begin{array}{c}
     \wtilde{x}_\theta \\
     f(\wtilde{x}_\theta) 
\end{array}\right)_H
+ 
\left(\begin{array}{c}
     0 \\
     \bar \theta
\end{array}\right)_H g(x,\theta)
\end{equation}

Taking the norm of \eqref{eq:c_1} and using \eqref{eq:c_3} we have  

\begin{equation*}
\left\|\left(\begin{array}{c}
     x \\
     f(x) 
\end{array}\right)
-
\left(\begin{array}{c}
     \wtilde{x}_\theta \\
     f(\wtilde{x}_\theta) 
\end{array}\right)
+ 
\left(\begin{array}{c}
     0 \\
     \bar  \theta
\end{array}\right) g(x,\theta)\right\|^2 = \sigma^2
\end{equation*}
or,
\[
g^2 + 2g \left(\begin{array}{c}
     x - \wtilde{x}_\theta \\
     f(x) - f(\wtilde{x}_\theta) 
\end{array}\right) ^T
\left(\begin{array}{c}
     0 \\
     \bar \theta
\end{array}\right)
+
\left\|\left(\begin{array}{c}
     x - \wtilde{x}_\theta \\
     f(x) - f(\wtilde{x}_\theta) 
\end{array}\right)\right\|^2 
-
\sigma^2 =0
\]
or,
\[
g^2 + 2g (f(x) - f(\wtilde{x}_\theta))^T \bar \theta 
+
\|x - \wtilde{x}_\theta\|^2 
+
\| f(x) - f(\wtilde{x}_\theta) \|^2
-
\sigma^2 =0
\]
the two solutions are 
\begin{multline}\label{eq:sol_g_pm}
    g_{\pm}(x,\theta) =\\ -f(x)^T \bar \theta + f(\wtilde{x}_\theta)^T \bar \theta \pm
    \sqrt{\sigma^2 + (f(x)^T \bar \theta - f(\wtilde{x}_\theta)^T \bar \theta)^2 - \|x- \wtilde{x}_\theta\|^2 - \|f(x)- f(\wtilde{x}_\theta) \|^2}
\end{multline}

From Lemma \ref{lem:bounding_stuff}, for $\alpha$ smaller than some constant, we have that
\[ 
\|0- \wtilde{x}_\theta(0)\|^2 + \|f(0)- f(\wtilde{x}_\theta(0)) \|^2 \leq \sigma^2
\]
and thus, the solutions of Eq. \eqref{eq:sol_g_pm} at $x=0$ are $g_{-}(0, \theta) < 0$ and $g_{+}(0, \theta) > 0$.
Therefore, from continuity we get that there is a neighborhood of $x=0$ such that the only non-negative solution is
\begin{multline}\label{eq:sol_g}
    g(x,\theta) =\\ -f(x)^T \bar \theta + f(\wtilde{x}_\theta)^T \bar \theta + 
    \sqrt{\sigma^2 + (f(x)^T \bar \theta - f(\wtilde{x}_\theta)^T \bar \theta)^2 - \|x- \wtilde{x}_\theta\|^2 - \|f(x)- f(\wtilde{x}_\theta) \|^2}
.\end{multline}
In addition, from the definition of $g$ the only valid solution is the non-negative one which appears on Eq. \eqref{eq:sol_g}.
Thus, denoting 
\begin{equation}\label{eq:Delta}
\Delta = \sigma^2 + (f(x)^T \bar \theta - f(\wtilde{x}_\theta)^T \bar \theta)^2 - \|x- \wtilde{x}_\theta\|^2 - \|f(x)- f(\wtilde{x}_\theta) \|^2,
\end{equation}
we have that near $x=0$
\begin{multline}\label{eq:nabla_g}
    \nabla_x{g}(x,\theta) =\\ -\DD_f[x]^T\bar\theta + J_{\wtilde{x}_\theta} \DD_f[\wtilde{x}_\theta]^T \bar \theta 
    +
    \frac{1}{\sqrt{\Delta}}
    \left( 
     (f(\wtilde{x}_\theta)^T \bar \theta - f(x)^T \bar \theta)\left(J_{\wtilde{x}_\theta}\DD_f[\wtilde{x}_\theta]^T \bar \theta - \DD_f[x]^T \bar \theta\right) 
    \right. \\
    \left.- 
    (I_d-J_{\wtilde{x}_\theta})(x- \wtilde{x}_\theta) 
    - 
    (\DD_f[x]^T - J_{\wtilde{x}_\theta}\DD_f[\wtilde{x}_\theta]^T)(f(x)- f(\wtilde{x}_\theta))  \right)
,\end{multline}
where $J_{\wtilde x_\theta(x)} = \DD_{\wtilde x_\theta (x)}[x]$ is the Jacobi matrix of the function $\wtilde x_\theta (x)$, and $I_d$ is the $d$-dimensional identity matrix.
Alternatively, we can write
\begin{multline}\label{eq:nabla_g2}
    \nabla_x{g}(x,\theta) =\\ (J_{\wtilde{x}_\theta} \DD_f[\wtilde{x}_\theta]^T - \DD_f[x]^T ) \bar\theta 
    +
    \frac{1}{\sqrt{\Delta}}
    \left( 
     (f(\wtilde{x}_\theta)^T\bar\theta - f(x)^T\bar\theta ) \left(J_{\wtilde{x}_\theta}\DD_f[\wtilde{x}_\theta]^T - \DD_f[x]^T \right)\bar\theta 
    \right. \\
    \left.- 
    (I_d-J_{\wtilde{x}_\theta})(x- \wtilde{x}_\theta) 
    - 
    (\DD_f[x]^T - J_{\wtilde{x}_\theta}\DD_f[\wtilde{x}_\theta]^T)(f(x)- f(\wtilde{x}_\theta))  \right)
\end{multline}

Next, using Lemma \ref{lem:bound_J}, Lemma \ref{lem:Dfx-Dfxtilde} and Lemma \ref{lem:bounding_stuff}, we bound 
\begin{equation}\label{eq:Df-JDfwtilde}
\begin{aligned}
     \|\DD_f[0]^T - J_{\wtilde{x}_\theta} \DD_f[\wtilde{x}_\theta(0)]^T \| &= \|\DD_f[0]^T -\DD_f[\wtilde{x}_\theta(0)]^T+\DD_f[\wtilde{x}_\theta(0)]^T - J_{\wtilde{x}_\theta(0)} \DD_f[\wtilde{x}_\theta(0)]^T \| 
     \\
     &\leq \|\DD_f[0]^T -\DD_f[\wtilde{x}_\theta(0)]^T\|+\|\DD_f[\wtilde{x}_\theta(0)]^T - J_{\wtilde{x}_\theta(0)} \DD_f[\wtilde{x}_\theta(0)]^T \|
     \\
     &\leq \|\DD_f[0]^T -\DD_f[\wtilde{x}_\theta(0)]^T\|+\|I_d - J_{\wtilde{x}_\theta(0)} \|\|\DD_f[\wtilde{x}_\theta(0)]^T \| 
     \\
     &= \OO(\frac{\sigma}{\tau}\sin \alpha) + \OO(\frac{\sigma}{\tau}) \|\DD_f[\wtilde{x}_\theta(0)]^T \|
     \\
     &= \OO(\frac{\sigma}{\tau}\cdot\alpha) 
\end{aligned}
\end{equation}

Now, using \eqref{eq:nabla_g2} and the fact that $\norm{\bar \theta} = 1$ we get
\begin{align*}
    \|\nabla_x{g}(0,\theta)\|  &\leq\|\DD_f[0]^T - J_{\wtilde{x}_\theta}[0] \DD_f[\wtilde{x}_\theta(0)]^T\|  \\\
    &~~~
    +
    \frac{1}{\sqrt{\Delta}}
    \left( 
     \|f(\wtilde{x}_\theta(0)) - f(0)\| \|J_{\wtilde{x}_\theta}[0]\DD_f[\wtilde{x}_\theta(0)]^T - \DD_f[0]^T \| 
    \right. \\
    &~~~ \left.+ 
    \|I_d-J_{\wtilde{x}_\theta}[0]\|\|\wtilde{x}_\theta(0)\| 
    + 
    \|\DD_f[0]^T - J_{\wtilde{x}_\theta}[0]\DD_f[\wtilde{x}_\theta(0)]^T\|\|f(0)- f(\wtilde{x}_\theta(0))\|\right)
\end{align*}
From Lemmas \ref{lem:bound_J} and \ref{lem:bounding_stuff} we know that $\|I_d - J_{\wtilde x_\theta}[0]\| =\OO(\frac{\sigma}{\tau})$, $\|\wtilde x_\theta(0)\|\leq \sigma \sin(\alpha + 3\sqrt{\sigma/\tau} \alpha)$, and that $\|f(\wtilde x_\theta(0)) - f(0)\| \leq 2\sigma \sin(\alpha + 3\sqrt{\sigma/\tau}\alpha)\tan(\alpha)$.
In other words, for $\alpha$ smaller than some constant we can say that $\|I_d - J_{\wtilde x_\theta}[0]\| = \OO(\sigma/\tau)$, $\|\wtilde x_\theta(0)\|\leq \OO(\sigma \alpha)$, and that $\|f(\wtilde x_\theta(0)) - f(0)\| \leq \OO(\sigma \alpha^2)$.
Combining this with \eqref{eq:Df-JDfwtilde} as well, we have
\begin{align*}
    \|\nabla_x{g}(0,\theta)\| &\leq\OO(\frac{\sigma}{\tau}\cdot\alpha)
    +
    \frac{1}{\sqrt{\Delta}}
    \left( 
    \OO(\frac{\sigma^2}{\tau}\cdot\alpha^3)
    + 
    \OO(\frac{\sigma^2}{\tau}\cdot\alpha) 
    +
    \OO(\frac{\sigma^2}{\tau}\cdot\alpha^3)\right)
\end{align*}
Since Lemma \ref{lem:Dxtildew+Dxw} gives us 
\[
 \Delta(0,\wtilde x_\theta (0)) \geq \frac{1}{2} \sigma^2
,\]
we have that $\frac{1}{\sqrt{\Delta}}\leq \sqrt{2}/\sigma$, and 
\begin{equation*}
    \|\nabla_x{g}(x,\theta)\| =  \OO(\frac{\sigma}{\tau}\cdot\alpha)
.\end{equation*}
\end{proof}

\begin{Lemma}\label{lem:bound_J}
Let the conditions of Lemma \ref{lem:bound_nablag} hold.
Let $g(x, \theta), \wtilde x(x)$ be as defined in Equation \eqref{eq:g_def} and \eqref{eq:x_tilde_def} respectively, let $I_d:\RR^d\to\RR^d$ denote the identity matrix and let $J_{\wtilde x_\theta}$ denote the differential of $\wtilde x(x)$ with respect to $x$. Then, 
\begin{equation*}
\|I_d-J_{\wtilde{x}_\theta}[0]\| = \OO(\frac{\sigma}{\tau})
.\end{equation*}    
\end{Lemma}
\begin{proof}
We begin by reiterating equations \eqref{eq:c_1},\eqref{eq:c_2}, and \eqref{eq:c_3}.
Namely we have
\begin{equation*}
\sigma \vec N(x,\wtilde{x}_\theta,\theta) = 
\left(\begin{array}{c}
     x \\
     f(x) 
\end{array}\right)
-
\left(\begin{array}{c}
     \wtilde{x}_\theta \\
     f(\wtilde{x}_\theta) 
\end{array}\right)
+ 
\left(\begin{array}{c}
     0 \\
     \bar \theta
\end{array}\right) g(x,\theta)
,\end{equation*}
where
\begin{equation*}
 \vec N(x,\wtilde{x}_\theta,\theta) \perp T_{\wtilde{x}_\theta}f
,\end{equation*}
and 
\begin{equation*}
\|\vec N(x,\wtilde{x}_\theta,\theta) \| = 1
.\end{equation*}
Thus, there is a vector $v(x,\wtilde x_\theta)\in \RR^{D-d}$ with $\|v(x,\wtilde x_\theta)\| = 1$,
\[
\vec N(x,\wtilde{x}_\theta,\theta) = \frac{1}{\sqrt{\|\DD_f[\wtilde{x}_\theta] v(x,\wtilde x_\theta)\|^2 + 1}} 
\left(\begin{array}{c}
     -\DD_f[\wtilde{x}_\theta]^T v(x,\wtilde x_\theta) \\
     v(x,\wtilde x_\theta)
\end{array}\right)
\]
or, denoting $ w(x,\wtilde x_\theta) = \frac{\sigma}{\sqrt{\|\DD_f[\wtilde{x}_\theta]^T v(x,\wtilde x_\theta))\|^2 + 1}}  v$, we have 
\[
\sigma \vec N(x,\wtilde{x}_\theta) = 
\left(\begin{array}{c}
     -\DD_f[\wtilde{x}_\theta]^T  w(x,\wtilde x_\theta) \\
     w(x,\wtilde x_\theta)
\end{array}\right)
\]

Using this pronunciation of $\vec N$ we can rewrite the above equation as 
\begin{equation}\label{eq:c_1_w}
\left(\begin{array}{c}
     -\DD_f[\wtilde{x}_\theta]^T w(x,\wtilde x_\theta) \\
     w(x,\wtilde x_\theta)
\end{array}\right) = 
\left(\begin{array}{c}
     x \\
     f(x) 
\end{array}\right)
-
\left(\begin{array}{c}
     \wtilde{x}_\theta \\
     f(\wtilde{x}_\theta) 
\end{array}\right)
+ 
\left(\begin{array}{c}
     0 \\
     \bar \theta
\end{array}\right) g(x,\theta)
.\end{equation}
From Eq. \eqref{eq:sol_g} in the proof of Lemma \ref{lem:bound_nablag} we know that 
$g(x, \theta) = -f(x)^T\bar\theta + f(\wtilde x_\theta)^T\bar \theta + \sqrt{\Delta}$, near $x=0$, where $\Delta$ is defined in Eq. \eqref{eq:Delta}.
Combining this with the last $D-d$ equations  we get,
\begin{equation}\label{eq:def_w}
    w(x,\wtilde x_\theta) =\left(f(x) - f(\wtilde{x}_\theta) + \bar \theta \left(- f(x)^T \bar \theta + f(\wtilde{x}_\theta) ^T \bar \theta + 
    \sqrt{\Delta}\right)\right)
.\end{equation}
Looking at the first $d$ equations of \eqref{eq:c_1_w}, we have 
\[
-\DD_f[\wtilde{x}_\theta]^T w(x,\wtilde x_\theta) - x + \wtilde{x}_\theta = 0
.\]
Denoting the function 
\begin{equation} \label{eq:G_def}
G(x,\wtilde{x}_\theta) = -\DD_f[\wtilde{x}_\theta]^T w(x,\wtilde x_\theta) - x + \wtilde{x}_\theta    
,\end{equation}
we aim at using the Inverse Function Theorem (IFT) to compute $J_{\wtilde{x}_\theta}$.
First, we compute  $\DD^{x}_G $ and $\DD^{\wtilde{x}_\theta}_G$, the partial differentials of $G$ with respect to the variables $x$ and $\wtilde x_\theta$:
\begin{equation*}
    \DD^x_G[x, \wtilde x_\theta] = - \DD_f[\wtilde{x}_\theta]^T \DD^x_w[x, \wtilde x_\theta] - I_d
\end{equation*}
\begin{equation*}
    \DD^{\wtilde{x}_\theta}_G[x, \wtilde x_\theta] = - \HH f(\wtilde x_\theta)^w - \DD_f[\wtilde{x}_\theta]^T \DD^{\wtilde{x}_\theta}_w[x, \wtilde x_\theta]  + I_d
,\end{equation*}
where $\HH f(\wtilde x_\theta)^w \in \RR^{d\times d}$ is the tensor Hessian of $f(\wtilde x_\theta):\RR^d\to\RR^{D-d}$ projected onto the target direction $w\in \RR^{D-d}$; that is
\begin{equation}\label{eq:H_w_def}
\HH f(\wtilde x_\theta)^w =
\left(\frac{\partial\left(\DD_f[\wtilde{x}_\theta]^T\right)}{\partial \wtilde{x}_1} w \right|\left. \frac{\partial\left(\DD_f[\wtilde{x}_\theta]^T\right)}{\partial \wtilde{x}_2} w \right| \cdots \left|\frac{\partial\left(\DD_f[\wtilde{x}_\theta]^T\right)}{\partial \wtilde{x}_d} w \right)
.\end{equation}
Notice that $\DD_f[\wtilde x_\theta]^T\in \RR^{d\times D-d}$; therefore, $\partial_{\wtilde x_j}\DD_f[\wtilde x_\theta]^T\in \RR^{d\times D-d}$ and $\partial_{\wtilde x_j}\DD_f[\wtilde x_\theta]^T w\in \RR^{d}$.

Next, using the IFT we have that 
\[
J_{\wtilde{x}_\theta} = -(\DD^{\wtilde{x}_\theta}_{G})^{-1} \DD^x_G = \left( I_d - \DD_f[\wtilde{x}_\theta]^T \DD^{\wtilde{x}_\theta}_ w + \HH f(\wtilde x_\theta)^w  \right)^{-1} \left(I_d + \DD_f[\wtilde{x}_\theta]^T \DD^x_w \right)
,\]
and thus
\[
J_{\wtilde{x}_\theta}[0] = (A(I_d-A^{-1}B))^{-1}A = (I_d-A^{-1}B)^{-1}A^{-1}A =  (I_d-A^{-1}B)^{-1}
,\]
where
\begin{equation}\label{eq:A_def}
A = I_d +  \DD_f[\wtilde{x}_\theta(0)]^T \DD^x_w[0,\wtilde{x}_\theta(0)]    
,\end{equation}
\begin{equation}\label{eq:B_def}
B = \DD_f[\wtilde{x}_\theta(0)]^T (\DD^x_w[0,\wtilde{x}_\theta(0)] + \DD^{\wtilde{x}_\theta}_w[0,\wtilde{x}_\theta(0)] )+ \HH f(\wtilde x_\theta(0))^w
.\end{equation}
From Lemma \ref{lem:bound_A_inv_B} we have that $\|A^{-1}B\| \leq \OO(\frac{\sigma}{\tau}) \leq 1/2$ for $\sigma/\tau$ smaller than some constant, and thus, using the first order approximation of this term we get that there is a matrix
\[\mathcal{E} = \sum_{t=2}^{\infty}(A^{-1}B)^t\]
such that
\[
J_{\wtilde{x}_\theta} = (I_d - A^{-1}B)^{-1} = I_d +A^{-1}B + \mathcal{E}
,\]
and
with
\[
\|\mathcal{E}\| \leq \|A^{-1}B\| \sum_{t=1}^\infty \frac{1}{2^t} = \|A^{-1}B\|
.\]

Thus we have, 
\begin{equation}\label{eq:I-J}
\|I_d -J_{\wtilde{x}_\theta}\| \leq  2\|A^{-1}B\|  =  \OO(\frac{\sigma}{\tau})
\end{equation}

\end{proof}

\begin{Lemma}\label{lem:bound_A_inv_B}
Let the conditions of Lemma \ref{lem:bound_J} hold and let $A$ and $B$ be as defined in \eqref{eq:A_def} and \eqref{eq:B_def}.
Then,
    \[
    \|A^{-1}B\| \leq \OO(\frac{\sigma}{\tau})
    \]
\end{Lemma}
\begin{proof}
We begin by noting that 
\[
\|A^{-1}B\| \leq \|A^{-1}\|\|B\|
,\]
where
\begin{equation*}
A = I_d +  \DD_f[\wtilde{x}_\theta(0)]^T \DD^x_w[0,\wtilde{x}_\theta(0)]    
,\end{equation*}
\begin{equation*}
B = \DD_f[\wtilde{x}_\theta(0)]^T (\DD^x_w[0,\wtilde{x}_\theta(0)] + \DD^{\wtilde{x}_\theta}_w[0,\wtilde{x}_\theta(0)] )+ \HH f(\wtilde x_\theta(0))^w
.\end{equation*}
Moreover,
\[
A^{-1} = (I_d +  \DD_f[\wtilde{x}_\theta(0)]^T \DD^x_w[0, \wtilde x_\theta(0)])^{-1} = I_d + \sum_{t=1}^\infty(\DD_f[\wtilde{x}_\theta(0)]^T \DD^x_w[0, \wtilde x_\theta(0)])^t
.\]
From Lemma \ref{lem:Dxtildew+Dxw} we have that $\|\DD^x_w[0, \wtilde x_\theta(0)]\| = \OO(\sin \alpha)$, where we remind the reader that $\DD^x_w[x, \wtilde x_\theta]$ is the partial differential of $w(x, \wtilde x_\theta)$ with respect to the $x$ variables only.
And, from Lemma \ref{lem:bounding_stuff}we have that $\|\DD_f[\wtilde{x}_\theta(0)]\|_2 \leq \sin (\alpha + 3\sqrt{\frac{\sigma}{\tau}}\alpha)$ . Thus $\|\DD_f[\wtilde{x}_\theta]^T \DD^x_w[0, \wtilde x_\theta(0)]\| = \OO(\sin\alpha)$, and thus, for $\alpha$ smaller than some constant we have
\begin{equation}\label{eq:bound_A}
    \|A^{-1}\| = 1 + \OO(\sin^2\alpha)
\end{equation}

Furthermore, from Lemma \ref{lem:Dxtildew+Dxw} we also know that $(\DD^x_w[0, \wtilde x_\theta(0)] + \DD^{\wtilde{x}_\theta}_ w[0, \wtilde x_\theta(0)] ) = \OO(\frac{\sigma}{\tau} \sin\alpha)$ and so 
\[
\| \DD_f[\wtilde{x}_\theta(0)]^T (\DD^x_w[0, \wtilde x_\theta(0)] + \DD^{\wtilde{x}_\theta}_ w[0, \wtilde x_\theta(0)] )\| \leq \OO(\frac{\sigma}{\tau}\sin^2 \alpha)
.\]
Combining this bound with the fact that $\|\HH f(\wtilde x_\theta(0))^w\| \leq \OO(\frac{\sigma}{\tau}) $ shown in Lemma \ref{lem:bound_DDf} we have 
\begin{equation}\label{eq:bound_B}
\|B\| \leq  \|\DD_f[\wtilde{x}_\theta]^T (\DD^x_w + \DD^{\wtilde{x}_\theta}_ w )\|+\| \HH f(\wtilde x_\theta)^w\| \leq \OO(\frac{\sigma}{\tau}) +  \OO(\frac{\sigma}{\tau}\sin^2 \alpha)
.
\end{equation}

Finally, from  \eqref{eq:bound_A} and \eqref{eq:bound_B} we have that for $\alpha$ smaller than some constant
\[
\|A^{-1}B\| \leq \OO(\frac{\sigma}{\tau})
\]
\end{proof}

\begin{Lemma}\label{lem:bound_DDf}
Let the conditions of Lemma \ref{lem:bound_J} and let $\HH f(\wtilde x_\theta(0))^w$ be as defined in \eqref{eq:H_w_def}.
Then, we have
\[
\|\HH f(\wtilde x_\theta(0))^w\|_{op} = \frac{\sqrt{2}\sigma}{\tau} + \OO\left(\frac{\sigma\sin \alpha}{\tau}\right)
\]
\end{Lemma}
\begin{proof}
We denote the tensor Hessian of $f:\RR^d\to\RR^{D-d}$ at $\wtilde x_\theta(0)$ by $\HH f(\wtilde x_\theta(0)):\RR^d\times\RR^d \to \RR^{D-d}$. 
For brevity of notation, throughout this proof we will use $\HH$ instead of $\HH f(\wtilde x_\theta(0))$. For any chosen direction $u\in \RR^{D-d}$ (i.e., a unit vector), $\HH$ can be thought of as a function:
$\HH ^u:\RR^d\times\RR^d \to \RR$
defined as $\HH ^u(v_1,v_2) = \lrangle{\HH (v_1,v_2), u}$. 
We note that this definition is consistent with the definition of $\HH f(\wtilde x_\theta(0))^u$ in \eqref{eq:H_w_def}.
Given $v\in\RR^d$, $\norm{v} = 1$ we also define $\HH_v:\RR^d \to \RR^{D-d}$
as $\HH_v(\cdot) = \HH(v,\cdot)$.
Note, that for any $w\in \RR^{D-d}$ 
\[
{\|\HH^w\|_{op}} = {\sup\limits_{v_1, v_2\in \mathbb{S}^{d}}|\lrangle{\HH(v_1, v_2), w}|} 
\leq \sup\limits_{v_1, v_2\in \mathbb{S}^{d}}\|\HH(v_1, v_2)\|_2\norm{w}_2
= \norm{w}_2\sup_{v\in\mathbb{S}^d}\|\HH_v\|_{op}
,\]
where the right-most equality is true since $\HH$ is symmetric.

Thus, in essence, we need to bound $\|\HH_v\|_{op}$ for an arbitrary $v$. 
From the definition of $\HH_v$ we know that 
\[
\HH_v\ =  \lim_{t\to 0} \frac{\DD_f[\wtilde x_\theta(0)] - \DD_f[\wtilde x_\theta(0) + tv]}{t} 
.\]
Then, from Lemma \ref{lem:alpha-beta_x_no_func}  we have for small $t$
\[
\sin(\maxangle (T_{\wtilde x_\theta}f,T_{\wtilde x_\theta + tv}f))\leq \frac{t}{\tau}(1+\tan^2\beta)+\OO(t^2/\tau^2)
\]
where $\beta = \maxangle(T_{\wtilde x_\theta}f,H)$. 
Therefore, applying Lemma \ref{lem:norm_from_angle} we get 
\[
 \|\DD_f[\wtilde x_\theta(0)] - \DD_f[\wtilde x_\theta(0) + tv] \|_{op} \leq \frac{t}{\tau}(1+\tan^2\beta)(1+\sin \beta) + \OO(t^2/\tau^2)
,\]
and we get 
\[
\|\HH_v\| \leq  \frac{1}{\tau}(1+\tan^2\beta)(1+\sin \beta)
.\]
Furthermore, from Lemma \ref{lem:alpha_beta_D} we know 
\[
\beta \leq \alpha + 3\alpha\sqrt{\frac{\sigma}{\tau}}
,\]
and so,
\[
\|\HH_v\| \leq  \frac{1}{\tau}(1+\tan^2
\left(\alpha + 3\alpha\sqrt{\frac{\sigma}{\tau}}\right))(1+\sin \left(\alpha + 3\alpha\sqrt{\frac{\sigma}{\tau}}\right))
= \frac{1}{\tau} + \OO(\frac{\sin \alpha}{\tau})
.\]
Thus, we obtain
\begin{equation}\label{eq:norm_H^w_bound}
\|\HH^w\| \leq \|w\| \left(\frac{1}{\tau} + \OO(\frac{\sin \alpha}{\tau})\right)    
.\end{equation}

Hence, all we are left with is bounding  $\|w(0,\wtilde x_\theta (0))\|$. 
From Eq. \eqref{eq:def_w}, Lemma \ref{lem:bounding_stuff}, and Lemma \ref{lem:Dxtildew+Dxw}, we have 
\begin{align*}
    \|w(0,\wtilde x_\theta(0))\| &=\|f(0) - f(\wtilde{x}_\theta(0)) + \theta \left(- f(0)\cdot \theta + f(\wtilde{x}_\theta(0)) \cdot \theta + 
    \sqrt{\Delta(0,\wtilde x_\theta (0))}\right)\|
    \\
    &\leq \|f(0) - f(\wtilde{x}_\theta(0))\| + \left|(- f(0) + f(\wtilde{x}_\theta(0))) \cdot \theta\right| + 
    \sqrt{\Delta(0,\wtilde x_\theta (0))}
    \\
    &\leq 2\sigma \sin \alpha + 2\sigma \sin \alpha + \sqrt{2}\sigma 
    \\
    &\leq \sqrt{2}\sigma  + 4\sigma \sin \alpha 
\end{align*}

Since $\|w\| \leq \sqrt{2}\sigma + \OO(\sin \alpha)$ 
we have from Eq. \eqref{eq:norm_H^w_bound}
\[
\|\HH^w\| \leq \frac{\sqrt{2}\sigma}{\tau} + \OO(\frac{\sigma\sin \alpha}{\tau})
\]
\end{proof}

\begin{Lemma}\label{lem:Dxtildew+Dxw}
Let the conditions of Lemma \ref{lem:bound_J} hold, let $w(x, \wtilde x_\theta)$ be as defined in \eqref{eq:def_w}, $\wtilde x_\theta(x)$ be as defined in \eqref{eq:x_tilde_def}, and $\Delta$ as defined in \eqref{eq:Delta}. 
Denote by $\DD^x_w, \DD^{\wtilde{x}_\theta}_w$ the partial differentials of $w$ with respect to the variables $x$ and $\wtilde x_\theta$.
Then, for $\alpha$ smaller than some constant
\begin{equation}\label{eq:Delta_bound}
    \frac{1}{2} \sigma^2 \leq \Delta(0,\wtilde x_\theta (0)) \leq 2 \sigma^2
,\end{equation}
\begin{equation} \label{eq:Dws_bound}
\| \DD^{\wtilde{x}_\theta}_ w[0, \wtilde x_\theta(0)] \| \leq  \mathcal{O}(\sin \alpha) \qquad \|\DD^x_w[0, \wtilde x_\theta(0)]\| \leq \mathcal{O}(\sin \alpha)
,\end{equation}
and
\begin{equation}\label{eq:Dxtildew+Dxw_bound}
\| \DD^{\wtilde{x}_\theta}_w[0, \wtilde x_\theta(0)] + \DD^x_w[0, \wtilde x_\theta(0)]\| \leq \mathcal{O}(\frac{\sigma}{\tau}\sin \alpha)
.\end{equation}
\end{Lemma}
\begin{proof}
First we bound $\Delta$ from Eq. \eqref{eq:Delta} at $x=0,\wtilde x_\theta = x_\theta(0)$ using Lemma \ref{lem:bounding_stuff}, and assuming $\alpha$ is smaller than some constant.
\begin{align}
\Delta(0,\wtilde x_\theta (0)) &= \sigma^2 + (f(0)^T \theta - f(\wtilde{x}_\theta(0))^T \theta)^2 - \|0- \wtilde{x}_\theta(0)\|^2 - \|f(0)- f(\wtilde{x}_\theta(0)) \|^2 
\notag\\
&\geq \sigma^2 -\|\wtilde{x}_\theta(0)\|^2 - 2\|f(0)- f(\wtilde{x}_\theta(0)) \|^2
\notag\\
&\geq \sigma^2 -2\sigma^2 \sin^2 \alpha - 4\sigma^2 \sin^2 \alpha 
\notag\\
&\geq \sigma^2(1-6\sin^2 \alpha )
\notag\\
&\geq \frac{1}{2} \sigma^2
.\end{align}
Similarly, 
\begin{align}
\Delta(0,\wtilde x_\theta (0)) &= \sigma^2 + (f(0)^T \theta - f(\wtilde{x}_\theta(0))^T \theta)^2 - \|0- \wtilde{x}_\theta(0)\|^2 - \|f(0)- f(\wtilde{x}_\theta(0)) \|^2 
\notag\\
&\leq \sigma^2 +\|\wtilde{x}_\theta(0)\|^2 + 2\|f(0)- f(\wtilde{x}_\theta(0)) \|^2
\notag\\
&\leq \sigma^2 +2\sigma^2 \sin^2 \alpha + 4\sigma^2 \sin^2 \alpha 
\notag\\
&\leq \sigma^2(1+6\sin^2 \alpha )
\notag\\
&\geq 2 \sigma^2
.\end{align}
and thus we showed Eq. \eqref{eq:Delta_bound}.

Next we compute $\DD^x_w$ and $D^{\wtilde{x}}_ w$
\begin{align}\label{eq:D_xw}
    \begin{split}
     \DD^x_w =& \DD_f[x] 
    +
    \theta {(\theta^T \DD_f[x])} \\
    &~~+
    {\frac{1}{\sqrt{\Delta}}} \theta \left( 
    2 {(f(x)\cdot \theta - f(\wtilde{x}_\theta) \cdot \theta)}{\theta^T \DD_f[x]} 
    - 
    2(x- \wtilde{x}_\theta)^T 
    - 
    2(f(x)- f(\wtilde{x}_\theta))^T\DD_f[x]  \right)\\
    =&\left(I_{D-d} 
    +
    \theta \theta^T  
    +
    \frac{1}{\sqrt{\Delta}} \theta \left( 
    2 (f(x)^T- f(\wtilde{x}_\theta)^T ) (\theta \theta^T - I_{D-d} )
      \right)\right) \DD_f[x] -
    \frac{2}{\sqrt{\Delta}} \theta (x- \wtilde{x}_\theta)^T 
    \end{split}
,\end{align}

\begin{align}\label{eq:D_xwtilde}
    \begin{split}
    \DD^{\wtilde{x}_\theta}_ w = & -\DD_f[\wtilde{x}_\theta] 
    - 
    \theta {(\theta^T \DD_f[\wtilde{x}_\theta])} \\
    &~~+
    {\frac{1}{\sqrt{\Delta}}} \theta \left( 
    -2 {(f(x)\cdot \theta - f(\wtilde{x}_\theta) \cdot \theta)}{\theta^T \DD_f[\wtilde{x}_\theta]} 
    + 
    2(x- \wtilde{x}_\theta)^T 
    +
    2(f(x)- f(\wtilde{x}_\theta))^T\DD_f[\wtilde{x}_\theta]  \right)\\
    =&-\left(I_{D-d} 
    +
    \theta \theta^T  
    +
    \frac{1}{\sqrt{\Delta}} \theta \left( 
    2 (f(x)^T- f(\wtilde{x}_\theta)^T ) (\theta \theta^T - I_{D-d} )
      \right)\right) \DD_f[x] +
    \frac{2}{\sqrt{\Delta}} \theta (x- \wtilde{x}_\theta)^T 
    \end{split}
.\end{align}
From Eq. \eqref{eq:D_xw}, Lemma \ref{lem:bounding_stuff}, and Eq. \eqref{eq:Delta_bound},  we have that 
\begin{align*}
     \|\DD^x_w[0,\wtilde x_\theta(0)]\| \leq& 
    \|I_{D-d} 
    +
    \theta \theta^T  
    +
    \frac{1}{\sqrt{\Delta}} \theta \left( 
    2 (f(0)^T- f(\wtilde{x}_\theta(0))^T ) (\theta \theta^T - I_{D-d} )
      \right)\|_{op} \|\DD_f[0]\|_{op} \\
      &~~+
    \frac{2}{\sqrt{\Delta}} \|(0- \wtilde{x}_\theta(0))^T\|_2 
    \\
    \leq& \OO( \sin \alpha)
\end{align*}
Similarly, from Eq. \eqref{eq:D_xwtilde}, Lemma \ref{lem:bounding_stuff}, and Eq. \eqref{eq:Delta_bound},  we have that 
\begin{equation}
     \|\DD^x_w[0,\wtilde x_\theta(0)]\| 
    \leq \OO( \sin \alpha)
\end{equation}
Thus, we showed \eqref{eq:Dws_bound}.

Now we show \eqref{eq:Dxtildew+Dxw_bound}. From \eqref{eq:D_xw} and \eqref{eq:D_xwtilde} we have 
\begin{equation}\label{eq:D_xw-Dxwtilde}
\begin{aligned}
    \sigma \DD^x_w + \sigma \DD^{\wtilde{x}_\theta}_ w 
    &= \DD_f[x] -\DD_f[\wtilde{x}_\theta] +\theta \theta^T (\DD_f[x] - \DD_f[\wtilde{x}_\theta]) \\
    &~~+\frac{1}{\sqrt{\Delta}} \theta 
    \bigg( 2 (f(x)\cdot \theta - f(\wtilde{x}_\theta) \cdot \theta)\theta^T - 2(f(x)- f(\wtilde{x}_\theta))^T\bigg) (\DD_f[x] - \DD_f[\wtilde{x}_\theta])\\
    &= \left(I_{D-d} +\theta \theta^T +\frac{1}{\sqrt{\Delta}} \theta 
    \bigg( 2 (f(x)^T - f(\wtilde{x}_\theta)^T)\theta\theta^T - 2(f(x)- f(\wtilde{x}_\theta))^T\bigg)\right) (\DD_f[x] - \DD_f[\wtilde{x}_\theta])\\
    &=\left(I_{D-d} +\theta \theta^T +\frac{2}{\sqrt{\Delta}} \theta 
      (f(x)^T - f(\wtilde{x}_\theta)^T) (\theta\theta^T - I_{D-d})\right) (\DD_f[x] - \DD_f[\wtilde{x}_\theta])
\end{aligned}
\end{equation}
Since we are bounding for $x= 0 $, we have from Lemma \ref{lem:bounding_stuff}, that $\|f(0) - f(\wtilde{x}_\theta(0))\| \leq \OO(\sigma \sin^2 \alpha)$, for $\alpha$ smaller than some constant.
Taking the norm of \eqref{eq:D_xw-Dxwtilde} we have that 
\[
\|\sigma \DD^x_w + \sigma \DD^{\wtilde{x}_\theta}_ w \| \leq \left\|I_{D-d} +\theta \theta^T +\frac{2}{\sqrt{\Delta}} \theta 
      (f(x)^T - f(\wtilde{x}_\theta)^T) (\theta\theta^T - I_{D-d})\right\| \|\DD_f[x] - \DD_f[\wtilde{x}_\theta]\|
\]
From Eq. \eqref{eq:Delta_bound} and Lemma \ref{lem:Dfx-Dfxtilde} we have,
\[
\|\sigma \DD^x_w + \sigma \DD^{\wtilde{x}_\theta}_ w \| =  \OO(\frac{\sigma}{\tau}\sin \alpha)
\]
\end{proof}

\begin{Lemma}\label{lem:Dfx-Dfxtilde}
Let $f$ be a differentiable function from $H$, a $d$-dimensional subspace of $\RR^D$, to $\RR^{D-d}$.
Assume, $\maxangle(T_0f, H)\leq \alpha$ and that $\mathrm{rch}(\Gamma_f)$ the reach of $\Gamma_f$ (the graph of the function $f$), is bounded by $\tau$.
\[
\|\DD_f[\wtilde{x}_\theta(0)] - \DD_f[0]\|_{op} \leq 
\frac{\sigma \sin (\alpha + 3\sqrt{\frac{\sigma}{\tau}}\alpha)}{\tau}(1 + \tan^2 \alpha)(1 + \sin\alpha)  +\OO(\sigma^2 \sin^2 (\alpha)/\tau^2)
\]
or
\[
\|\DD_f[\wtilde{x}_\theta(0)] - \DD_f[0]\|_{op} \leq \OO(\frac{\sigma}{\tau}\sin \alpha)
,\]
for $\alpha$ smaller than some constant.
\end{Lemma}
\begin{proof}
From Lemma \ref{lem:alpha-beta_x_no_func} we have that 
\[
	\sin(\maxangle (T_{0}f, T_{\wtilde{x}_\theta}f)) \leq \frac{\norm{\wtilde{x}_\theta}}{\tau}(1 + \tan^2 \alpha)  +\OO(\norm{\wtilde{x}_\theta}^2/\tau^2)
\]
From Lemma \ref{lem:bounding_stuff}, we have $\|\wtilde{x}_\theta\| \leq \sigma \sin (\alpha + 3\sqrt{\frac{\sigma}{\tau}}\alpha)$, and thus
\[
	\sin(\maxangle (T_{0}f, T_{\wtilde{x}_\theta}f)) \leq \frac{\sigma \sin (\alpha + 3\sqrt{\frac{\sigma}{\tau}}\alpha)}{\tau}(1 + \tan^2 \alpha)  +\OO(\sigma^2 \sin^2 (\alpha)/\tau^2)
\]
Moreover, we have that 
\[
\sin(\maxangle (T_{0}f, H)) \leq \sin \alpha
\]
Using Lemma \ref{lem:norm_from_angle} we have 
\[
\|\DD_f[\wtilde{x}_\theta] - \DD_f[x]\| 
\leq 
\frac{\sigma \sin (\alpha + 3\sqrt{\frac{\sigma}{\tau}}\alpha)}{\tau}(1 + \tan^2 \alpha)(1 + \sin\alpha)  +\OO(\sigma^2 \sin^2 (\alpha)/\tau^2) 
\]

\end{proof}
\begin{Lemma}\label{lem:norm_from_angle}
Let $L_1, L_2$ be two linear operators from $H$ a $d$-dimensional subspace of $\RR^D$ to $\RR^{D-d}$.
Let, $\maxangle(H,(H,L_1(H))_H) \leq \alpha$, where $(H,L_1(H)_H)$ is the subspace spanned by $H$ and $L_1(H)$, the target space of $L_1$. Furthermore, let $\maxangle((H,L_1(H))_H,(H,L_2(H))_H) \leq \beta$.
Then, 
\[\|L_1 - L_2\|_{op} \leq \sin \beta(1 + \sin\alpha).\]
\end{Lemma}
\begin{proof}
For any $x\in H, \|x\| = 1$, from Lemma \ref{lem:Q_angle_between_flats}, there is $y\in H$ such that 
\[
\|(x,L_1(x)) - (y,L_2(y))\| \leq \sin \beta
.\]
Therefore, 
\[
\|x-y\|^2+\|L_1(x) - L_2(y)\|^2  = \|(x,L_1(x)) - (y,L_2(y))\|^2\leq \sin^2 \beta
,\]
and
\[
\|x-y\| \leq \sin \beta ~,~ \|L_1(x) - L_2(y)\| \leq \sin\beta
.\]
Note, that $\|L_1(y) - L_1(x)\| \leq \|L_1\|_{op}\|x-y\|$.
Since $\maxangle(H,(H,L_1(H))) \leq \alpha$ we have that $\|L_1\|_{op}\leq \sin \alpha$, and we get that 
\[
\|L_1(y) - L_1(x)\| \leq \sin \alpha \sin \beta
.\]
Furthermore,
\[
\|L_1(x) - L_2(x)\| = \|L_1(x) - L_1(y) + L_1(y) - L_2(x)\| \leq \|L_1(x) - L_1(y)\|+\|L_1(y) - L_2(x)\|
,\]
and so
\[
\|L_1(x) - L_2(x)\| \leq \sin \beta(1 + \sin\alpha)
.\]
\end{proof}

\begin{Lemma}\label{lem:bounding_stuff}
Let the conditions of Lemma \ref{lem:bound_Df-Dfwtilde} hold. Let $\wtilde x_\theta(x)$ be as defined in equation \eqref{eq:x_tilde_def} in the proof of Lemma \ref{lem:bound_Df-Dfwtilde}.
Then, for $\alpha$ smaller than some constant and $\frac{\tau}{\sigma}$ larger than some constant, we have
\begin{align} \label{eq:xtilde_bound_l}
& \|\wtilde{x}_\theta(0)\| &\leq \sigma \sin (\alpha + 3\sqrt{\frac{\sigma}{\tau}}\alpha)
\\
\label{eq:f_xtilde_bound_l}
&\|f(\wtilde{x}_\theta(0)) - f(0)\| &\leq 2\sigma \sin (\alpha + 3\sqrt{\frac{\sigma}{\tau}}\alpha) \tan \alpha 
\\ \label{eq:df_bound_l}
&\|\DD_f[0]\|_2 &\leq \sin \alpha    
\\ \label{eq:df_xtilde_bound_l}
&\|\DD_f[\wtilde{x}_\theta(0)]\|_2 &\leq \sin (\alpha + 3\sqrt{\frac{\sigma}{\tau}}\alpha)
\end{align}

\end{Lemma}
\begin{proof}
In essence, this lemma is a summary and rewriting of results from other lemmas which are meant to be used conveniently in the proof of Lemma \ref{lem:bound_Df-Dfwtilde}. 
Accordingly, \eqref{eq:xtilde_bound_l} is already achieved in Lemma \ref{lem:wtilde_x_bound_clean_D}.
Then, from Lemma \ref{lem:f_bound_circle_H_no_func_ver2_clean} we have
\[
    \|f(\wtilde{x}_\theta(0))\| \leq \norm{\wtilde{x}_\theta(0)}\tan \alpha  +\OO(\|\wtilde{x}_\theta(0)\|^2/\tau)
.\]
Thus, for $\alpha$ and $\frac{\sigma}{\tau}$ smaller than some constants, we achieve Eq. \eqref{eq:f_xtilde_bound_l}.
Next, since $\maxangle(H, T_0f) \leq \alpha$, by Lemma \ref{lem:Q_angle_between_flats} we have \eqref{eq:df_bound_l}.
Finally, denoting $ \beta(\wtilde x_\theta(0)) =  \maxangle(T_{\wtilde x_\theta(0)}f, H)$ by Lemma \ref{lem:Q_angle_between_flats} we have $\|\DD_f[\wtilde{x}_\theta(0)]\|_2 \leq \sin \beta(\wtilde{x}_\theta(0))$, and combining this with Lemma \ref{lem:alpha_beta_D} we obtain \eqref{eq:df_xtilde_bound_l}

\end{proof}

\begin{Lemma}\label{lem:wtilde_x_bound_clean_D}
Let the conditions of Lemma \ref{lem:bound_Df-Dfwtilde} hold. Let $\wtilde x_\theta(x)$ be as defined in equation \eqref{eq:x_tilde_def} in the proof of Lemma \ref{lem:bound_Df-Dfwtilde}.
Then, for any unit vector $\theta\in \RR^{D-d}$, if $\alpha, \frac{\sigma}{\tau}$ are smaller than some constants, we have
	\begin{equation*}
	\|\wtilde{x}_\theta(0)\| \leq \sigma \sin (\alpha + 3\sqrt{\frac{\sigma}{\tau}}\alpha)
	\end{equation*}
\end{Lemma}
\begin{proof}
	From Lemma \ref{lem:x_tilde_bound_beta_D}, we get
	\begin{equation}\label{eq:xtilde_alpha_bound}
	\norm{\wtilde{x}_\theta(0)} \leq \sigma \sin \beta (\wtilde x_\theta(0)) 
	\end{equation}
	where $ \beta (\wtilde x_\theta(0)) = \maxangle(T_{\wtilde x_{\theta}(0)}f, H) $.
	Then, using Lemma \ref{lem:alpha_beta_D} we get
	\[
	\alpha - 4\alpha \sqrt{\frac{\sigma}{\tau}}  \leq \beta(\wtilde x_\theta(0)) \leq \alpha + 3\alpha\sqrt{\frac{\sigma}{\tau}}
	.\]
	Thus, we obtain
	\[
	\norm{\wtilde x_\theta(0)} \leq \sigma \sin (\alpha + 3\sqrt{\frac{\sigma}{\tau}}\alpha)
	,\]
	as required.
\end{proof}

\begin{Lemma}\label{lem:x_tilde_bound_beta_D}
Let the conditions of Lemma \ref{lem:bound_Df-Dfwtilde} hold. Let $\wtilde x_\theta(x)$ be as defined in equation \eqref{eq:x_tilde_def} in the proof of Lemma \ref{lem:bound_Df-Dfwtilde}.
Let $ T_{\wtilde x_\theta(0)}f $ be the tangent to the graph of $ f $ at the point $ (\wtilde x_\theta(0), f(\wtilde x_\theta(0))) $, $ \beta(\wtilde x_\theta(0)) =  \maxangle(T_{\wtilde x_\theta(0)}f, H)$.
	Then,
	\[
	\norm{\wtilde x_\theta(0) } \leq \sigma \sin \beta(\wtilde x_\theta(0))
	\]
\end{Lemma}
\begin{proof}
From \eqref{eq:x_tilde_def} of the proof of Lemma \ref{lem:bound_Df-Dfwtilde} (or more conveniently \eqref{eq:c_1} from the proof of Lemma \ref{lem:bound_nablag}) we have that 
\[
\|x-\wtilde{x}_\theta\| = \sigma \|Proj_H(\vec N_\theta(x,\wtilde{x}_\theta,\theta))\|,
\]
Using Lemma \ref{lem:angle_space_perp_space}, since $\vec N_\theta \in T_{\wtilde x_\theta(0)}f^\perp$ we have that 
\[
\|Proj_H(\vec N_\theta(x,\wtilde{x}_\theta,\theta))\| \leq \cos (\frac{\pi}{2}  - \beta(\wtilde x_\theta(0))) = \sin(\beta(\wtilde x_\theta(0))),
\]
and thus
\[
\|\wtilde{x}_\theta(0)\| = \sigma \sin(\beta(\wtilde x_\theta(0)))
\]
\end{proof}

\begin{Lemma}\label{lem:alpha_beta_D}
Let the conditions of Lemma \ref{lem:bound_Df-Dfwtilde} hold. Let $\wtilde x_\theta(x)$ be as defined in equation \eqref{eq:x_tilde_def} in the proof of Lemma \ref{lem:bound_Df-Dfwtilde}.
Let $ x_0 \in H $ be such that $ \norm{x_0} \leq \norm{\wtilde x_\theta(0)} $.
	Denote $ \beta(x) = \maxangle(T_{x}f, H) $ and let $ \alpha = \beta(0) $.
	Then,
	\[
	\alpha - 4\alpha\frac{\sigma}{\tau}  \leq \beta(x_0) \leq \alpha + 3\alpha\sqrt{\frac{\sigma}{\tau}}
	.\]
\end{Lemma}
\begin{proof}
	For convenience of notations we denote in this proof 
	\[\beta \defeq \beta(x_0).\]
	Using the result of Lemma \ref{lem:alpha_beta_x} we achieve
	\[
	\alpha - 2 \sqrt{ \frac{\norm{x_0}}{\tau}\lrbrackets{2 \alpha + \frac{\norm{x_0}}{\tau}} } 
	\leq \beta \leq
	\alpha + 2 \sqrt{ \frac{\norm{x_0}}{\tau}\lrbrackets{2 \alpha + \frac{\norm{x_0} }{\tau}}}
	,\]
	and from the fact that $ \norm{x_0} \leq \norm{\wtilde x_\theta(0)} $ we get
	\[
	\alpha - 2 \sqrt{ \frac{\norm{\wtilde x_\theta(0)}}{\tau}\lrbrackets{2 \alpha + \frac{\norm{\wtilde x_\theta(0)}}{\tau}} } 
	\leq \beta \leq
	\alpha + 2 \sqrt{ \frac{\norm{\wtilde x_\theta(0)}}{\tau}\lrbrackets{2 \alpha + \frac{\norm{\wtilde x_\theta(0)}}{\tau}} }
	\]
	From Lemma \ref{lem:x_tilde_bound_beta_D} we know that
	\[
	\norm{\wtilde x_\theta(0)} \leq \sigma \sin\beta
	,\]
	and so we get
	\begin{align*}
	\lrbrackets{\beta - \alpha}^2 
	&\leq
	4\frac{\sigma}{\tau}sin\beta\lrbrackets{2\alpha   + \frac{\sigma}{\tau}sin\beta} \\
	\frac{\beta^2-2\alpha\beta + \alpha^2}{4} 
	&\leq 
	2 \frac{\sigma}{\tau}\alpha\beta + \frac{\sigma^2}{\tau^2}\beta^2
	,\end{align*}
	that can be written as the parabola 
	\begin{align*}
	\lrbrackets{\frac{1}{4} - \frac{\sigma^2}{\tau^2}}\beta^2 - \lrbrackets{\frac{1}{2} + 2 \frac{\sigma}{\tau} }\alpha \beta + \frac{1}{4}\alpha^2 & \leq
	0 \\
	\lrbrackets{1 - 4\frac{\sigma^2}{\tau^2}}\beta^2 - \lrbrackets{2 + 8 \frac{\sigma}{\tau} }\alpha \beta + \alpha^2 & \leq
	0 
	.\end{align*}
	The left hand side of this expression is a parabola with respect to $\beta$.
	Note, that for $\frac{\sigma}{\tau} = 0$ the roots are $\beta = \alpha$.
	Solving this parabola we get the roots
	\begin{align*}
	\beta_{1,2} 
	&=
	\frac{\lrbrackets{2 +8\frac{\sigma}{\tau}}\alpha \pm \sqrt{\lrbrackets{2 + 8 \frac{\sigma}{\tau} }^2\alpha^2 - 4 \lrbrackets{1 - 4 \frac{\sigma^2}{\tau^2} }\alpha^2}}{2 - 8\frac{\sigma^2}{\tau^2}} \\
	&=
	\frac{\lrbrackets{1 +4\frac{\sigma}{\tau}}\alpha \pm \lrbrackets{1 + 4 \frac{\sigma}{\tau} }\alpha\sqrt{1 - \frac{\lrbrackets{1 - 4 \frac{\sigma^2}{\tau^2} }}{\lrbrackets{1 + 4 \frac{\sigma}{\tau} }^2}}}{1 - 4\frac{\sigma^2}{\tau^2}} \\
	&=
	\frac{\lrbrackets{1 +4\frac{\sigma}{\tau}}\alpha \pm \lrbrackets{1 + 4 \frac{\sigma}{\tau} }\alpha\sqrt{1 - \frac{\lrbrackets{1 - 4 \frac{\sigma^2}{\tau^2} }}{\lrbrackets{1 + 4 \frac{\sigma}{\tau} }^2}}}{1 - 4\frac{\sigma^2}{\tau^2}}
	.\end{align*}
	Therefore, from Remark \ref{rem:taylor_sqrt1-x2_clean} the inequality holds for
	\begin{align*}
	\beta 
	&\geq 
	\frac{\lrbrackets{1 +4\frac{\sigma}{\tau}}\alpha - \lrbrackets{1 + 4 \frac{\sigma}{\tau} }\alpha\sqrt{1 - \frac{\lrbrackets{1 - 4 \frac{\sigma^2}{\tau^2} }}{\lrbrackets{1 + 4 \frac{\sigma}{\tau} }^2}}}{1 - 4\frac{\sigma^2}{\tau^2}}\\
	&\geq
	\frac{\lrbrackets{1 +4\frac{\sigma}{\tau}}\alpha - \lrbrackets{1 + 4 \frac{\sigma}{\tau} }\alpha\lrbrackets{1 - \frac{\lrbrackets{1 - 4 \frac{\sigma^2}{\tau^2} }}{\lrbrackets{1 + 4 \frac{\sigma}{\tau} }^2}}}{1 - 4\frac{\sigma^2}{\tau^2}}\\
	&=
	\alpha\frac{1}{{1 + 4 \frac{\sigma}{\tau} }} \\
	&\geq 
	\alpha - 4\alpha\frac{\sigma}{\tau}
	,\end{align*}
	where on the other hand
	\begin{align*}
	\beta 
	&\leq
	\frac{\lrbrackets{1 +4\frac{\sigma}{\tau}}\alpha + \lrbrackets{1 + 4 \frac{\sigma}{\tau} }\alpha\sqrt{1 - \frac{\lrbrackets{1 - 4 \frac{\sigma^2}{\tau^2} }}{\lrbrackets{1 + 4 \frac{\sigma}{\tau} }^2}}}{1 - 4\frac{\sigma^2}{\tau^2}} \\
	&=
	\frac{\lrbrackets{1 +4\frac{\sigma}{\tau}}\alpha + \lrbrackets{1 + 4 \frac{\sigma}{\tau} }\alpha\sqrt{1 - \frac{\lrbrackets{1 - 2 \frac{\sigma}{\tau} }\lrbrackets{1 + 2 \frac{\sigma}{\tau} }}{\lrbrackets{1 + 4 \frac{\sigma}{\tau} }^2}}}{1 - 4\frac{\sigma^2}{\tau^2}} \\
	&\leq
	\frac{\lrbrackets{1 +4\frac{\sigma}{\tau}}\alpha + \lrbrackets{1 + 4 \frac{\sigma}{\tau} }\alpha\sqrt{1 - \frac{\lrbrackets{1 - 2 \frac{\sigma}{\tau} }\lrbrackets{1 + 2 \frac{\sigma}{\tau} }}{\lrbrackets{1 + 2 \frac{\sigma}{\tau} }^2}}}{1 - 4\frac{\sigma^2}{\tau^2}} \\
	&=
	\frac{\lrbrackets{1 +4\frac{\sigma}{\tau}}\alpha + \lrbrackets{1 + 4 \frac{\sigma}{\tau} }\alpha\sqrt{1 - \frac{{1 - 2 \frac{\sigma}{\tau} }}{{1 + 2 \frac{\sigma}{\tau} }}}}{1 - 4\frac{\sigma^2}{\tau^2}} 
	,\end{align*}
	since we know that $ \frac{1}{1 + x} \geq 1 - x $ we get
	\begin{align*}
	\beta 
	&\leq
	\frac{\lrbrackets{1 +4\frac{\sigma}{\tau}}\alpha + \lrbrackets{1 + 4 \frac{\sigma}{\tau} }\alpha\sqrt{1 - \lrbrackets{1 - 2 \frac{\sigma}{\tau}}^2}}{1 - 4\frac{\sigma^2}{\tau^2}} \\
	&\leq
	\frac{\lrbrackets{1 +4\frac{\sigma}{\tau}}\alpha + \lrbrackets{1 + 4 \frac{\sigma}{\tau} }\alpha\sqrt{4\frac{\sigma}{\tau} - 4\frac{\sigma^2}{\tau^2}}}{1 - 4\frac{\sigma^2}{\tau^2}}\\
	&\leq
	\frac{\lrbrackets{1 +4\frac{\sigma}{\tau}}\alpha + 2\lrbrackets{1 + 4 \frac{\sigma}{\tau} }\alpha\sqrt{\frac{\sigma}{\tau}}}{1 - 4\frac{\sigma^2}{\tau^2}}
	,\end{align*}
	and by using the bound $ \frac{1}{1 - x^2} \leq 1 + 2 x^2 $ for $ 0 \leq x \leq 0.5 $	
	\begin{align*}
	\beta
	&\leq
	\lrbrackets{\lrbrackets{1 +4\frac{\sigma}{\tau}}\alpha + 2\lrbrackets{1 + 4 \frac{\sigma}{\tau} }\alpha\sqrt{\frac{\sigma}{\tau}}}\lrbrackets{1 + 8\frac{\sigma^2}{\tau^2}} \\
	&=
	\alpha + 2\alpha\sqrt{\frac{\sigma}{\tau}} + 4\alpha\frac{\sigma}{\tau} + 8 \alpha\frac{\sigma^{1.5}}{\tau^{1.5}} + 8\alpha\frac{\sigma^2}{\tau^2} + 16\alpha\frac{\sigma^{2.5}}{\tau^{2.5}} + 32\alpha\frac{\sigma^{3}}{\tau^{3}} + 32\alpha\frac{\sigma^{3.5}}{\tau^{3.5}}
	.\end{align*}
	Then, since $ \frac{\sigma}{\tau}\leq \frac{1}{36} $ is sufficiently small we obtain 
	\[
	\beta \leq \alpha + 3\alpha\sqrt{\frac{\sigma}{\tau}}
	,\]
	and 
	\[
	\alpha - 4\alpha\frac{\sigma}{\tau}  \leq \beta \leq \alpha + 3\alpha\sqrt{\frac{\sigma}{\tau}}
	\]
\end{proof}

\subsubsection{Bounding the finite sample error}\label{sec:FiniteSample_err}
Back to Theorem \ref{thm:Step2} proof road-map see Figure \ref{tikz:thm33_lemmas}.

\noindent In this section we show that  $ \maxangle(T_0 \wtilde f_\ell, H_{\ell + 1}) $ the angle between the tangent of $\wtilde{f}_\ell(0)$ and the tangent estimated using $n$ samples decays to zero as $n\to \infty$.
Namely, the main result of this subsection is pronounced in the lemma below.

\begin{Lemma}\label{lem:AngleImprovement}
	Let $(q_\ell, H_\ell)$ be defined in Algorithm \ref{alg:step2_clean} and $ \pi^*_{q_\ell, H_\ell}(x) $ be defined in \eqref{eq:argmin2_clean} and let $ H_{\ell + 1} = T_0 \pi^*_{q_\ell, H_\ell} $ the tangent to the graph of $ \pi^*_{q_\ell, H_\ell} $ at $ \pi^*_{q_\ell, H_\ell}(0) $.
	Then, for all $ \delta>0 $ there is $ N_\delta $ such that for all $ n > N_\delta $ we have with probability $ 1-\delta $
	\[
	\maxangle(T_0\wtilde f_\ell, H_{\ell + 1}) \leq 2\sqrt{d}\frac{C_0 \ln(1/\delta)}{n ^{r_1}}
	,\]
	where ${r_1} = \frac{k-1}{2k + d}$ and $C_0$ is a constant.
\end{Lemma}
\begin{proof}
We first note that it is sufficient to bound the error of estimating the image of $\DD_{\wtilde{f}_\ell}[0]$, the differential of $ \wtilde f_\ell(x) $ at 0, by the image of $ \DD_{\pi^*_{q_\ell, H_\ell}}[0] $, the differential of the local polynomial least-squares regression $\pi^*_{q_\ell, H_\ell}$.
Explicitly, if 
\[
\|{\DD_{\pi^*_{q_\ell, H_\ell}}[0] - \DD_{ \wtilde f_\ell}[0]}\|_{op} \leq \sqrt{d}\frac{C_0 \ln(1/\delta)}{n^{r_1}}
,\]
then, by using Lemma \ref{lem:Diff_Tf_bound} we get that
\[
\sin(\maxangle(T_0\wtilde f_\ell, T_0\pi^*_{q_\ell, H_\ell})) \leq \sqrt{d}\frac{C_0 \ln(1/\delta)}{n^{r_1}}
,\]
which for sufficiently large $ n $ yields
\[
\maxangle(T_0\wtilde f_\ell, T_0\pi^*_{q_\ell, H_\ell}) \leq 2\sqrt{d}\frac{C_0 \ln(1/\delta)}{n^{r_1}}
,\]
as required.

Therefore, it is sufficient to show that for any $\delta$ there is $N_\delta$ such that for all $n>N_\delta$ we have 
\[
\|{\DD_{\pi^*_{q_\ell, H_\ell}}[0] - \DD_{ \wtilde f_\ell}[0]}\|_{op} \leq \sqrt{d}\frac{C_0 \ln(1/\delta)}{n^{r_1}}
,\]
with probability of at least $1 - \delta$.
Let us reiterate the minimization problem by which we derive the approximant.
Namely, given a sample $ \{r_i\}_{i=1}^{n} $ drawn i.i.d from $ \text{Unif}(\MM_\sigma) $, and a coordinate system $ (q, H)\in \RR^D\times Gr(d,D) $ we look for a polynomial $ \pi^*_{q,H} $ minimizing
\begin{equation}\label{eq:Step2_sample}
J_2(\pi ~|~ q, H) = \frac{1}{N_{q,H}}\sum_{r_i \in U_{\textrm{ROI}}^{n}} \norm{r_i - \pi(x_i)}^2
,\end{equation}
where $x_i$ are the projections of $r_i - q$ onto $H$, and $U_{\textrm{ROI}}^{n}(q,H)$ is defined through a bandwidth $\epsilon_{n}$ as
\begin{equation}\label{eq:ROI_ell_sample}
U_{\textrm{ROI}}^{n}(q,H) = {\{r_i\in U_\textrm{ROI}~|~\dist(x_i, q)<\epsilon_{n}\}}
,\end{equation}
and $N_{q,H}$ denotes the number of samples in $U_{\textrm{ROI}}^{n}(q, H)$.
Explicitly,
\begin{equation}\label{eq:argmin2_sample}
\pi^*_{q_\ell, H_\ell} = \argmin_{\pi\in \Pi_{k-1}^{d\mapsto D}} J_2(\pi ~|~ q_\ell, H_\ell)
.\end{equation}
We demand that the bandwidth $\epsilon_{n}\to 0$ as ${n}\to\infty$ such that 
\begin{equation}\label{eq:bandwidthDecay_sample}
0<\lim_{{n}\rightarrow\infty}N^{1/(2k+1)}\cdot \epsilon_{n} < \infty
.\end{equation}
And, the approximation is defined through $\DD_{\pi^*_{q_\ell, H_\ell}}[0]$

From Lemma \ref{lem:noise_cov_bound} we can apply Theorem 3.2 from \cite{aizenbud2021VectorEstimation} that gives convergence rates for local polynomial regression of vector valued functions in our case. Thus, we have that for every direction in the basis $\{x^j\}_{j=1}^d\subset H_\ell$ and every $\delta$ there exists  $N_{\delta}$ such that for all $n > N_{\delta}$ we have
\[
\Pr(\|{\partial_{x^j} \pi^*_{q_\ell, H_\ell}(0) - \partial_{x^j} \wtilde f_\ell(0)}\| > \frac{C_0 \ln(1/\delta)^{r_1}}{n^{r_1}}) < \delta
,\]
where $r_1 = \frac{k-1}{2k + d}$ and $C_0$ is a constant. Notice that $r_1 \leq 1/2$, and thus 
\[
\Pr(\|{\partial_{x^j} \pi^*_{q_\ell, H_\ell}(0) - \partial_{x^j} \wtilde f_\ell(0)}\| > \frac{C_0 \ln(1/\delta)}{n^{r_1}}) < \delta
.\]
Thus, taking into account all $d$ directions of the basis to $H_\ell$ we get that there are $C$ and $N_\delta$ such that for all $n > N_\delta$  
\[
\Pr(\|{\partial_{x^j} \pi^*_{q_\ell, H_\ell}(0) - \partial_{x^j} \wtilde f_\ell(0)}\| > \frac{C_0 \ln(1/\delta)}{n^{r_1}}\textrm{ for any } 1 \leq j \leq d) < d\delta
,\]
and thus
\[
\Pr(\|{\DD_{\pi^*_{q_\ell, H_\ell}}[0] - \DD_{\wtilde f_\ell}[0]}\|_{op} > \sqrt{d}\frac{C_0 \ln(1/\delta)}{n^r}) < d\delta
,\]
as required.
\end{proof}

In order to use convergence rate results of local polynomial regression for vector valued functions as described in Theorems 3.1 and 3.2 of \cite{aizenbud2021VectorEstimation} in our case, we need to show that the noise distribution $\eta_\ell$ defined in \eqref{eq:ftilde_def} is such that $\|\cov(\eta_\ell)\|\leq \sqrt{c/D}$.
\begin{Lemma}\label{lem:noise_cov_bound}
 Let $H_\ell\in Gr(d, D)$, and let $f_\ell:H_\ell\simeq\RR^d \rightarrow \RR^{D-d}$, defined as in \eqref{eq:fl_def}. Let  $\eta_\ell$ defined in \eqref{eq:ftilde_def}. Denote 
$\alpha_\ell = \maxangle (T_0f_{\ell}, H_{\ell})$ and assume $\alpha_\ell<1/D^{1/4}$. Then, 
\[
\|\cov(\eta_\ell)\|_{op} \leq \sqrt{\frac{c\sigma}{D-d}}
\]
    
\end{Lemma}
\begin{proof}
For ease of notation, denote $\wtilde{D} = D-d$.
Since we are interested in bounding 
\begin{equation}\label{eq:cov_norm}
\|\cov(\eta_\ell) \|_{op} = \max_{\vec{x}\in \mathbb{S}_{\wtilde{D} - 1} } \vec{x}^T \cov(\eta_\ell)\vec{x}
\end{equation}
we note that 
\[
\vec{x}^T\cov(\eta_\ell)\vec{x} = \mbox{Var}(\eta_\ell\cdot \vec x)
.\]
Thus, rewriting \eqref{eq:cov_norm} we have
\begin{equation}\label{eq:cov_norm2}
    \|\cov(\eta_\ell)\|_{op}= \max_{\vec{x}\in \mathbb{S}_{\wtilde{D} - 1}} \vec{x}^T\cov(\eta_\ell)\vec{x} = \max_{\vec{x}\in \mathbb{S}_{\wtilde{D} - 1}}\mbox{Var}(\eta_\ell\cdot \vec{x}) \leq \max_{\vec{x}\in \mathbb{S}_{\wtilde{D} - 1}}\EE((\eta_\ell\cdot \vec{x})^2)
\end{equation}
Thus, our goal is to bound, for any $\vec{z}\in \mathbb{S}_{\wtilde{D} - 1}$  the expression $\EE((\eta_\ell\cdot \vec{z})^2)$. From the definition of $g$ and $\Omega$ in \eqref{eq:g_def} and \eqref{eq:Omega_def}, we have that
\begin{align}\label{eq:E(Z_z)}
\EE((\eta_\ell\cdot \vec{z})^2) &= \frac{\int\limits_{y\in\Omega(x)} (y \cdot \vec{z})^2 dy}{\int\limits_{y\in\Omega(x)} dy} = 
\frac{\int\limits_{\mathbb{S}_{\wtilde D - 1}}\int\limits_0^{g(x,\theta)} (\theta \cdot \vec{z})^2 r^2 r^{\wtilde D-1} drd\theta}{\int\limits_{\mathbb{S}_{\wtilde D - 1}}\int\limits_0^{g(x,\theta)} r^{\wtilde D-1} drd\theta} \notag\\
& = \frac{\wtilde D\int\limits_{\mathbb{S}_{\wtilde D - 1}}(\theta \cdot \vec{z})^2 g(x,\theta)^{\wtilde D+2} d\theta}{(\wtilde D+2)\int\limits_{\mathbb{S}_{\wtilde D - 1}}g(x,\theta)^{\wtilde D} drd\theta}     
,\end{align}
where $dr$ is the measure over the radial component, $r^{\wtilde D-1}$ is the Jacobian introduced by the change of variables and $d\theta$ is the measure over the $(\wtilde D-1)$-dimensional sphere.

Following the rationale of the proof of Lemma \ref{lem:bound_Df-Dfwtilde}, we split $\mathbb{S}_{\wtilde D-1}$ into $\Omega_1$ and $\Omega_2$ of \eqref{eq:Omegas_def}.
That is,
\begin{equation*}
    \begin{aligned}
    \Omega_1 &= \{\theta~|~0\leq \vec{z}^T\theta \leq \xi\}\\
    \Omega_2 &= \{\theta~|~\vec{z}^T\theta > \xi\}
    \end{aligned}
,\end{equation*}
for some $\xi$ to be chosen later.
Thus, denoting $z = \theta^T \vec{z}$ we rewrite \eqref{eq:E(Z_z)} as` 
\begin{align*}
\EE((\eta_\ell\cdot \vec{z})^2)   &=\frac{\wtilde D\left(\int\limits_{\Omega_1}z^2 g(x,\theta)^{\wtilde D+2} d\theta + \int\limits_{\Omega_2}z^2 g(x,\theta)^{\wtilde D+2} d\theta\right)}{(\wtilde D+2)\int\limits_{\mathbb{S}_{\wtilde D-1}}g(x,\theta)^{\wtilde D} d\theta} 
\\
&\leq  \frac{\wtilde D\left(\xi^2 \int\limits_{\Omega_1} g(x,\theta)^{\wtilde D+2} d\theta + \int\limits_{\Omega_2} g(x,\theta)^{\wtilde D+2} d\theta\right)}{(\wtilde D+2)\int\limits_{\mathbb{S}_{\wtilde D-1}}g(x,\theta)^{\wtilde D} d\theta} 
\end{align*}

Since the conditions of Lemma \ref{lem:g_bond_weak} are met, we have $\sigma \leq g(0,\theta)\leq \sigma + 4\sigma \alpha_\ell ^2$, and thus
\begin{align*}
\EE((\eta_\ell\cdot \vec{z})^2)
&\leq  \frac{\wtilde D\left(\xi^2 \int\limits_{\Omega_1} g(x,\theta)^{\wtilde D+2} d\theta + \int\limits_{\Omega_2} g(x,\theta)^{\wtilde D+2} d\theta\right)}{(\wtilde D+2)\int\limits_{\mathbb{S}_{\wtilde D-1}}g(x,\theta)^{\wtilde D} d\theta}
\\
&\leq  \frac{\wtilde D\sigma^2(1+4\alpha_\ell ^2)^2\left(\xi^2  \int\limits_{\Omega_1} g(x,\theta)^{\wtilde D} d\theta + \int\limits_{\Omega_2} g(x,\theta)^{\wtilde D} d\theta\right)}{(\wtilde D+2)\int\limits_{\mathbb{S}_{\wtilde D-1}}g(x,\theta)^{\wtilde D} d\theta}
\\
&\leq  \frac{\wtilde D\sigma^2(1+4\alpha_\ell ^2)^2}{\wtilde D+2}\left(\xi^2  +\frac{ \int\limits_{\Omega_2} g(x,\theta)^{\wtilde D} d\theta}{\int\limits_{\mathbb{S}_{\wtilde D-1}}g(x,\theta)^{\wtilde D} d\theta} \right)
\\
&\leq  \frac{\wtilde D\sigma^2(1+4\alpha_\ell ^2)^2}{\wtilde D+2}\left(\xi^2  +\frac{\sigma^{\wtilde D}(1+4\alpha_\ell ^2)^{\wtilde D} \int\limits_{\Omega_2} d\theta}{\sigma^{\wtilde D}\int\limits_{\mathbb{S}_{\wtilde D-1}} d\theta} \right)
\\
&\leq  \frac{\wtilde D\sigma^2(1+4\alpha_\ell ^2)^2}{\wtilde D+2}\left(\xi^2 +\frac{(1+4\alpha_\ell ^2)^{\wtilde D}}{\xi\sqrt{\wtilde D-1}} e^{-(\wtilde D-1)\xi^2/2}\right)
,\end{align*}
where the last inequality comes from Eq. \eqref{eq:Omega3_to_ball_ratio}.
Since $\alpha_\ell <1/D^{1/4}$ we have that $(1+4\alpha_\ell ^2)^{\wtilde D}$ is bounded by some constant $c$. Choosing $\xi = 2\sqrt{\frac{\log(\wtilde D-1)}{\wtilde D-1}}$ we have 
\begin{align*}
\EE((\eta_\ell\cdot \vec{z})^2)
&\leq  \frac{\wtilde D\sigma^2(1+4\alpha_\ell ^2)^2 }{\wtilde D+2}\left(4\frac{\log(\wtilde D-1)}{\wtilde D-1} +\frac{c}{2\sqrt{\log(\wtilde D-1)}} (\wtilde D-1)^{-2}\right)
\\
&\leq  c_1\sigma^2\frac{\log(\wtilde D-1)}{\wtilde D-1}\leq  \sqrt{\frac{c\sigma}{\wtilde D}}
\end{align*}
for some constants $c,c_1$. Combining with Eq. 
\eqref{eq:cov_norm2}, we conclude the proof.

\end{proof}

\subsubsection{Bounding the distance of $q_\ell$ from $f_\ell(0)$}\label{sec:f0_est}

Back to Theorem \ref{thm:Step2} proof road-map see Figure \ref{tikz:thm33_lemmas}.

\begin{Lemma}\label{lem:dist_to_f_l0_weak}
For $f_\ell$ defined in \eqref{eq:fl_def}. Denote 
$\alpha_\ell = \maxangle (T_0f_{\ell}, H_{\ell})$ and assume $\alpha_\ell<1/D^{1/4}$. Then,
for any $\delta$ there is $N$ such that for any number of samples $n > N$
\[
\|f_\ell(0)\| \leq \tau \alpha_\ell /48
\]
with probability of at least $1-\delta$.
\end{Lemma}
\begin{proof}
In this proof we will assume that $\alpha_\ell  \geq\frac{1}{D}$. The case when $\alpha_\ell <\frac{1}{D}$ will be treated at the end of this proof.
Using the triangle inequality, we have 
    \begin{equation}\label{eq:f_ell0_triangle}
        \|f_\ell(0)\| \leq \| \wtilde{f}_\ell(0) \| + \| \wtilde{f}_\ell(0)  - f_\ell(0)\|.
    \end{equation}
    and from Lemma \ref{lem:dist_q_l_and f_tilde0} we have that 
    \begin{equation}\label{eq:f_tilde0_bound_stone}
            \| \wtilde{f}_\ell(0)\| \leq \frac{C_2\ln\left(\frac{1}{\delta}\right)}{n^{r_0}}
    \end{equation}
    with probability of at least $1-\delta$.
    
Similar to \eqref{eq:f_tilde-f_integrals} of the proof of Lemma \ref{lem:bound_Df-Dfwtilde} we can write 
\begin{equation*}
\wtilde{f}_\ell(x) - f_\ell(x)=
\frac{\wtilde D\int\limits_{\mathbb{S}_{\wtilde D-1}}\theta g(x,\theta)^{\wtilde D+1} d\theta}{(\wtilde D+1)\int\limits_{\mathbb{S}_{\wtilde D-1}}g(x,\theta)^{\wtilde D} drd\theta}    ,\end{equation*}
where $\wtilde D = D - d$, and $\mathbb{S}_{\wtilde D - 1}$ is the $({\wtilde D - 2})$-dimensional unit sphere in $\RR^{\wtilde D - 1}$.
Following the rationale of the proof of Lemma \ref{lem:bound_Df-Dfwtilde}, we split $\mathbb{S}_{\wtilde D-1}$ into $\Omega_1$ and $\Omega_2$ of \eqref{eq:Omegas_def}.
That is,
\begin{equation*}
    \begin{aligned}
    \Omega_1 &= \{\theta~|~0\leq \vec{z}^T\theta \leq \xi\}\\
    \Omega_2 &= \{\theta~|~\vec{z}^T\theta > \xi\}
    \end{aligned}
,\end{equation*}
for some $\xi$ to be chosen later.
Thus, we have  
\begin{align*}
z^T\cdot (\wtilde{f}_\ell(x) - f_\ell(x))&=
\frac{\wtilde D\int\limits_{\mathbb{S}_{\wtilde D-1}}z g(x,\theta)^{\wtilde D+1} d\theta}{(\wtilde D+1)\int\limits_{\mathbb{S}_{\wtilde D-1}}g(x,\theta)^{\wtilde D} d\theta} 
\\
&=\frac{\wtilde D\left(\int\limits_{\Omega_1}z g(x,\theta)^{\wtilde D+1} d\theta + \int\limits_{\Omega_2}z g(x,\theta)^{\wtilde D+1} d\theta\right)}{(\wtilde D+1)\int\limits_{\mathbb{S}_{\wtilde D-1}}g(x,\theta)^{\wtilde D} d\theta} 
\\
&\leq  \frac{\wtilde D\left(\xi \int\limits_{\Omega_1} g(x,\theta)^{\wtilde D+1} d\theta + \int\limits_{\Omega_2} g(x,\theta)^{\wtilde D+1} d\theta\right)}{(\wtilde D+1)\int\limits_{\mathbb{S}_{\wtilde D-1}}g(x,\theta)^{\wtilde D} d\theta} 
\end{align*}
Since the conditions of Lemma \ref{lem:g_bond_weak} are met, we have $\sigma \leq g(0,\theta)\leq \sigma + 4\sigma \alpha_\ell ^2$, and thus
\begin{align*}
z^T\cdot (\wtilde{f}_\ell(x) - f_\ell(x))
&\leq  \frac{\wtilde D\left(\xi \int\limits_{\Omega_1} g(x,\theta)^{\wtilde D+1} d\theta + \int\limits_{\Omega_2} g(x,\theta)^{\wtilde D+1} d\theta\right)}{(\wtilde D+1)\int\limits_{\mathbb{S}_{\wtilde D-1}}g(x,\theta)^{\wtilde D} d\theta}
\\
&\leq  \frac{\wtilde D\sigma(1+4\alpha_\ell ^2)\left(\xi  \int\limits_{\Omega_1} g(x,\theta)^{\wtilde D} d\theta + \int\limits_{\Omega_2} g(x,\theta)^{\wtilde D} d\theta\right)}{(\wtilde D+1)\int\limits_{\mathbb{S}_{\wtilde D-1}}g(x,\theta)^{\wtilde D} d\theta}
\\
&\leq  \frac{\wtilde D\sigma(1+4\alpha_\ell ^2)}{\wtilde D+1}\left(\xi  +\frac{ \int\limits_{\Omega_2} g(x,\theta)^{\wtilde D} d\theta}{\int\limits_{\mathbb{S}_{\wtilde D-1}}g(x,\theta)^{\wtilde D} d\theta} \right)
\\
&\leq  \frac{\wtilde D\sigma(1+4\alpha_\ell ^2)}{\wtilde D+1}\left(\xi  +\frac{\sigma^{\wtilde D}(1+4\alpha_\ell ^2)^{\wtilde D} \int\limits_{\Omega_2} d\theta}{\sigma^{\wtilde D}\int\limits_{\mathbb{S}_{\wtilde D-1}} d\theta} \right)
\\
&\leq  \frac{\wtilde D\sigma(1+4\alpha_\ell ^2)}{\wtilde D+1}\left(\xi  +\frac{(1+4\alpha_\ell ^2)^{\wtilde D}}{\xi\sqrt{\wtilde D-1}} e^{-(\wtilde D-1)\xi^2/2}\right)
\end{align*}
Since $\alpha_\ell <1/D^{1/4}$ we have that $(1+4\alpha_\ell ^2)^{\wtilde D}$ is bounded by some constant $c$. Choosing $\xi = 2\sqrt{\frac{\log(\wtilde D-1)}{\wtilde D-1}}$ we have 
\begin{align*}
z^T\cdot (\wtilde{f}_\ell(x) - f_\ell(x))
&\leq  \frac{\wtilde D\sigma(1+\alpha_\ell )}{\wtilde D+1}\left(2\sqrt{\frac{\log(\wtilde D-1)}{\wtilde D-1}} +\frac{c}{2\sqrt{\log(\wtilde D-1)}} (\wtilde D-1)^{-2}\right)
\\
&\leq  c_1\sigma\sqrt{\frac{\log(\wtilde D-1)}{\wtilde D-1}} 
\end{align*}
for some constant $c_1$. Recalling that $M=\frac{\tau}{\sigma}>C_\tau \sqrt{D \log D}$ we have,
\begin{equation*}
    \|\wtilde{f}_\ell(x) - f_\ell(x)\| \leq c_1\sigma\sqrt{\frac{\log(\wtilde D-1)}{\wtilde D-1}} = \sigma C_\tau \sqrt{D\log D} \frac{c_1}{C_\tau D}\leq \tau \frac{c_1}{C_\tau D}.
\end{equation*}
and thus, for $\alpha_\ell \geq 1/D$, and for large enough $\C_\tau$ we have that 
\begin{equation*}
    \|\wtilde{f}_\ell(x) - f_\ell(x)\|\leq \frac{\tau \alpha_\ell }{2\cdot 48}.
\end{equation*}

Combining this with  \eqref{eq:f_ell0_triangle} and \eqref{eq:f_tilde0_bound_stone}, we have that for any $\delta>0$, for $\alpha_\ell >\frac{1}{D}$, and for number of samples $n>N$ large enough, 
\[
\|f_\ell(0)\| \leq \tau \alpha_\ell /48,
\]
with probability of at least $1-\delta$.

For $\alpha_\ell<\frac{1}{D}$ using Lemma \ref{lem:dist_to_f_l0_srtong_small_alpha} concludes the proof.
\end{proof}

\begin{Lemma}\label{lem:dist_to_f_l0_srtong_small_alpha}
For $f_\ell$ defined in \eqref{eq:fl_def}. Denote 
$\alpha_\ell = \maxangle (T_0f_{\ell}, H_{\ell})$ and assume $\alpha_\ell<1/D$. Then,
for any $\delta$ there is $N$ such that for any number of samples $n > N$, we have 
\[
\| f_\ell(0)\| \leq D C_{1} \alpha_\ell ^2 + \frac{C_2\ln\left(\frac{1}{\delta}\right)}{n^{r_0}}.
\]
\end{Lemma}
\begin{proof}
Using the triangle inequality, we have 
\begin{equation}\label{eq:f_ell0_triangle_l2}
    \|f_\ell(0)\| \leq \| \wtilde{f}_\ell(0) \| + \| \wtilde{f}_\ell(0)  - f_\ell(0)\|.
\end{equation}
and Lemma \ref{lem:dist_q_l_and f_tilde0} we have that 
\begin{equation}\label{eq:f_tilde0_bound_stone_l2}
        \| \wtilde{f}_\ell(0)\| \leq \frac{C_2\ln\left(\frac{1}{\delta}\right)}{n^{r_0}}
\end{equation}
with probability of at least $1-\delta$.

Now we focus on bounding $\| \wtilde{f}_\ell(0)  - f_\ell(0)\|$

  From \eqref{eq:f_tilde-f_integrals} we have that 
\begin{equation*}
\wtilde{f}_\ell(x) - f_\ell(x)=
\frac{\wtilde D\int\limits_{\mathbb{S}_{\wtilde D-1}}\theta g(x,\theta)^{\wtilde D+1} d\theta}{(\wtilde D+1)\int\limits_{\mathbb{S}_{\wtilde D-1}}g(x,\theta)^{\wtilde D} drd\theta}     
=
\frac{\wtilde D\int\limits_{\mathbb{S}_{\wtilde D-1}}\theta (g(x,\theta)^{\wtilde D+1} - \sigma^{\wtilde D+1}) d\theta}{(\wtilde D+1)\int\limits_{\mathbb{S}_{\wtilde D-1}}g(x,\theta)^{\wtilde D} drd\theta}     
\end{equation*}
or, looking at some direction $\vec{z}$ we have 
\begin{align*}
z^T\cdot (\wtilde{f}_\ell(x) - f_\ell(x))&=
\frac{\wtilde D\int\limits_{\mathbb{S}_{\wtilde D-1}}z (g(x,\theta)^{\wtilde D+1}- \sigma^{\wtilde D+1}) d\theta}{(\wtilde D+1)\int\limits_{\mathbb{S}_{\wtilde D-1}}g(x,\theta)^{\wtilde D} d\theta} 
\\
&\leq
\frac{\wtilde D}{(\wtilde D+1)}\sigma((1+4\alpha_\ell^2)^{\wtilde D+1} - 1)
\end{align*}

We note that for $x<1/n$ the following holds
\[
(1+x)^n-1<x((1+1/n)^n-1)/(1/n)<nx(e-1)<2nx.
\]
Using the above observation, and the fact that $4\alpha_\ell^2 <\wtilde D+1$ we have that 
\begin{align*}
z^T\cdot (\wtilde{f}_\ell(x) - f_\ell(x))&\leq
\frac{8\sigma\wtilde D}{(\wtilde D+1)}(\wtilde D+1)\alpha_\ell^2 \leq 8\sigma D \alpha_\ell^2
\end{align*}
Combining this with  \eqref{eq:f_ell0_triangle_l2} and \eqref{eq:f_tilde0_bound_stone_l2}, we have that for any $\delta>0$, for $\alpha_\ell >\frac{1}{D}$, and for number of samples $n>N$ large enough, 
\[
\|f_\ell(0)\| \leq 8\sigma D \alpha_\ell^2 + \frac{C_2\ln\left(\frac{1}{\delta}\right)}{n^{r_0}},
\]
with probability of at least $1-\delta$.
\end{proof}

\begin{Lemma}\label{lem:dist_q_l_and f_tilde0}
For any $\delta>0$ there is $N_\delta$ such that for any $n>N_\delta$, we have 
\[
\| \wtilde{f}_\ell(0)\| \leq \frac{C \ln\left(\frac{1}{\delta}\right)}{n^{r_0}}
\] 
with probability at least $1-\delta$, where $r_0 = \frac{k}{2k+d}$.
\end{Lemma}
\begin{proof}
First we note that 
\[
\wtilde f_\ell(0) = \wtilde f_{\ell - 0.5}(0) - \pi^*_{q_{\ell-1}, H_\ell}(0)
.\]
Then, from applying Theorem \ref{thm:vector_LPR} on $\wtilde{f}_{\ell-0.5}(0)$, and noting that $r_0 <1/2$,  we get the desired bound.
\end{proof}

\subsubsection{Bounding the error induced by the shifted origin}\label{sec:ShiftedCenter_err}
\begin{figure}
	\centering
	\includegraphics[width=0.8\textwidth]{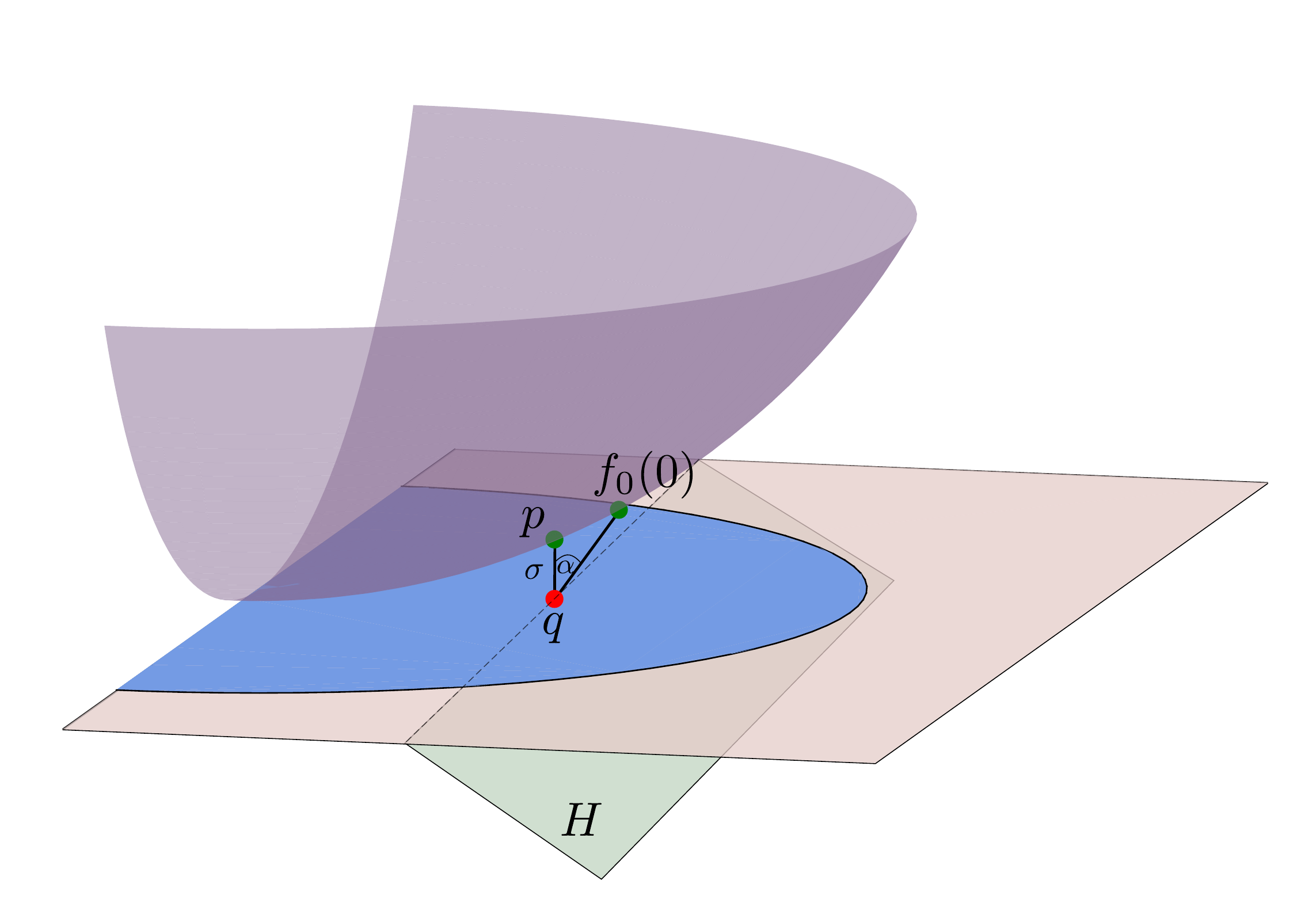}
	\caption{Illustration of the difference between $ p $ and $ f_0(0) $.}
	\label{fig:f0(0)vs_p}
\end{figure}

Back to Theorem \ref{thm:Step2} proof road-map see Figure \ref{tikz:thm33_lemmas}.

\begin{Lemma}\label{lem:shift_ang_general}
    Let $G_0$ be a $d$ dimensional linear space in $\RR^D$, and let $g_0:G_0 \rightarrow \RR^{D-d}$, such that the graph of $g_0$ is a manifold with reach bounded by $\tau$. Assume that $\maxangle (T_0g_0, G_0) \leq \alpha$. Let  $G_1$ be a $d$ dimensional linear space in $\RR^D$, such that $\maxangle(G_0,G_1)\leq \beta$. Define $g_1:G_1 \rightarrow \RR^{D-d}$ as the function who's graph coincides with the graph of $g_0$. 

    Then, for $\alpha \leq \pi/16$, $\beta\leq \beta_c$ where $\beta_c$ is some constant dependent only on $c_{\pi/4}$ of Lemma \ref{lem:M_is_locally_a_fuinction_clean}, and  $\|g_0(0)\| \leq \frac{3\tau}{4 \cdot 16} $, we have 
    \[
    \maxangle (G_1, T_0g_1)) \leq \maxangle(G_1, T_0g_0)  + \frac{ 8\|g_0(0)\|}{\tau}
    \]
\end{Lemma}

We first need a supporting lemma that will show us that $g_{1}$ exists, and specifically, $g_{1}(0)$ exist.
\begin{Lemma} \label{lem:rough_w_bound}
Under the conditions of Lemma \ref{lem:shift_ang_general}, $g_{1}(0)$ exists and 
\[
\|P_{T_0 g_0} (o_1 + (0, g_{1}(0))_{G_1} - \tilde o_1) \| \leq \tau/2
\]
\end{Lemma}
\begin{proof}
We begin with defining the coordinate system $(\tilde o_1, G_1)$ with $\tilde o_1 = (0, g_0(0))_{G_0}$. Let $\wtilde g_1:(\wtilde o_1,G_1) \simeq \RR^d \to \RR^{D-d}$ be the function defined in Lemma \ref{lem:M_is_locally_a_fuinction_clean}, such that 
\[
\Gamma_{\wtilde g_1} =  \MM \cap \textrm{Cyl}(\wtilde o_1,c_{\pi/4}\tau, \tau/2)
\]
From Lemma \ref{lem:M_is_locally_a_fuinction_clean} we know that $\wtilde g_1$ is defined for any $x\in \RR^d$ such that $\norm{x} \leq c_{\pi/4}\tau$.
Now, we denote $x_o = P_{G_1}(o_1 - \tilde o_1)$, the projection of $o_1$ onto the affine space defined by $(\tilde o_1, G_1)$.
From the assumptions we know that $\norm {\tilde o_1 - o_1} \leq  \frac{3\tau}{4 \cdot 16}$. 
Since $\maxangle (G_1,G_0) \leq \beta$ from Lemma \ref{lem:angle_space_to_vec} we have that $\norm{x_o} \leq \frac{3\tau}{4 \cdot 16} \sin \beta$. 
Thus, for sufficiently small $\beta$, depending only on $c_{\pi/4}$, $\|x_o\| \leq c_{\pi/4}
\tau$, and $\wtilde{g}_1(x_o)$ is therefore defined (by Lemma \ref{lem:M_is_locally_a_fuinction_clean}). Since $\wtilde g_1$ identifies with $g_1$ up to some shift in the domain and target, it follows that $g_1(0)$ is well defined.

Next we bound  $ \|P_{T_0g_0} (o_1 + (0, g_{1}(0))_{G_1} - \tilde o_1) \| $.
Since $\wtilde{g}_1(0) = 0$, from Lemma \ref{lem:f_bound_circle_H_clean} and the triangle inequality for maximal angles between flats
we have that 
\[
\|\wtilde{g}_1(x_o)\| \leq \tau\cos(\alpha + \beta)-\sqrt{\tau^2-(\norm{x_o} + \tau\sin(\alpha+\beta))^2}
\]
Substituting $\|x_o\|$ in the right hand side we set 
\begin{align}
\|\wtilde{g}_1(x_o)\| &\leq \tau\cos(\alpha + \beta)-\sqrt{\tau^2-(\frac{3\tau}{4 \cdot 16} \sin \beta + \tau\sin(\alpha+\beta))^2} \notag\\
&= \tau \left( \cos(\alpha + \beta)-\sqrt{1-(\frac{3}{4 \cdot 16} \sin \beta + \sin(\alpha+\beta))^2} \right)    
\end{align}
Since  
\begin{multline}
 \|P_{T_0g_0} (o_1 + (0, g_{1}(0))_{G_1} - \tilde o_1) \| \leq \norm{o_1 + (0, g_{1}(0))_{G_1} - \tilde o_1)}   = \sqrt{\|\wtilde{g}_1(x_o)\|^2 + \|x_o\|^2} 
\\
= \tau \sqrt{\left(\frac{3}{4 \cdot 16}\right)^2 \sin^2 \beta + \left( \cos(\alpha + \beta)-\sqrt{1-(\frac{3}{4 \cdot 16} \sin \beta + \sin(\alpha+\beta))^2} \right)^2 }
\end{multline}
which, for small enough $\beta$ and fixed $\alpha$ is smaller than $0.5 \tau$.
\begin{figure}
    \centering
    \includegraphics{./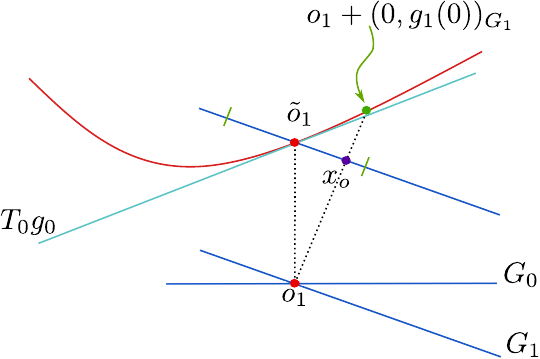}
    \caption{Illustration of an angle change of a coordinate system. We have $(o_1, G_0)$ as some coordinate system. Locally we look at $\MM$ (marked in solid red) as a graph of $g_0:(o_1, G_0)\to G_0\perp$. The point $\tilde o_1$ equals $g_0(0)$. Let $G_1$ be some rotated coordinate system and describe $\MM$ as a local graph of $g_1:(o_1, G_1)\to G_1^\perp$.}
    \label{fig:G0G1}
\end{figure}
\end{proof}

Next we prove Lemma \ref{lem:shift_ang_general}

\begin{proof}[proof of Lemma \ref{lem:shift_ang_general}] 
We first note that from Lemma \ref{lem:rough_w_bound} $(0,g_1(0))_{G_1}\in \MM$ exists.
Then, we denote by $o_1$ the origin, $\tilde o_1 = o_1 + (0, g_0(0))_{G_0}$ and $\bar o_1 = o_1 + (0, g_1(0))_{G_1}$ (see Figure \ref{fig:G0G1}).
Thus, We can write $\bar o_1 = \tilde o_1 + w_x + w_y$, where 
\[
\begin{array}{c}
w_x = P_{T_0g_0}(\bar o_1 - \tilde o_1) \in T_0g_0 \\ 
w_y = \bar o_1 - w_x - \tilde o_1 = P_{T_0g_0^\perp}(\bar o_1-\tilde o_1) \in T_0g_0^\perp.    
\end{array}
\]
In this case, since $w_x$ and $w_y$ are orthogonal, we have 
\begin{equation}\label{eq:dist_f0_w}
    \|\bar o_1 - \tilde o_1\|^2 = \|w_x\|^2 + \|w_y\|^2.
\end{equation}
From Lemma \ref{lem:f_bound_circle_no_func_clean}, we have that  $\|w_y\| \leq \tau - \sqrt{\tau^2 - \|w_x\|^2}$. Since $\|w_x\|\leq \tau$, and by Remark \ref{rem:taylor_sqrt1-x2_clean} we have,
\begin{equation}\label{eq:wy_bound}
    \|w_y\| \leq \tau - \sqrt{\tau^2 - \|w_x\|^2} \leq \frac{\|w_x\|^2}{\tau}.
\end{equation}
Thus, in order to bound \eqref{eq:dist_f0_w}, we only need to bound $\|w_x\|$. Recall that  $\maxangle(G_0,G_1)\leq \beta$. By Lemma \ref{lem:angle_space_to_vec} we have that 
\[
\beta \geq \min\limits_{v\in G_0^\perp} \angle (v,\bar o_1).
\]
Taking the cosine of both sides we have
\begin{equation}\label{eq:cos_bound_wx_norm}
    \cos\left(\beta\right)\leq \max\limits_{v\in G_0^\perp} \cos(\angle (v,\bar o_1)) =\max\limits_{v\in G_0^\perp} \frac{<v,\bar o_1>}{\|\bar o_1\|}.
\end{equation}
    
For any unit vector $v\in G_0^\perp$ we have
\begin{equation}\label{eq:vw_prod_bound}
 <v,\bar o_1> = <v,\tilde o_1 + w_x + w_y> \leq \|\tilde o_1\| + <v,w_x> + \|w_y\|.   
\end{equation}
We also note that 
\[
    \|\bar o_1\| \geq \|w_x +w_y\| - \|\tilde o_1\| ,
\]
and since $w_x$ and $w_y$ are orthogonal, we have
\begin{equation}\label{eq:w_norm_bound}
\|\bar o_1\| \geq \sqrt{\|w_x\|^2 + \|w_y\|^2} - \|\tilde o_1\| \geq \|w_x\| - \|\tilde o_1\|.    
\end{equation}
rewriting Eq. \eqref{eq:cos_bound_wx_norm} using Eq. \eqref{eq:vw_prod_bound} and Eq. \eqref{eq:w_norm_bound}, we have
\[
    \cos\left(\beta\right)\leq \max\limits_{v\in H_0^\perp} \frac{<v,w>}{\|w\|} \leq \max\limits_{v\in H_0^\perp} \frac{\|\tilde o_1\| + <v,w_x> + \|w_y\| }{\|w_x\| - \|\tilde o_1\|}
\]
Since $v\in G_0^\perp$ and $w_x\in T_0g_0$, from Lemma \ref{lem:angle_space_perp_space} we have
\[
    <v,w_x> \leq \|w_x\| \cos(\frac{\pi}{2} - \alpha) = \|w_x\| \sin(\alpha),
\]
and thus
\begin{equation}\label{eq:wx_bound1}
    \cos\left(\beta\right) \leq
 \frac{\|\tilde o_1\| + \|w_x\| \sin(\alpha) + \|w_y\| }{\|w_x\| - \|\tilde o_1\|}.
\end{equation}
Since $\|w_y\| \leq \frac{\|w_x\|^2}{\tau}$ we can rewrite Eq. \eqref{eq:wx_bound1}
\begin{equation}\label{eq:order2_ineq_wx}
-\frac{1}{\tau}\|w_x\|^2+\|w_x\|\left(\cos\left(\beta\right) - \sin( \alpha)\right) - \|\tilde o_1\| \left( \cos\left(\beta\right) + 1 \right) \leq 0.    
\end{equation}
Since we have that $\alpha \leq \pi/16$ and $\beta \leq \pi/10 $, we have  $1 \geq \cos(\beta)\geq 0.95$ and $\cos (\pi/2-\alpha) \leq 0.2$, and then
\begin{equation}\label{eq:order2_ineq_wx_2}
-\frac{1}{\tau}\|w_x\|^2+0.75 \|w_x\|  -2 \|\tilde o_1\| \leq 0 
\end{equation}
The left hand side of Eq. \eqref{eq:order2_ineq_wx_2} is a second degree polynomial in $\|w_x\|$. The roots of this polynomial are:
\[
\begin{array}{cc}
    \xi_1 =  \frac{3\tau}{8}\left(1 - \sqrt{1-\frac{128}{9\tau} \|\tilde o_1\|}\right) \\
    \xi_2 = \frac{3\tau}{8} \left(1 + \sqrt{1-\frac{128}{9\tau} \|\tilde o_1\|}\right).
\end{array}
\]
by Remark \ref{rem:taylor_sqrt1-x2_clean} we have,
\begin{equation} \label{eq:x112_bound}
\begin{array}{ll}
    \xi_1 \leq& \frac{3\tau}{8} \left[1 - 1+\frac{128}{9\tau} \|\tilde o_1\|\right] = \frac{16}{3} \|\tilde o_1\| \\
    \xi_2 \geq &\frac{3\tau}{8} \left[1 + 1-\frac{128}{9\tau} \|\tilde o_1\|\right] = \frac{3\tau}{4} - \frac{16}{3} \|\tilde o_1\|.
\end{array}
\end{equation}
We have from Eq. \eqref{eq:order2_ineq_wx} that $\|w_x\|\leq \xi_1$ or $\|w_x\|\geq \xi_2$, or, from Eq. \eqref{eq:x112_bound} we have 
\[
\|w_x\|\leq \frac{16}{3} \|\tilde o_1\|  \mbox{~~~or~~~~} \|w_x\|\geq \frac{3\tau}{4} - \frac{16}{3} \|\tilde o_1\|.
\]
By Lemma \ref{lem:rough_w_bound}, and for $\|\tilde o_1\| \leq \frac{3\tau}{4 \cdot 16} $  we have that $\|w_x\| \leq \tau/2 \leq \frac{3\tau}{4} - \frac{16}{3} \|\tilde o_1\|$, and thus, we have   
\[
\|w_x\|\leq \frac{16}{3} \|\tilde o_1\|.
\]
From Eq. \eqref{eq:dist_f0_w} and \eqref{eq:wy_bound} we have that 
\[
    \|\tilde o_1-w\|^2 = \|w_x\|^2 + \|w_y\|^2 \leq \frac{16}{3}\|\tilde o_1\|^2 + \frac{16\|\tilde o_1\|^2 }{3\tau^2},
\]
and since $\tau > 1$ we have $\|\tilde o_1-w\|^2 \leq 32/3 \|\tilde o_1\|^2$

Finally, from Corollary 3 in \cite{boissonnat2017reach} we conclude that 
\[
\sin \frac{\maxangle(T_0g_0,T_0g_1(w))}{2} \leq \frac{\sqrt{32/3} \|\tilde o_1\|}{2\tau} \leq \frac{4 \|\tilde o_1\|}{2\tau} = \frac{ 2\|\tilde o_1\|}{\tau}.
\]
Since $x/2 \leq \sin x$
\[
\maxangle(T_0g_0,T_0g_1) \leq \frac{ 8\|\tilde o_1\|}{\tau}. 
\]
and since 
\[
\maxangle (T_0g_0, G_1) \leq \maxangle (T_0g_0, G_0) + \maxangle (G_0, G_1) \leq \alpha + \beta
\]
we have  
\[
\maxangle (T_0g_1, G_1) \leq \maxangle (T_0g_0, G_1) + \maxangle(T_0g_0,T_0g_1)\leq \maxangle(T_0g_0(0),G_1)  + \frac{ 8\|\tilde o_1\|}{\tau} \left(\leq \alpha+\beta +\frac{ 8\|\tilde o_1\|}{\tau}\right)
\]
\end{proof}

\end{appendices}

\end{document}


\begin{frontmatter}
\title{Supplementary Material for: \\Non-Parametric Estimation of Manifolds from Noisy Data}
\runtitle{Non-Parametric Estimation of Manifolds from Noisy Data}

\begin{aug}
\author[A]{\fnms{Yariv} \snm{Aizenbud}\ead[label=e1]{yariv.aizenbud@yale.edu}} \and
\author[B]{\fnms{Barak} \snm{Sober}\ead[label=e2]{Barak.Sober@mail.huji.ac.il}}
\address[A]{Department of Mathematics, Yale University,
\printead{e1}}

\address[B]{Department of Statistics and Data Science, The Hebrew University of Jerusalem,
\printead{e2}}
\end{aug}

\end{frontmatter}
\begin{appendix}

\section{Preliminaries}
Before we delve into the proofs, we wish to introduce the concepts of Principal Angles between linear sub-spaces \cite{jordan1875essai,bjorck1973numerical} as well as develop some general results concerning the viewpoint of the manifold as being locally a graph of some function from a local coordinate system.
Both of these topics will play a key role in the proofs below.

In addition, two bounds resulting from the Taylor expansion will be used extensively in our proofs.
Thus, we note them here as the two following remarks:

\begin{remark}\label{rem:taylor_sqrt1-x2_clean}
For $x\in[0,\sqrt{3}/2]$
\begin{equation}\label{eq:Taylor2ndOrder}
    1-1/2x^2 \geq \sqrt{1 - x^2} \geq 1-x^2
\end{equation}
\end{remark}
\begin{remark}\label{rem:taylor2_sqrt1-x2_clean}
For $x\in[0,\sqrt{3}/2]$
\begin{equation}
1-1/2x^2-1/8x^4 \geq \sqrt{1 - x^2} \geq 1-\frac{1}{2}x^2 - x^4    
\end{equation}
\end{remark}

\subsection{Principal angles between linear sub-Spaces}
The concept of Principal Angles between flats were first introduced by Jordan in 1875 \cite{jordan1875essai}.
Below, we use the definition of Principal Angles between subspaces as described in \cite{bjorck1973numerical}.
\begin{definition}[Principal Angles]\label{def:principal_angles_clean}
	Let $V$ be an inner product space. Given two sub-spaces $\UU, \WW$ of dimensions $dim(\UU)=k, dim(\WW) = l$, where $k \leq l$ there exists a sequence of $k$ angles $0 \leq \beta_1 \leq \ldots \leq \beta_k \leq \pi/2$ called the principal angles and their corresponding principal pairs of vectors $(u_i, w_i)\in\UU\times\WW$ for $i=1,\ldots,k$ such that $\angle(u_i,w_i) = \beta_i$ are defined by: 
	\begin{equation*}
	\begin{array}{ll}
	    u_1, w_1 \defeq \argmin\limits_{\substack{u\in\UU, w\in\WW \\ \norm{u}=\norm{w} =1}}\arccos\left(\abs{\langle u, w \rangle}\right), &
	    \beta_1 \defeq \angle(u_1, w_1)
	\end{array}    
	,\end{equation*}
	and for $i > 1$
	\begin{equation*}
	\begin{array}{ll}
	    u_i, w_i \defeq \argmin\limits_{\substack{u\perp\UU_{i-1}, w\perp\WW_{i-1} \\ \norm{u}=\norm{w} =1}}\arccos\left(\abs{\langle u, w \rangle}\right), &
	    \beta_i \defeq \angle(u_i, w_i)
	\end{array}
	,\end{equation*}
    where $$\UU_i \defeq Span\{u_j\}_{j=1}^i ~,~ \WW_i \defeq Span\{w_j\}_{j=1}^i.$$
\end{definition}
In other words, given two linear subspaces of $\RR^D$ of the same dimension $d$ we can measure the distance between them based upon the principal angles. 
In our case, we measure the distance between two subspaces by taking the maximal principal angle (maximal angle) , and denote it as
\begin{equation}\label{eq:TheAngleLinear_clean}
    \angle_{\max}(\UU,\WW) \defeq \max_{1\leq i\leq d}\beta_i
.\end{equation}

Lemmas \ref{lem:angle_space_to_vec} and \ref{lem:angle_space_perp_space} are reformulation of results proven in \cite{knyazev2007majorization} and will be also used later on.
\begin{Lemma}\label{lem:angle_space_to_vec}
Let $F$ and $G$ be two linear spaces of dimension $d$ in $\RR^D$. Assume that $\maxangle(F,G)\leq \alpha$. Then for any vector $v \in F^\perp$
\[
\min\limits_{w\in G^\perp} \angle(v,w) \leq \alpha
\]
\end{Lemma}

\begin{Lemma}\label{lem:angle_space_perp_space}
Let $F$ and $G$ be two linear spaces of dimension $d$ in $\RR^D$. Assume that $\maxangle(F.G)\leq \alpha$. Then for any vector $v \in F^\perp$ and  $w\in G$,
\[
\angle(v,w) \geq \pi/2 - \alpha
\]
\end{Lemma}

\subsection{Viewing the manifold locally as a function graph}
It is well known that, locally, a sub-manifold of $\RR^D$ can be described as a graph of a function defined from the tangent space to its orthogonal complement.
In this section, we deal with expressing a manifold as a local function graph with respect to some tilted coordinate system and bounding the size of the neighborhood for which this definition still hold.
The results reported below are general and relates closely on the concept of the Reach (see Definition \ref{def:reach}) which was introduced by Federer \cite{federer1959curvature} and further studied by Boissonat, Lieutier and Wintraecken \cite{boissonnat2017reach}. 

\begin{Lemma}[Corollary 8 from \cite{boissonnat2017reach}] \label{lem:reach_ball_no_intersect_clean}
    Let $\MM$ be a sub-manifold of $\RR^D$ with reach $\tau$ and let $p\in\MM$.
    Then, any open $D$ dimensional ball of radius $\rho \leq \tau$ that is tangent to $\MM$ at $p$ does not intersect $\MM$.
\end{Lemma}

\begin{Lemma}[Bounding Ball]\label{lem:f_bound_circle_no_func_clean}
Let $\MM$ be a $d$-dimensional sub-manifold of $\RR^D$ with reach $\tau$. For any $p \in \MM$, let $T_{p}\MM$ be the tangent of $\MM$ at $p$.
    For any $x = p + x_T$, where $x_T\in T_p\MM$, and $y\in (T_p\MM)^\perp$  such that $\|x_T\|\leq \tau$, $\|y\| \leq \tau/2$ and $(x + y)\in \MM$, we have that 
    \[
    \norm{y} \leq \tau-\sqrt{\tau^2-\norm{x_T}^2}
    \]
\end{Lemma}
The proof follows directly from Lemma \ref{lem:reach_ball_no_intersect_clean}.

\begin{Lemma}[Bounding Ball with Noise]\label{lem:dist_ri_TpM_clean}
Under the sampling assumption of \ref{sec:SamplingAssumptions}. For $r_i\in U_{ROI}$ where $U_{ROI}$ is defined in \eqref{eq:ROI_clean}. Denote $p_r = P_\MM(r)$, and $x_i = P_{T_{p_r}\MM}(r_i - p_r)$. Then,
\[
\dist(r_i - p_r,T_{p_r}\MM) \leq \tau-\sqrt{\tau^2-\|x_i\|^2}  + \sigma
.\]    
\end{Lemma}
\begin{proof}
We first note that since $r$ is at distance at most $\sigma$ from $\MM$, we have that  $U_{\textrm{ROI}} \subset B_D(p_r, \sqrt{\sigma\tau}+\sigma )$. Denote $p_i = P_\MM(r_i)$.
By Lemma \ref{lem:f_bound_circle_no_func_clean} the distance between $r_i$ and $ T_{p_r}\MM $ is bounded by 
\[
\dist(r_i-p_r,T_{p_r}\MM) \leq \dist(p_i-p_r,T_{p_r}\MM) +\sigma \leq \tau-\sqrt{\tau^2-\|x_i\|^2}  + \sigma
.\]
\end{proof}

\begin{Lemma}[Bounding Ball From a Tilted Plane] \label{lem:f_bound_circle_H_no_func_clean}

    Let $\MM$ be a $d$-dimensional sub-manifold of $\RR^D$ with reach $\tau$. For any $p \in \MM$, let $T_{p}\MM$ be the tangent of $\MM$ at $p$ and let $ H\in Gr(d, D) $ such that $ \maxangle(T_p\MM, H) = \alpha \leq \pi/4 $.
    For any $x = p + x_H$, where $x_H\in H$, $\|x_H\|\leq c_{\pi/4}\tau$ for some constant $c_{\pi/4}$ , and $y\in H^\perp$  such that $\|x-p\|\leq \tau \cos \alpha$, $\|y\| \leq \tau/2$ and $(x + y)\in \MM$, we have that 
	\[
	-\tau\cos\alpha+\sqrt{\tau^2-(\norm{x} - \tau\sin\alpha)^2} \leq \|y\| \leq \tau\cos\alpha-\sqrt{\tau^2-(\norm{x} + \tau\sin\alpha)^2}
	\]
\end{Lemma}
The proof of Lemma \ref{lem:f_bound_circle_H_no_func_clean} follows directly from applying Lemma \ref{lem:f_bound_circle_no_func_clean} and observing the illustration in Figure~\ref{fig:BoundingCircle}.

\begin{figure}
	\centering
	\includegraphics[width=0.6\linewidth]{./Figures/BoundingCircle.pdf}
	\caption{Illustration of bounding ball for $ d=1,~D=2 $. The manifold $ \MM $ is marked by the red solid line, $ T_\MM $ is marked by the blue dashed line and $ \alpha = \maxangle(H, T_p\MM) $ is the angle betweeen the $ x $-axis and $ T_p\MM $. The bounding balls defined by the reach $ \tau $ are marked in pink \emphbrackets{solid and dotted}  }
	\label{fig:BoundingCircle}
\end{figure}

\begin{Lemma}\label{lem:f_bound_circle_H_no_func_ver2_clean}
    Let $\MM$ be a $d$-dimensional sub-manifold of $\RR^D$ with reach $\tau$. For any $p \in \MM$, let $T_{p}\MM$ be the tangent of $\MM$ at $p$ and let $ H\in Gr(d, D) $ such that $ \maxangle(T_p\MM, H) = \alpha  \leq \pi/4$.
    For any $x = p + x_H$, where $x_H\in H$,$\|x_H\|\leq c_{\pi/4}\tau$ for some constant $c_{\pi/4}$, and $y\in H^\perp$  such that $\|x-p\|\leq \tau \cos \alpha$, $\|y\| \leq \tau/2$ and $(x + y)\in \MM$, we have that 
	\[
	\|y\| \leq \norm{x_H}(\tan \alpha  +\OO(\|x_H\|/\tau))
	\]
\end{Lemma}
\begin{proof}
 Recalling Lemma \ref{lem:f_bound_circle_H_no_func_clean}, we have that 
	\begin{equation}
	\|y\| \leq \tau \cos \alpha - \sqrt{\tau^2-(\norm{x_H}+\tau \sin \alpha)^2}
	.\end{equation}
	Therefore,
	\begin{align*}
	\|y\| &\leq \tau (\cos \alpha - \sqrt{1-(\norm{x_H}/\tau + \sin \alpha)^2})\\
	      &=  \tau (\cos \alpha - \sqrt{1-\norm{x_H}^2/\tau^2  + 2\norm{x_H}/\tau\sin \alpha - \sin^2\alpha})\\
	      &=  \tau (\cos \alpha - \sqrt{\cos^2\alpha-\norm{x_H}^2/\tau^2 - 2\norm{x_H}/\tau\sin \alpha })\\
	      &=  \tau \cos \alpha(1 - \sqrt{1-\norm{x_H}^2/(\tau^2\cos^2\alpha)  - 2\norm{x_H}/\tau\tan \alpha / \cos\alpha })\\
	      &\leq  \tau \cos \alpha (1 - (1-\frac{\norm{x_H}^2}{2\tau^2\cos^2\alpha}  - \frac{\norm{x_H}\tan \alpha}{\tau \cos\alpha}  - \left(\frac{\norm{x_H}^2}{\tau^2\cos^2\alpha}  + 2\frac{\norm{x_H}\tan \alpha}{\tau\cos\alpha} \right)^2))\\
	      &= \tau   \cos\alpha(\frac{\norm{x_H}^2}{2\tau^2\cos^2\alpha}  + \frac{\norm{x_H}\tan \alpha}{\tau\cos\alpha}  + \left(\frac{\norm{x_H}^2}{\tau^2\cos^2\alpha}  + 2\frac{\norm{x_H}\tan \alpha}{\tau\cos\alpha} \right)^2)\\
	      &= \norm{x_H}\tan \alpha  +\frac{\norm{x_H}^2}{2\tau\cos\alpha}  +  \left(\frac{\norm{x_H}^2}{\tau^{3/2}\cos^{3/2}\alpha}  + 2\frac{\norm{x_H}\tan \alpha}{\tau^{1/2}\cos^{1/2}\alpha} \right)^2\\
	      &\leq \norm{x_H} (\tan \alpha + c\|x_H\|/\tau)
	\end{align*}
	for some constant $c$.
\end{proof}

\begin{Lemma}\label{lem:alpha-beta_x_no_func}
	Let $\MM$ be a $d$-dimensional sub-manifold of $\RR^D$ with reach $\tau$. For any $p \in \MM$, let $T_{p}\MM$ be the tangent of $\MM$ at $p$ and let $ H\in Gr(d, D) $ such that $ \maxangle(T_p\MM, H) = \alpha\leq \pi/4 $.
	For any $x = p + x_H$, where $x_H\in H$,$\|x_H\|\leq c_{\pi/4}\tau$ for some constant $c_{\pi/4}$, and $y\in H^\perp$  such that $\|x-p\|\leq \tau \cos \alpha$, $\|y\| \leq \tau/2$ and $(x + y)\in \MM$, we denote $ \gamma = \maxangle(T_{x+y}\MM, T_{p}\MM) $.
	Then, we get that
	\[
	\sin(\maxangle (T_{x+y}\MM, T_{p}\MM)) \leq \frac{\norm{x_H}}{\tau}(1 + \tan^2 \alpha)  +\OO(\|x_H\|^2/\tau^2)
	\]
\end{Lemma}
\begin{proof}
From  Corollary 3 in \cite{boissonnat2017reach} bounds the maximal angle between the tangent spaces at two points on $\MM$ through their Euclidean distance and $\tau$.
	Namely, let $p_1, p_2 \in \MM$
	\[
	sin(\maxangle(T_{p_1}\MM, T_{p_2}\MM)/2) \leq\frac{\norm{p_1 - p_2}}{2\tau}
	.\]
	Therefore, in our case we obtain,
	\begin{equation}\label{eq:sin_alpha_1}
	\sin(\maxangle (T_{x+y}\MM, T_{p}\MM)/2) \leq \frac{\sqrt{\norm{x_H}^2 + \|y\|^2}}{2\tau}
	.\end{equation}
		
	Recalling Lemma \ref{lem:f_bound_circle_H_no_func_ver2_clean}, we have that 
	\begin{equation*}
	\|y\| \leq \norm{x_H}\tan \alpha  +\OO(\|x_H\|^2/\tau)
	\end{equation*}
	
	\begin{align*}
	\sin(\maxangle (T_{x+y}\MM, T_{p}\MM)/2) &\leq \frac{\sqrt{\norm{x_H}^2 + \|y\|^2}}{2\tau}    \\
	&= \frac{\sqrt{\norm{x_H}^2 +  \norm{x_H}^2\tan^2 \alpha  +\OO(\|x_H\|^2/\tau)}}{2\tau}\\
	&=\frac{\norm{x_H}}{2\tau}\sqrt{1 + \tan^2 \alpha  +\OO(\|x_H\|/\tau)}\\
	&\leq\frac{\norm{x_H}}{2\tau}(1 + \tan^2 \alpha)  +\OO(\|x_H\|^2/\tau^2)
	\end{align*}
	Then, as for sufficiently small $\gamma$ $sin(\gamma) < 2\sin(\gamma/2)$, for $\tau$ large enough we have
	\[
	\sin(\maxangle (T_{x+y}\MM, T_{p}\MM)) \leq \frac{\norm{x_H}}{\tau}(1 + \tan^2 \alpha)  +\OO(\|x_H\|^2/\tau^2)
	\]
\end{proof}

\begin{Lemma}\label{lem:alpha_beta_x_no_func}
	Let $\MM$ be a $d$-dimensional sub-manifold of $\RR^D$ with reach $\tau$. For any $p \in \MM$, let $T_{p}\MM$ be the tangent of $\MM$ at $p$ and let $ H\in Gr(d, D) $ such that $ \maxangle(T_p\MM, H) = \alpha\leq \pi/4 $.
	For any $x = p + x_H$, where $x_H\in H$,$\|x_H\|\leq c_{\pi/4}\tau$ for some constant $c_{\pi/4}$, and $y\in H^\perp$  such that $\|x-p\|\leq \tau \cos \alpha$, $\|y\| \leq \tau/2$ and $(x + y)\in \MM$, we denote $ \beta = \maxangle(T_{x+y}\MM, H) $.
	Then, we get that
	\[
	\alpha - 2 \sqrt{ \frac{\norm{x_H}}{\tau}\lrbrackets{2 \alpha + \frac{\norm{x_H}}{\tau}} } 
	\leq \beta \leq
	\alpha + 2 \sqrt{ \frac{\norm{x_H}}{\tau}\lrbrackets{2 \alpha + \frac{\norm{x_H}}{\tau}} }
	\]
\end{Lemma}
\begin{proof}
	Using the triangle inequality for maximal angles we have that 
	\[
	\maxangle(T_p \MM, H) \leq \maxangle(T_p \MM,T_{x+y} \MM)+\maxangle(T_{x+y} \MM,H),\]
	which can be written as 
	\[
	|\alpha - \beta| \leq \maxangle(T_{x+y} \MM, T_p \MM)
	.\]
	From Lemma~\ref{lem:alpha-beta_x_no_func} we have that 
	\[
	\sin(\maxangle (T_{x+y}\MM, T_{p}\MM)) \leq \frac{\norm{x_H}}{\tau}(1 + \tan^2 \alpha)  +\OO(\|x_H\|^2/\tau^2)
	,\]
	and thus, 
	\[
	\sin(|\alpha - \beta|) \leq \frac{\norm{x_H}}{\tau}(1 + \tan^2 \alpha)  +\OO(\|x_H\|^2/\tau^2)
	\]
	Since for $x\leq \pi/2$, we have that $x/2<\sin(x)$, we have 
	
	\[
	|\alpha - \beta| \leq 2\frac{\norm{x_H}}{\tau}(1 + \tan^2 \alpha)  +\OO(\|x_H\|^2/\tau^2)
	.\]
	Since for $\alpha \leq \pi/4$, we have that $\tan \alpha \leq 1$, and $\tan^2\alpha \leq \alpha$, we have that 
	\[
	|\alpha - \beta| \leq
	2 \sqrt{ \frac{\norm{x_H}}{\tau}\lrbrackets{2 \alpha + \frac{\norm{x_H}}{\tau}} }
	\]
	or 
	\[
	\alpha - 2 \sqrt{ \frac{\norm{x_H}}{\tau}\lrbrackets{2 \alpha + \frac{\norm{x_H}}{\tau}} } 
	\leq \beta \leq
	\alpha + 2 \sqrt{ \frac{\norm{x_H}}{\tau}\lrbrackets{2 \alpha + \frac{\norm{x_H}}{\tau}} }
	\]
	as required.
\end{proof}

\begin{Lemma} [$\MM$ is locally a function graph over a tilted plane]\label{lem:M_is_locally_a_fuinction_clean}
Let $\MM$ be a $d$-dimensional sub-manifold of $\RR^D$ with reach $\tau$. For any $p \in \MM$, let $T_{p}\MM$ be the tangent of $\MM$ at $p$. Let $H \in Gr(d,D)$, such that $\maxangle(H,T_p\MM) = \alpha \leq \pi/4$.  
Then $\MM \cap \emph{Cyl}_H(p,\rho, \tau/2)$ is locally a function over $H$, where $\emph{Cyl}_H(p,\rho, \tau/2)$ is the $D$-dimensional cylinder with the base $B_H(p, \rho)\subset H$ and height $\tau/2$ in any direction on $H^\perp$. 
Furthermore, $\rho = c_{\pi/4}\tau$ for some constant $c_{\pi/4}$. Explicitly, there exists a function 
$$
f:B_H(p, \rho) \to H^\perp
$$
such that the graph of $f$ defined as
$$
\Gamma_f = \{p + (x,f(x)) | x\in B_H(p, \rho) \} 
$$
identifies with $\MM \cap \emph{Cyl}_H(p,\rho, \tau/2)$.
\end{Lemma}
\begin{proof}
We split our arguments to two separate parts.
First, we show that for $c < 0.02$ there exists a function $f$ such that $\Gamma_f \subset M\cap \textrm{Cyl}_H(p,  c \tau, \tau/2)$.
Then, in the second part of the proof, we show that there is a constant $c_{\pi/4} < c$ such that $f$ is defined uniquely and $\Gamma_f = M\cap \textrm{Cyl}_H(p, \rho, \tau/2)$.

By definition, there is an open ball $U_T\subset T_p\MM$ of $p$ such that there is a neighborhood $W_\MM \subset \MM$ that can be pronounced as a graph of a function from $U_T\simeq \RR^d$ to $T_p\MM^\perp\simeq \RR^{D-d}$.
Accordingly, for any $H$ such that $\maxangle(H, T_p\MM) < \pi/2$ there is an open ball $B_H(p,\eps) \subset H$ such that $W_H \subset \MM$ can be pronounced as a graph of a function $f$  from $B_H(p,\eps)$ to $H^\perp$.
We wish to show that $f$ can be extended to a ball $B_H(p, 0.02 \tau)\subset H$ such that the graph of $f$ is a subset of $\MM$ (note that $f(0) = 0$).

By contradiction, let us assume that $\frak{r}$ the maximal radius of an open ball such that the $\Gamma_f \subset \MM$, is strictly smaller than $0.02 \tau$.
We claim that the graph $\Gamma_f$ is defined on the closed ball $\bar B_H(p, \frak{r})$ and is also subset of $\MM$.
This is true, from the following argument:
Take a sequence of points $\{x_n\}$ converging to $ x\in \partial \bar B_H(p, \frak{r})$, a point on the boundary of $\bar B_H(p, \rho)$, and consider $\{y_n = p + (x_n, f(x_n))\in \MM\}$. From the compactness of $\MM$ the sequence $y_n$ has a converging subsequence $y_{n_k}$ and we denote its limit as $y$.
Since $x_n \to x$, we define $f(x)\defeq\lim f(x_{n_k})$ and  $y = p + (x, f(x))$.

We now wish to show that there is $\eps>0$ such that $f$ can be extended to $B_H(p, \frak{r} + \eps)$.
Using a similar argument to the one used in the beginning of the proof, by showing that for any $x\in\partial B_H(p, \frak{r})$ the angle $\maxangle(T_x f, H) < \pi/2$, we get that there is $W_H\subset\MM$, a neighborhood of $y\in \MM$ that is the image of some function from $B_H(x,\eps_x)$ to $H^\perp$.
Therefore, $f$ can be extended into this neighborhood.
Taking $\eps$ to be the minimum over all $\eps_x$, which exists since $x$ is in $H(p, \frak{r})$, which is compact, we get that $f$ can be extended to $B_H(p, \frak{r}+ \eps)$.

The remaining piece of the existence puzzle is showing that for all $x\in\partial B_H(p, \frak{r})$ we have $\maxangle(T_x f, H) < \pi/2$.
From Lemma \ref{lem:alpha_beta_x_no_func} we have that for any $x$ such that 
\[
\frac{\pi}{2} > \frac{\pi}{4} + 2\sqrt{\frac{\norm{x}}{\tau}\left(2\frac{\pi}{4} + \frac{\norm{x}}{\tau}\right)},
\]
$\maxangle(T_xf,H) <\pi/2$ holds.
Rewriting the inequality we get
\[
\begin{array}{cc}
      \\
    \frac{\pi}{4} > & 2\sqrt{\frac{\norm{x}}{\tau}\left(\frac{\pi}{2} + \frac{\norm{x}}{\tau}\right)} \\
    \frac{\pi^2}{8^2} > & \frac{\norm{x}}{\tau}\left(\frac{\pi}{2} + \frac{\norm{x}}{\tau}\right)\\
    0 > & \frac{\norm{x}^2}{\tau^2} + \frac{\pi}{2} \frac{\norm{x}}{\tau} - \frac{\pi^2}{8^2},\\
\end{array}
\]
and thus we require
\[
\frac{\norm{x}}{\tau} < \frac{-\frac{\pi}{2} + \sqrt{\frac{\pi^2}{4} + 4 \frac{\pi^2}{8^2} }}{2} = 
\pi \left(-\frac{1}{4} + \frac{1}{2}\sqrt{\frac{1}{4} + \frac{4}{8^2} }\right)
.\]
Finally, as 
$\pi \left(-\frac{1}{4} + \frac{1}{2}\sqrt{\frac{1}{4} + \frac{4}{8^2} }\right) >0.02$ we have that for $x<0.02\tau$,
$\maxangle(T_xf,H) <\pi/2$ holds.

We now turn to show that there is a constant $c_{\pi/4}$ for which $f$ is uniquely defined in $B_H(p, c_{\pi/4}\tau)$.
From Lemma \ref{lem:f_bound_circle_H_no_func_clean} we know that for any $x \in H$ with $\|x\|\leq \tau/2$ all the $y\in H^\perp$ such that $(x,y)\in\MM$ and $\|y\| \leq \tau/2$ must satisfy:
\begin{equation}
    \|y\| \leq \tau\cos\alpha-\sqrt{\tau^2-(\norm{x} + \tau\sin\alpha)^2} = \tau\left(\cos\alpha-\sqrt{1-(\frac{\norm{x}}{\tau} + \sin\alpha)^2}\right)
\end{equation}
Let $y_1, y_2$ be such that $(x,y_1),(x,y_2)\in\MM$ where $\norm{x}=\bar x\tau$.
Then,
\begin{equation}
    \|y_j\| \leq \tau\left(\cos\alpha-\sqrt{1-(\bar x+ \sin\alpha)^2}\right),\quad (j=1,2)
.\end{equation}
In other words, $y_1$ and $y_2$ cannot be too far from one another, and note that as $\bar x \to 0$
\begin{equation}\label{eq:y2_y1_diff}
\norm{y_2 - y_1} \to 0
\end{equation}

On the other hand, taking the point $(x, y_1)\in \MM$, denoting $\beta = \maxangle (T_{(x, y_1)}\MM, H)$, and applying Lemma \ref{lem:alpha_beta_x_no_func} we have that
\[
\beta \leq
	\alpha + 2 \sqrt{ \bar x\lrbrackets{2 \alpha + \bar x} }
,\]
which tends to $\alpha$ when $\bar x\to 0$.
From Lemma \ref{lem:reach_ball_no_intersect_clean} we know that $(x,y_2)$ cannot be in any ball tangent to $\MM$ at $(x, y_1)$ of radius $\tau$.
We denote by $v$ the direction $(0, y_2 - y_1)\in H^\perp$. 
From Lemma \ref{lem:angle_space_to_vec}  we know that there is $w \in \lrbrackets{ T_{(x, y_1)} \MM} ^\perp$ such that $\angle (v, w) \leq \beta$.
Therefore, we can now limit our discussion to $L_{y_1}$ the affine space spanned by $v$ and $w$ from $(x, y_1)$, and note that it contains $(x, y_2)$ as well.
Taking the two balls $B_D((x, y_1) \pm \tau\cdot w, \tau)$ and intersecting them with $L_{y_1}$ we get two 2-dimensional disks of radius $\tau$ (see Figure \ref{fig:bouding_balls_for lem_locally_func}).
Thus, $(x, y_2)$ cannot be within either disks.
From basic trigonometry we achieve that either $y_2 = y_1$ or
\[
\norm{y_2 - y_1} \geq 2\tau\cos(\beta) \geq 2\tau\cos(\alpha + 2\sqrt{\bar x (2\alpha + \bar x)})
,\]
which tends to $2\tau \cos \alpha$ as $\bar x \to 0$.
Combining this with \eqref{eq:y2_y1_diff} we get there is $c_{\pi/4}$  such that for all $\bar x \leq c_{\pi/4}$ we have $y_1 = y_2$.
\begin{figure}
    \centering
    \includegraphics
 [width=0.5\textwidth]{Figures/BoundingCircle_Twicev2.pdf}
    \caption{Illustration of bounding balls. The reach $\tau$ of the manifold $\MM$ (marked by the red line) bounds its sectional curvatures. Namely, the manifold cannot intersect a tangent ball of radius $\tau$. In this illustration we have some coordinate system $H$ and the manifold can be described locally as a graph of some function $f:H\to H^\perp$. The coordinate system $H$ is not aligned with $T_0f$, the tangent at zero; i.e., $\maxangle(H, T_0f) = \alpha$. Then the value of $f$ at $x$ is bounded to the markered interval in $H^\perp$ above $x$. Furthermore, in order to have two different points in $\MM$ above $x$, the manifold cannot curve too fast as it cannot enter neither the dotted balls nor the solid ones.}
    \label{fig:bouding_balls_for lem_locally_func}
\end{figure}

\end{proof}

\begin{corollary} \label{cor:GraphOfFunctionTau2_clean}
Under the requirements of Lemma \ref{lem:M_is_locally_a_fuinction_clean} we get that $\MM$ is a function graph over $T_{p}\MM$ in a ${\tau}/2$ neighborhood of $p$. 
Reiterating \eqref{eq:phi_def_clean}, we have a function 
\[
\phi_p:B_{T_p\MM}(0,\tau/2) \rightarrow T_p\MM^\perp
\]
such that the graph of $\phi$ shifted to $p$ coincides with $\MM \cap \mathrm{Cyl}_{T_p\MM}(p,\tau/2,\tau/2)$.

\end{corollary}

\begin{corollary}\label{cor:GraphOfFunctionTau_ROI}
Let the requirements of Lemma \ref{lem:M_is_locally_a_fuinction_clean}, and the sampling assumptions of Section \ref{sec:SamplingAssumptions} hold. Denote the projection of $r$ onto $\MM$ by $p_r = P_\MM(r)$, let $U_{\textrm{ROI}}$ be as defined in \eqref{eq:ROI_clean}. 
Then, any $r_i \in U_{\textrm{ROI}}$ can be written as
\begin{equation}\label{eq:FunctionGraph_pi_ei_clean}
r_i = \underbrace{p_r + (x_i, \phi_{p_r}(x_i))_{T_{p_r}\MM}}_{p_i}+\eps_i
,\end{equation}
where $x_i= P_{T_{p_r}\MM}(r_i - p_r)$ and $\|\eps_i\| \leq \sigma$.
\end{corollary}
\begin{proof}
From the assumptions of Section \ref{sec:SamplingAssumptions} we know that  $\tau/\sigma > M $.
Therefore, there is $M$ such that $\sqrt{\sigma\tau}+\sigma < \tau/2$ and thus, from Corollary \ref{cor:GraphOfFunctionTau2_clean}, the intersection of $\MM$ with $\textrm{Cyl}_{T_{p_r}\MM}(p_r,\sqrt{\sigma \tau}+\sigma, \tau/2)$, a cylinder with base $B_{T_{p_r}\MM}(p_r, \sqrt{\sigma\tau}+\sigma)\subset T_{p_r}\MM$ and heights $\tau/2$ in $T_{p_r}\MM^\perp$ can be written as $\Gamma_{\phi_{p_r}, B_{T_{p_r}\MM}(p_r, \sqrt{\sigma\tau} + \sigma)}$, the graph of $\phi_{p_r}:T_{p_r}\MM\to T_{p_r}\MM^\perp$. Since $r_i$ are in a tubular neighborhood of $\MM$, the proof is concluded. 
\end{proof}

\begin{Lemma}[Function version of Lemma \ref{lem:f_bound_circle_no_func_clean}]\label{lem:f_bound_circle_clean}
    Let $\MM$ be a $d$-dimensional sub-manifold of $\RR^D$ with reach $\tau$. For any $p \in \MM$, let $T_{p}\MM$ be the tangent of $\MM$ at $p$.
    Let $\phi_p:B_{T_p\MM}(0, \tau/2)\to T_p\MM^\perp$ be defined as in Corollary \ref{cor:GraphOfFunctionTau2_clean}; that is, 
    \[\Gamma_{\phi_p,B_{T_p\MM}(0, \tau/2)}\subset\MM,\]
    where
    \[\Gamma_{\phi_p,B_{T_p\MM}(0, \tau/2)} = \{p + (x, \phi_p(x)) | x\in B_{T_p\MM}(0, \tau/2)\}.\]
    Then, for any $v \in T_p\MM^\perp$, such that $\norm{v} = 1$
    \[
    \lrangle{v, \phi_p(x)} \leq \tau-\sqrt{\tau^2-\norm{x}^2}
    \]
\end{Lemma}
\begin{proof}
    This follows immediately from Lemma \ref{lem:f_bound_circle_no_func_clean} and Lemma \ref{cor:GraphOfFunctionTau2_clean}.
\end{proof}

\begin{corollary}
It follows immediately from Lemma \ref{lem:f_bound_circle_clean} 
\begin{equation}\label{eq:eq:bound_phix_clean}
    \norm{\phi_p(x)}_{\RR^{D-d}} \leq \tau - \sqrt{\tau^2 - \norm{x}_{\RR^d}^2}
,\end{equation}
and using the triangle inequality we can say that
\begin{equation}\label{eq:bound_x_phix_clean}
    \norm{(x,\phi_p(x))}_{\RR^{D}} \leq \norm{x}_{\RR^d} + \tau - \sqrt{\tau^2 - \norm{x}_{\RR^d}^2}
.\end{equation}
\end{corollary}

\begin{Lemma}[Function version of Lemma \ref{lem:f_bound_circle_H_no_func_clean}]\label{lem:f_bound_circle_H_clean}
    Let $\MM$ be a $d$-dimensional sub-manifold of $\RR^D$ with reach $\tau$. For any $p \in \MM$, let $T_{p}\MM$ be the tangent of $\MM$ at $p$ and let $ H\in Gr(d, D) $ such that $ \maxangle(T_p\MM, H) = \alpha \leq \pi/4$.
	Let $f_p:\RR^d\to \RR^{D-d}$ be such that the neighborhood $W_p\subset\MM$ can be descried as the the graph of $f_p$
	\[\Gamma_{f_p,W_p} = \{p + (x, f_p(x)) | x\in P_{H}(W_p)\}\]
	Then, for any $v \in H^\perp$, such that $\norm{v} = 1$
	\[
	-\tau\cos\alpha+\sqrt{\tau^2-(\norm{x} - \tau\sin\alpha)^2} \leq \lrangle{v, f_p(x)} \leq \tau\cos\alpha-\sqrt{\tau^2-(\norm{x} + \tau\sin\alpha)^2}
	\]
\end{Lemma}
\begin{proof}
    This follows directly from Lemma \ref{lem:f_bound_circle_H_no_func_clean} and Lemma \ref{lem:M_is_locally_a_fuinction_clean}.
\end{proof}

\begin{Lemma}[Function Version of Lemma \ref{lem:alpha_beta_x_no_func}]\label{lem:alpha_beta_x}
    Let $\MM$ be a $d$-dimensional sub-manifold of $\RR^D$ with reach $\tau$. For any $p \in \MM$, let $T_{p}\MM$ be the tangent of $\MM$ at $p$ and let $ H\in Gr(d, D) $ such that $ \maxangle(T_p\MM, H) = \alpha \leq \pi/4$.
	Let $f_p:\RR^d\to \RR^{D-d}$ be such that the neighborhood $W_p\subset\MM$ can be descried as the the graph of $f_p$
	\[\Gamma_{f_p,W_p} = \{p + (x, f_p(x)) | x\in P_{H}(W_p)\}\]
    Let $ x_0\in H $, $ \beta(x_0) = \maxangle(T_{x_0}f_p, H) $, where $ T_\xi f $ is the tangent to the graph of $ f_p $ at $ f(\xi) $.
	Then, we get
	\[
	\alpha - 2 \sqrt{ \frac{\norm{x_0}}{\tau}\lrbrackets{2 \alpha + \frac{\norm{x_0}}{\tau}} } 
	\leq \beta(x_0) \leq
	\alpha + 2 \sqrt{ \frac{\norm{x_0}}{\tau}\lrbrackets{2 \alpha + \frac{\norm{x_0}}{\tau}} }
	\]
\end{Lemma}

\begin{proof}
 This follows immediately from Lemma \ref{lem:alpha_beta_x_no_func} and Lemma \ref{lem:M_is_locally_a_fuinction_clean}.
\end{proof}
\section{Supporting lemmas for Step 1}
\subsection{Proof of Lemma \ref{lem:J1pTp_clean}} \label{subsec:proof_lem_J1pTp_clean}
\begin{proof}
We recall that $ U_\textrm{ROI} $ as defined in \eqref{eq:ROI_clean} is contained in $ B_D({p_r}, \sqrt{\sigma\tau} + \sigma) $.
Using Lemma \ref{lem:dist_ri_TpM_clean}, for any $r_i\in U_\textrm{ROI}$ we know that the distance between $r_i - p_r$ and $ T_{{p_r}}\MM $ is bounded by 
\begin{equation}\label{eq:d_ri_Tp_clean}
\dist(r_i - p_r,T_{{p_r}}\MM) \leq \tau-\sqrt{\tau^2-\|x_i\|^2}  + \sigma
,\end{equation}
where $ x_i = P_{T_{p_r}\MM}(r_i - p_r) $.
Since $ x_i \in B_D(0, \sqrt{\sigma\tau}+\sigma )$ we have that $\norm{x_i} \leq \sqrt{\sigma\tau}+\sigma $, and so
\[
J_1(r; {p_r}, T_{{p_r}}\MM) \leq \left(\tau-\sqrt{\tau^2-(\sqrt{\sigma\tau}+\sigma)^2} + \sigma\right)^2 
.\]
By simplifying and bounding this expression using Remark \ref{rem:taylor2_sqrt1-x2_clean} we get
	\begin{align*}
		\left((\sigma+\tau)-\sqrt{\tau^2-(\sqrt{\sigma\tau}+\sigma)^2}\right)^2 = & (\sigma+\tau)^2 + \tau^2-(\sqrt{\sigma\tau}+\sigma)^2 - 2(\sigma+\tau) \sqrt{\tau^2-(\sqrt{\sigma\tau}+\sigma)^2}\\
		= &
		\sigma^2 + 2\sigma\tau + 2 \tau^2 -  (\sqrt{\sigma\tau}+\sigma)^2 \\
		&- 2\tau(\sigma+\tau) \sqrt{1-(\sqrt{\sigma/\tau}+\sigma/\tau)^2}\\
		\leq &
		\sigma^2 + 2\sigma\tau + 2 \tau^2 - (\sqrt{\sigma\tau}+\sigma)^2\\
		& - 2\tau^2 (1-1/2 (\sqrt{\sigma/\tau}+\sigma/\tau)^2- (\sqrt{\sigma/\tau}+\sigma/\tau)^4 )\\& - 2\tau\sigma (1-1/2 (\sqrt{\sigma/\tau}+\sigma/\tau)^2- (\sqrt{\sigma/\tau}+\sigma/\tau)^4)\\
		= &
		\sigma^2 -  \tau^2(\sqrt{\sigma/\tau}+\sigma/\tau)^2 \\
		& + \tau^2  ((\sqrt{\sigma/\tau}+\sigma/\tau)^2  +2(\sqrt{\sigma/\tau}+\sigma/\tau)^4) \\& + \tau\sigma  ((\sqrt{\sigma/\tau}+\sigma/\tau)^2+2(\sqrt{\sigma/\tau}+\sigma/\tau)^4) \\
		= &
		\sigma^2 + 2\tau^2 (\sqrt{\sigma/\tau}+\sigma/\tau)^4\\& + \tau\sigma ( (\sqrt{\sigma/\tau}+\sigma/\tau)^2 +2(\sqrt{\sigma/\tau}+\sigma/\tau)^4)\\
		\leq &
		\sigma^2 + 2 \left(\sqrt{\sigma} + \frac{\sigma}{\sqrt{\tau}}\right)^4
		+ 3\sigma
		\left(\sqrt{\sigma} + \frac{\sigma}{\sqrt{\tau}}\right)^2 
	.\end{align*}
Using the fact that $ \sigma \leq \tau $ we get
\begin{align*}
J_1(r; {p_r}, T_{{p_r}}\MM) &\leq
\sigma^2 + 2(2\sqrt{\sigma})^4 + 3\sigma (2\sqrt{\sigma})^2
\end{align*}
Thus, we obtain
$$J_1(r; {p_r}, T_{{p_r}}\MM) \leq 49 \sigma^2.$$
\end{proof}
\subsection{Technical parts of Lemma \ref{thm:J1pH_clean}}
\subsubsection{Proof of Claim \ref{thm:claim_R1pp_bound_clean} of  Lemma \ref{thm:J1pH_clean}}\label{sec:proof_claim_R1pp_bound_clean}
\begin{proof} 
It is clear from \eqref{eq:R1pp_def_clean} that
\[
R_1''(r; p, H) \geq
- \sum\limits_{j\in\KK''} \frac{1}{\#|U_\textrm{ROI}|}\sum\limits_{r_i\in U_\textrm{ROI}}   \langle r_i - p, {y}_j \rangle^2 =  - \frac{1}{\#|U_\textrm{ROI}|}\sum\limits_{r_i\in U_\textrm{ROI}}\sum\limits_{j\in\KK''}\langle r_i - p, {y}_j \rangle^2
.\]
Furthermore, since $y_j\in T_p\MM^\perp$ and by Lemma \ref{lem:dist_ri_TpM_clean} we get 
\[
\sum\limits_{j\in\KK''}\abs{\lrangle{r_i - p, y_j}}^2 \leq dist^2(r_i - p, T_p\MM) \leq \left(\tau - \sqrt{\tau^2 - \norm{P_{T_p\MM}(r_i- p) }^2} + \sigma\right)^2 
,\]
and since $r_i\in U_\textrm{ROI} \subset B_D(p, \sqrt{\sigma\tau} + \sigma)$ we get
\begin{align*}
     \sum\limits_{j\in\KK''}\langle r_i - p,y_j \rangle^2 
     & \leq 
	\left(\tau+\sigma-\sqrt{\tau^2-(\sqrt{\sigma\tau}+\sigma)^2}\right)^2 \\
	& = \lrbrackets{\tau + \sigma - \tau\sqrt{1 - \lrbrackets{\sqrt{\frac{\sigma}{\tau}} + \frac{\sigma}{\tau} }^2 }}^2
\end{align*}
by Taylor expansion (Remark \ref{rem:taylor_sqrt1-x2_clean})
\begin{align*}
     \sum\limits_{j\in\KK''}\langle r_i - p,y_j \rangle^2 
	& \leq 
	\lrbrackets{\tau + \sigma - \tau\lrbrackets{1 - \frac{1}{2}\lrbrackets{\sqrt{\frac{\sigma}{\tau}} + \frac{\sigma}{\tau} }^2}}^2 \\
	&=
	\lrbrackets{\sigma + \tau \frac{1}{2}\lrbrackets{\sqrt{\frac{\sigma}{\tau}} + \frac{\sigma}{\tau} }^2}^2 \\
\end{align*} 
since $ \sigma < \tau $ we get $\sqrt{\frac{\sigma}{\tau}} > \frac{\sigma}{\tau} $ and
\begin{align}\label{eq:ri_yk_bound_s_square_clean}
     \sum\limits_{j\in\KK''}\langle r_i - p,y_j \rangle^2 
	& \leq 
	\lrbrackets{\sigma + 2\tau \frac{\sigma}{\tau}}^2 = 9 \sigma^2
\end{align}
and
\begin{equation*}
R_1''(r; p, H) \geq
-  \frac{1}{\#|U_\textrm{ROI}|}\sum\limits_{r_i\in U_\textrm{ROI}}   9 \sigma^2 =  -9 \sigma^2
\end{equation*}

\end{proof}

\subsubsection{Simplification of \eqref{eq:a1a2_restriction_clean} to achieve \eqref{eq:a1a2_simplification_clean}}\label{subsec:simplification_app}
We wish to rewrite the requirement of \eqref{eq:a1a2_restriction_clean} in terms of $a_1$ and $a_2$.
By Lemma \ref{lem:f_bound_circle_clean}, for all $x\in B_{T_p\MM}(x_0, a_2 \sqrt{\sigma\tau})$:
\begin{equation*}
    \|\phi_p(x)\| \leq \tau - \sqrt{\tau^2 - (a_1+a_2)^2 {\sigma\tau}} 
;\end{equation*}
see Figure \ref{fig:ManifoldGoodDisc_clean}.
Thus,
\begin{align*}
\norm{(x,\phi_p(x))_{T_p\MM}}
& \leq \sqrt{(a_1+a_2)^2 {\sigma\tau} + \left(\tau - \sqrt{\tau^2 - (a_1+a_2)^2 {\sigma\tau}}\right)^2} \\
&= \sqrt{(a_1+a_2)^2{\sigma\tau} + \tau^2 - 2\tau\sqrt{\tau^2 - (a_1+a_2)^2 {\sigma\tau}} + \tau^2 - (a_1+a_2)^2 {\sigma\tau}} \\
&= \sqrt{2\tau^2 - 2\tau^2\sqrt{1 - (a_1+a_2)^2 {\sigma/\tau}} } \\
&= \tau\sqrt{2} \sqrt{1 - \sqrt{1 - (a_1+a_2)^2 {\sigma/\tau}} } \\
&\leq \tau\sqrt{2} \sqrt{1 - (1 - (a_1+a_2)^2 {\sigma/\tau}) } \\
&= \sqrt{2}(a_1+a_2) \tau\sqrt{ {\sigma/\tau} } \\
&= \sqrt{2}(a_1+a_2) \sqrt{ {\sigma\tau} },\end{align*}
Where the second inequality results from applying \eqref{eq:Taylor2ndOrder}.
Therefore, the requirement of \eqref{eq:a1a2_restriction_clean} translates to
$$
\norm{(x,\phi_p(x))_{T_p\MM}}_{T_p\MM,y_{\tilde j}}\leq \sqrt{2}(a_1+a_2)\sqrt{\sigma\tau}<\sqrt{\sigma\tau}-2\sigma
,$$
which can be simplified to 
$$
(a_1+a_2)<\frac{1}{\sqrt{2}} - \frac{\sqrt{2\sigma}}{\sqrt{\tau}} 
.$$

\subsection{Supporting Lemmas on Sample size in a given volume}
In this section we concentrated all assisting lemmas that are used in the proofs of Step 1.

\begin{Lemma}[Number of samples in a ball]\label{lem:sampling_density_almost_uniform_clean}
Suppose $\nu$ is a distribution on $\Omega\subset B_d(0,R) \subset \RR^d$ which is close to the uniform distribution $\mu$. That is, there exists $\mu_{max}$, $\mu_{min}$  such that for any $x\in \Omega$ we have  $\mu_{\min}\mu(x) \leq \nu(x) \leq \mu_{\max}\mu(x)$. Suppose $X=\{x_j\}_{j=1}^n$ is a set of $n$ i.i.d. sample from $\nu$, and denote the volume of a $ d $-dimensional unit ball by  $ \mathrm{V}_d = \frac{\pi^{d/2}}{\Gamma(d/2+1)} $. For any $\eps$, $\delta$ and radius $\rho$, there is $N$, such that if $n>N$ the following holds: For any $x_0\in \Omega$ such that $B_d(x_0,\rho)\subset \Omega$, we have 
\[
n(\frac{\mu_{max}}{2}\cdot \mathrm{V}_d \cdot\rho^d-\eps) < \#|X \cap B_d(x_0, \rho)| < n(2\cdot \mu_{min}\cdot \mathrm{V}_d \cdot\rho^d+\eps)
\]
with probability of at least $1-\delta$, where $\#|A|$ denotes the number of elements in the set $A$.
\end{Lemma}
\begin{proof}
Since $\Omega \subset B_d(0,R)\subset\RR^d$, there exists an $\tilde\eps$-net (denoted by $\Xi$) such that 
\[
\#\abs{\Xi} = \ceil{ \frac{3R}{\tilde\eps}}^d
,\]
where $ \ceil{x} $ is the ceiling value of $ x $ \cite{vershynin2018high}.
Around each point $p$ in $\Xi$, we consider a ball $B_d(p, (1 - \tilde\eps)\rho)$. 
Note, that this $\tilde\eps$-net along with these balls are independent of the choice of a specific ball $B_d(x_0,\rho)$.

For each of the $B_d(p, (1-\tilde\eps)\rho)$ , we  consider our sample set as $n$ i.i.d random variables   $Z^p_j$ which return the value $1$ if the sample lies within $B_d(p, (1-\tilde\eps)\rho)$ and $0$ if not.
Naturally, we get for all $j$ that
\[
\Pr[z_j^p = 1] = \int\limits_{B_d(p, (1-\tilde\eps)\rho)} d\nu 
\] 
Applying Hoeffding's inequality for each of the $B_d(p, (1 - \tilde\eps)\rho)$ we arrive at
\begin{equation}\label{eq:HoeffingtonInequality1_clean}
\Pr[\overline{Z}^p - \EE[\overline{Z}^p] \leq -\eps] \leq e^{-2 n \eps^2}
,    
\end{equation}
and
\begin{equation}\label{eq:HoeffingtonInequality2_clean}
\Pr[\overline{Z}^p - \EE[\overline{Z}^p] \geq \eps] \leq e^{-2 n \eps^2}
,    
\end{equation}
where
\[
\overline{Z}^p = \frac{1}{n}\sum_{j=1}^n Z_j^p.
\]
As a result
\[
\EE[\overline{Z}^p] = \frac{1}{n}\sum_{j=1}^n\Pr[Z_j^p = 1] =
\frac{1}{n}\sum_{j=1}^n\int\limits_{B_d(p, (1 - \tilde\eps)\rho)}d\nu
,\]
and
\begin{equation}\label{eq:EzBounds_clean}
\mu_{min} \cdot \vol(B_d(p, (1 - \tilde\eps)\rho)) \leq \EE[\overline{Z}^p] \leq \mu_{max} \cdot \vol(B_d(p, (1 - \tilde\eps)\rho))
.\end{equation}
Plugging this into \eqref{eq:HoeffingtonInequality1_clean} we get
\begin{equation}\label{eq:PrBound1_clean}
\Pr\left[\overline{Z}^p - \mu_{max}\vol\left(B_d(p, (1- \tilde\eps)\rho)\right)\leq -\eps\right] \leq e^{-2n \eps^2}
,\end{equation}
or, alternatively, since $\#|X \cap B_d(p,(1-\tilde\eps)\rho)| = n \cdot \overline{Z}^p$ we get
\[
\Pr\left[\#|X \cap B_d(p,(1-\tilde\eps)\rho)| \leq n(\mu_{max}\vol(B_d(p,(1-\tilde\eps)\rho))-\eps)\right]\leq e^{-2n \eps^2}
.\]

Denoting by $A_p$ the event $\#|X \cap B_d(p,(1-\tilde\eps)\rho)| \leq n(\mu_{max}\vol(B_d(p,(1-\tilde\eps)\rho))-\eps)$ we use the union bound to achieve 
\begin{equation}\label{eq:UnionBoundAp_clean}
\Pr\left(\bigcup\limits_{p\in\Xi} A_p\right) \leq \sum\limits_{p\in\Xi} \Pr\left(A_p \right) \leq \ceil{ 3R/\tilde\eps}^d \cdot  e^{-2n \eps^2}
.\end{equation}
Explicitly, the chances that there exists $B_d(p,(1-\tilde\eps)\rho)$ containing \textbf{less} sampled points than $n(\mu_{max}\vol(B_d(p,(1-\tilde\eps)\rho))-\eps)$ are less than $c \cdot e^{-2n \eps^2}$, where $c = \ceil{ 3R/\tilde\eps}^d$.
Going back to $B_d(x_0, \rho)$, we know that there exists a point $\tilde p\in \Xi$ such that 
\[
B_d(\tilde p,(1-\tilde\eps)\rho)\subset B_d(x_0, \rho)
.\]
As a result, for any $\delta, \eps, \rho$ there exists $N$ such that for all $n>N$
\[
\#|X \cap B_d(x_0, \rho)| > n(\mu_{max}\vol(B_d((1-\tilde\eps)\rho))-\eps)
= n(\mu_{max}\cdot \mathrm{V}_d \cdot(1-\tilde\eps)^d\rho^d-\eps)
,\]
with probability larger than $1-\delta$, where
\[
\mathrm{V}_d = \frac{\pi^{d/2}}{\Gamma(d/2+1)}
.\]

Similarly, instead of considering $B_d(p, (1-\tilde\eps)\rho)$ we look at $B_d(p, (1+\tilde\eps)\rho)$ for $p\in\Xi$ and alter the definitions of $Z^p$ accordingly.
Then, by plugging the left inequality of \eqref{eq:EzBounds_clean} into \eqref{eq:HoeffingtonInequality2_clean} we get
\[
\Pr[\overline{Z}^p - \mu_{min}\vol(B_d((1+\tilde\eps)\rho))\geq \eps] \leq e^{-2n\eps^2} 
\]
Utilizing the union bound once more we get the same bound as in \eqref{eq:UnionBoundAp_clean}. 
In other words, the chances that there exists a $B_d(p, (1+\tilde\eps)\rho)$ containing \textbf{more} sampled points than $n(\mu_{min} \cdot \vol(B_d((1+\tilde\eps)\rho)) + \eps )$ are less than $\ceil{3R/\tilde\eps}^d\cdot e^{-2n\eps^2}$.

Going back to $B_d(x_0, \rho)$, we know that there exists a point $\tilde p\in \Xi$ such that 
\[
B_d(x_0, \rho) \subset B_d(\tilde p,(1+\tilde\eps)\rho)
,\]
and for $\tilde\eps$ small enough
\[
B_d(\tilde p,(1+\tilde\eps)\rho) \subset \Omega
.\]
As a result, for any $\delta, \eps, \rho$ there exists $N$ such that for all $n>N$
\[
\#|X \cap B_d(x_0, \rho)| < n(\mu_{min}\vol(B_d((1+\tilde\eps)\rho))+\eps)
= n(\mu_{min}\cdot \mathrm{V}_d \cdot(1+\tilde\eps)^d\rho^d+\eps)
,\]
with probability larger than $1-\delta$.

Finally, we get that for any $\delta, \eps, \rho$ there exists $N$ large enough such that if $n>N$ we get
\[
n(\mu_{max}\cdot \mathrm{V}_d \cdot(1-\tilde\eps)^d\rho^d-\eps) < \#|X \cap B_d(x_0, \rho)| < n(\mu_{min}\cdot \mathrm{V}_d \cdot(1+\tilde\eps)^d\rho^d+\eps)
\]
Since this is true for any $\tilde\eps$, $(1+1/a)^a \leq 3$ and $(1-1/a)^a \geq 0.25$ we can choose $\tilde\eps = \frac{1}{c_1 d}$ such that
\[(1 - \tilde\eps)^d \geq 0.25^{1/c_1} \geq 0.5,\]
and
\[(1 + \tilde\eps)^d \leq 3^{1/c_1} \leq 2,\]
and achieve
\[
n(\frac{\mu_{max}}{2}\cdot \mathrm{V}_d \cdot\rho^d-\eps) < \#|X \cap B_d(x_0, \rho)| < n(2\cdot \mu_{min}\cdot \mathrm{V}_d \cdot\rho^d+\eps)
\]
\end{proof}

\begin{Lemma}[The projection of the Lebesgue measure onto $T_p\MM$ is almost uniform] \label{lem:measure_projecting_to_tangent_clean}
    Let $\MM$ be a $d$-dimensional sub-manifold of $\RR^D$ with bounded reach $\tau$ and a Riemannian metric $G$ pronounced through the chart $\varphi_p$ around a point $ p\in \MM $ \eqref{eq:LocalChart_clean}. 
    Let $r\in\MM_\sigma$, and let $\mu, \mu_\MM, \mu_{T_p\MM}$ denote the uniform distribution on $\MM_\sigma\subset\RR^D$, $\MM$, $T_p\MM$ correspondingly.
    Denote $P_\MM, P_{T_p\MM}$ the projection operators onto $\MM$ and $T_p\MM$.
    Then $\left(P_{T_p\MM}P_{\MM}\right)_*\mu$ is a measure on $T_p\MM$, and upon restricting this measure to $B_d(0,\rho)$ for some $\rho < \tau/2$ we get
    \[
    (P_{T_p\MM}P_\MM)_*\mu = \mathrm{V}_{D-d}\sigma^{D-d} \sqrt{det(G)}\mu_{T_p\MM}
    ,\]
    where $\mathrm{V}_d$ denotes the volume of a $d$-dimensional ball.
\end{Lemma}

\begin{proof}
We first note that since $\mu$ is the Lebesgue measure on $\MM_\sigma$ we have
\[
\int_{\MM_\sigma} d\mu = \int_{\MM}\mathrm{V}_{D-d}\sigma^{D-d}\mu_\MM
,\]
where $\mu_\MM$ is the uniform distribution on $\MM$. Thus,
\[
P_\MM \mu = {\mathrm{V}_{D-d}\sigma^{D-d}}\mu_\MM
.\]
Now, $P_\MM\mu$ is a measure defined on $\MM$, which can be pulled back to the tangent domain $T_p\MM\simeq\RR^d$ in the neighborhood $B_d(p, \tau/2)$ according to Corollary \ref{cor:GraphOfFunctionTau2_clean}.
If we denote the chart from $T_p\MM\simeq\RR^d$ to $\MM$ by $\varphi_p$ we get
\[
(P_{T_p\MM}P_\MM)_*\mu = \mathrm{V}_{D-d}\sigma^{D-d} \sqrt{det(G)}\mu_{T_p\MM}
,\]
where $G$ is the Riemannian metric expressed in this chart, and $\mu_{T_p\MM}$ is the Lebesgue measure on $T_p\MM$.

\end{proof}

\begin{corollary}\label{cor:measure_projecting_bounds_clean}
	From the fact that $\MM\in\C^k$ is compact and the restriction to a ball of radius $\tau/2$ we get that $\sqrt{det(G)}$ is bounded and
	\[
	\mu_{min} \mu_{T_p\MM} \leq (P_{T_p\MM}P_\MM)_*\mu \leq \mu_{max} \mu_{T_p\MM}
	\]
	where $\mu_{min}, \mu_{max}$ are constans that depend on $\tau$, and $\mu_{T_p\MM}$ is the Lebesgue measure on $T_p\MM$.
	The constants $ \mu_{min}, \mu_{max} $ can be described explicitly to show their exact relationship to $ \tau $.
\end{corollary}

Combining Lemma \ref{lem:sampling_density_almost_uniform_clean} and Corollary \ref{cor:measure_projecting_bounds_clean}, we have the following result

\begin{Lemma}\label{lem:num_of_samples_in_a_ball_clean}
	Let $\MM$ be a compact $d$-dimensional sub-manifold of $\RR^D$ with reach $\tau$  bounded away from zero, and a Riemannian metric $G_p$ pronounced through the chart $\varphi_p$ around a point $ p\in \MM $ \emphbrackets{see \eqref{eq:LocalChart_clean}}. 
    Let $\MM_\sigma$ be a tubular neighborhood around $ \MM $ of radius $ \sigma $ \emphbrackets{see \eqref{eq:Msigma}}, and assume $\sqrt{\frac{\sigma}{\tau}} < \frac{1}{2}$. 
    Suppose that $\mu$ is the uniform distribution on $ \MM_\sigma $. 
    Let $X = \{r_1, \ldots,r_n\}$ be $n$ points sampled i.i.d from $\mu$, and denote the volume of a $d$-dimensional unit ball by  $ \mathrm{V}_d = \frac{\pi^{d/2}}{\Gamma(d/2+1)} $.
    Denote,
    \begin{align}
    \label{eq:mu_min_max_def}
    \begin{split}
    \mu_{\min} &= \mathrm{V}_{D-d}\sigma^{D-d} \min_{\substack{p\in \MM\\ x\in B_{T_p\MM}(0,\sqrt{\sigma\tau}- \sigma)}}  \sqrt{det(G_p(x))}\\
    \mu_{\max} &= \mathrm{V}_{D-d}\sigma^{D-d}\max_{\substack{p\in \MM \\ x \in B_{T_p\MM}(0,\sqrt{\sigma\tau}- \sigma)}}  \sqrt{det(G_p(x))}
    .\end{split}
    \end{align}
    
    Then for any $\eps$ and $\delta$, there is $N$, such that for all $n>N$ the following holds: For any $x_0\in T_p\MM$ and $\rho \in \RR^+$ such that $B_{T_p\MM}(x_0,\rho + \sigma)\subset  B_{T_p\MM}(0,\sqrt{\sigma\tau}- \sigma)$ \emphbrackets{see the red, green, and blue discs in Figure \ref{fig:ManifoldGoodDisc_clean}}, we have 
\begin{align*}
\begin{split}
&\#\{r_i|P_{T_p\MM}(r_i) \in B_{T_p\MM}(x_0,\rho) \}
\leq
n(2\cdot \mu_{min}\cdot \mathrm{V}_d \cdot\rho^d+\eps) \\
&\#\{r_i|P_{T_p\MM}(r_i) \in B_{T_p\MM}(x_0,\rho+\sigma) \}
\geq
n \left(\frac{\mu_{max}}{2}\cdot V \cdot \rho^d - \eps\right)
\end{split}
\end{align*}
with probability of at least $1-\delta$.
\end{Lemma}

\begin{proof}
    Note that both the minimum and maximum in \eqref{eq:mu_min_max_def} exist and finite since $ \MM $ is compact and the determinant is continuous.
	We mention that since $ \sqrt{\frac{\sigma}{\tau}} < \frac{1}{2} $ we get that $ \sqrt{\sigma \tau} < \frac{\tau}{2} $ and the conditions of Corollary \ref{cor:measure_projecting_bounds_clean} are met.
	Note that 
	\[
	\#\{r_i|P_{T_p\MM}(r_i) \in B_d(x_0,\rho) \} \leq \#\{r_i|P_{T_p\MM}\circ P_\MM(r_i) \in B_d(x_0,\rho) \}
	,\]
	and from Lemma \ref{lem:sampling_density_almost_uniform_clean} combined with Corollary \ref{cor:measure_projecting_bounds_clean} we get
	\[
	\#\{r_i|P_{T_p\MM}\circ P_\MM(r_i) \in B_d(x_0,\rho) \} \leq n(2\cdot \mu_{min}\cdot \mathrm{V}_d \cdot\rho^d+\eps)
	.\]
	Thus, the first inequality is achieved.
	
	On the other hand,
	\[
	\#\{r_i|P_{T_p\MM}(r_i) \in B_d(x_0,\rho+\sigma) \}
	\geq \#\{r_i|P_{T_p\MM}\circ P_\MM(r_i) \in B_d(x_0,\rho)\}
	.\]
	Using again Lemma \ref{lem:sampling_density_almost_uniform_clean} combined with Corollary \ref{cor:measure_projecting_bounds_clean} we get
	\[
	\#\{r_i|P_{T_p\MM}\circ P_\MM(r_i) \in B_d(x_0,\rho)\} \geq n \left(\frac{\mu_{max}}{2}\cdot \mathrm{V}_{d} \cdot \rho^d - \eps\right)
	,\]
	and the second inequality holds.
\end{proof}

\section{Supporting lemmas for Step 2}

\begin{Lemma}\label{lem:Tf_Tftilde_Init}
	Let the sampling assumptions of Section \ref{sec:SamplingAssumptions} hold.
	Let $(q_{-1}, H_0)\in \MM_\sigma\times Gr(d,D)$ be the initialization of Algorithm \ref{alg:step2_clean}. Assume $\maxangle (H_0, T_p\MM) \leq \alpha$, for $p = P_\MM(q_{-1})$.
	Let $ f_{-1/2}:\RR^d\to\RR^{D-d} $ be defined in \eqref{eq:fl12_def}; i.e., $f_{-1/2}$ is a function whose graph $ \Gamma_{f_{-1/2}, q_{-1}, H_0} $ coincides with $ \MM $ in the sense of Lemma \ref{lem:M_is_locally_a_fuinction_clean}; explicitly 
	\[
	\Gamma_{f_{-1/2}, q_{-1}, H_0} = \{q_{-1} + (x, f_{-1/2}(x))_{H_0} ~|~ x\in B_{H_0}(q_{-1}, c_{\pi/4}\tau)\}
	,\]
	where $c_{\pi/4}$ is a constant and $\Gamma_{f_{-1/2}, q_{-1}, H_0} \subset \MM$.
	Then, for any $\alpha \leq \alpha_c$ where $\alpha_c$ is a constant depending only on $c_{\pi/4}$ of Lemma \ref{lem:M_is_locally_a_fuinction_clean}, there is $M$ such that for $\tau/\sigma >M$ (see assumption \ref{assume:noise} in Section \ref{sec:SamplingAssumptions}) we have 
	\[
	\maxangle(H_0, T_0 f_{-1/2}) 
	\leq  \frac{3\alpha}{2}
	.\]
\end{Lemma}
\begin{proof}
    Let $\phi_p: T_p\MM \simeq \RR^d \rightarrow \RR^{D-d}$ be defined in \eqref{eq:phi_def_clean} and \eqref{eq:FunctionGraph_init}. Let $g_0:(q_{-1},T_p\MM) \simeq \RR^d \to \RR^{D-d}$ defined as $g_0(x) = \phi_p(x) + (p - q_{-1})$ (note that $ (p-q_{-1}) \in T_p \MM^\perp$ and thus this can be understood with some abuse of notation). From Corollary \ref{cor:GraphOfFunctionTau2_clean}, $\Gamma_{g_0, q_{-1}, T_p\MM}$ coincides with $\MM \cap \mathrm{Cyl}(p,\tau/2, \tau/2)$.
    We note that $\maxangle (T_0g_0, T_p\MM)=0$,  $\maxangle(H_0, T_p\MM)\leq \alpha$ where $\alpha \leq \alpha_c$, $\|p-q_{-1}\| \leq \sigma$ and from assumption \ref{assume:noise} in Section \ref{sec:SamplingAssumptions}, we have that $\sigma \leq \frac{3\tau}{4 \cdot 16}$.  
    Then, we can apply Lemma \ref{lem:shift_ang_general}, with the above defined $g_0$ and $g_1 = f_{-1/2}$, $G_0 = T_p\MM$, and $G_1 = H_0$ and get
    \[
    \maxangle (H_0, T_0f_{-1/2}) \leq \maxangle(H_0, T_0g_0)  + \frac{ 8\|g_0(0)\|}{\tau}
    .\]
Since $T_0g_0 = T_p\MM$ and $\frac{ 8\|g_0(0)\|}{\tau} \leq \frac{8\sigma}{\tau}$, we have 
\[
    \maxangle (H_0, T_0f_{-1/2}) \leq \alpha + \frac{8\sigma}{\tau}.
\]
Finally, from assumption \ref{assume:noise} in Section \ref{sec:SamplingAssumptions}, we can require that $8\frac{\sigma}{\tau}\leq \frac{\alpha}{2}$, and thus,
\[
    \maxangle (H_0, T_0f_{-1/2}) \leq \frac{3\alpha}{2}.
\]

\end{proof}
\begin{Lemma}\label{lem:comute_kappa}
For any $\delta$ and for any $n,\alpha_1,r_1$, Let $C_{0}$ be the constant from Theorem 3.2 of \cite{aizenbud2021VectorEstimation}. We have that 
    \begin{equation}\label{eq:kappa_res_lem}
        \kappa = r_1\log_2(n) + \bar{C}_{\alpha_1,d} - \log\left(\ln\left(\frac{2r_1\log_2(n) + 2\bar{C}_{ \alpha_1,d}}{\delta}\right)\right)
    ,\end{equation}
    where
  \[
    \bar{C}_{ \alpha_1,d} = 1 + \log_2\left(\frac{ \alpha_1}{12\sqrt{d}}\right) -\log_2(C_0 )
    ,
    \]
    satisfies 
    \begin{equation}\label{eq:n_bound1_kappa_lem}
        2^{\kappa-1}C_0 \ln(1/\delta_1) \leq \frac{ \alpha_1}{12\sqrt{d}}n^{r_1},
    \end{equation}
    for $\delta_1 = \frac{\delta}{2\kappa}$, and $C_0$ from Theorem 3.2 of \cite{aizenbud2021VectorEstimation}.
    Furthermore, for $\kappa$ as in \eqref{eq:kappa_res_lem} we have 
    \[
     \alpha_1 2^{-\kappa} \leq C_{d} \ln\left(\frac{1}{\delta}\right) n^{-r_1}\left(\ln\left(\ln(n) \right)\right)^{2r_1}
    \]
\end{Lemma}

\begin{proof}
We find $\kappa$ that will satisfy \eqref{eq:n_bound1_kappa_lem}. Recalling that $\delta_1 = \frac{\delta}{2\kappa}$, we have that
	\[
    C_0 \left(\ln \frac{1}{\delta_1}\right) = C_0 \left(\ln \frac{2\kappa}{\delta}\right).
	\]
Rewriting \eqref{eq:n_bound1_kappa_lem}, we need $\kappa$ to satisfy
    \begin{equation}
        2^{\kappa-1}C_0\left(\ln \frac{2\kappa}{\delta}\right) \leq \frac{ \alpha_1}{12\sqrt{d}}n^{r_1},
    \end{equation}
    or, taking $\log_2$ of both sides, we have
    \begin{equation}
        \kappa - 1 +\log_2(C_0 ) + \log_2\left(\ln \frac{2\kappa}{\delta}\right) \leq \log_2\left(\frac{ \alpha_1}{12\sqrt{d}}\right) + r_1\log_2(n),
    \end{equation}
    or, 
    \begin{equation}\label{eq:kappa_bound1}
        \kappa  + \log_2\left(\ln \frac{2\kappa}{\delta}\right) \leq 1 + \log_2\left(\frac{ \alpha_1}{12\sqrt{d}}\right) -\log_2(C_0 ) +  r_1\log_2(n),
    \end{equation}
    To simplify the expression we denote the RHS of \eqref{eq:kappa_bound1} by $x$. Then we are looking for $\kappa$ such that
    \begin{equation}\label{eq:kappa_bound_simplified_x}
    \kappa+\log_2\left(\ln\left(\frac{2\kappa}{\delta}\right)\right) < x     
    \end{equation}
    We note that 
    \[
    \kappa = x-\log_2\left(\ln\left(\frac{2x}{\delta}\right)\right)
    \]
    satisfies \eqref{eq:kappa_bound_simplified_x} since
    \[
    x-\log_2\left(\ln\left(\frac{2x}{\delta}\right)\right)   +   \log_2\left(\ln\left( \frac{2x-2\log_2\left(\ln\left(\frac{x}{\delta}\right)\right)}{\delta} \right) \right)<x
    \]
    Thus, the following $\kappa$ satisfies Eq. \eqref{eq:kappa_bound1}
    \begin{equation}\label{eq:kappa_res}
        \kappa = r_1\log_2(n) + \bar{C}_{ \alpha_1,d} - \log\left(\ln\left(\frac{2r_1\log_2(n) + 2\bar{C}_{ \alpha_1,d}}{\delta}\right)\right)
    \end{equation}
    where 
    \[
    \bar{C}_{ \alpha_1,d} = 1 + \log_2\left(\frac{ \alpha_1}{12\sqrt{d}}\right) -\log_2(C_0 ).
    \]
    
    We now bound $ \alpha_1 2^{-\kappa}$ by 
    
       \begin{equation}
    \begin{array}{ll}
     \alpha_1 2^{-r_1\log_2(n) - \bar{C}_{\alpha_1,d} + \log\left(\ln\left(\frac{2r_1\log_2(n) + 2 \bar{C}_{\alpha_1,d}}{\delta}\right)\right)} &=   \alpha_1 n^{-r_1}2^{-\bar{C}_{ \alpha_1,d}}\left(\ln\left(\frac{2r_1\log_2(n) + 2\bar{C}_{ \alpha_1,d}}{\delta}\right)\right)
     \\
         & =  \alpha_1 2^{-\bar{C}_{\alpha_1,d}} n^{-r_1}\left(\ln\left(2r_1\log_2(n) + 2\bar{C}_{ \alpha_1,d}\right)+\ln\left(\frac{1}{\delta}\right)\right)\\
         & \leq C_{ \alpha_1,d} \ln\left(\frac{1}{\delta}\right) n^{-r_1}\left(\ln\left(\ln(n) \right)\right)^{2r_1}
    \end{array}
    \end{equation}
    Since $\alpha_1$ is bounded from above, there is  $ C_{d}$ independent of $\alpha_1$ for which
    \begin{equation}
      \alpha_1 2^{-\kappa} = \alpha_1 2^{-r_1\log_2(n) - \bar{C}_{ \alpha_1,d} + \log\left(\ln\left(\frac{2r_1\log_2(n) + 2 \bar{C}_{ \alpha_1,d}}{\delta}\right)\right)} \leq C_{ d} \ln\left(\frac{1}{\delta}\right) n^{-r_1}\left(\ln\left(\ln(n) \right)\right)^{2r_1}
    \end{equation}
\end{proof}

\subsection{Supporting lemmas for Lemma \ref{lem:main_support_theorem_step2}}
\label{sec:ProofofLemAlphaHalf}

\subsubsection{Bounding the error between $T_0 {f_\ell}$ and $T_0\wtilde{f}_\ell$}\label{sec:Tf_Tf_tilde_err}
\usetikzlibrary{calc}

\begin{figure}
    \centering
    \begin{tikzpicture}[auto]
        \node [Rbbblock] (main_main_lem) {L \textcolor{blue}{\ref{lem:Tf_TtildeF_D}}
        \[
        \maxangle(T_0 f_\ell, T_0 {\wtilde{f}_\ell}) 
        \leq  
         \frac{\alpha}{6}
        \]};
        
        \node [bbblock, below =1cm of main_main_lem] (main_lem) { L \textcolor{blue}{\ref{lem:bound_Df-Dfwtilde}}
        \vspace{-0.3cm}
         \begingroup\makeatletter\def\f@size{7}\check@mathfonts
        \[
        \|\DD\wtilde{f}(0) - \DD f(0)\| < \eps \alpha
        \]
        \endgroup
        };

        \draw [imp] (main_lem) to[out=90,in=-90,looseness=0.73] (main_main_lem) node [midway, below, sloped] (TextNode) {};
       
        \node [bbblock, below right =1cm and 1cm of main_main_lem] (Diff_Tf_bound) {L \textcolor{blue}{\ref{lem:Diff_Tf_bound}}
        \vspace{-0.3cm}
        \begingroup\makeatletter\def\f@size{7}\check@mathfonts
        \begin{gather*}
    	\norm{\DD_f[0] - \DD_{\tilde f}[0]}_{op}\leq \varepsilon 
    	\\
    	\Downarrow
    	\\
    	\sin(\maxangle(T_0 f, T_0 \wtilde{f})) \leq  \varepsilon 
    	\end{gather*}
    	\endgroup
	};
        
 	   \draw [imp] (Diff_Tf_bound) to[out=90,in=-90,looseness=0.73] (main_main_lem) node [midway, below, sloped] (TextNode) {};

        \node [bbblock, below left = 1cm and 1cm of main_lem] (bound_g) {L \textcolor{blue}{\ref{lem:g_bond_weak}}
        \vspace{-0.3cm}
         \begingroup\makeatletter\def\f@size{7}\check@mathfonts
         \[
        \sigma \leq g(0,\theta) \leq \sigma +  4\sigma \alpha^2
        \]
        \endgroup}; 
        
         \draw [imp] (bound_g) to[out=90,in=-90,looseness=0.73] (main_lem) node [midway, below, sloped] (TextNode) {};

        \node [bbblock, below =1cm of main_lem] (bound_partI) {L \textcolor{blue}{\ref{lem:bound_nablag}}
        \vspace{-0.3cm}
         \begingroup\makeatletter\def\f@size{7}\check@mathfonts
         \[
        \|\nabla_x{g}(0,\theta)\| =  \OO(\frac{\sigma}{\tau}\sin(\alpha))
        \]\endgroup}; 
        
         \draw [imp] (bound_partI) to[out=90,in=-90,looseness=0.73] (main_lem) node [midway, below, sloped] (TextNode) {};

        \node [bbblock, below right =1.5cm and 0cm of bound_partI] (bound_J) {L \textcolor{blue}{\ref{lem:bound_J}}
        \vspace{-0.3cm}
        \begingroup\makeatletter\def\f@size{7}\check@mathfonts
        \[
            \|I-J_{\wtilde{x}_\theta}\| = \frac{3\sigma}{\tau} + \OO(\frac{\sigma\sin \alpha}{\tau})
        \]
        \endgroup};

        \node [bbblock, below =0.5cm of bound_J] (Jacobi_bound_clean) {L \textcolor{blue}{\ref{lem:bound_A_inv_B}}
        }; 
        
        \node [bbblock, below right =1cm and -2cm of Jacobi_bound_clean] (Jacobi_expression_clean) {L \textcolor{blue}{\ref{lem:Dxtildew+Dxw}}
        \vspace{-0.3cm}
        \begingroup\makeatletter\def\f@size{7}\check@mathfonts
        \begin{equation*}
        \begin{array}{l}
            \| D_{\wtilde{x}_\theta} w + D_x w\| \leq \mathcal{O}(\frac{\sigma}{\tau}\sin \alpha)\\
            \| D_{\wtilde{x}_\theta} w \| \leq  \mathcal{O}(\sin \alpha)\\ \|D_x w\| \leq \mathcal{O}(\sin \alpha)\\
            1/2 \sigma^2 \leq \Delta(0,\wtilde x_\theta (0)) \leq 2 \sigma^2
        \end{array}
        \end{equation*}
        \endgroup
        }; 
        
        \node [bbblock, below left =1cm and -2cm of Jacobi_bound_clean] (hessian_bounds_clean) {L \textcolor{blue}{\ref{lem:bound_DDf}}  
        \vspace{-0.3cm}
        \begingroup\makeatletter\def\f@size{7}\check@mathfonts
        \[
        \HH f(\wtilde x_\theta)_w = \frac{\sqrt{2}\sigma}{\tau} + \OO(\frac{\sigma\sin \alpha}{\tau})
        \]
        \endgroup};
        
        \node [xbblock,text width=13em, below right =5cm and -2.5cm of Jacobi_bound_clean] (Df_Dftilde_diff) {L \textcolor{blue}{\ref{lem:Dfx-Dfxtilde}}
        \vspace{-0.3cm}
        \begingroup\makeatletter\def\f@size{7}\check@mathfonts
        \[
        \|Df(\wtilde{x}_\theta(0)) - Df(0)\|_{op} \leq \OO(\frac{\sigma}{\tau}\sin \alpha)
        \]
        \endgroup
};
        \draw [imp] (bound_J) to[out=90,in=-90,looseness=0.73] (bound_partI) node [midway, below, sloped] (TextNode) {};
        
        \draw [imp] (Jacobi_bound_clean) to[out=90,in=-90,looseness=0.73] (bound_J) node [midway, below, sloped] (TextNode) {};

        \draw [imp] (Jacobi_expression_clean) to[out=90,in=-90,looseness=0.73] (Jacobi_bound_clean) node [midway, below, sloped] (TextNode) {};
        
        \draw [imp] (Jacobi_expression_clean) to[out=90,in=0,looseness=0.73] (bound_J) node [midway, below, sloped] (TextNode) {};
        \draw [imp] (hessian_bounds_clean) to[out=90,in=-90,looseness=0.73] (Jacobi_bound_clean) node [midway, below, sloped] (TextNode) {};
        
        \draw [imp] (Df_Dftilde_diff)  to[out=90,in=-90,looseness=0.73] (Jacobi_expression_clean) node [midway, below, sloped] (TextNode) {};
        
        \draw [imp] (Jacobi_expression_clean)   to[out=90,in=-90,looseness=0.73] (7.2,-8) to[out=90,in=0,looseness=0.7] (bound_partI) node [midway, below, sloped] (TextNode) {};

        \draw [imp] (Df_Dftilde_diff)  to[out=90,in=-90,looseness=0.73] (9.5,-13.3) to[out=90,in=-90,looseness=0.73] (9.5,-8) to[out=90,in=0,looseness=0.7] (bound_partI) node [midway, below, sloped] (TextNode) {};

          \node [xbblock,text width=13em, below left =2.5cm and -1cm of bound_partI] (Grad_fl_bounds) {L  \textcolor{blue}{\ref{lem:bounding_stuff}}
         \vspace{-0.3cm} 
         \begingroup\makeatletter\def\f@size{7}\check@mathfonts
          \begin{equation*}
          \begin{array}{l}
            \|\wtilde{x}_\theta(0)\| \leq \sigma \sin (\alpha + 3\sqrt{\frac{\sigma}{\tau}}\alpha)
            \\
            \|f(\wtilde{x}_\theta(0))\| \leq 2\sigma \sin (\alpha + 3\sqrt{\frac{\sigma}{\tau}}\alpha) \tan \alpha 
            \\
            \|Df(0)\|_2 \leq \sin \alpha    
            \\
            \|Df(\wtilde{x}_\theta(0))\|_2 \leq \sin (\alpha + 3\sqrt{\frac{\sigma}{\tau}}\alpha)
          \end{array}
          \end{equation*}
          \endgroup
            }; 
          
         \draw [imp] (Grad_fl_bounds) to[out=90,in=-90,looseness=0.73] (bound_partI) node [midway, below, sloped] (TextNode) {};
         
         \draw [imp] (Grad_fl_bounds) to[out=90,in=-90,looseness=0.73] (bound_g) node [midway, below, sloped] (TextNode) {};
         
         \draw [imp] (Grad_fl_bounds) to[out=0,in=180,looseness=0.73] (Jacobi_bound_clean) node [midway, below, sloped] (TextNode) {};

         \node [bbblock, below right =1.6cm and -2.5cm of Grad_fl_bounds] (sectional_derivative_diff_bound) {L \textcolor{blue}{\ref{lem:wtilde_x_bound_clean_D}}
         \vspace{-0.3cm}
         \begingroup\makeatletter\def\f@size{7}\check@mathfonts
         \[
         \|\wtilde{x}_\theta(0)\| \leq \sigma \sin (\alpha + 3\sqrt{\frac{\sigma}{\tau}}\alpha)
         \]   \endgroup}; 
         
         \node [bbblock, below =4cm of Grad_fl_bounds] (Grad_f_bound) {L \textcolor{blue}{\ref{lem:x_tilde_bound_beta_D}}
         \vspace{-0.3cm}
          \begingroup\makeatletter\def\f@size{7}\check@mathfonts
          \[
        \norm{\wtilde x_\theta(0) } \leq \sigma \sin \beta(\wtilde x_\theta(0))
        \] \endgroup}; 
     
         \draw [imp] (sectional_derivative_diff_bound) to[out=90,in=-90,looseness=0.73] (Grad_fl_bounds) node [midway, below, sloped] (TextNode) {};

         \draw [imp] (Grad_f_bound) to[out=90,in=-90,looseness=0.73] (sectional_derivative_diff_bound) node [midway, below, sloped] (TextNode) {};

         \node [bbblock, below left =1.6cm and -2.5cm of Grad_fl_bounds] (wtilde_x_bound_clean) {L \textcolor{blue}{\ref{lem:alpha_beta_D}}
         \vspace{-0.3cm}
         	 \begingroup\makeatletter\def\f@size{7}\check@mathfonts
         	 \[
        	\alpha_\ell - 4\alpha_\ell\frac{\sigma}{\tau}  \leq \beta(x_0) \leq \alpha_\ell + 3\alpha_\ell\sqrt{\frac{\sigma}{\tau}}
        	\]\endgroup
	        };

         \draw [imp] (wtilde_x_bound_clean) to[out=90,in=-90,looseness=0.73] (Grad_fl_bounds) node [midway, below, sloped] (TextNode) {};
          \draw [imp] (Grad_f_bound) to[out=90,in=-90,looseness=0.73] (wtilde_x_bound_clean) node [midway, below, sloped] (TextNode) {};
    \end{tikzpicture}
    \caption{Road-map for proof of Lemma  \ref{lem:Tf_TtildeF_D}}
    \label{tikz:lemTf-Tftilde_lemmas}
\end{figure}


Back to Theorem \ref{thm:Step2} proof road-map see Figure \ref{tikz:thm33_lemmas}.

The main Lemma in the Section is Lemma \ref{lem:Tf_TtildeF_D}. A road-map for the proof appears in Figure~\ref{tikz:lemTf-Tftilde_lemmas}.

\begin{Lemma}\label{lem:Tf_TtildeF_D}
    Let the assumptions of Theorem \ref{thm:Step2} hold, and let $ f:H\simeq\RR^d\to\RR^{D-d} $ be a function such that its graph coincides with a neighborhood on the manifold $\MM$ (see Lemma \ref{lem:M_is_locally_a_fuinction_clean}). Let $\wtilde f:\RR^d\to\RR^{D-d} $ be the regression function defined in \eqref{eq:ftilde_def_general}. Denoting $\alpha = \maxangle(H, T_0 f)$, there is a constant $C_\tau$ large enough such that for $M = \frac{\tau}{\sigma}>C_\tau\sqrt{D \log D}$, and $\alpha \leq \frac{3}{2}\sqrt{C_M/M}$, where $C_M$ is from Theorem \ref{thm:Step1}, we have 
    \[
	\maxangle(T_0 f, T_0 {\wtilde{f}}) 
	\leq  
	 \frac{\alpha}{6} 
	\]
\end{Lemma}
\begin{proof}
From Lemma \ref{lem:bound_Df-Dfwtilde} we have 
\[
\|\DD_{\wtilde{f}}[0] - \DD_f[0]\| < \frac{\alpha}{6}
\]
From Lemma \ref{lem:Diff_Tf_bound} we have that 
    \[
	\maxangle(T_0 f, T_0 {\wtilde{f}}) 
	\leq   \frac{\alpha}{6}
	\]
	and the proof is concluded. 
\end{proof}

\begin{Lemma}\label{lem:bound_Df-Dfwtilde}
 Let the assumptions of Theorem \ref{thm:Step2} hold, and let $ f:H\simeq\RR^d\to\RR^{D-d} $ be a function such that its graph coincides with a neighborhood on the manifold $\MM$ (see Lemma \ref{lem:M_is_locally_a_fuinction_clean}). Let $\wtilde f:\RR^d\to\RR^{D-d} $ be the regression function   defined in \eqref{eq:ftilde_def_general}. 
 For any $\eps>0$, denoting $\alpha = \maxangle(H, T_0 f)$, there is a constant $C_\tau$ large enough such that for $M = \frac{\tau}{\sigma} >C_\tau\sqrt{D \log D}$, and $\alpha \leq \frac{3}{2} \sqrt{C_M/M}$, where $C_M$ is from Theorem \ref{thm:Step1}, we have 
    \[
    \|\DD\wtilde{f}(0) - \DD f(0)\| < \eps \alpha
    \]
\end{Lemma}
\begin{proof}
We reiterate the definition of $\Omega(x):H \simeq \RR^d \to 2^{\RR^{D-d}}$ from \eqref{eq:Omega_def}
\begin{equation*}
\Omega(x) = (x + H^\perp) \cap  \MM_\sigma
,    
\end{equation*}
where $x + H^\perp = \{x + y ~|~ y\in H^\perp\}$.
Next, denoting $S_{D-d}$ to be the $D-d$ dimensional unit sphere, we define $g(x, \theta):H\times S_{D-d}\to \RR$ the maximal length from $f(x)$ in the direction $\theta$ that is inside $\Omega(x)$.
Explicitly,
\begin{equation}\label{eq:g_def}
g(x, \theta) = \max\{y\in \RR ~|~ (x,f(x) + y\cdot \theta)_H  \in \Omega(x)\}
.    
\end{equation}
Note, that the farthest point from $f(x)$ in $\Omega(x)$ at each direction $\theta$, by which we define $g(x, \theta)$, belongs to $\partial\MM_\sigma$ (the boundary of the domain $\MM_\sigma$), and is therefore exactly $\sigma$ away from some point on the manifold itself.
Since we are viewing the manifold locally as the graph of the function $f:H\to H^\perp$ we denote this point by $(\wtilde x_\theta, f(\wtilde x_\theta(x)))_H$.
Explicitly, $\wtilde{x}_\theta:H \to H$ is such that
\begin{equation}\label{eq:x_tilde_def}
    (x ,f(x) + g(x, \theta) \cdot \theta)_H^T  = (\wtilde x_\theta(x), f(\wtilde x_\theta(x)))_H^T + \sigma \vec N_\theta (x)
,\end{equation}
where $\vec N_\theta \in \RR^D$ is perpendicular to $T_{\wtilde x_\theta}f$.
We introduced the definition of $\wtilde{x}_\theta$ here as it will be pivotal in the proofs of Lemmas \ref{lem:g_bond_weak} and \ref{lem:bound_nablag} upon which the the current proof relies.
Furthermore, we wish to stress here that by Lemma \ref{lem:bounding_stuff} $\wtilde x_\theta(0) \leq \sigma \sin (\alpha + 3 \sqrt{\sigma/\tau}\alpha)$.
Therefore, for sufficiently large $M = \frac{\tau}{\sigma}$, $\wtilde x_\theta(0)$ is within the domain of definition of the function $f$ which by Lemma \ref{lem:M_is_locally_a_fuinction_clean} is of radius of at least $c_{\pi/4}\tau$.

Next, by the definitions and Eq. \eqref{eq:g_def} and \eqref{eq:Omega_def} we have that 
\begin{equation*}
\wtilde{f}(x) - f(x) = \frac{\int\limits_{y\in\Omega(x)} y dy}{\int\limits_{y\in\Omega(x)} dy} 
.\end{equation*}
Since $\Omega \subset \RR^D$ is perpendicular to $H\simeq \RR^d$ we get that $\Omega\simeq \RR^{D-d}$.
Thus, by change of variables we can breakdown the integrals over $\Omega(x)$ to a radial component $r$ and directions on the $(D-d-1)$-dimensional sphere.
Explicitly,
\begin{equation*}
\wtilde{f}(x) - f(x) = 
\frac{\int\limits_{S_{D-d-1}}\int\limits_0^{g(x,\theta)} \theta r r^{D-d-1} drd\theta}{\int\limits_{S_{D-d-1}}\int\limits_0^{g(x,\theta)} r^{D-d-1} drd\theta}  = 
\frac{(D-d)\int\limits_{S_{D-d-1}}\theta g(x,\theta)^{D-d+1} d\theta}{(D-d+1)\int\limits_{S_{D-d-1}}g(x,\theta)^{D-d} drd\theta}     
,\end{equation*}
where $dr$ is the measure over the radial component, $r^{D-d-1}$ is the Jacobian introduced by the change of variables and $d\theta$ is the measure over the $(D-d-1)$-dimensional sphere.
For brevity we introduce the notation $\wtilde D = D - d$ and get
\begin{equation}\label{eq:f_tilde-f_integrals}
\wtilde{f}(x) - f(x) = 
\frac{\int\limits_{\mathbb{S}_{\wtilde D-1}}\int\limits_0^{g(x,\theta)} \theta r r^{\wtilde D-1} drd\theta}{\int\limits_{\mathbb{S}_{\wtilde D-1}}\int\limits_0^{g(x,\theta)} r^{\wtilde D-1} drd\theta}  = 
\frac{\wtilde D \int\limits_{\mathbb{S}_{\wtilde D-1}}\theta g(x,\theta)^{\wtilde D+1} d\theta}{(\wtilde D+1)\int\limits_{\mathbb{S}_{\wtilde D-1}}g(x,\theta)^{\wtilde D} drd\theta}     
.\end{equation}
 
Next, by taking the differential of that expression with respect to $x$ we have that 
\begin{multline*}
    \DD_{\wtilde{f}}[x] - \DD_f[x] =
\frac{\wtilde D}{(\wtilde D+1)}\left(
\frac{(\wtilde D+1)\int\limits_{\mathbb{S}_{\wtilde D-1}}\theta g(x,\theta)^{\wtilde D} \nabla_x g^T d\theta}
{\int\limits_{\mathbb{S}_{\wtilde D-1}}g(x,\theta)^{\wtilde D} d\theta} \right.\\
-\left.
\frac{\wtilde D\int\limits_{\mathbb{S}_{\wtilde D-1}}\theta g(x,\theta)^{\wtilde D+1} d\theta \int\limits_{\mathbb{S}_{\wtilde D-1}} g(x,\theta)^{\wtilde D-1}\nabla_x g^T d\theta}
{\left(\int\limits_{\mathbb{S}_{\wtilde D-1}}g(x,\theta)^{\wtilde D} d\theta\right)^2}     
\right)
,\end{multline*}
where $\nabla_x g$ stands for the gradient of $g(\theta, x)$ with respect to the $x$ variables only.
As can be seen in the above equation there is a multiplicative factor of size $\approx (D - d)$ for both summends.
In order to deal with this obstacle we wish to utilize the fact that in high dimensions most of the volume of a sphere is concentrated near an equator.
Thus, we split the domain into two different regions that we will deal with separately (we will use this trick in few of the other proofs). 
In case $D-d$ is small, then the following computations can be done without splitting the domain into two regions (one near the equator and the the second being the remaining cap), and include the $D-d$ factor in the constant that will be cancelled by $M$.
Therefore, we assume without losing the generality of our claim that $D-d = \wtilde D > 3$.
For any direction/unit vector $\vec{z}\in S_{\wtilde D}$ denote $\vec{z}^T\theta = z$ and 
\begin{equation}\label{eq:Omegas_def}
    \begin{aligned}
    \Omega_1 &= \{\theta~|~0\leq \vec{z}^T\theta \leq \xi\}\\
    \Omega_2 &= \{\theta~|~\vec{z}^T\theta > \xi\}
    \end{aligned}
\end{equation}
For some $\xi$ to be chosen later. Using the above notation, we have 
\begin{multline}\label{eq:zDftilde}
    \vec{z}^T\cdot  (\DD_{\wtilde{f}}[x] - \DD_f[x] )=
\frac{\wtilde D}{(\wtilde D+1)}\left(
\underbrace{\frac{(\wtilde D+1)\int\limits_{\mathbb{S}_{\wtilde D-1}}z g(x,\theta)^{\wtilde D} \nabla_x g^T d\theta}
{\int\limits_{\mathbb{S}_{\wtilde D-1}}g(x,\theta)^{\wtilde D} drd\theta}  }_{I} \right.  \\
-\left.
\underbrace{\frac{\wtilde D\int\limits_{\mathbb{S}_{\wtilde D-1}}z g(x,\theta)^{\wtilde D+1} d\theta \int\limits_{\mathbb{S}_{\wtilde D-1}} g(x,\theta)^{\wtilde D-1}\nabla_x g^T d\theta}
{\left(\int\limits_{\mathbb{S}_{\wtilde D-1}}g(x,\theta)^{\wtilde D} drd\theta\right)^2}}_{II}     
\right)
\end{multline}
First we treat part $(I)$ of Eq. \eqref{eq:zDftilde} by splitting the domain into $\Omega_1$ and $\Omega_2$ of Eq. \ref{eq:Omegas_def}.

\begin{align*}
    (I) &= \frac{(\wtilde D+1)\left(\int\limits_{\Omega_1}z g(x,\theta)^{\wtilde D} \nabla_x g^T d\theta + \int\limits_{\Omega_2}z g(x,\theta)^{\wtilde D} \nabla_x g^T d\theta \right) }
{\int\limits_{\mathbb{S}_{\wtilde D-1}}g(x,\theta)^{\wtilde D} d\theta} \\
&\leq\frac{(\wtilde D+1)\left(\xi\int\limits_{\Omega_1} g(x,\theta)^{\wtilde D} \nabla_x g^T d\theta + \int\limits_{\Omega_2} g(x,\theta)^{\wtilde D} \nabla_x g^T d\theta \right)}
{\int\limits_{\mathbb{S}_{\wtilde D-1}}g(x,\theta)^{\wtilde D} d\theta}
\end{align*}

From \ref{lem:g_bond_weak} we have that $0 < \sigma \leq g(0,\theta)\leq \sigma + 4\sigma\alpha^2$, and from Lemma \ref{lem:bound_nablag} we have $\|\nabla_x g(0,\theta)\|\leq c_1 \sigma \alpha/\tau$. Then,
\begin{align*}
\norm{(I)} &\leq \frac{(\wtilde D+1)\left(\xi\int\limits_{\Omega_1} g(x,\theta)^{\wtilde D} \|\nabla_x g^T\| d\theta + \int\limits_{\Omega_2} g(x,\theta)^{\wtilde D} \|\nabla_x g^T\| d\theta \right)}
{\int\limits_{\mathbb{S}_{\wtilde D-1}}g(x,\theta)^{\wtilde D} d\theta} \\
&\leq  \frac{(\wtilde D+1)c_1 \sigma \alpha \left(\xi\int\limits_{\Omega_1} g(x,\theta)^{\wtilde D}  d\theta + \int\limits_{\Omega_2} g(x,\theta)^{\wtilde D}  d\theta \right)}
{\tau\int\limits_{\mathbb{S}_{\wtilde D-1}}g(x,\theta)^{\wtilde D} d\theta}
\\
&\leq  \frac{\xi(\wtilde D+1)c_1 \sigma \alpha}{\tau} + \frac{(\wtilde D+1)c_1 \sigma \alpha \int\limits_{\Omega_2} g(x,\theta)^{\wtilde D}  d\theta }
{\tau\int\limits_{\mathbb{S}_{\wtilde D-1}}g(x,\theta)^{\wtilde D} d\theta}
\\
&\leq  \frac{\xi(\wtilde D+1)c_1 \sigma \alpha}{\tau} + \frac{(\wtilde D+1)c_1 \sigma \alpha ( \sigma + 4\sigma\alpha^2)^{\wtilde D}\int\limits_{\Omega_2}  d\theta }
{\tau \sigma^{\wtilde D}\int\limits_{\mathbb{S}_{\wtilde D-1}} d\theta}
\end{align*}

Furthermore, using the following concentration of measure inequality (see e.g. \cite{milman2009asymptotic,Venkatesan2012high}
\begin{equation}\label{eq:Omega3_to_ball_ratio}
\frac{\int\limits_{\Omega_2}  d\theta}{\int\limits_{\mathbb{S}_{\wtilde D-1}}  d\theta} \leq \frac{2}{\xi\sqrt{\wtilde D-2}}e^{-(\wtilde D - 2 )\xi^2/2}
,\end{equation}
we have that,
\begin{align*}
    \norm{(I)} 
&\leq  \frac{\xi(\wtilde D+1)c_1 \sigma\alpha}{\tau} + \frac{(\wtilde D+1)c_1 \sigma\alpha (1 + 4\alpha^2)^{\wtilde D} }
{\tau}\frac{2}{\xi\sqrt{\wtilde D-2}}e^{-(\wtilde D - 2 )\xi^2/2}
\\
&\leq  \frac{\xi(\wtilde D+1)c_1 \sigma\alpha}{\tau} + \frac{c_2 \sigma\alpha \sqrt{\wtilde D+1}( 1 + 4\alpha^2)^{\wtilde D} }
{\tau}\frac{2}{\xi}e^{-(\wtilde D - 2 )\xi^2/2}
,\end{align*}
where $c_2 = 4c_1 \geq \frac{\sqrt{\wtilde D +1}}{\sqrt{\wtilde D - 2}} c_1$.

Since $M = \frac{\tau}{\sigma}>C_\tau\sqrt{D \log D}$, we choose $\xi = 2\sqrt{\log D/D}$ and we have

\begin{align*}
    \norm{(I)} 
&\leq \frac{2c_1 \alpha(\wtilde D+1)}{C_\tau D} + \frac{c_2 \alpha \sqrt{\wtilde D+1}( 1 + 4\alpha^2)^{\wtilde D} }
{C_\tau\sqrt{D \log D}}\frac{2}{\xi}e^{-(\wtilde D - 2 )\xi^2/2}
\\
&\leq \alpha\left(\frac{2c_1 }{C_\tau} + \frac{c_1 \sqrt{\wtilde D+1}( 1 + 4\alpha^2)^{\wtilde D} } 
{C_\tau\sqrt{D\log D }}\frac{4\sqrt{D}}{\sqrt{\log D}}e^{-\log D}\right)
\\
&\leq \alpha\left(\frac{2c_1 }{C_\tau} + \frac{2c_1 \sqrt{D}( 1 + 4\alpha^2)^{\wtilde D} }
{C_\tau D\log D }\right)
,\end{align*}
where the second inequality is true since  $(\wtilde D-2)/D \geq 1/2$.
Since $\alpha \leq \frac{3}{2} \sqrt{C_M/M} $, and we also have that $M>C_\tau\sqrt{D \log D}$ we have that 
$( 1 + 4\alpha^2)^{\wtilde D}$ is bounded for any $D$. Thus, we have that for $C_\tau$ large enough, 
\begin{align}
\norm{(I)} 
&\leq \frac{\eps}{2}\alpha
\end{align}
for any $D$.

Next we bound $\|(II)\|$ from \eqref{eq:zDftilde}:
\begin{align*}
\frac{\wtilde D\int\limits_{\mathbb{S}_{\wtilde D-1}}z g(x,\theta)^{\wtilde D+1} d\theta \int\limits_{\mathbb{S}_{\wtilde D-1}} g(x,\theta)^{\wtilde D-1}\nabla_x g^T d\theta}
{\left(\int\limits_{\mathbb{S}_{\wtilde D-1}}g(x,\theta)^{\wtilde D} d\theta\right)^2}
\end{align*}
First, we note that 
\begin{align*}
\norm{\frac{\int\limits_{\mathbb{S}_{\wtilde D-1}} g(x,\theta)^{\wtilde D-1}\nabla_x g^T d\theta}
{\int\limits_{\mathbb{S}_{\wtilde D-1}}g(x,\theta)^{\wtilde D} d\theta}} & \leq \frac{c_1 \sigma\alpha}{\tau} \frac{\int\limits_{\mathbb{S}_{\wtilde D-1}} g(x,\theta)^{\wtilde D-1} d\theta}
{\int\limits_{\mathbb{S}_{\wtilde D-1}}g(x,\theta)^{\wtilde D-1} d\theta} = \frac{c_1 \sigma \alpha}{\tau}.
\end{align*}
Thus, similarly to the way we bounded $(I)$, we split the domain to $\Omega_1$ and $\Omega_2$ of Eq. \eqref{eq:Omega_def} and achieve
\begin{align*}
\norm{(II)} &\leq \frac{\wtilde Dc_1 \sigma\alpha}{\tau}\frac{\int\limits_{\mathbb{S}_{\wtilde D-1}}z g(x,\theta)^{\wtilde D+1} d\theta}
{\int\limits_{\mathbb{S}_{\wtilde D-1}}g(x,\theta)^{\wtilde D} d\theta}
\\
&\leq \frac{\wtilde Dc_1 \sigma\alpha}{\tau} \frac{ \left(\xi\int\limits_{\Omega_1} g(x,\theta)^{\wtilde D+1}  d\theta + \int\limits_{\Omega_2} g(x,\theta)^{\wtilde D+1}  d\theta \right)}
{\int\limits_{\mathbb{S}_{\wtilde D-1}}g(x,\theta)^{\wtilde D} d\theta}
\\
&\leq  \frac{\xi\wtilde Dc_1 \sigma\alpha}{\tau} + \frac{\wtilde Dc_1 \sigma\alpha ( \sigma + 4\sigma \alpha^2)^{\wtilde D+1}\int\limits_{\Omega_2}  d\theta }
{\tau \sigma^{\wtilde D}\int\limits_{\mathbb{S}_{\wtilde D-1}} d\theta}
\\
&\leq  \frac{\xi\wtilde Dc_1 \sigma\alpha}{\tau} + \frac{\wtilde Dc_1 \sigma^2\alpha ( 1 + 4\alpha^2)^{\wtilde D+1}\int\limits_{\Omega_2}  d\theta }
{\tau \int\limits_{\mathbb{S}_{\wtilde D-1}} d\theta}
\end{align*}
From \eqref{eq:Omega3_to_ball_ratio} we have that 
\begin{align*}
\norm{(II)} &\leq  \frac{\xi\wtilde Dc_1 \sigma\alpha}{\tau} + \frac{\wtilde Dc_1 \sigma^2\alpha ( 1 + 4\alpha^2)^{\wtilde D+1}}
{\tau} \frac{2}{\xi\sqrt{\wtilde D-2}}e^{-(\wtilde D - 2 )\xi^2/2}
\end{align*}
Since $M = \frac{\tau}{\sigma}>C_\tau\sqrt{D\log D}$, and choosing again $\xi = 2\sqrt{\log D/D}$ we have

\begin{align*}
\norm{(II)} &\leq  \frac{2\wtilde Dc_1 \alpha}{C_\tau D} + \frac{\wtilde Dc_1 \sigma\alpha ( 1 + 4\alpha^2)^{\wtilde D+1}}
{C_\tau \sqrt{D\log D}} \frac{\sqrt{D}}{\sqrt{\log D} \sqrt{\wtilde D-2}}e^{-\log D}
\\
&\leq \frac{2c_1 \alpha}{C_\tau } + \frac{\alpha c_1 \sigma ( 1 + 4\alpha^2)^{\wtilde D+1}}
{C_\tau D\log D} \frac{1}{\sqrt{\wtilde D-2}}
\end{align*}
Since From Theorem \ref{thm:Step1} we have that $\alpha \leq \frac{3}{2} \sqrt{ C_M/M}$, and thus 
$( 1 + \alpha)^{\wtilde D}$ is bounded for any $D$. Thus, we have that for $C_\tau$ large enough, 
\begin{align}
\norm{(II)}&\leq \frac{\eps}{2}\alpha
\end{align}
for any $D$.

Thus, we have from \eqref{eq:zDftilde} that for any unit vector $z$, $\|\vec{z}( \DD_{\wtilde{f}}[x] - \DD_f[x])\| \leq \eps\alpha$
or 
\[
\| \DD_{\wtilde{f}}[x] - \DD_{f}[x]\|_{op} \leq \eps\alpha
\]
\end{proof}

\begin{Lemma}\label{lem:Diff_Tf_bound}
	Let $ f, \tilde f $ be functions from $ \RR^d $ to $ \RR^{D-d} $, and denote their differentials by $ \DD_f, \DD_{\wtilde f} $ respectively.
	Denote by $ T_0 f, T_0 \wtilde f $ the tangent planes of the graphs of $ f $ and $ \wtilde f $ respectively.
	Assume that
	\[
	\norm{\DD_f[0] - \DD_{\tilde f}[0]}_{op}\leq \varepsilon 
	.\]
	Then, for sufficiently small $ \varepsilon $
	\[
	\sin(\maxangle(T_0 f, T_0 \wtilde{f})) \leq  \varepsilon 
	.\]
\end{Lemma}

\begin{proof}
	By definition we know that
	\[
	T_0 f = \{ \DD_{f}[0]v ~|~ v\in \RR^d \} \subset \RR^D
	,\]
	and
	\[
	T_0 \wtilde f_\ell = \{ (v, \DD_{\wtilde f}[0]v) ~|~ v\in \RR^d \} \subset \RR^D
	.\] 
	Let $ L_1, L_2 $ be two linear spaces, denote by $ Q_{L_1, L_2} :L_1\to L_2^\perp $  the following operator
	\[
	Q_{L_1, L_2} = {v - P_{L_2}(v)}
	.\]
	By Lemma \ref{lem:Q_angle_between_flats} we know that 
	\[
	\sin(\maxangle\lrbrackets{L_1, L_2}) = \norm{Q_{L_1, L_2}}_{op}
	.\]
	We now turn to look at the operator $ Q_{T_0 f, T_0 \wtilde f } $ operating on some vector $ v\in T_0 f $.
	\begin{align*}
	\norm{Q_{T_0 f, T_0 \wtilde f}(v,\DD_{f}[0]v )} 
	&\leq 
	\norm{(v,\DD_{f}[0]v ) - (v,\DD_{\wtilde f}[0]v )}\\
	&\leq 
	\norm{\DD_{f}[0] - \DD_{\wtilde f}[0]}_{op}\norm{(v,\DD_{f}[0]v)}
	,\end{align*}
	and thus,
	\[
	\sin (\angle_{max}(T_0 f, T_0 \wtilde f ))  = \|Q_{T_0 f, T_0 \wtilde f}\|_{op} \leq \|\DD_{f}[0] - \DD_{\wtilde f}[0]\|_{op} \leq  \varepsilon  
	\]
\end{proof}

\begin{Lemma}\label{lem:Q_angle_between_flats}
    Let $L_1$ and $L_2$ be two linear subspaces of $\RR^D$. Denote $Q_{L_1,L_2}:L_1 \rightarrow L_2^\perp$ defined as
    \[
    Q_{L_1,L_2}(v) = v - P_{L_2}(v)
    ,\]
    where $ P_{L_2} $ is the projection onto $ L_2 $.
    Then, 
    \[
    \sin \angle_{max}(L_1,L_2) = \|Q_{L_1,L_2}\|_{op}
    .\]
\end{Lemma}
\begin{proof}
	Recalling Definition \ref{def:principal_angles_clean}, the Principal Angles $ \beta_i $ between  $ L_1, L_2 $ and their corresponding pairs of vectors $ u_i\in L_1, w_i\in L_2 $ are defined as
	\begin{equation*}
	\begin{array}{ll}
	u_1, w_1 \defeq \argmin\limits_{\substack{u\in L_1, w\in L_2 \\ \norm{u}=\norm{w} =1}}\arccos\left(\abs{\langle u, w \rangle}\right), &
	\beta_1 \defeq \angle(u_1, w_1)
	\end{array}    
	,\end{equation*}
	and for $i > 1$
	\begin{equation*}
	\begin{array}{ll}
	u_i, w_i \defeq \argmin\limits_{\substack{u\perp\UU_{i-1}, w\perp\WW_{i-1} \\ \norm{u}=\norm{w} =1}}\arccos\left(\abs{\langle u, w \rangle}\right), &
	\beta_i \defeq \angle(u_i, w_i)
	\end{array}
	,\end{equation*}
	where $$\UU_i \defeq Span\{u_j\}_{j=1}^i ~,~ \WW_i \defeq Span\{w_j\}_{j=1}^i.$$
	We now wish to show that for all $ i $ we can choose
	\[
	w_i = \frac{P_{L_2}(u_i)}{\norm{P_{L_2}(u_i)}}
	.\]
	Since the definition is inductive so will be our proof.
	\paragraph*{Basis of the induction $ i=1 $:}
	We first denote
	\[
	v_1 = P_{L_2(u_1)}
	.\]
	Note that,
	\[
	\beta_1 = \angle(u_1, w_1) = \angle(u_1, \norm{v_1} w_1 )
	,\]
	and by the minimization problem defining $ \beta_1 $ we know that
	\[
	\beta_1 \leq \angle(u_1, v_1)
	.\]
	Then, since the projection onto a linear space minimizes the Least-Squares norm we get
	\begin{align*}
	\lrangle{u_1 - v_1, u_1 - v_1} 
	&\leq 
	\lrangle{u_1 - \norm{v_1}w_1, u_1 - \norm{v_1}w_1}\\
	\lrangle{u_1,u_1} - 2\lrangle{u_1, v_1} + \lrangle{v_1,v_1} 
	&\leq
	 \lrangle{u_1,u_1} - 2\lrangle{u_1, \norm{v_1}w_1} + \lrangle{\norm{v_1}w_1,\norm{v_1}w_1} \\
	 - 2\lrangle{u_1, v_1} + \norm{v_1}^2 
	 &\leq
	 - 2\lrangle{u_1, \norm{v_1}w_1} + \norm{v_1}^2 \\
	 \lrangle{u_1, \norm{v_1}w_1} 
	 &\leq
	 \lrangle{u_1, v_1}\\
	 \lrangle{u_1, w_1} 
	 &\leq
	 \lrangle{u_1, \frac{v_1}{\norm{v_1}}}
	 \\
	 \beta_1 =
	 \angle(u_1, w_1) =
	 \arccos(\lrangle{u_1, w_1}) 
	 &\geq
	 \arccos(\lrangle{u_1, \frac{v_1}{\norm{v_1}}}) = \angle(u_1, v_1)
	.\end{align*}
	Thus, 
	\[
	\angle(u_1, v_1) = \beta_1
	,\]
	and we can choose $ w_1 = v_1 $.
	\paragraph*{The induction step:}
	Now we assume that for all $ i\leq j $
	\[
	w_i = \frac{v_i}{\norm{v_i}}
	,\]
	where
	\[
	v_i = P_{L_2}(u_i)
	.\]
	And, we wish to show that 
	\[
	\angle(u_{j+1}, v_{j+1}) = \beta{j+1}
	,\]
	where the fact that $ \beta_{j+1} \leq  \angle(u_{j+1}, v_{j+1})$ results directly from the definition of $ \beta_{j+1} $.
	
	We first note that since
	\[
	u_{j+1}\perp\UU_j
	,\]
	we have
	\[	P_{L_2}(u_{j+1})\notin Span\{P_{L_2}(u_i)\}_{i=1}^j = \WW_j
	,\]
	thus, 
	\[
	v_{j+1} \in \WW_j^\perp  
	.\]
	From here on we can repeat the same argument as in the basis of the induction, just replacing $ L_1, L_2 $ with $ \UU_j^\perp, \WW_j^\perp $ respectively.
\end{proof}

\begin{Lemma}\label{lem:g_bond_weak}
Let the conditions of Lemma \ref{lem:bound_Df-Dfwtilde} hold.
Let $g(x, \theta)$ be as defined in Equation \eqref{eq:g_def}.
Then, for $\alpha$ smaller than some constant and $M$ larger then some constant, 
\[
\sigma \leq g(0,\theta) \leq \sigma +  4\sigma \alpha^2
\]
\end{Lemma}

\begin{proof}
    
    Since $(0,f(0))_H \in \MM$, any point $p\in \RR^D$ such that $\norm{p - (0,f(0))_H}\leq\sigma$ belongs to $\MM_\sigma$, the $\sigma$-tubular neighborhood of $\MM$. 
    In particular, this is also true for $p\in H^\perp\subset \RR^D$, and thus, we obtain the lower bound 
    \begin{equation*}
     \sigma \leq g(0,\theta)   
    ,\end{equation*}
    as by the definition of \eqref{eq:g_def} we have $g(0,\theta) = \max_{p\in (\textrm{Span}\{\theta\}\cap\MM_\sigma)} \norm{p - (0, f(0))_H}$.
    
    From Lemma \ref{lem:bounding_stuff} we have that 
    \[
    \|f(\wtilde{x}_\theta(0)) - f(0)\| \leq 2\sigma \sin (\alpha + 3\sqrt{\frac{\sigma}{\tau}}\alpha) \tan \alpha,
    \]
    where $\wtilde{x}_\theta$ defined in \eqref{eq:x_tilde_def}.
    
    Since $(0, f(0))_H + \theta g(0,\theta)$ is at distance  $\sigma$ from $(\wtilde{x}_\theta(0), f(\wtilde{x}_\theta(0)))_H$, and denoting $\theta\in H^\perp\subset \RR^D$ by $(0, \bar \theta)_H$ we have that
    \[
    \sigma  =
    \norm{(0, f(0) + \bar \theta g(0, \theta)  )_H -(\wtilde x_{\theta}(0), f(\wtilde x_\theta(0))_H}
    = \sqrt{\norm{\wtilde x_\theta}^2 + \norm{f(\wtilde x_\theta(0) - f(0)}^2 + g(0,\theta)^2 },\]
    and thus,
    \[
    g(0,\theta) \leq \sigma + 2\sigma \sin (\alpha + 3\sqrt{\frac{\sigma}{\tau}}\alpha) \tan \alpha,
    .\]
    Then, for $\alpha$ smaller than some constant and $M$ larger than some constant we have 
    \[
    g(0,\theta) \leq \sigma + 4\sigma \sin^2 \alpha,
    \] 
    or,
    \[
    g(0,\theta) \leq \sigma + 4\sigma \alpha^2,
    \] 
\end{proof}
\begin{Lemma}\label{lem:bound_nablag}
Let the conditions of Lemma \ref{lem:bound_Df-Dfwtilde} hold.
Let $g(x, \theta)$ be as defined in Equation \eqref{eq:g_def}. Then, 
\[
\|\nabla_x{g}(0,\theta)\| \leq C \cdot \frac{\sigma}{\tau}\alpha 
,\]
where $\nabla_x{g}$ denotes the gradient of $g(x, \theta)$ with respect to the $x$ variables only, and $C$ is some constant.
\end{Lemma}
\begin{proof}
Following the definition of $\wtilde{x}_\theta$ in \eqref{eq:x_tilde_def} and $g(x,\theta)$ in \eqref{eq:g_def}, we have the following equations that describe the connection between $x, \wtilde{x}_\theta$ and $g(x,\theta)$
\begin{equation*}
\left(\begin{array}{c}
     \wtilde{x}_\theta \\
     f(\wtilde{x}_\theta) 
\end{array}\right)
+
\sigma \vec N(x,\wtilde{x}_\theta,\theta) = 
\left(\begin{array}{c}
     x \\
     f(x) 
\end{array}\right)_H
+ 
\left(\begin{array}{c}
     0 \\
     \bar \theta
\end{array}\right)_H g(x,\theta)
,\end{equation*}
where $\theta\in \RR^D$ is written as $(0, \bar \theta)_H$, and $ \vec N(x,\wtilde{x}_\theta,\theta)\in \RR^D$ is some unit vector perpendicular to $T_{\wtilde{x}_\theta}f$.
Explicitly, 
\begin{equation}\label{eq:c_2}
 \vec N(x,\wtilde{x}_\theta,\theta) \perp T_{\wtilde{x}_\theta}f
,\end{equation}
and 
\begin{equation}\label{eq:c_3}
\|\vec N(x,\wtilde{x}_\theta,\theta) \| = 1
.\end{equation}
Alternatively, we can write,
\begin{equation}\label{eq:c_1}
\sigma \vec N(x,\wtilde{x}_\theta,\theta) = 
\left(\begin{array}{c}
     x \\
     f(x) 
\end{array}\right)_H
-
\left(\begin{array}{c}
     \wtilde{x}_\theta \\
     f(\wtilde{x}_\theta) 
\end{array}\right)_H
+ 
\left(\begin{array}{c}
     0 \\
     \bar \theta
\end{array}\right)_H g(x,\theta)
\end{equation}

Taking the norm of \eqref{eq:c_1} and using \eqref{eq:c_3} we have  

\begin{equation*}
\left\|\left(\begin{array}{c}
     x \\
     f(x) 
\end{array}\right)
-
\left(\begin{array}{c}
     \wtilde{x}_\theta \\
     f(\wtilde{x}_\theta) 
\end{array}\right)
+ 
\left(\begin{array}{c}
     0 \\
     \bar  \theta
\end{array}\right) g(x,\theta)\right\|^2 = \sigma^2
\end{equation*}
or,
\[
g^2 + 2g \left(\begin{array}{c}
     x - \wtilde{x}_\theta \\
     f(x) - f(\wtilde{x}_\theta) 
\end{array}\right) ^T
\left(\begin{array}{c}
     0 \\
     \bar \theta
\end{array}\right)
+
\left\|\left(\begin{array}{c}
     x - \wtilde{x}_\theta \\
     f(x) - f(\wtilde{x}_\theta) 
\end{array}\right)\right\|^2 
-
\sigma^2 =0
\]
or,
\[
g^2 + 2g (f(x) - f(\wtilde{x}_\theta))^T \bar \theta 
+
\|x - \wtilde{x}_\theta\|^2 
+
\| f(x) - f(\wtilde{x}_\theta) \|^2
-
\sigma^2 =0
\]
the two solutions are 
\begin{multline}\label{eq:sol_g_pm}
    g_{\pm}(x,\theta) =\\ -f(x)^T \bar \theta + f(\wtilde{x}_\theta)^T \bar \theta \pm
    \sqrt{\sigma^2 + (f(x)^T \bar \theta - f(\wtilde{x}_\theta)^T \bar \theta)^2 - \|x- \wtilde{x}_\theta\|^2 - \|f(x)- f(\wtilde{x}_\theta) \|^2}
\end{multline}

From Lemma \ref{lem:bounding_stuff}, for $\alpha$ smaller than some constant, we have that
\[ 
\|0- \wtilde{x}_\theta(0)\|^2 + \|f(0)- f(\wtilde{x}_\theta(0)) \|^2 \leq \sigma^2
\]
and thus, the solutions of Eq. \eqref{eq:sol_g_pm} at $x=0$ are $g_{-}(0, \theta) < 0$ and $g_{+}(0, \theta) > 0$.
Therefore, from continuity we get that there is a neighborhood of $x=0$ such that the only non-negative solution is
\begin{multline}\label{eq:sol_g}
    g(x,\theta) =\\ -f(x)^T \bar \theta + f(\wtilde{x}_\theta)^T \bar \theta + 
    \sqrt{\sigma^2 + (f(x)^T \bar \theta - f(\wtilde{x}_\theta)^T \bar \theta)^2 - \|x- \wtilde{x}_\theta\|^2 - \|f(x)- f(\wtilde{x}_\theta) \|^2}
.\end{multline}
In addition, from the definition of $g$ the only valid solution is the non-negative one which appears on Eq. \eqref{eq:sol_g}.
Thus, denoting 
\begin{equation}\label{eq:Delta}
\Delta = \sigma^2 + (f(x)^T \bar \theta - f(\wtilde{x}_\theta)^T \bar \theta)^2 - \|x- \wtilde{x}_\theta\|^2 - \|f(x)- f(\wtilde{x}_\theta) \|^2,
\end{equation}
we have that near $x=0$
\begin{multline}\label{eq:nabla_g}
    \nabla_x{g}(x,\theta) =\\ -\DD_f[x]^T\bar\theta + J_{\wtilde{x}_\theta} \DD_f[\wtilde{x}_\theta]^T \bar \theta 
    +
    \frac{1}{\sqrt{\Delta}}
    \left( 
     (f(\wtilde{x}_\theta)^T \bar \theta - f(x)^T \bar \theta)\left(J_{\wtilde{x}_\theta}\DD_f[\wtilde{x}_\theta]^T \bar \theta - \DD_f[x]^T \bar \theta\right) 
    \right. \\
    \left.- 
    (I_d-J_{\wtilde{x}_\theta})(x- \wtilde{x}_\theta) 
    - 
    (\DD_f[x]^T - J_{\wtilde{x}_\theta}\DD_f[\wtilde{x}_\theta]^T)(f(x)- f(\wtilde{x}_\theta))  \right)
,\end{multline}
where $J_{\wtilde x_\theta(x)} = \DD_{\wtilde x_\theta (x)}[x]$ is the Jacobi matrix of the function $\wtilde x_\theta (x)$, and $I_d$ is the $d$-dimensional identity matrix.
Alternatively, we can write
\begin{multline}\label{eq:nabla_g2}
    \nabla_x{g}(x,\theta) =\\ (J_{\wtilde{x}_\theta} \DD_f[\wtilde{x}_\theta]^T - \DD_f[x]^T ) \bar\theta 
    +
    \frac{1}{\sqrt{\Delta}}
    \left( 
     (f(\wtilde{x}_\theta)^T\bar\theta - f(x)^T\bar\theta ) \left(J_{\wtilde{x}_\theta}\DD_f[\wtilde{x}_\theta]^T - \DD_f[x]^T \right)\bar\theta 
    \right. \\
    \left.- 
    (I_d-J_{\wtilde{x}_\theta})(x- \wtilde{x}_\theta) 
    - 
    (\DD_f[x]^T - J_{\wtilde{x}_\theta}\DD_f[\wtilde{x}_\theta]^T)(f(x)- f(\wtilde{x}_\theta))  \right)
\end{multline}

Next, using Lemma \ref{lem:bound_J}, Lemma \ref{lem:Dfx-Dfxtilde} and Lemma \ref{lem:bounding_stuff}, we bound 
\begin{equation}\label{eq:Df-JDfwtilde}
\begin{aligned}
     \|\DD_f[0]^T - J_{\wtilde{x}_\theta} \DD_f[\wtilde{x}_\theta(0)]^T \| &= \|\DD_f[0]^T -\DD_f[\wtilde{x}_\theta(0)]^T+\DD_f[\wtilde{x}_\theta(0)]^T - J_{\wtilde{x}_\theta(0)} \DD_f[\wtilde{x}_\theta(0)]^T \| 
     \\
     &\leq \|\DD_f[0]^T -\DD_f[\wtilde{x}_\theta(0)]^T\|+\|\DD_f[\wtilde{x}_\theta(0)]^T - J_{\wtilde{x}_\theta(0)} \DD_f[\wtilde{x}_\theta(0)]^T \|
     \\
     &\leq \|\DD_f[0]^T -\DD_f[\wtilde{x}_\theta(0)]^T\|+\|I_d - J_{\wtilde{x}_\theta(0)} \|\|\DD_f[\wtilde{x}_\theta(0)]^T \| 
     \\
     &= \OO(\frac{\sigma}{\tau}\sin \alpha) + \OO(\frac{\sigma}{\tau}) \|\DD_f[\wtilde{x}_\theta(0)]^T \|
     \\
     &= \OO(\frac{\sigma}{\tau}\cdot\alpha) 
\end{aligned}
\end{equation}

Now, using \eqref{eq:nabla_g2} and the fact that $\norm{\bar \theta} = 1$ we get
\begin{align*}
    \|\nabla_x{g}(0,\theta)\|  &\leq\|\DD_f[0]^T - J_{\wtilde{x}_\theta}[0] \DD_f[\wtilde{x}_\theta(0)]^T\|  \\\
    &~~~
    +
    \frac{1}{\sqrt{\Delta}}
    \left( 
     \|f(\wtilde{x}_\theta(0)) - f(0)\| \|J_{\wtilde{x}_\theta}[0]\DD_f[\wtilde{x}_\theta(0)]^T - \DD_f[0]^T \| 
    \right. \\
    &~~~ \left.+ 
    \|I_d-J_{\wtilde{x}_\theta}[0]\|\|\wtilde{x}_\theta(0)\| 
    + 
    \|\DD_f[0]^T - J_{\wtilde{x}_\theta}[0]\DD_f[\wtilde{x}_\theta(0)]^T\|\|f(0)- f(\wtilde{x}_\theta(0))\|\right)
\end{align*}
From Lemmas \ref{lem:bound_J} and \ref{lem:bounding_stuff} we know that $\|I_d - J_{\wtilde x_\theta}[0]\| =\OO(\frac{\sigma}{\tau})$, $\|\wtilde x_\theta(0)\|\leq \sigma \sin(\alpha + 3\sqrt{\sigma/\tau} \alpha)$, and that $\|f(\wtilde x_\theta(0)) - f(0)\| \leq 2\sigma \sin(\alpha + 3\sqrt{\sigma/\tau}\alpha)\tan(\alpha)$.
In other words, for $\alpha$ smaller than some constant we can say that $\|I_d - J_{\wtilde x_\theta}[0]\| = \OO(\sigma/\tau)$, $\|\wtilde x_\theta(0)\|\leq \OO(\sigma \alpha)$, and that $\|f(\wtilde x_\theta(0)) - f(0)\| \leq \OO(\sigma \alpha^2)$.
Combining this with \eqref{eq:Df-JDfwtilde} as well, we have
\begin{align*}
    \|\nabla_x{g}(0,\theta)\| &\leq\OO(\frac{\sigma}{\tau}\cdot\alpha)
    +
    \frac{1}{\sqrt{\Delta}}
    \left( 
    \OO(\frac{\sigma^2}{\tau}\cdot\alpha^3)
    + 
    \OO(\frac{\sigma^2}{\tau}\cdot\alpha) 
    +
    \OO(\frac{\sigma^2}{\tau}\cdot\alpha^3)\right)
\end{align*}
Since Lemma \ref{lem:Dxtildew+Dxw} gives us 
\[
 \Delta(0,\wtilde x_\theta (0)) \geq \frac{1}{2} \sigma^2
,\]
we have that $\frac{1}{\sqrt{\Delta}}\leq \sqrt{2}/\sigma$, and 
\begin{equation*}
    \|\nabla_x{g}(x,\theta)\| =  \OO(\frac{\sigma}{\tau}\cdot\alpha)
.\end{equation*}
\end{proof}

\begin{Lemma}\label{lem:bound_J}
Let the conditions of Lemma \ref{lem:bound_nablag} hold.
Let $g(x, \theta), \wtilde x(x)$ be as defined in Equation \eqref{eq:g_def} and \eqref{eq:x_tilde_def} respectively, let $I_d:\RR^d\to\RR^d$ denote the identity matrix and let $J_{\wtilde x_\theta}$ denote the differential of $\wtilde x(x)$ with respect to $x$. Then, 
\begin{equation*}
\|I_d-J_{\wtilde{x}_\theta}[0]\| = \OO(\frac{\sigma}{\tau})
.\end{equation*}    
\end{Lemma}
\begin{proof}
We begin by reiterating equations \eqref{eq:c_1},\eqref{eq:c_2}, and \eqref{eq:c_3}.
Namely we have
\begin{equation*}
\sigma \vec N(x,\wtilde{x}_\theta,\theta) = 
\left(\begin{array}{c}
     x \\
     f(x) 
\end{array}\right)
-
\left(\begin{array}{c}
     \wtilde{x}_\theta \\
     f(\wtilde{x}_\theta) 
\end{array}\right)
+ 
\left(\begin{array}{c}
     0 \\
     \bar \theta
\end{array}\right) g(x,\theta)
,\end{equation*}
where
\begin{equation*}
 \vec N(x,\wtilde{x}_\theta,\theta) \perp T_{\wtilde{x}_\theta}f
,\end{equation*}
and 
\begin{equation*}
\|\vec N(x,\wtilde{x}_\theta,\theta) \| = 1
.\end{equation*}
Thus, there is a vector $v(x,\wtilde x_\theta)\in \RR^{D-d}$ with $\|v(x,\wtilde x_\theta)\| = 1$,
\[
\vec N(x,\wtilde{x}_\theta,\theta) = \frac{1}{\sqrt{\|\DD_f[\wtilde{x}_\theta] v(x,\wtilde x_\theta)\|^2 + 1}} 
\left(\begin{array}{c}
     -\DD_f[\wtilde{x}_\theta]^T v(x,\wtilde x_\theta) \\
     v(x,\wtilde x_\theta)
\end{array}\right)
\]
or, denoting $ w(x,\wtilde x_\theta) = \frac{\sigma}{\sqrt{\|\DD_f[\wtilde{x}_\theta]^T v(x,\wtilde x_\theta))\|^2 + 1}}  v$, we have 
\[
\sigma \vec N(x,\wtilde{x}_\theta) = 
\left(\begin{array}{c}
     -\DD_f[\wtilde{x}_\theta]^T  w(x,\wtilde x_\theta) \\
     w(x,\wtilde x_\theta)
\end{array}\right)
\]

Using this pronunciation of $\vec N$ we can rewrite the above equation as 
\begin{equation}\label{eq:c_1_w}
\left(\begin{array}{c}
     -\DD_f[\wtilde{x}_\theta]^T w(x,\wtilde x_\theta) \\
     w(x,\wtilde x_\theta)
\end{array}\right) = 
\left(\begin{array}{c}
     x \\
     f(x) 
\end{array}\right)
-
\left(\begin{array}{c}
     \wtilde{x}_\theta \\
     f(\wtilde{x}_\theta) 
\end{array}\right)
+ 
\left(\begin{array}{c}
     0 \\
     \bar \theta
\end{array}\right) g(x,\theta)
.\end{equation}
From Eq. \eqref{eq:sol_g} in the proof of Lemma \ref{lem:bound_nablag} we know that 
$g(x, \theta) = -f(x)^T\bar\theta + f(\wtilde x_\theta)^T\bar \theta + \sqrt{\Delta}$, near $x=0$, where $\Delta$ is defined in Eq. \eqref{eq:Delta}.
Combining this with the last $D-d$ equations  we get,
\begin{equation}\label{eq:def_w}
    w(x,\wtilde x_\theta) =\left(f(x) - f(\wtilde{x}_\theta) + \bar \theta \left(- f(x)^T \bar \theta + f(\wtilde{x}_\theta) ^T \bar \theta + 
    \sqrt{\Delta}\right)\right)
.\end{equation}
Looking at the first $d$ equations of \eqref{eq:c_1_w}, we have 
\[
-\DD_f[\wtilde{x}_\theta]^T w(x,\wtilde x_\theta) - x + \wtilde{x}_\theta = 0
.\]
Denoting the function 
\begin{equation} \label{eq:G_def}
G(x,\wtilde{x}_\theta) = -\DD_f[\wtilde{x}_\theta]^T w(x,\wtilde x_\theta) - x + \wtilde{x}_\theta    
,\end{equation}
we aim at using the Inverse Function Theorem (IFT) to compute $J_{\wtilde{x}_\theta}$.
First, we compute  $\DD^{x}_G $ and $\DD^{\wtilde{x}_\theta}_G$, the partial differentials of $G$ with respect to the variables $x$ and $\wtilde x_\theta$:
\begin{equation*}
    \DD^x_G[x, \wtilde x_\theta] = - \DD_f[\wtilde{x}_\theta]^T \DD^x_w[x, \wtilde x_\theta] - I_d
\end{equation*}
\begin{equation*}
    \DD^{\wtilde{x}_\theta}_G[x, \wtilde x_\theta] = - \HH f(\wtilde x_\theta)^w - \DD_f[\wtilde{x}_\theta]^T \DD^{\wtilde{x}_\theta}_w[x, \wtilde x_\theta]  + I_d
,\end{equation*}
where $\HH f(\wtilde x_\theta)^w \in \RR^{d\times d}$ is the tensor Hessian of $f(\wtilde x_\theta):\RR^d\to\RR^{D-d}$ projected onto the target direction $w\in \RR^{D-d}$; that is
\begin{equation}\label{eq:H_w_def}
\HH f(\wtilde x_\theta)^w =
\left(\frac{\partial\left(\DD_f[\wtilde{x}_\theta]^T\right)}{\partial \wtilde{x}_1} w \right|\left. \frac{\partial\left(\DD_f[\wtilde{x}_\theta]^T\right)}{\partial \wtilde{x}_2} w \right| \cdots \left|\frac{\partial\left(\DD_f[\wtilde{x}_\theta]^T\right)}{\partial \wtilde{x}_d} w \right)
.\end{equation}
Notice that $\DD_f[\wtilde x_\theta]^T\in \RR^{d\times D-d}$; therefore, $\partial_{\wtilde x_j}\DD_f[\wtilde x_\theta]^T\in \RR^{d\times D-d}$ and $\partial_{\wtilde x_j}\DD_f[\wtilde x_\theta]^T w\in \RR^{d}$.

Next, using the IFT we have that 
\[
J_{\wtilde{x}_\theta} = -(\DD^{\wtilde{x}_\theta}_{G})^{-1} \DD^x_G = \left( I_d - \DD_f[\wtilde{x}_\theta]^T \DD^{\wtilde{x}_\theta}_ w + \HH f(\wtilde x_\theta)^w  \right)^{-1} \left(I_d + \DD_f[\wtilde{x}_\theta]^T \DD^x_w \right)
,\]
and thus
\[
J_{\wtilde{x}_\theta}[0] = (A(I_d-A^{-1}B))^{-1}A = (I_d-A^{-1}B)^{-1}A^{-1}A =  (I_d-A^{-1}B)^{-1}
,\]
where
\begin{equation}\label{eq:A_def}
A = I_d +  \DD_f[\wtilde{x}_\theta(0)]^T \DD^x_w[0,\wtilde{x}_\theta(0)]    
,\end{equation}
\begin{equation}\label{eq:B_def}
B = \DD_f[\wtilde{x}_\theta(0)]^T (\DD^x_w[0,\wtilde{x}_\theta(0)] + \DD^{\wtilde{x}_\theta}_w[0,\wtilde{x}_\theta(0)] )+ \HH f(\wtilde x_\theta(0))^w
.\end{equation}
From Lemma \ref{lem:bound_A_inv_B} we have that $\|A^{-1}B\| \leq \OO(\frac{\sigma}{\tau}) \leq 1/2$ for $\sigma/\tau$ smaller than some constant, and thus, using the first order approximation of this term we get that there is a matrix
\[\mathcal{E} = \sum_{t=2}^{\infty}(A^{-1}B)^t\]
such that
\[
J_{\wtilde{x}_\theta} = (I_d - A^{-1}B)^{-1} = I_d +A^{-1}B + \mathcal{E}
,\]
and
with
\[
\|\mathcal{E}\| \leq \|A^{-1}B\| \sum_{t=1}^\infty \frac{1}{2^t} = \|A^{-1}B\|
.\]

Thus we have, 
\begin{equation}\label{eq:I-J}
\|I_d -J_{\wtilde{x}_\theta}\| \leq  2\|A^{-1}B\|  =  \OO(\frac{\sigma}{\tau})
\end{equation}

\end{proof}

\begin{Lemma}\label{lem:bound_A_inv_B}
Let the conditions of Lemma \ref{lem:bound_J} hold and let $A$ and $B$ be as defined in \eqref{eq:A_def} and \eqref{eq:B_def}.
Then,
    \[
    \|A^{-1}B\| \leq \OO(\frac{\sigma}{\tau})
    \]
\end{Lemma}
\begin{proof}
We begin by noting that 
\[
\|A^{-1}B\| \leq \|A^{-1}\|\|B\|
,\]
where
\begin{equation*}
A = I_d +  \DD_f[\wtilde{x}_\theta(0)]^T \DD^x_w[0,\wtilde{x}_\theta(0)]    
,\end{equation*}
\begin{equation*}
B = \DD_f[\wtilde{x}_\theta(0)]^T (\DD^x_w[0,\wtilde{x}_\theta(0)] + \DD^{\wtilde{x}_\theta}_w[0,\wtilde{x}_\theta(0)] )+ \HH f(\wtilde x_\theta(0))^w
.\end{equation*}
Moreover,
\[
A^{-1} = (I_d +  \DD_f[\wtilde{x}_\theta(0)]^T \DD^x_w[0, \wtilde x_\theta(0)])^{-1} = I_d + \sum_{t=1}^\infty(\DD_f[\wtilde{x}_\theta(0)]^T \DD^x_w[0, \wtilde x_\theta(0)])^t
.\]
From Lemma \ref{lem:Dxtildew+Dxw} we have that $\|\DD^x_w[0, \wtilde x_\theta(0)]\| = \OO(\sin \alpha)$, where we remind the reader that $\DD^x_w[x, \wtilde x_\theta]$ is the partial differential of $w(x, \wtilde x_\theta)$ with respect to the $x$ variables only.
And, from Lemma \ref{lem:bounding_stuff}we have that $\|\DD_f[\wtilde{x}_\theta(0)]\|_2 \leq \sin (\alpha + 3\sqrt{\frac{\sigma}{\tau}}\alpha)$ . Thus $\|\DD_f[\wtilde{x}_\theta]^T \DD^x_w[0, \wtilde x_\theta(0)]\| = \OO(\sin\alpha)$, and thus, for $\alpha$ smaller than some constant we have
\begin{equation}\label{eq:bound_A}
    \|A^{-1}\| = 1 + \OO(\sin^2\alpha)
\end{equation}

Furthermore, from Lemma \ref{lem:Dxtildew+Dxw} we also know that $(\DD^x_w[0, \wtilde x_\theta(0)] + \DD^{\wtilde{x}_\theta}_ w[0, \wtilde x_\theta(0)] ) = \OO(\frac{\sigma}{\tau} \sin\alpha)$ and so 
\[
\| \DD_f[\wtilde{x}_\theta(0)]^T (\DD^x_w[0, \wtilde x_\theta(0)] + \DD^{\wtilde{x}_\theta}_ w[0, \wtilde x_\theta(0)] )\| \leq \OO(\frac{\sigma}{\tau}\sin^2 \alpha)
.\]
Combining this bound with the fact that $\|\HH f(\wtilde x_\theta(0))^w\| \leq \OO(\frac{\sigma}{\tau}) $ shown in Lemma \ref{lem:bound_DDf} we have 
\begin{equation}\label{eq:bound_B}
\|B\| \leq  \|\DD_f[\wtilde{x}_\theta]^T (\DD^x_w + \DD^{\wtilde{x}_\theta}_ w )\|+\| \HH f(\wtilde x_\theta)^w\| \leq \OO(\frac{\sigma}{\tau}) +  \OO(\frac{\sigma}{\tau}\sin^2 \alpha)
.
\end{equation}

Finally, from  \eqref{eq:bound_A} and \eqref{eq:bound_B} we have that for $\alpha$ smaller than some constant
\[
\|A^{-1}B\| \leq \OO(\frac{\sigma}{\tau})
\]
\end{proof}

\begin{Lemma}\label{lem:bound_DDf}
Let the conditions of Lemma \ref{lem:bound_J} and let $\HH f(\wtilde x_\theta(0))^w$ be as defined in \eqref{eq:H_w_def}.
Then, we have
\[
\|\HH f(\wtilde x_\theta(0))^w\|_{op} = \frac{\sqrt{2}\sigma}{\tau} + \OO\left(\frac{\sigma\sin \alpha}{\tau}\right)
\]
\end{Lemma}
\begin{proof}
We denote the tensor Hessian of $f:\RR^d\to\RR^{D-d}$ at $\wtilde x_\theta(0)$ by $\HH f(\wtilde x_\theta(0)):\RR^d\times\RR^d \to \RR^{D-d}$. 
For brevity of notation, throughout this proof we will use $\HH$ instead of $\HH f(\wtilde x_\theta(0))$. For any chosen direction $u\in \RR^{D-d}$ (i.e., a unit vector), $\HH$ can be thought of as a function:
$\HH ^u:\RR^d\times\RR^d \to \RR$
defined as $\HH ^u(v_1,v_2) = \lrangle{\HH (v_1,v_2), u}$. 
We note that this definition is consistent with the definition of $\HH f(\wtilde x_\theta(0))^u$ in \eqref{eq:H_w_def}.
Given $v\in\RR^d$, $\norm{v} = 1$ we also define $\HH_v:\RR^d \to \RR^{D-d}$
as $\HH_v(\cdot) = \HH(v,\cdot)$.
Note, that for any $w\in \RR^{D-d}$ 
\[
{\|\HH^w\|_{op}} = {\sup\limits_{v_1, v_2\in \mathbb{S}^{d}}|\lrangle{\HH(v_1, v_2), w}|} 
\leq \sup\limits_{v_1, v_2\in \mathbb{S}^{d}}\|\HH(v_1, v_2)\|_2\norm{w}_2
= \norm{w}_2\sup_{v\in\mathbb{S}^d}\|\HH_v\|_{op}
,\]
where the right-most equality is true since $\HH$ is symmetric.

Thus, in essence, we need to bound $\|\HH_v\|_{op}$ for an arbitrary $v$. 
From the definition of $\HH_v$ we know that 
\[
\HH_v\ =  \lim_{t\to 0} \frac{\DD_f[\wtilde x_\theta(0)] - \DD_f[\wtilde x_\theta(0) + tv]}{t} 
.\]
Then, from Lemma \ref{lem:alpha-beta_x_no_func}  we have for small $t$
\[
\sin(\maxangle (T_{\wtilde x_\theta}f,T_{\wtilde x_\theta + tv}f))\leq \frac{t}{\tau}(1+\tan^2\beta)+\OO(t^2/\tau^2)
\]
where $\beta = \maxangle(T_{\wtilde x_\theta}f,H)$. 
Therefore, applying Lemma \ref{lem:norm_from_angle} we get 
\[
 \|\DD_f[\wtilde x_\theta(0)] - \DD_f[\wtilde x_\theta(0) + tv] \|_{op} \leq \frac{t}{\tau}(1+\tan^2\beta)(1+\sin \beta) + \OO(t^2/\tau^2)
,\]
and we get 
\[
\|\HH_v\| \leq  \frac{1}{\tau}(1+\tan^2\beta)(1+\sin \beta)
.\]
Furthermore, from Lemma \ref{lem:alpha_beta_D} we know 
\[
\beta \leq \alpha + 3\alpha\sqrt{\frac{\sigma}{\tau}}
,\]
and so,
\[
\|\HH_v\| \leq  \frac{1}{\tau}(1+\tan^2
\left(\alpha + 3\alpha\sqrt{\frac{\sigma}{\tau}}\right))(1+\sin \left(\alpha + 3\alpha\sqrt{\frac{\sigma}{\tau}}\right))
= \frac{1}{\tau} + \OO(\frac{\sin \alpha}{\tau})
.\]
Thus, we obtain
\begin{equation}\label{eq:norm_H^w_bound}
\|\HH^w\| \leq \|w\| \left(\frac{1}{\tau} + \OO(\frac{\sin \alpha}{\tau})\right)    
.\end{equation}

Hence, all we are left with is bounding  $\|w(0,\wtilde x_\theta (0))\|$. 
From Eq. \eqref{eq:def_w}, Lemma \ref{lem:bounding_stuff}, and Lemma \ref{lem:Dxtildew+Dxw}, we have 
\begin{align*}
    \|w(0,\wtilde x_\theta(0))\| &=\|f(0) - f(\wtilde{x}_\theta(0)) + \theta \left(- f(0)\cdot \theta + f(\wtilde{x}_\theta(0)) \cdot \theta + 
    \sqrt{\Delta(0,\wtilde x_\theta (0))}\right)\|
    \\
    &\leq \|f(0) - f(\wtilde{x}_\theta(0))\| + \left|(- f(0) + f(\wtilde{x}_\theta(0))) \cdot \theta\right| + 
    \sqrt{\Delta(0,\wtilde x_\theta (0))}
    \\
    &\leq 2\sigma \sin \alpha + 2\sigma \sin \alpha + \sqrt{2}\sigma 
    \\
    &\leq \sqrt{2}\sigma  + 4\sigma \sin \alpha 
\end{align*}

Since $\|w\| \leq \sqrt{2}\sigma + \OO(\sin \alpha)$ 
we have from Eq. \eqref{eq:norm_H^w_bound}
\[
\|\HH^w\| \leq \frac{\sqrt{2}\sigma}{\tau} + \OO(\frac{\sigma\sin \alpha}{\tau})
\]
\end{proof}

\begin{Lemma}\label{lem:Dxtildew+Dxw}
Let the conditions of Lemma \ref{lem:bound_J} hold, let $w(x, \wtilde x_\theta)$ be as defined in \eqref{eq:def_w}, $\wtilde x_\theta(x)$ be as defined in \eqref{eq:x_tilde_def}, and $\Delta$ as defined in \eqref{eq:Delta}. 
Denote by $\DD^x_w, \DD^{\wtilde{x}_\theta}_w$ the partial differentials of $w$ with respect to the variables $x$ and $\wtilde x_\theta$.
Then, for $\alpha$ smaller than some constant
\begin{equation}\label{eq:Delta_bound}
    \frac{1}{2} \sigma^2 \leq \Delta(0,\wtilde x_\theta (0)) \leq 2 \sigma^2
,\end{equation}
\begin{equation} \label{eq:Dws_bound}
\| \DD^{\wtilde{x}_\theta}_ w[0, \wtilde x_\theta(0)] \| \leq  \mathcal{O}(\sin \alpha) \qquad \|\DD^x_w[0, \wtilde x_\theta(0)]\| \leq \mathcal{O}(\sin \alpha)
,\end{equation}
and
\begin{equation}\label{eq:Dxtildew+Dxw_bound}
\| \DD^{\wtilde{x}_\theta}_w[0, \wtilde x_\theta(0)] + \DD^x_w[0, \wtilde x_\theta(0)]\| \leq \mathcal{O}(\frac{\sigma}{\tau}\sin \alpha)
.\end{equation}
\end{Lemma}
\begin{proof}
First we bound $\Delta$ from Eq. \eqref{eq:Delta} at $x=0,\wtilde x_\theta = x_\theta(0)$ using Lemma \ref{lem:bounding_stuff}, and assuming $\alpha$ is smaller than some constant.
\begin{align}
\Delta(0,\wtilde x_\theta (0)) &= \sigma^2 + (f(0)^T \theta - f(\wtilde{x}_\theta(0))^T \theta)^2 - \|0- \wtilde{x}_\theta(0)\|^2 - \|f(0)- f(\wtilde{x}_\theta(0)) \|^2 
\notag\\
&\geq \sigma^2 -\|\wtilde{x}_\theta(0)\|^2 - 2\|f(0)- f(\wtilde{x}_\theta(0)) \|^2
\notag\\
&\geq \sigma^2 -2\sigma^2 \sin^2 \alpha - 4\sigma^2 \sin^2 \alpha 
\notag\\
&\geq \sigma^2(1-6\sin^2 \alpha )
\notag\\
&\geq \frac{1}{2} \sigma^2
.\end{align}
Similarly, 
\begin{align}
\Delta(0,\wtilde x_\theta (0)) &= \sigma^2 + (f(0)^T \theta - f(\wtilde{x}_\theta(0))^T \theta)^2 - \|0- \wtilde{x}_\theta(0)\|^2 - \|f(0)- f(\wtilde{x}_\theta(0)) \|^2 
\notag\\
&\leq \sigma^2 +\|\wtilde{x}_\theta(0)\|^2 + 2\|f(0)- f(\wtilde{x}_\theta(0)) \|^2
\notag\\
&\leq \sigma^2 +2\sigma^2 \sin^2 \alpha + 4\sigma^2 \sin^2 \alpha 
\notag\\
&\leq \sigma^2(1+6\sin^2 \alpha )
\notag\\
&\geq 2 \sigma^2
.\end{align}
and thus we showed Eq. \eqref{eq:Delta_bound}.

Next we compute $\DD^x_w$ and $D^{\wtilde{x}}_ w$
\begin{align}\label{eq:D_xw}
    \begin{split}
     \DD^x_w =& \DD_f[x] 
    +
    \theta {(\theta^T \DD_f[x])} \\
    &~~+
    {\frac{1}{\sqrt{\Delta}}} \theta \left( 
    2 {(f(x)\cdot \theta - f(\wtilde{x}_\theta) \cdot \theta)}{\theta^T \DD_f[x]} 
    - 
    2(x- \wtilde{x}_\theta)^T 
    - 
    2(f(x)- f(\wtilde{x}_\theta))^T\DD_f[x]  \right)\\
    =&\left(I_{D-d} 
    +
    \theta \theta^T  
    +
    \frac{1}{\sqrt{\Delta}} \theta \left( 
    2 (f(x)^T- f(\wtilde{x}_\theta)^T ) (\theta \theta^T - I_{D-d} )
      \right)\right) \DD_f[x] -
    \frac{2}{\sqrt{\Delta}} \theta (x- \wtilde{x}_\theta)^T 
    \end{split}
,\end{align}

\begin{align}\label{eq:D_xwtilde}
    \begin{split}
    \DD^{\wtilde{x}_\theta}_ w = & -\DD_f[\wtilde{x}_\theta] 
    - 
    \theta {(\theta^T \DD_f[\wtilde{x}_\theta])} \\
    &~~+
    {\frac{1}{\sqrt{\Delta}}} \theta \left( 
    -2 {(f(x)\cdot \theta - f(\wtilde{x}_\theta) \cdot \theta)}{\theta^T \DD_f[\wtilde{x}_\theta]} 
    + 
    2(x- \wtilde{x}_\theta)^T 
    +
    2(f(x)- f(\wtilde{x}_\theta))^T\DD_f[\wtilde{x}_\theta]  \right)\\
    =&-\left(I_{D-d} 
    +
    \theta \theta^T  
    +
    \frac{1}{\sqrt{\Delta}} \theta \left( 
    2 (f(x)^T- f(\wtilde{x}_\theta)^T ) (\theta \theta^T - I_{D-d} )
      \right)\right) \DD_f[x] +
    \frac{2}{\sqrt{\Delta}} \theta (x- \wtilde{x}_\theta)^T 
    \end{split}
.\end{align}
From Eq. \eqref{eq:D_xw}, Lemma \ref{lem:bounding_stuff}, and Eq. \eqref{eq:Delta_bound},  we have that 
\begin{align*}
     \|\DD^x_w[0,\wtilde x_\theta(0)]\| \leq& 
    \|I_{D-d} 
    +
    \theta \theta^T  
    +
    \frac{1}{\sqrt{\Delta}} \theta \left( 
    2 (f(0)^T- f(\wtilde{x}_\theta(0))^T ) (\theta \theta^T - I_{D-d} )
      \right)\|_{op} \|\DD_f[0]\|_{op} \\
      &~~+
    \frac{2}{\sqrt{\Delta}} \|(0- \wtilde{x}_\theta(0))^T\|_2 
    \\
    \leq& \OO( \sin \alpha)
\end{align*}
Similarly, from Eq. \eqref{eq:D_xwtilde}, Lemma \ref{lem:bounding_stuff}, and Eq. \eqref{eq:Delta_bound},  we have that 
\begin{equation}
     \|\DD^x_w[0,\wtilde x_\theta(0)]\| 
    \leq \OO( \sin \alpha)
\end{equation}
Thus, we showed \eqref{eq:Dws_bound}.

Now we show \eqref{eq:Dxtildew+Dxw_bound}. From \eqref{eq:D_xw} and \eqref{eq:D_xwtilde} we have 
\begin{equation}\label{eq:D_xw-Dxwtilde}
\begin{aligned}
    \sigma \DD^x_w + \sigma \DD^{\wtilde{x}_\theta}_ w 
    &= \DD_f[x] -\DD_f[\wtilde{x}_\theta] +\theta \theta^T (\DD_f[x] - \DD_f[\wtilde{x}_\theta]) \\
    &~~+\frac{1}{\sqrt{\Delta}} \theta 
    \bigg( 2 (f(x)\cdot \theta - f(\wtilde{x}_\theta) \cdot \theta)\theta^T - 2(f(x)- f(\wtilde{x}_\theta))^T\bigg) (\DD_f[x] - \DD_f[\wtilde{x}_\theta])\\
    &= \left(I_{D-d} +\theta \theta^T +\frac{1}{\sqrt{\Delta}} \theta 
    \bigg( 2 (f(x)^T - f(\wtilde{x}_\theta)^T)\theta\theta^T - 2(f(x)- f(\wtilde{x}_\theta))^T\bigg)\right) (\DD_f[x] - \DD_f[\wtilde{x}_\theta])\\
    &=\left(I_{D-d} +\theta \theta^T +\frac{2}{\sqrt{\Delta}} \theta 
      (f(x)^T - f(\wtilde{x}_\theta)^T) (\theta\theta^T - I_{D-d})\right) (\DD_f[x] - \DD_f[\wtilde{x}_\theta])
\end{aligned}
\end{equation}
Since we are bounding for $x= 0 $, we have from Lemma \ref{lem:bounding_stuff}, that $\|f(0) - f(\wtilde{x}_\theta(0))\| \leq \OO(\sigma \sin^2 \alpha)$, for $\alpha$ smaller than some constant.
Taking the norm of \eqref{eq:D_xw-Dxwtilde} we have that 
\[
\|\sigma \DD^x_w + \sigma \DD^{\wtilde{x}_\theta}_ w \| \leq \left\|I_{D-d} +\theta \theta^T +\frac{2}{\sqrt{\Delta}} \theta 
      (f(x)^T - f(\wtilde{x}_\theta)^T) (\theta\theta^T - I_{D-d})\right\| \|\DD_f[x] - \DD_f[\wtilde{x}_\theta]\|
\]
From Eq. \eqref{eq:Delta_bound} and Lemma \ref{lem:Dfx-Dfxtilde} we have,
\[
\|\sigma \DD^x_w + \sigma \DD^{\wtilde{x}_\theta}_ w \| =  \OO(\frac{\sigma}{\tau}\sin \alpha)
\]
\end{proof}

\begin{Lemma}\label{lem:Dfx-Dfxtilde}
Let $f$ be a differentiable function from $H$, a $d$-dimensional subspace of $\RR^D$, to $\RR^{D-d}$.
Assume, $\maxangle(T_0f, H)\leq \alpha$ and that $\mathrm{rch}(\Gamma_f)$ the reach of $\Gamma_f$ (the graph of the function $f$), is bounded by $\tau$.
\[
\|\DD_f[\wtilde{x}_\theta(0)] - \DD_f[0]\|_{op} \leq 
\frac{\sigma \sin (\alpha + 3\sqrt{\frac{\sigma}{\tau}}\alpha)}{\tau}(1 + \tan^2 \alpha)(1 + \sin\alpha)  +\OO(\sigma^2 \sin^2 (\alpha)/\tau^2)
\]
or
\[
\|\DD_f[\wtilde{x}_\theta(0)] - \DD_f[0]\|_{op} \leq \OO(\frac{\sigma}{\tau}\sin \alpha)
,\]
for $\alpha$ smaller than some constant.
\end{Lemma}
\begin{proof}
From Lemma \ref{lem:alpha-beta_x_no_func} we have that 
\[
	\sin(\maxangle (T_{0}f, T_{\wtilde{x}_\theta}f)) \leq \frac{\norm{\wtilde{x}_\theta}}{\tau}(1 + \tan^2 \alpha)  +\OO(\norm{\wtilde{x}_\theta}^2/\tau^2)
\]
From Lemma \ref{lem:bounding_stuff}, we have $\|\wtilde{x}_\theta\| \leq \sigma \sin (\alpha + 3\sqrt{\frac{\sigma}{\tau}}\alpha)$, and thus
\[
	\sin(\maxangle (T_{0}f, T_{\wtilde{x}_\theta}f)) \leq \frac{\sigma \sin (\alpha + 3\sqrt{\frac{\sigma}{\tau}}\alpha)}{\tau}(1 + \tan^2 \alpha)  +\OO(\sigma^2 \sin^2 (\alpha)/\tau^2)
\]
Moreover, we have that 
\[
\sin(\maxangle (T_{0}f, H)) \leq \sin \alpha
\]
Using Lemma \ref{lem:norm_from_angle} we have 
\[
\|\DD_f[\wtilde{x}_\theta] - \DD_f[x]\| 
\leq 
\frac{\sigma \sin (\alpha + 3\sqrt{\frac{\sigma}{\tau}}\alpha)}{\tau}(1 + \tan^2 \alpha)(1 + \sin\alpha)  +\OO(\sigma^2 \sin^2 (\alpha)/\tau^2) 
\]

\end{proof}
\begin{Lemma}\label{lem:norm_from_angle}
Let $L_1, L_2$ be two linear operators from $H$ a $d$-dimensional subspace of $\RR^D$ to $\RR^{D-d}$.
Let, $\maxangle(H,(H,L_1(H))_H) \leq \alpha$, where $(H,L_1(H)_H)$ is the subspace spanned by $H$ and $L_1(H)$, the target space of $L_1$. Furthermore, let $\maxangle((H,L_1(H))_H,(H,L_2(H))_H) \leq \beta$.
Then, 
\[\|L_1 - L_2\|_{op} \leq \sin \beta(1 + \sin\alpha).\]
\end{Lemma}
\begin{proof}
For any $x\in H, \|x\| = 1$, from Lemma \ref{lem:Q_angle_between_flats}, there is $y\in H$ such that 
\[
\|(x,L_1(x)) - (y,L_2(y))\| \leq \sin \beta
.\]
Therefore, 
\[
\|x-y\|^2+\|L_1(x) - L_2(y)\|^2  = \|(x,L_1(x)) - (y,L_2(y))\|^2\leq \sin^2 \beta
,\]
and
\[
\|x-y\| \leq \sin \beta ~,~ \|L_1(x) - L_2(y)\| \leq \sin\beta
.\]
Note, that $\|L_1(y) - L_1(x)\| \leq \|L_1\|_{op}\|x-y\|$.
Since $\maxangle(H,(H,L_1(H))) \leq \alpha$ we have that $\|L_1\|_{op}\leq \sin \alpha$, and we get that 
\[
\|L_1(y) - L_1(x)\| \leq \sin \alpha \sin \beta
.\]
Furthermore,
\[
\|L_1(x) - L_2(x)\| = \|L_1(x) - L_1(y) + L_1(y) - L_2(x)\| \leq \|L_1(x) - L_1(y)\|+\|L_1(y) - L_2(x)\|
,\]
and so
\[
\|L_1(x) - L_2(x)\| \leq \sin \beta(1 + \sin\alpha)
.\]
\end{proof}

\begin{Lemma}\label{lem:bounding_stuff}
Let the conditions of Lemma \ref{lem:bound_Df-Dfwtilde} hold. Let $\wtilde x_\theta(x)$ be as defined in equation \eqref{eq:x_tilde_def} in the proof of Lemma \ref{lem:bound_Df-Dfwtilde}.
Then, for $\alpha$ smaller than some constant and $\frac{\tau}{\sigma}$ larger than some constant, we have
\begin{align} \label{eq:xtilde_bound_l}
& \|\wtilde{x}_\theta(0)\| &\leq \sigma \sin (\alpha + 3\sqrt{\frac{\sigma}{\tau}}\alpha)
\\
\label{eq:f_xtilde_bound_l}
&\|f(\wtilde{x}_\theta(0)) - f(0)\| &\leq 2\sigma \sin (\alpha + 3\sqrt{\frac{\sigma}{\tau}}\alpha) \tan \alpha 
\\ \label{eq:df_bound_l}
&\|\DD_f[0]\|_2 &\leq \sin \alpha    
\\ \label{eq:df_xtilde_bound_l}
&\|\DD_f[\wtilde{x}_\theta(0)]\|_2 &\leq \sin (\alpha + 3\sqrt{\frac{\sigma}{\tau}}\alpha)
\end{align}

\end{Lemma}
\begin{proof}
In essence, this lemma is a summary and rewriting of results from other lemmas which are meant to be used conveniently in the proof of Lemma \ref{lem:bound_Df-Dfwtilde}. 
Accordingly, \eqref{eq:xtilde_bound_l} is already achieved in Lemma \ref{lem:wtilde_x_bound_clean_D}.
Then, from Lemma \ref{lem:f_bound_circle_H_no_func_ver2_clean} we have
\[
    \|f(\wtilde{x}_\theta(0))\| \leq \norm{\wtilde{x}_\theta(0)}\tan \alpha  +\OO(\|\wtilde{x}_\theta(0)\|^2/\tau)
.\]
Thus, for $\alpha$ and $\frac{\sigma}{\tau}$ smaller than some constants, we achieve Eq. \eqref{eq:f_xtilde_bound_l}.
Next, since $\maxangle(H, T_0f) \leq \alpha$, by Lemma \ref{lem:Q_angle_between_flats} we have \eqref{eq:df_bound_l}.
Finally, denoting $ \beta(\wtilde x_\theta(0)) =  \maxangle(T_{\wtilde x_\theta(0)}f, H)$ by Lemma \ref{lem:Q_angle_between_flats} we have $\|\DD_f[\wtilde{x}_\theta(0)]\|_2 \leq \sin \beta(\wtilde{x}_\theta(0))$, and combining this with Lemma \ref{lem:alpha_beta_D} we obtain \eqref{eq:df_xtilde_bound_l}

\end{proof}

\begin{Lemma}\label{lem:wtilde_x_bound_clean_D}
Let the conditions of Lemma \ref{lem:bound_Df-Dfwtilde} hold. Let $\wtilde x_\theta(x)$ be as defined in equation \eqref{eq:x_tilde_def} in the proof of Lemma \ref{lem:bound_Df-Dfwtilde}.
Then, for any unit vector $\theta\in \RR^{D-d}$, if $\alpha, \frac{\sigma}{\tau}$ are smaller than some constants, we have
	\begin{equation*}
	\|\wtilde{x}_\theta(0)\| \leq \sigma \sin (\alpha + 3\sqrt{\frac{\sigma}{\tau}}\alpha)
	\end{equation*}
\end{Lemma}
\begin{proof}
	From Lemma \ref{lem:x_tilde_bound_beta_D}, we get
	\begin{equation}\label{eq:xtilde_alpha_bound}
	\norm{\wtilde{x}_\theta(0)} \leq \sigma \sin \beta (\wtilde x_\theta(0)) 
	\end{equation}
	where $ \beta (\wtilde x_\theta(0)) = \maxangle(T_{\wtilde x_{\theta}(0)}f, H) $.
	Then, using Lemma \ref{lem:alpha_beta_D} we get
	\[
	\alpha - 4\alpha \sqrt{\frac{\sigma}{\tau}}  \leq \beta(\wtilde x_\theta(0)) \leq \alpha + 3\alpha\sqrt{\frac{\sigma}{\tau}}
	.\]
	Thus, we obtain
	\[
	\norm{\wtilde x_\theta(0)} \leq \sigma \sin (\alpha + 3\sqrt{\frac{\sigma}{\tau}}\alpha)
	,\]
	as required.
\end{proof}

\begin{Lemma}\label{lem:x_tilde_bound_beta_D}
Let the conditions of Lemma \ref{lem:bound_Df-Dfwtilde} hold. Let $\wtilde x_\theta(x)$ be as defined in equation \eqref{eq:x_tilde_def} in the proof of Lemma \ref{lem:bound_Df-Dfwtilde}.
Let $ T_{\wtilde x_\theta(0)}f $ be the tangent to the graph of $ f $ at the point $ (\wtilde x_\theta(0), f(\wtilde x_\theta(0))) $, $ \beta(\wtilde x_\theta(0)) =  \maxangle(T_{\wtilde x_\theta(0)}f, H)$.
	Then,
	\[
	\norm{\wtilde x_\theta(0) } \leq \sigma \sin \beta(\wtilde x_\theta(0))
	\]
\end{Lemma}
\begin{proof}
From \eqref{eq:x_tilde_def} of the proof of Lemma \ref{lem:bound_Df-Dfwtilde} (or more conveniently \eqref{eq:c_1} from the proof of Lemma \ref{lem:bound_nablag}) we have that 
\[
\|x-\wtilde{x}_\theta\| = \sigma \|Proj_H(\vec N_\theta(x,\wtilde{x}_\theta,\theta))\|,
\]
Using Lemma \ref{lem:angle_space_perp_space}, since $\vec N_\theta \in T_{\wtilde x_\theta(0)}f^\perp$ we have that 
\[
\|Proj_H(\vec N_\theta(x,\wtilde{x}_\theta,\theta))\| \leq \cos (\frac{\pi}{2}  - \beta(\wtilde x_\theta(0))) = \sin(\beta(\wtilde x_\theta(0))),
\]
and thus
\[
\|\wtilde{x}_\theta(0)\| = \sigma \sin(\beta(\wtilde x_\theta(0)))
\]
\end{proof}

\begin{Lemma}\label{lem:alpha_beta_D}
Let the conditions of Lemma \ref{lem:bound_Df-Dfwtilde} hold. Let $\wtilde x_\theta(x)$ be as defined in equation \eqref{eq:x_tilde_def} in the proof of Lemma \ref{lem:bound_Df-Dfwtilde}.
Let $ x_0 \in H $ be such that $ \norm{x_0} \leq \norm{\wtilde x_\theta(0)} $.
	Denote $ \beta(x) = \maxangle(T_{x}f, H) $ and let $ \alpha = \beta(0) $.
	Then,
	\[
	\alpha - 4\alpha\frac{\sigma}{\tau}  \leq \beta(x_0) \leq \alpha + 3\alpha\sqrt{\frac{\sigma}{\tau}}
	.\]
\end{Lemma}
\begin{proof}
	For convenience of notations we denote in this proof 
	\[\beta \defeq \beta(x_0).\]
	Using the result of Lemma \ref{lem:alpha_beta_x} we achieve
	\[
	\alpha - 2 \sqrt{ \frac{\norm{x_0}}{\tau}\lrbrackets{2 \alpha + \frac{\norm{x_0}}{\tau}} } 
	\leq \beta \leq
	\alpha + 2 \sqrt{ \frac{\norm{x_0}}{\tau}\lrbrackets{2 \alpha + \frac{\norm{x_0} }{\tau}}}
	,\]
	and from the fact that $ \norm{x_0} \leq \norm{\wtilde x_\theta(0)} $ we get
	\[
	\alpha - 2 \sqrt{ \frac{\norm{\wtilde x_\theta(0)}}{\tau}\lrbrackets{2 \alpha + \frac{\norm{\wtilde x_\theta(0)}}{\tau}} } 
	\leq \beta \leq
	\alpha + 2 \sqrt{ \frac{\norm{\wtilde x_\theta(0)}}{\tau}\lrbrackets{2 \alpha + \frac{\norm{\wtilde x_\theta(0)}}{\tau}} }
	\]
	From Lemma \ref{lem:x_tilde_bound_beta_D} we know that
	\[
	\norm{\wtilde x_\theta(0)} \leq \sigma \sin\beta
	,\]
	and so we get
	\begin{align*}
	\lrbrackets{\beta - \alpha}^2 
	&\leq
	4\frac{\sigma}{\tau}sin\beta\lrbrackets{2\alpha   + \frac{\sigma}{\tau}sin\beta} \\
	\frac{\beta^2-2\alpha\beta + \alpha^2}{4} 
	&\leq 
	2 \frac{\sigma}{\tau}\alpha\beta + \frac{\sigma^2}{\tau^2}\beta^2
	,\end{align*}
	that can be written as the parabola 
	\begin{align*}
	\lrbrackets{\frac{1}{4} - \frac{\sigma^2}{\tau^2}}\beta^2 - \lrbrackets{\frac{1}{2} + 2 \frac{\sigma}{\tau} }\alpha \beta + \frac{1}{4}\alpha^2 & \leq
	0 \\
	\lrbrackets{1 - 4\frac{\sigma^2}{\tau^2}}\beta^2 - \lrbrackets{2 + 8 \frac{\sigma}{\tau} }\alpha \beta + \alpha^2 & \leq
	0 
	.\end{align*}
	The left hand side of this expression is a parabola with respect to $\beta$.
	Note, that for $\frac{\sigma}{\tau} = 0$ the roots are $\beta = \alpha$.
	Solving this parabola we get the roots
	\begin{align*}
	\beta_{1,2} 
	&=
	\frac{\lrbrackets{2 +8\frac{\sigma}{\tau}}\alpha \pm \sqrt{\lrbrackets{2 + 8 \frac{\sigma}{\tau} }^2\alpha^2 - 4 \lrbrackets{1 - 4 \frac{\sigma^2}{\tau^2} }\alpha^2}}{2 - 8\frac{\sigma^2}{\tau^2}} \\
	&=
	\frac{\lrbrackets{1 +4\frac{\sigma}{\tau}}\alpha \pm \lrbrackets{1 + 4 \frac{\sigma}{\tau} }\alpha\sqrt{1 - \frac{\lrbrackets{1 - 4 \frac{\sigma^2}{\tau^2} }}{\lrbrackets{1 + 4 \frac{\sigma}{\tau} }^2}}}{1 - 4\frac{\sigma^2}{\tau^2}} \\
	&=
	\frac{\lrbrackets{1 +4\frac{\sigma}{\tau}}\alpha \pm \lrbrackets{1 + 4 \frac{\sigma}{\tau} }\alpha\sqrt{1 - \frac{\lrbrackets{1 - 4 \frac{\sigma^2}{\tau^2} }}{\lrbrackets{1 + 4 \frac{\sigma}{\tau} }^2}}}{1 - 4\frac{\sigma^2}{\tau^2}}
	.\end{align*}
	Therefore, from Remark \ref{rem:taylor_sqrt1-x2_clean} the inequality holds for
	\begin{align*}
	\beta 
	&\geq 
	\frac{\lrbrackets{1 +4\frac{\sigma}{\tau}}\alpha - \lrbrackets{1 + 4 \frac{\sigma}{\tau} }\alpha\sqrt{1 - \frac{\lrbrackets{1 - 4 \frac{\sigma^2}{\tau^2} }}{\lrbrackets{1 + 4 \frac{\sigma}{\tau} }^2}}}{1 - 4\frac{\sigma^2}{\tau^2}}\\
	&\geq
	\frac{\lrbrackets{1 +4\frac{\sigma}{\tau}}\alpha - \lrbrackets{1 + 4 \frac{\sigma}{\tau} }\alpha\lrbrackets{1 - \frac{\lrbrackets{1 - 4 \frac{\sigma^2}{\tau^2} }}{\lrbrackets{1 + 4 \frac{\sigma}{\tau} }^2}}}{1 - 4\frac{\sigma^2}{\tau^2}}\\
	&=
	\alpha\frac{1}{{1 + 4 \frac{\sigma}{\tau} }} \\
	&\geq 
	\alpha - 4\alpha\frac{\sigma}{\tau}
	,\end{align*}
	where on the other hand
	\begin{align*}
	\beta 
	&\leq
	\frac{\lrbrackets{1 +4\frac{\sigma}{\tau}}\alpha + \lrbrackets{1 + 4 \frac{\sigma}{\tau} }\alpha\sqrt{1 - \frac{\lrbrackets{1 - 4 \frac{\sigma^2}{\tau^2} }}{\lrbrackets{1 + 4 \frac{\sigma}{\tau} }^2}}}{1 - 4\frac{\sigma^2}{\tau^2}} \\
	&=
	\frac{\lrbrackets{1 +4\frac{\sigma}{\tau}}\alpha + \lrbrackets{1 + 4 \frac{\sigma}{\tau} }\alpha\sqrt{1 - \frac{\lrbrackets{1 - 2 \frac{\sigma}{\tau} }\lrbrackets{1 + 2 \frac{\sigma}{\tau} }}{\lrbrackets{1 + 4 \frac{\sigma}{\tau} }^2}}}{1 - 4\frac{\sigma^2}{\tau^2}} \\
	&\leq
	\frac{\lrbrackets{1 +4\frac{\sigma}{\tau}}\alpha + \lrbrackets{1 + 4 \frac{\sigma}{\tau} }\alpha\sqrt{1 - \frac{\lrbrackets{1 - 2 \frac{\sigma}{\tau} }\lrbrackets{1 + 2 \frac{\sigma}{\tau} }}{\lrbrackets{1 + 2 \frac{\sigma}{\tau} }^2}}}{1 - 4\frac{\sigma^2}{\tau^2}} \\
	&=
	\frac{\lrbrackets{1 +4\frac{\sigma}{\tau}}\alpha + \lrbrackets{1 + 4 \frac{\sigma}{\tau} }\alpha\sqrt{1 - \frac{{1 - 2 \frac{\sigma}{\tau} }}{{1 + 2 \frac{\sigma}{\tau} }}}}{1 - 4\frac{\sigma^2}{\tau^2}} 
	,\end{align*}
	since we know that $ \frac{1}{1 + x} \geq 1 - x $ we get
	\begin{align*}
	\beta 
	&\leq
	\frac{\lrbrackets{1 +4\frac{\sigma}{\tau}}\alpha + \lrbrackets{1 + 4 \frac{\sigma}{\tau} }\alpha\sqrt{1 - \lrbrackets{1 - 2 \frac{\sigma}{\tau}}^2}}{1 - 4\frac{\sigma^2}{\tau^2}} \\
	&\leq
	\frac{\lrbrackets{1 +4\frac{\sigma}{\tau}}\alpha + \lrbrackets{1 + 4 \frac{\sigma}{\tau} }\alpha\sqrt{4\frac{\sigma}{\tau} - 4\frac{\sigma^2}{\tau^2}}}{1 - 4\frac{\sigma^2}{\tau^2}}\\
	&\leq
	\frac{\lrbrackets{1 +4\frac{\sigma}{\tau}}\alpha + 2\lrbrackets{1 + 4 \frac{\sigma}{\tau} }\alpha\sqrt{\frac{\sigma}{\tau}}}{1 - 4\frac{\sigma^2}{\tau^2}}
	,\end{align*}
	and by using the bound $ \frac{1}{1 - x^2} \leq 1 + 2 x^2 $ for $ 0 \leq x \leq 0.5 $	
	\begin{align*}
	\beta
	&\leq
	\lrbrackets{\lrbrackets{1 +4\frac{\sigma}{\tau}}\alpha + 2\lrbrackets{1 + 4 \frac{\sigma}{\tau} }\alpha\sqrt{\frac{\sigma}{\tau}}}\lrbrackets{1 + 8\frac{\sigma^2}{\tau^2}} \\
	&=
	\alpha + 2\alpha\sqrt{\frac{\sigma}{\tau}} + 4\alpha\frac{\sigma}{\tau} + 8 \alpha\frac{\sigma^{1.5}}{\tau^{1.5}} + 8\alpha\frac{\sigma^2}{\tau^2} + 16\alpha\frac{\sigma^{2.5}}{\tau^{2.5}} + 32\alpha\frac{\sigma^{3}}{\tau^{3}} + 32\alpha\frac{\sigma^{3.5}}{\tau^{3.5}}
	.\end{align*}
	Then, since $ \frac{\sigma}{\tau}\leq \frac{1}{36} $ is sufficiently small we obtain 
	\[
	\beta \leq \alpha + 3\alpha\sqrt{\frac{\sigma}{\tau}}
	,\]
	and 
	\[
	\alpha - 4\alpha\frac{\sigma}{\tau}  \leq \beta \leq \alpha + 3\alpha\sqrt{\frac{\sigma}{\tau}}
	\]
\end{proof}

\subsubsection{Bounding the finite sample error}\label{sec:FiniteSample_err}
Back to Theorem \ref{thm:Step2} proof road-map see Figure \ref{tikz:thm33_lemmas}.

\noindent In this section we show that  $ \maxangle(T_0 \wtilde f_\ell, H_{\ell + 1}) $ the angle between the tangent of $\wtilde{f}_\ell(0)$ and the tangent estimated using $n$ samples decays to zero as $n\to \infty$.
Namely, the main result of this subsection is pronounced in the lemma below.

\begin{Lemma}\label{lem:AngleImprovement}
	Let $(q_\ell, H_\ell)$ be defined in Algorithm \ref{alg:step2_clean} and $ \pi^*_{q_\ell, H_\ell}(x) $ be defined in \eqref{eq:argmin2_clean} and let $ H_{\ell + 1} = T_0 \pi^*_{q_\ell, H_\ell} $ the tangent to the graph of $ \pi^*_{q_\ell, H_\ell} $ at $ \pi^*_{q_\ell, H_\ell}(0) $.
	Then, for all $ \delta>0 $ there is $ N_\delta $ such that for all $ n > N_\delta $ we have with probability $ 1-\delta $
	\[
	\maxangle(T_0\wtilde f_\ell, H_{\ell + 1}) \leq 2\sqrt{d}\frac{C_0 \ln(1/\delta)}{n ^{r_1}}
	,\]
	where ${r_1} = \frac{k-1}{2k + d}$ and $C_0$ is a constant.
\end{Lemma}
\begin{proof}
We first note that it is sufficient to bound the error of estimating the image of $\DD_{\wtilde{f}_\ell}[0]$, the differential of $ \wtilde f_\ell(x) $ at 0, by the image of $ \DD_{\pi^*_{q_\ell, H_\ell}}[0] $, the differential of the local polynomial least-squares regression $\pi^*_{q_\ell, H_\ell}$.
Explicitly, if 
\[
\|{\DD_{\pi^*_{q_\ell, H_\ell}}[0] - \DD_{ \wtilde f_\ell}[0]}\|_{op} \leq \sqrt{d}\frac{C_0 \ln(1/\delta)}{n^{r_1}}
,\]
then, by using Lemma \ref{lem:Diff_Tf_bound} we get that
\[
\sin(\maxangle(T_0\wtilde f_\ell, T_0\pi^*_{q_\ell, H_\ell})) \leq \sqrt{d}\frac{C_0 \ln(1/\delta)}{n^{r_1}}
,\]
which for sufficiently large $ n $ yields
\[
\maxangle(T_0\wtilde f_\ell, T_0\pi^*_{q_\ell, H_\ell}) \leq 2\sqrt{d}\frac{C_0 \ln(1/\delta)}{n^{r_1}}
,\]
as required.

Therefore, it is sufficient to show that for any $\delta$ there is $N_\delta$ such that for all $n>N_\delta$ we have 
\[
\|{\DD_{\pi^*_{q_\ell, H_\ell}}[0] - \DD_{ \wtilde f_\ell}[0]}\|_{op} \leq \sqrt{d}\frac{C_0 \ln(1/\delta)}{n^{r_1}}
,\]
with probability of at least $1 - \delta$.
Let us reiterate the minimization problem by which we derive the approximant.
Namely, given a sample $ \{r_i\}_{i=1}^{n} $ drawn i.i.d from $ \text{Unif}(\MM_\sigma) $, and a coordinate system $ (q, H)\in \RR^D\times Gr(d,D) $ we look for a polynomial $ \pi^*_{q,H} $ minimizing
\begin{equation}\label{eq:Step2_sample}
J_2(\pi ~|~ q, H) = \frac{1}{N_{q,H}}\sum_{r_i \in U_{\textrm{ROI}}^{n}} \norm{r_i - \pi(x_i)}^2
,\end{equation}
where $x_i$ are the projections of $r_i - q$ onto $H$, and $U_{\textrm{ROI}}^{n}(q,H)$ is defined through a bandwidth $\epsilon_{n}$ as
\begin{equation}\label{eq:ROI_ell_sample}
U_{\textrm{ROI}}^{n}(q,H) = {\{r_i\in U_\textrm{ROI}~|~\dist(x_i, q)<\epsilon_{n}\}}
,\end{equation}
and $N_{q,H}$ denotes the number of samples in $U_{\textrm{ROI}}^{n}(q, H)$.
Explicitly,
\begin{equation}\label{eq:argmin2_sample}
\pi^*_{q_\ell, H_\ell} = \argmin_{\pi\in \Pi_{k-1}^{d\mapsto D}} J_2(\pi ~|~ q_\ell, H_\ell)
.\end{equation}
We demand that the bandwidth $\epsilon_{n}\to 0$ as ${n}\to\infty$ such that 
\begin{equation}\label{eq:bandwidthDecay_sample}
0<\lim_{{n}\rightarrow\infty}N^{1/(2k+1)}\cdot \epsilon_{n} < \infty
.\end{equation}
And, the approximation is defined through $\DD_{\pi^*_{q_\ell, H_\ell}}[0]$

From Lemma \ref{lem:noise_cov_bound} we can apply Theorem 3.2 from \cite{aizenbud2021VectorEstimation} that gives convergence rates for local polynomial regression of vector valued functions in our case. Thus, we have that for every direction in the basis $\{x^j\}_{j=1}^d\subset H_\ell$ and every $\delta$ there exists  $N_{\delta}$ such that for all $n > N_{\delta}$ we have
\[
\Pr(\|{\partial_{x^j} \pi^*_{q_\ell, H_\ell}(0) - \partial_{x^j} \wtilde f_\ell(0)}\| > \frac{C_0 \ln(1/\delta)^{r_1}}{n^{r_1}}) < \delta
,\]
where $r_1 = \frac{k-1}{2k + d}$ and $C_0$ is a constant. Notice that $r_1 \leq 1/2$, and thus 
\[
\Pr(\|{\partial_{x^j} \pi^*_{q_\ell, H_\ell}(0) - \partial_{x^j} \wtilde f_\ell(0)}\| > \frac{C_0 \ln(1/\delta)}{n^{r_1}}) < \delta
.\]
Thus, taking into account all $d$ directions of the basis to $H_\ell$ we get that there are $C$ and $N_\delta$ such that for all $n > N_\delta$  
\[
\Pr(\|{\partial_{x^j} \pi^*_{q_\ell, H_\ell}(0) - \partial_{x^j} \wtilde f_\ell(0)}\| > \frac{C_0 \ln(1/\delta)}{n^{r_1}}\textrm{ for any } 1 \leq j \leq d) < d\delta
,\]
and thus
\[
\Pr(\|{\DD_{\pi^*_{q_\ell, H_\ell}}[0] - \DD_{\wtilde f_\ell}[0]}\|_{op} > \sqrt{d}\frac{C_0 \ln(1/\delta)}{n^r}) < d\delta
,\]
as required.
\end{proof}

In order to use convergence rate results of local polynomial regression for vector valued functions as described in Theorems 3.1 and 3.2 of \cite{aizenbud2021VectorEstimation} in our case, we need to show that the noise distribution $\eta_\ell$ defined in \eqref{eq:ftilde_def} is such that $\|\cov(\eta_\ell)\|\leq \sqrt{c/D}$.
\begin{Lemma}\label{lem:noise_cov_bound}
 Let $H_\ell\in Gr(d, D)$, and let $f_\ell:H_\ell\simeq\RR^d \rightarrow \RR^{D-d}$, defined as in \eqref{eq:fl_def}. Let  $\eta_\ell$ defined in \eqref{eq:ftilde_def}. Denote 
$\alpha_\ell = \maxangle (T_0f_{\ell}, H_{\ell})$ and assume $\alpha_\ell<1/D^{1/4}$. Then, 
\[
\|\cov(\eta_\ell)\|_{op} \leq \sqrt{\frac{c\sigma}{D-d}}
\]
    
\end{Lemma}
\begin{proof}
For ease of notation, denote $\wtilde{D} = D-d$.
Since we are interested in bounding 
\begin{equation}\label{eq:cov_norm}
\|\cov(\eta_\ell) \|_{op} = \max_{\vec{x}\in \mathbb{S}_{\wtilde{D} - 1} } \vec{x}^T \cov(\eta_\ell)\vec{x}
\end{equation}
we note that 
\[
\vec{x}^T\cov(\eta_\ell)\vec{x} = \mbox{Var}(\eta_\ell\cdot \vec x)
.\]
Thus, rewriting \eqref{eq:cov_norm} we have
\begin{equation}\label{eq:cov_norm2}
    \|\cov(\eta_\ell)\|_{op}= \max_{\vec{x}\in \mathbb{S}_{\wtilde{D} - 1}} \vec{x}^T\cov(\eta_\ell)\vec{x} = \max_{\vec{x}\in \mathbb{S}_{\wtilde{D} - 1}}\mbox{Var}(\eta_\ell\cdot \vec{x}) \leq \max_{\vec{x}\in \mathbb{S}_{\wtilde{D} - 1}}\EE((\eta_\ell\cdot \vec{x})^2)
\end{equation}
Thus, our goal is to bound, for any $\vec{z}\in \mathbb{S}_{\wtilde{D} - 1}$  the expression $\EE((\eta_\ell\cdot \vec{z})^2)$. From the definition of $g$ and $\Omega$ in \eqref{eq:g_def} and \eqref{eq:Omega_def}, we have that
\begin{align}\label{eq:E(Z_z)}
\EE((\eta_\ell\cdot \vec{z})^2) &= \frac{\int\limits_{y\in\Omega(x)} (y \cdot \vec{z})^2 dy}{\int\limits_{y\in\Omega(x)} dy} = 
\frac{\int\limits_{\mathbb{S}_{\wtilde D - 1}}\int\limits_0^{g(x,\theta)} (\theta \cdot \vec{z})^2 r^2 r^{\wtilde D-1} drd\theta}{\int\limits_{\mathbb{S}_{\wtilde D - 1}}\int\limits_0^{g(x,\theta)} r^{\wtilde D-1} drd\theta} \notag\\
& = \frac{\wtilde D\int\limits_{\mathbb{S}_{\wtilde D - 1}}(\theta \cdot \vec{z})^2 g(x,\theta)^{\wtilde D+2} d\theta}{(\wtilde D+2)\int\limits_{\mathbb{S}_{\wtilde D - 1}}g(x,\theta)^{\wtilde D} drd\theta}     
,\end{align}
where $dr$ is the measure over the radial component, $r^{\wtilde D-1}$ is the Jacobian introduced by the change of variables and $d\theta$ is the measure over the $(\wtilde D-1)$-dimensional sphere.

Following the rationale of the proof of Lemma \ref{lem:bound_Df-Dfwtilde}, we split $\mathbb{S}_{\wtilde D-1}$ into $\Omega_1$ and $\Omega_2$ of \eqref{eq:Omegas_def}.
That is,
\begin{equation*}
    \begin{aligned}
    \Omega_1 &= \{\theta~|~0\leq \vec{z}^T\theta \leq \xi\}\\
    \Omega_2 &= \{\theta~|~\vec{z}^T\theta > \xi\}
    \end{aligned}
,\end{equation*}
for some $\xi$ to be chosen later.
Thus, denoting $z = \theta^T \vec{z}$ we rewrite \eqref{eq:E(Z_z)} as` 
\begin{align*}
\EE((\eta_\ell\cdot \vec{z})^2)   &=\frac{\wtilde D\left(\int\limits_{\Omega_1}z^2 g(x,\theta)^{\wtilde D+2} d\theta + \int\limits_{\Omega_2}z^2 g(x,\theta)^{\wtilde D+2} d\theta\right)}{(\wtilde D+2)\int\limits_{\mathbb{S}_{\wtilde D-1}}g(x,\theta)^{\wtilde D} d\theta} 
\\
&\leq  \frac{\wtilde D\left(\xi^2 \int\limits_{\Omega_1} g(x,\theta)^{\wtilde D+2} d\theta + \int\limits_{\Omega_2} g(x,\theta)^{\wtilde D+2} d\theta\right)}{(\wtilde D+2)\int\limits_{\mathbb{S}_{\wtilde D-1}}g(x,\theta)^{\wtilde D} d\theta} 
\end{align*}

Since the conditions of Lemma \ref{lem:g_bond_weak} are met, we have $\sigma \leq g(0,\theta)\leq \sigma + 4\sigma \alpha_\ell ^2$, and thus
\begin{align*}
\EE((\eta_\ell\cdot \vec{z})^2)
&\leq  \frac{\wtilde D\left(\xi^2 \int\limits_{\Omega_1} g(x,\theta)^{\wtilde D+2} d\theta + \int\limits_{\Omega_2} g(x,\theta)^{\wtilde D+2} d\theta\right)}{(\wtilde D+2)\int\limits_{\mathbb{S}_{\wtilde D-1}}g(x,\theta)^{\wtilde D} d\theta}
\\
&\leq  \frac{\wtilde D\sigma^2(1+4\alpha_\ell ^2)^2\left(\xi^2  \int\limits_{\Omega_1} g(x,\theta)^{\wtilde D} d\theta + \int\limits_{\Omega_2} g(x,\theta)^{\wtilde D} d\theta\right)}{(\wtilde D+2)\int\limits_{\mathbb{S}_{\wtilde D-1}}g(x,\theta)^{\wtilde D} d\theta}
\\
&\leq  \frac{\wtilde D\sigma^2(1+4\alpha_\ell ^2)^2}{\wtilde D+2}\left(\xi^2  +\frac{ \int\limits_{\Omega_2} g(x,\theta)^{\wtilde D} d\theta}{\int\limits_{\mathbb{S}_{\wtilde D-1}}g(x,\theta)^{\wtilde D} d\theta} \right)
\\
&\leq  \frac{\wtilde D\sigma^2(1+4\alpha_\ell ^2)^2}{\wtilde D+2}\left(\xi^2  +\frac{\sigma^{\wtilde D}(1+4\alpha_\ell ^2)^{\wtilde D} \int\limits_{\Omega_2} d\theta}{\sigma^{\wtilde D}\int\limits_{\mathbb{S}_{\wtilde D-1}} d\theta} \right)
\\
&\leq  \frac{\wtilde D\sigma^2(1+4\alpha_\ell ^2)^2}{\wtilde D+2}\left(\xi^2 +\frac{(1+4\alpha_\ell ^2)^{\wtilde D}}{\xi\sqrt{\wtilde D-1}} e^{-(\wtilde D-1)\xi^2/2}\right)
,\end{align*}
where the last inequality comes from Eq. \eqref{eq:Omega3_to_ball_ratio}.
Since $\alpha_\ell <1/D^{1/4}$ we have that $(1+4\alpha_\ell ^2)^{\wtilde D}$ is bounded by some constant $c$. Choosing $\xi = 2\sqrt{\frac{\log(\wtilde D-1)}{\wtilde D-1}}$ we have 
\begin{align*}
\EE((\eta_\ell\cdot \vec{z})^2)
&\leq  \frac{\wtilde D\sigma^2(1+4\alpha_\ell ^2)^2 }{\wtilde D+2}\left(4\frac{\log(\wtilde D-1)}{\wtilde D-1} +\frac{c}{2\sqrt{\log(\wtilde D-1)}} (\wtilde D-1)^{-2}\right)
\\
&\leq  c_1\sigma^2\frac{\log(\wtilde D-1)}{\wtilde D-1}\leq  \sqrt{\frac{c\sigma}{\wtilde D}}
\end{align*}
for some constants $c,c_1$. Combining with Eq. 
\eqref{eq:cov_norm2}, we conclude the proof.

\end{proof}

\subsubsection{Bounding the distance of $q_\ell$ from $f_\ell(0)$}\label{sec:f0_est}

Back to Theorem \ref{thm:Step2} proof road-map see Figure \ref{tikz:thm33_lemmas}.

\begin{Lemma}\label{lem:dist_to_f_l0_weak}
For $f_\ell$ defined in \eqref{eq:fl_def}. Denote 
$\alpha_\ell = \maxangle (T_0f_{\ell}, H_{\ell})$ and assume $\alpha_\ell<1/D^{1/4}$. Then,
for any $\delta$ there is $N$ such that for any number of samples $n > N$
\[
\|f_\ell(0)\| \leq \tau \alpha_\ell /48
\]
with probability of at least $1-\delta$.
\end{Lemma}
\begin{proof}
In this proof we will assume that $\alpha_\ell  \geq\frac{1}{D}$. The case when $\alpha_\ell <\frac{1}{D}$ will be treated at the end of this proof.
Using the triangle inequality, we have 
    \begin{equation}\label{eq:f_ell0_triangle}
        \|f_\ell(0)\| \leq \| \wtilde{f}_\ell(0) \| + \| \wtilde{f}_\ell(0)  - f_\ell(0)\|.
    \end{equation}
    and from Lemma \ref{lem:dist_q_l_and f_tilde0} we have that 
    \begin{equation}\label{eq:f_tilde0_bound_stone}
            \| \wtilde{f}_\ell(0)\| \leq \frac{C_2\ln\left(\frac{1}{\delta}\right)}{n^{r_0}}
    \end{equation}
    with probability of at least $1-\delta$.
    
Similar to \eqref{eq:f_tilde-f_integrals} of the proof of Lemma \ref{lem:bound_Df-Dfwtilde} we can write 
\begin{equation*}
\wtilde{f}_\ell(x) - f_\ell(x)=
\frac{\wtilde D\int\limits_{\mathbb{S}_{\wtilde D-1}}\theta g(x,\theta)^{\wtilde D+1} d\theta}{(\wtilde D+1)\int\limits_{\mathbb{S}_{\wtilde D-1}}g(x,\theta)^{\wtilde D} drd\theta}    ,\end{equation*}
where $\wtilde D = D - d$, and $\mathbb{S}_{\wtilde D - 1}$ is the $({\wtilde D - 2})$-dimensional unit sphere in $\RR^{\wtilde D - 1}$.
Following the rationale of the proof of Lemma \ref{lem:bound_Df-Dfwtilde}, we split $\mathbb{S}_{\wtilde D-1}$ into $\Omega_1$ and $\Omega_2$ of \eqref{eq:Omegas_def}.
That is,
\begin{equation*}
    \begin{aligned}
    \Omega_1 &= \{\theta~|~0\leq \vec{z}^T\theta \leq \xi\}\\
    \Omega_2 &= \{\theta~|~\vec{z}^T\theta > \xi\}
    \end{aligned}
,\end{equation*}
for some $\xi$ to be chosen later.
Thus, we have  
\begin{align*}
z^T\cdot (\wtilde{f}_\ell(x) - f_\ell(x))&=
\frac{\wtilde D\int\limits_{\mathbb{S}_{\wtilde D-1}}z g(x,\theta)^{\wtilde D+1} d\theta}{(\wtilde D+1)\int\limits_{\mathbb{S}_{\wtilde D-1}}g(x,\theta)^{\wtilde D} d\theta} 
\\
&=\frac{\wtilde D\left(\int\limits_{\Omega_1}z g(x,\theta)^{\wtilde D+1} d\theta + \int\limits_{\Omega_2}z g(x,\theta)^{\wtilde D+1} d\theta\right)}{(\wtilde D+1)\int\limits_{\mathbb{S}_{\wtilde D-1}}g(x,\theta)^{\wtilde D} d\theta} 
\\
&\leq  \frac{\wtilde D\left(\xi \int\limits_{\Omega_1} g(x,\theta)^{\wtilde D+1} d\theta + \int\limits_{\Omega_2} g(x,\theta)^{\wtilde D+1} d\theta\right)}{(\wtilde D+1)\int\limits_{\mathbb{S}_{\wtilde D-1}}g(x,\theta)^{\wtilde D} d\theta} 
\end{align*}
Since the conditions of Lemma \ref{lem:g_bond_weak} are met, we have $\sigma \leq g(0,\theta)\leq \sigma + 4\sigma \alpha_\ell ^2$, and thus
\begin{align*}
z^T\cdot (\wtilde{f}_\ell(x) - f_\ell(x))
&\leq  \frac{\wtilde D\left(\xi \int\limits_{\Omega_1} g(x,\theta)^{\wtilde D+1} d\theta + \int\limits_{\Omega_2} g(x,\theta)^{\wtilde D+1} d\theta\right)}{(\wtilde D+1)\int\limits_{\mathbb{S}_{\wtilde D-1}}g(x,\theta)^{\wtilde D} d\theta}
\\
&\leq  \frac{\wtilde D\sigma(1+4\alpha_\ell ^2)\left(\xi  \int\limits_{\Omega_1} g(x,\theta)^{\wtilde D} d\theta + \int\limits_{\Omega_2} g(x,\theta)^{\wtilde D} d\theta\right)}{(\wtilde D+1)\int\limits_{\mathbb{S}_{\wtilde D-1}}g(x,\theta)^{\wtilde D} d\theta}
\\
&\leq  \frac{\wtilde D\sigma(1+4\alpha_\ell ^2)}{\wtilde D+1}\left(\xi  +\frac{ \int\limits_{\Omega_2} g(x,\theta)^{\wtilde D} d\theta}{\int\limits_{\mathbb{S}_{\wtilde D-1}}g(x,\theta)^{\wtilde D} d\theta} \right)
\\
&\leq  \frac{\wtilde D\sigma(1+4\alpha_\ell ^2)}{\wtilde D+1}\left(\xi  +\frac{\sigma^{\wtilde D}(1+4\alpha_\ell ^2)^{\wtilde D} \int\limits_{\Omega_2} d\theta}{\sigma^{\wtilde D}\int\limits_{\mathbb{S}_{\wtilde D-1}} d\theta} \right)
\\
&\leq  \frac{\wtilde D\sigma(1+4\alpha_\ell ^2)}{\wtilde D+1}\left(\xi  +\frac{(1+4\alpha_\ell ^2)^{\wtilde D}}{\xi\sqrt{\wtilde D-1}} e^{-(\wtilde D-1)\xi^2/2}\right)
\end{align*}
Since $\alpha_\ell <1/D^{1/4}$ we have that $(1+4\alpha_\ell ^2)^{\wtilde D}$ is bounded by some constant $c$. Choosing $\xi = 2\sqrt{\frac{\log(\wtilde D-1)}{\wtilde D-1}}$ we have 
\begin{align*}
z^T\cdot (\wtilde{f}_\ell(x) - f_\ell(x))
&\leq  \frac{\wtilde D\sigma(1+\alpha_\ell )}{\wtilde D+1}\left(2\sqrt{\frac{\log(\wtilde D-1)}{\wtilde D-1}} +\frac{c}{2\sqrt{\log(\wtilde D-1)}} (\wtilde D-1)^{-2}\right)
\\
&\leq  c_1\sigma\sqrt{\frac{\log(\wtilde D-1)}{\wtilde D-1}} 
\end{align*}
for some constant $c_1$. Recalling that $M=\frac{\tau}{\sigma}>C_\tau \sqrt{D \log D}$ we have,
\begin{equation*}
    \|\wtilde{f}_\ell(x) - f_\ell(x)\| \leq c_1\sigma\sqrt{\frac{\log(\wtilde D-1)}{\wtilde D-1}} = \sigma C_\tau \sqrt{D\log D} \frac{c_1}{C_\tau D}\leq \tau \frac{c_1}{C_\tau D}.
\end{equation*}
and thus, for $\alpha_\ell \geq 1/D$, and for large enough $\C_\tau$ we have that 
\begin{equation*}
    \|\wtilde{f}_\ell(x) - f_\ell(x)\|\leq \frac{\tau \alpha_\ell }{2\cdot 48}.
\end{equation*}

Combining this with  \eqref{eq:f_ell0_triangle} and \eqref{eq:f_tilde0_bound_stone}, we have that for any $\delta>0$, for $\alpha_\ell >\frac{1}{D}$, and for number of samples $n>N$ large enough, 
\[
\|f_\ell(0)\| \leq \tau \alpha_\ell /48,
\]
with probability of at least $1-\delta$.

For $\alpha_\ell<\frac{1}{D}$ using Lemma \ref{lem:dist_to_f_l0_srtong_small_alpha} concludes the proof.
\end{proof}

\begin{Lemma}\label{lem:dist_to_f_l0_srtong_small_alpha}
For $f_\ell$ defined in \eqref{eq:fl_def}. Denote 
$\alpha_\ell = \maxangle (T_0f_{\ell}, H_{\ell})$ and assume $\alpha_\ell<1/D$. Then,
for any $\delta$ there is $N$ such that for any number of samples $n > N$, we have 
\[
\| f_\ell(0)\| \leq D C_{1} \alpha_\ell ^2 + \frac{C_2\ln\left(\frac{1}{\delta}\right)}{n^{r_0}}.
\]
\end{Lemma}
\begin{proof}
Using the triangle inequality, we have 
\begin{equation}\label{eq:f_ell0_triangle_l2}
    \|f_\ell(0)\| \leq \| \wtilde{f}_\ell(0) \| + \| \wtilde{f}_\ell(0)  - f_\ell(0)\|.
\end{equation}
and Lemma \ref{lem:dist_q_l_and f_tilde0} we have that 
\begin{equation}\label{eq:f_tilde0_bound_stone_l2}
        \| \wtilde{f}_\ell(0)\| \leq \frac{C_2\ln\left(\frac{1}{\delta}\right)}{n^{r_0}}
\end{equation}
with probability of at least $1-\delta$.

Now we focus on bounding $\| \wtilde{f}_\ell(0)  - f_\ell(0)\|$

  From \eqref{eq:f_tilde-f_integrals} we have that 
\begin{equation*}
\wtilde{f}_\ell(x) - f_\ell(x)=
\frac{\wtilde D\int\limits_{\mathbb{S}_{\wtilde D-1}}\theta g(x,\theta)^{\wtilde D+1} d\theta}{(\wtilde D+1)\int\limits_{\mathbb{S}_{\wtilde D-1}}g(x,\theta)^{\wtilde D} drd\theta}     
=
\frac{\wtilde D\int\limits_{\mathbb{S}_{\wtilde D-1}}\theta (g(x,\theta)^{\wtilde D+1} - \sigma^{\wtilde D+1}) d\theta}{(\wtilde D+1)\int\limits_{\mathbb{S}_{\wtilde D-1}}g(x,\theta)^{\wtilde D} drd\theta}     
\end{equation*}
or, looking at some direction $\vec{z}$ we have 
\begin{align*}
z^T\cdot (\wtilde{f}_\ell(x) - f_\ell(x))&=
\frac{\wtilde D\int\limits_{\mathbb{S}_{\wtilde D-1}}z (g(x,\theta)^{\wtilde D+1}- \sigma^{\wtilde D+1}) d\theta}{(\wtilde D+1)\int\limits_{\mathbb{S}_{\wtilde D-1}}g(x,\theta)^{\wtilde D} d\theta} 
\\
&\leq
\frac{\wtilde D}{(\wtilde D+1)}\sigma((1+4\alpha_\ell^2)^{\wtilde D+1} - 1)
\end{align*}

We note that for $x<1/n$ the following holds
\[
(1+x)^n-1<x((1+1/n)^n-1)/(1/n)<nx(e-1)<2nx.
\]
Using the above observation, and the fact that $4\alpha_\ell^2 <\wtilde D+1$ we have that 
\begin{align*}
z^T\cdot (\wtilde{f}_\ell(x) - f_\ell(x))&\leq
\frac{8\sigma\wtilde D}{(\wtilde D+1)}(\wtilde D+1)\alpha_\ell^2 \leq 8\sigma D \alpha_\ell^2
\end{align*}
Combining this with  \eqref{eq:f_ell0_triangle_l2} and \eqref{eq:f_tilde0_bound_stone_l2}, we have that for any $\delta>0$, for $\alpha_\ell >\frac{1}{D}$, and for number of samples $n>N$ large enough, 
\[
\|f_\ell(0)\| \leq 8\sigma D \alpha_\ell^2 + \frac{C_2\ln\left(\frac{1}{\delta}\right)}{n^{r_0}},
\]
with probability of at least $1-\delta$.
\end{proof}

\begin{Lemma}\label{lem:dist_q_l_and f_tilde0}
For any $\delta>0$ there is $N_\delta$ such that for any $n>N_\delta$, we have 
\[
\| \wtilde{f}_\ell(0)\| \leq \frac{C \ln\left(\frac{1}{\delta}\right)}{n^{r_0}}
\] 
with probability at least $1-\delta$, where $r_0 = \frac{k}{2k+d}$.
\end{Lemma}
\begin{proof}
First we note that 
\[
\wtilde f_\ell(0) = \wtilde f_{\ell - 0.5}(0) - \pi^*_{q_{\ell-1}, H_\ell}(0)
.\]
Then, from applying Theorem \ref{thm:vector_LPR} on $\wtilde{f}_{\ell-0.5}(0)$, and noting that $r_0 <1/2$,  we get the desired bound.
\end{proof}

\subsubsection{Bounding the error induced by the shifted origin}\label{sec:ShiftedCenter_err}
\begin{figure}
	\centering
	\includegraphics[width=0.8\textwidth]{Figures/Manifold_ball_2.pdf}
	\caption{Illustration of the difference between $ p $ and $ f_0(0) $.}
	\label{fig:f0(0)vs_p}
\end{figure}

Back to Theorem \ref{thm:Step2} proof road-map see Figure \ref{tikz:thm33_lemmas}.

\begin{Lemma}\label{lem:shift_ang_general}
    Let $G_0$ be a $d$ dimensional linear space in $\RR^D$, and let $g_0:G_0 \rightarrow \RR^{D-d}$, such that the graph of $g_0$ is a manifold with reach bounded by $\tau$. Assume that $\maxangle (T_0g_0, G_0) \leq \alpha$. Let  $G_1$ be a $d$ dimensional linear space in $\RR^D$, such that $\maxangle(G_0,G_1)\leq \beta$. Define $g_1:G_1 \rightarrow \RR^{D-d}$ as the function who's graph coincides with the graph of $g_0$. 

    Then, for $\alpha \leq \pi/16$, $\beta\leq \beta_c$ where $\beta_c$ is some constant dependent only on $c_{\pi/4}$ of Lemma \ref{lem:M_is_locally_a_fuinction_clean}, and  $\|g_0(0)\| \leq \frac{3\tau}{4 \cdot 16} $, we have 
    \[
    \maxangle (G_1, T_0g_1)) \leq \maxangle(G_1, T_0g_0)  + \frac{ 8\|g_0(0)\|}{\tau}
    \]
\end{Lemma}

We first need a supporting lemma that will show us that $g_{1}$ exists, and specifically, $g_{1}(0)$ exist.
\begin{Lemma} \label{lem:rough_w_bound}
Under the conditions of Lemma \ref{lem:shift_ang_general}, $g_{1}(0)$ exists and 
\[
\|P_{T_0 g_0} (o_1 + (0, g_{1}(0))_{G_1} - \tilde o_1) \| \leq \tau/2
\]
\end{Lemma}
\begin{proof}
We begin with defining the coordinate system $(\tilde o_1, G_1)$ with $\tilde o_1 = (0, g_0(0))_{G_0}$. Let $\wtilde g_1:(\wtilde o_1,G_1) \simeq \RR^d \to \RR^{D-d}$ be the function defined in Lemma \ref{lem:M_is_locally_a_fuinction_clean}, such that 
\[
\Gamma_{\wtilde g_1} =  \MM \cap \textrm{Cyl}(\wtilde o_1,c_{\pi/4}\tau, \tau/2)
\]
From Lemma \ref{lem:M_is_locally_a_fuinction_clean} we know that $\wtilde g_1$ is defined for any $x\in \RR^d$ such that $\norm{x} \leq c_{\pi/4}\tau$.
Now, we denote $x_o = P_{G_1}(o_1 - \tilde o_1)$, the projection of $o_1$ onto the affine space defined by $(\tilde o_1, G_1)$.
From the assumptions we know that $\norm {\tilde o_1 - o_1} \leq  \frac{3\tau}{4 \cdot 16}$. 
Since $\maxangle (G_1,G_0) \leq \beta$ from Lemma \ref{lem:angle_space_to_vec} we have that $\norm{x_o} \leq \frac{3\tau}{4 \cdot 16} \sin \beta$. 
Thus, for sufficiently small $\beta$, depending only on $c_{\pi/4}$, $\|x_o\| \leq c_{\pi/4}
\tau$, and $\wtilde{g}_1(x_o)$ is therefore defined (by Lemma \ref{lem:M_is_locally_a_fuinction_clean}). Since $\wtilde g_1$ identifies with $g_1$ up to some shift in the domain and target, it follows that $g_1(0)$ is well defined.

Next we bound  $ \|P_{T_0g_0} (o_1 + (0, g_{1}(0))_{G_1} - \tilde o_1) \| $.
Since $\wtilde{g}_1(0) = 0$, from Lemma \ref{lem:f_bound_circle_H_clean} and the triangle inequality for maximal angles between flats
we have that 
\[
\|\wtilde{g}_1(x_o)\| \leq \tau\cos(\alpha + \beta)-\sqrt{\tau^2-(\norm{x_o} + \tau\sin(\alpha+\beta))^2}
\]
Substituting $\|x_o\|$ in the right hand side we set 
\begin{align}
\|\wtilde{g}_1(x_o)\| &\leq \tau\cos(\alpha + \beta)-\sqrt{\tau^2-(\frac{3\tau}{4 \cdot 16} \sin \beta + \tau\sin(\alpha+\beta))^2} \notag\\
&= \tau \left( \cos(\alpha + \beta)-\sqrt{1-(\frac{3}{4 \cdot 16} \sin \beta + \sin(\alpha+\beta))^2} \right)    
\end{align}
Since  
\begin{multline}
 \|P_{T_0g_0} (o_1 + (0, g_{1}(0))_{G_1} - \tilde o_1) \| \leq \norm{o_1 + (0, g_{1}(0))_{G_1} - \tilde o_1)}   = \sqrt{\|\wtilde{g}_1(x_o)\|^2 + \|x_o\|^2} 
\\
= \tau \sqrt{\left(\frac{3}{4 \cdot 16}\right)^2 \sin^2 \beta + \left( \cos(\alpha + \beta)-\sqrt{1-(\frac{3}{4 \cdot 16} \sin \beta + \sin(\alpha+\beta))^2} \right)^2 }
\end{multline}
which, for small enough $\beta$ and fixed $\alpha$ is smaller than $0.5 \tau$.
\begin{figure}
    \centering
    \includegraphics{./Figures/RoughWBound.pdf}
    \caption{Illustration of an angle change of a coordinate system. We have $(o_1, G_0)$ as some coordinate system. Locally we look at $\MM$ (marked in solid red) as a graph of $g_0:(o_1, G_0)\to G_0\perp$. The point $\tilde o_1$ equals $g_0(0)$. Let $G_1$ be some rotated coordinate system and describe $\MM$ as a local graph of $g_1:(o_1, G_1)\to G_1^\perp$.}
    \label{fig:G0G1}
\end{figure}
\end{proof}

Next we prove Lemma \ref{lem:shift_ang_general}

\begin{proof}[proof of Lemma \ref{lem:shift_ang_general}] 
We first note that from Lemma \ref{lem:rough_w_bound} $(0,g_1(0))_{G_1}\in \MM$ exists.
Then, we denote by $o_1$ the origin, $\tilde o_1 = o_1 + (0, g_0(0))_{G_0}$ and $\bar o_1 = o_1 + (0, g_1(0))_{G_1}$ (see Figure \ref{fig:G0G1}).
Thus, We can write $\bar o_1 = \tilde o_1 + w_x + w_y$, where 
\[
\begin{array}{c}
w_x = P_{T_0g_0}(\bar o_1 - \tilde o_1) \in T_0g_0 \\ 
w_y = \bar o_1 - w_x - \tilde o_1 = P_{T_0g_0^\perp}(\bar o_1-\tilde o_1) \in T_0g_0^\perp.    
\end{array}
\]
In this case, since $w_x$ and $w_y$ are orthogonal, we have 
\begin{equation}\label{eq:dist_f0_w}
    \|\bar o_1 - \tilde o_1\|^2 = \|w_x\|^2 + \|w_y\|^2.
\end{equation}
From Lemma \ref{lem:f_bound_circle_no_func_clean}, we have that  $\|w_y\| \leq \tau - \sqrt{\tau^2 - \|w_x\|^2}$. Since $\|w_x\|\leq \tau$, and by Remark \ref{rem:taylor_sqrt1-x2_clean} we have,
\begin{equation}\label{eq:wy_bound}
    \|w_y\| \leq \tau - \sqrt{\tau^2 - \|w_x\|^2} \leq \frac{\|w_x\|^2}{\tau}.
\end{equation}
Thus, in order to bound \eqref{eq:dist_f0_w}, we only need to bound $\|w_x\|$. Recall that  $\maxangle(G_0,G_1)\leq \beta$. By Lemma \ref{lem:angle_space_to_vec} we have that 
\[
\beta \geq \min\limits_{v\in G_0^\perp} \angle (v,\bar o_1).
\]
Taking the cosine of both sides we have
\begin{equation}\label{eq:cos_bound_wx_norm}
    \cos\left(\beta\right)\leq \max\limits_{v\in G_0^\perp} \cos(\angle (v,\bar o_1)) =\max\limits_{v\in G_0^\perp} \frac{<v,\bar o_1>}{\|\bar o_1\|}.
\end{equation}
    
For any unit vector $v\in G_0^\perp$ we have
\begin{equation}\label{eq:vw_prod_bound}
 <v,\bar o_1> = <v,\tilde o_1 + w_x + w_y> \leq \|\tilde o_1\| + <v,w_x> + \|w_y\|.   
\end{equation}
We also note that 
\[
    \|\bar o_1\| \geq \|w_x +w_y\| - \|\tilde o_1\| ,
\]
and since $w_x$ and $w_y$ are orthogonal, we have
\begin{equation}\label{eq:w_norm_bound}
\|\bar o_1\| \geq \sqrt{\|w_x\|^2 + \|w_y\|^2} - \|\tilde o_1\| \geq \|w_x\| - \|\tilde o_1\|.    
\end{equation}
rewriting Eq. \eqref{eq:cos_bound_wx_norm} using Eq. \eqref{eq:vw_prod_bound} and Eq. \eqref{eq:w_norm_bound}, we have
\[
    \cos\left(\beta\right)\leq \max\limits_{v\in H_0^\perp} \frac{<v,w>}{\|w\|} \leq \max\limits_{v\in H_0^\perp} \frac{\|\tilde o_1\| + <v,w_x> + \|w_y\| }{\|w_x\| - \|\tilde o_1\|}
\]
Since $v\in G_0^\perp$ and $w_x\in T_0g_0$, from Lemma \ref{lem:angle_space_perp_space} we have
\[
    <v,w_x> \leq \|w_x\| \cos(\frac{\pi}{2} - \alpha) = \|w_x\| \sin(\alpha),
\]
and thus
\begin{equation}\label{eq:wx_bound1}
    \cos\left(\beta\right) \leq
 \frac{\|\tilde o_1\| + \|w_x\| \sin(\alpha) + \|w_y\| }{\|w_x\| - \|\tilde o_1\|}.
\end{equation}
Since $\|w_y\| \leq \frac{\|w_x\|^2}{\tau}$ we can rewrite Eq. \eqref{eq:wx_bound1}
\begin{equation}\label{eq:order2_ineq_wx}
-\frac{1}{\tau}\|w_x\|^2+\|w_x\|\left(\cos\left(\beta\right) - \sin( \alpha)\right) - \|\tilde o_1\| \left( \cos\left(\beta\right) + 1 \right) \leq 0.    
\end{equation}
Since we have that $\alpha \leq \pi/16$ and $\beta \leq \pi/10 $, we have  $1 \geq \cos(\beta)\geq 0.95$ and $\cos (\pi/2-\alpha) \leq 0.2$, and then
\begin{equation}\label{eq:order2_ineq_wx_2}
-\frac{1}{\tau}\|w_x\|^2+0.75 \|w_x\|  -2 \|\tilde o_1\| \leq 0 
\end{equation}
The left hand side of Eq. \eqref{eq:order2_ineq_wx_2} is a second degree polynomial in $\|w_x\|$. The roots of this polynomial are:
\[
\begin{array}{cc}
    \xi_1 =  \frac{3\tau}{8}\left(1 - \sqrt{1-\frac{128}{9\tau} \|\tilde o_1\|}\right) \\
    \xi_2 = \frac{3\tau}{8} \left(1 + \sqrt{1-\frac{128}{9\tau} \|\tilde o_1\|}\right).
\end{array}
\]
by Remark \ref{rem:taylor_sqrt1-x2_clean} we have,
\begin{equation} \label{eq:x112_bound}
\begin{array}{ll}
    \xi_1 \leq& \frac{3\tau}{8} \left[1 - 1+\frac{128}{9\tau} \|\tilde o_1\|\right] = \frac{16}{3} \|\tilde o_1\| \\
    \xi_2 \geq &\frac{3\tau}{8} \left[1 + 1-\frac{128}{9\tau} \|\tilde o_1\|\right] = \frac{3\tau}{4} - \frac{16}{3} \|\tilde o_1\|.
\end{array}
\end{equation}
We have from Eq. \eqref{eq:order2_ineq_wx} that $\|w_x\|\leq \xi_1$ or $\|w_x\|\geq \xi_2$, or, from Eq. \eqref{eq:x112_bound} we have 
\[
\|w_x\|\leq \frac{16}{3} \|\tilde o_1\|  \mbox{~~~or~~~~} \|w_x\|\geq \frac{3\tau}{4} - \frac{16}{3} \|\tilde o_1\|.
\]
By Lemma \ref{lem:rough_w_bound}, and for $\|\tilde o_1\| \leq \frac{3\tau}{4 \cdot 16} $  we have that $\|w_x\| \leq \tau/2 \leq \frac{3\tau}{4} - \frac{16}{3} \|\tilde o_1\|$, and thus, we have   
\[
\|w_x\|\leq \frac{16}{3} \|\tilde o_1\|.
\]
From Eq. \eqref{eq:dist_f0_w} and \eqref{eq:wy_bound} we have that 
\[
    \|\tilde o_1-w\|^2 = \|w_x\|^2 + \|w_y\|^2 \leq \frac{16}{3}\|\tilde o_1\|^2 + \frac{16\|\tilde o_1\|^2 }{3\tau^2},
\]
and since $\tau > 1$ we have $\|\tilde o_1-w\|^2 \leq 32/3 \|\tilde o_1\|^2$

Finally, from Corollary 3 in \cite{boissonnat2017reach} we conclude that 
\[
\sin \frac{\maxangle(T_0g_0,T_0g_1(w))}{2} \leq \frac{\sqrt{32/3} \|\tilde o_1\|}{2\tau} \leq \frac{4 \|\tilde o_1\|}{2\tau} = \frac{ 2\|\tilde o_1\|}{\tau}.
\]
Since $x/2 \leq \sin x$
\[
\maxangle(T_0g_0,T_0g_1) \leq \frac{ 8\|\tilde o_1\|}{\tau}. 
\]
and since 
\[
\maxangle (T_0g_0, G_1) \leq \maxangle (T_0g_0, G_0) + \maxangle (G_0, G_1) \leq \alpha + \beta
\]
we have  
\[
\maxangle (T_0g_1, G_1) \leq \maxangle (T_0g_0, G_1) + \maxangle(T_0g_0,T_0g_1)\leq \maxangle(T_0g_0(0),G_1)  + \frac{ 8\|\tilde o_1\|}{\tau} \left(\leq \alpha+\beta +\frac{ 8\|\tilde o_1\|}{\tau}\right)
\]
\end{proof}

\end{appendix}


\bibliography{main}{}       
\bibliographystyle{imsart-number} 